 \newtheorem{Theorem}{Theorem}[section]
 \newtheorem{Corollary}[Theorem]{Corollary}
 \newtheorem{Lemma}[Theorem]{Lemma}
 \newtheorem{Proposition}[Theorem]{Proposition}
 \newtheorem{Definition}[Theorem]{Definition}
 \newtheorem{Remark}[Theorem]{Remark}
 \numberwithin{equation}{section}
\begin{document}

\title[Hardy space, kernel function and Saitoh's conjecture]
 {Hardy space, kernel function and Saitoh's conjecture  on products of planar domains}

\author{Qi'an Guan}
\address{Qi'an Guan: School of Mathematical Sciences,
Peking University, Beijing, 100871, China.}
\email{guanqian@math.pku.edu.cn}

\author{Zheng Yuan}

\address{Zheng Yuan: School of Mathematical Sciences,
Peking University, Beijing, 100871, China.}
\email{zyuan@pku.edu.cn}

\thanks{}

\subjclass[2020]{30H10 30H20 31C12 30E20}

\keywords{Hardy space, kernel function, Saitoh's conjecture, concavity property, product space}

\date{\today}

\dedicatory{}

\commby{}

%%% ----------------------------------------------------------------------

\begin{abstract}
In this article, we consider two classes of weighted Hardy spaces on products of planar domains and their corresponding kernel functions, and we prove product versions of Saitoh's conjecture related to the two classes of weighted Hardy spaces.
\end{abstract}

%%% ----------------------------------------------------------------------
\maketitle
%%% ----------------------------------------------------------------------

\section{Introduction}
Let $\Delta$ be the unit disc in $\mathbb{C}$. Hardy space $H^2(\Delta)$ consists of the holomorphic functions $f$ on $\Delta$ such that 
$\int_{0}^{2\pi}|f(re^{i\theta})|^2d\theta$
is bounded for $0<r<1$, which was introduced into analysis by Hardy \cite{hardy}. In \cite{rudin55}, Rudin generalized Hardy spaces to any planar domains and studied their properties. 
Let $D$ be a planar regular region with finite boundary components which are analytic Jordan curves (see \cite{saitoh}, \cite{yamada}). 
Let $H^2(D)$ (see \cite{rudin55,saitoh}) denote the analytic Hardy class on $D$ defined as the set of all holomorphic functions $f(z)$ on $D$ such that the subharmonic functions $|f(z)|^2$ have harmonic majorants $U(z)$: 
$$|f(z)|^2\le U(z)\,\, \text{on}\,\,D.$$
Then each function $f(z)\in H^2(D)$ has Fatou's nontangential boundary value a.e. on $\partial D$ belonging to $L^2(\partial D)$ (see \cite{rudin55,duren}). 

Let $\lambda$ be a positive continuous function on $\partial D$. 
Define the weighted Szeg\"o kernel $K_{\lambda}(z,\overline w)$ (see \cite{nehari}) as follows: 
$$f(w)=\frac{1}{2\pi}\int_{\partial D}f(z)\overline{K_{\lambda}(z,\overline w)}\lambda(z)|dz|$$
holds for any $f\in H^2(D)$. Let $G_D(z,t)$ be the Green function on $D$, and let $\partial/\partial v_z$ denote the derivative along the outer normal unit vector $v_z$. For fixed $t\in D$, $\frac{\partial G_D(z,t)}{\partial v_z}$ is  positive and continuous on $\partial D$ because of the analyticity of the boundary (see \cite{saitoh}, \cite{guan-19saitoh}). When $\lambda(z)=\left(\frac{\partial G_D(z,t)}{\partial v_z}\right)^{-1}$ on $\partial D$, $\hat K_t(z,\overline w)$ denotes $K_{\lambda}(z,\overline w)$, which is the so-called conjugate Hardy $H^2$ kernel on $D$ (see \cite{saitoh}).
 When $t=w$ and $z=w$, $\hat K(z)$ denotes $\hat K_t(z,\overline w)$ for simplicity.

In \cite{guan-19saitoh}, Guan proved the following Saitoh's conjecture for conjugate Hardy $H^2$ kernels (see \cite{saitoh,yamada}), which gives a relation between conjugate Hardy $H^2$ kernel  $\hat K(z)$ (a weighted Szeg\"o kernel) and the Bergman kernel $B(z)$ for planar region $D$. 

\begin{Theorem}
	[\cite{guan-19saitoh}]\label{thm:saitoh}
	If $n>1$, then $\hat K(z)>\pi B(z)$, where $B(z)$ is the Bergman kernel on $D$.
\end{Theorem}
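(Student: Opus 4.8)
The plan is to realize both kernels as endpoint data of a single family of minimal weighted $L^2$ integrals and then to exploit a concavity property of that family. Fix the point $z_0\in D$ at which we evaluate, write $G_D=G_D(\cdot,z_0)$ for the Green function with pole at $z_0$ (so $G_D<0$ in $D$ and $G_D=0$ on $\partial D$), and for $t\ge 0$ set
\[
G(t)=\inf\left\{\int_D |F|^2 e^{2tG_D}\,dA : F \text{ holomorphic on } D,\ F(z_0)=1\right\}.
\]
At $t=0$ this is the standard extremal characterization of the Bergman kernel, so $G(0)=1/B(z_0)$. The first substantial step is the boundary asymptotics as $t\to+\infty$: writing $dA$ by the coarea formula along the level sets of $G_D$ and applying a Laplace-type estimate (the mass of $e^{2tG_D}$ concentrates on $\{G_D=0\}=\partial D$), one obtains
\[
2t\int_D |F|^2 e^{2tG_D}\,dA \longrightarrow \int_{\partial D}|F|^2\Big(\tfrac{\partial G_D}{\partial v}\Big)^{-1}|dz|
\]
uniformly enough to pass to the infimum, so that $\lim_{t\to\infty}2t\,G(t)=2\pi/\hat K(z_0)$ by the extremal characterization of the weighted Szeg\"o kernel with weight $\lambda=(\partial G_D/\partial v)^{-1}$. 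Thus the desired inequality $\hat K(z_0)>\pi B(z_0)$ is equivalent to $\lim_{t\to\infty}t\,G(t)<G(0)$.

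The second step is to apply the concavity property of minimal $L^2$ integrals (in the spirit of Guan--Zhou) to the family $G(t)$: since the weight $e^{2tG_D}$ comes from the subharmonic exponent $2tG_D$, the reciprocal $1/G(t)$ is convex in $t$, and the optimal $L^2$-extension estimate supplies the one-sided derivative bound $-G'(0)\ge G(0)$ at the left endpoint. Combining the convexity with this endpoint bound controls the asymptotic slope of $1/G$ and yields the non-strict comparison $\hat K(z_0)\ge \pi B(z_0)$. As a consistency check, for $D=\Delta$ and $z_0=0$ one computes $G(t)=\pi/(t+1)$, so $1/G$ is affine, $-G'(0)=G(0)$, and $\hat K=\pi B$ exactly: the simply connected case is precisely the equality case.

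The heart of the matter, and the main obstacle, is the strict inequality when $n>1$. Equality $\hat K(z_0)=\pi B(z_0)$ would force every inequality in the previous step to be saturated: $1/G$ would have to be affine and the endpoint bound $-G'(0)=G(0)$ would have to hold, which by the uniqueness/equality case of the optimal $L^2$ extension pins down the extremal functions for all $t$ as a rigid one-parameter family. The plan is to show that such rigidity forces a global holomorphic object on $D$ --- concretely, a single-valued holomorphic function of modulus $e^{G_D}$ with a single zero at $z_0$, equivalently a single-valued harmonic conjugate of $G_D$. On a regular region with $n$ boundary components the harmonic conjugate of $G_D$ has periods around the inner boundary curves governed by the harmonic measures, and these periods vanish simultaneously only when there are no inner curves, i.e. when $n=1$ (the function is then exactly a Riemann map to $\Delta$). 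Hence equality is incompatible with $n>1$, and the comparison is strict: $\hat K(z)>\pi B(z)$. I expect the delicate part to be making the rigidity statement precise --- extracting from the saturated concavity the exact global function and identifying its obstruction to single-valuedness with the topology of $D$.
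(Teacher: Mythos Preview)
This theorem is only quoted in the present paper (it is Guan's original result), but the paper's machinery reproduces it as the one-variable, $\varphi\equiv0$, $c\equiv1$ specialization of Theorem~\ref{thm:saitoh-1d} via Proposition~\ref{p:inequality}. Your overall architecture --- a one-parameter family of minimal $L^2$ integrals interpolating between the Bergman and Szeg\"o data, a concavity property of the family, and a rigidity analysis of the equality case --- is exactly right and matches both Guan's proof and the paper's framework. In particular your last step, that equality forces a single-valued holomorphic $f$ with $|f|=e^{G_D}$, impossible on a multiply connected $D$, is precisely the character condition $\chi_{z_0}=1$ in the paper's language.

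The gap is in your second step. You parameterize by the weight, $G(t)=\inf_{F(z_0)=1}\int_D|F|^2e^{2tG_D}$, and assert that $1/G(t)$ is convex and that $-G'(0)\ge G(0)$ follows from optimal $L^2$ extension. Neither is justified. For the convexity: $1/G(t)$ is the weighted Bergman kernel $B_{e^{2tG_D}}(z_0)$, and Berndtsson-type positivity would give log-convexity in $t$ only if the \emph{negative} log of the weight, namely $-2tG_D$, were plurisubharmonic in $(t,z)$; but $-G_D$ is \emph{super}harmonic, so the hypothesis fails. For the endpoint bound: $-G'(0)\ge G(0)$ unpacks to $2\int_D|F_0|^2(-G_D)\ge\int_D|F_0|^2$ for the Bergman extremal $F_0$, and optimal Ohsawa--Takegoshi does not give this directly. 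Your disc check $G(t)=\pi/(t+1)$ shows both claims hold in that model, but the general argument is missing.

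The paper sidesteps this by the dual parameterization: keep the weight trivial but shrink the domain, $\tilde G(t)=\inf_{F(z_0)=1}\int_{\{2G_D<-t\}}|F|^2$. The Guan--Zhou concavity (Theorem~\ref{thm:general_concave}) then makes $r\mapsto\tilde G(-\log r)$ concave on $(0,1]$, and this \emph{is} an established theorem. The Szeg\"o data enters not as a $t\to\infty$ Laplace asymptotic but through the slope of $\tilde G$ at $t=0^+$: Proposition~\ref{p:b6} bounds $\pi/\hat K(z_0)$ above by $\liminf_{r\to1}(1-r)^{-1}\int_{\{2G_D\ge\log r\}}|F_0|^2$, and concavity together with $\tilde G(+\infty)=0$ bounds this slope below by $\tilde G(0)=1/B(z_0)$. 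Equality forces the concave function to be linear, and Theorem~\ref{thm:linear-2d} then yields the character obstruction. The fix for your argument is to switch to this sublevel-set parameterization, where the needed concavity is a theorem rather than a claim.
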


In \cite{GY-weightedsaitoh}, we proved a weighted version of Saitoh's conjecture (see Theorem \ref{thm:saitoh-1d}), which deduced a weighted version of Saitoh's conjecture  for higher derivatives (see Corollary \ref{c:saitoh-higher jet}).

In this article, we generalize the Hardy spaces to products of planar domains, and we prove product versions of Saitoh's conjecture.

\subsection{Hardy space $H_{\rho}^2(M,\partial M)$ over $\partial M$}

Let $D_j$ be a planar regular region with finite boundary components which are analytic Jordan curves   for any $1\le j\le n$. Let $M=\prod_{1\le j\le n}D_j$ be a bounded domain in $\mathbb{C}^n$.

 Let  $M_j=\prod_{1\le l\le n,l\not=j}D_l$, then $M=D_j\times M_j$. Note that $\partial M=\cup_{j=1}^n\partial D_j\times \overline{M_j}$.  
Let $d\mu_j$ be the Lebesgue measure on $M_j$ for any $1\le j\le n$ and $d\mu$ is a measure on $\partial M$ defined by
$$\int_{\partial M}hd\mu=\sum_{1\le j\le n}\frac{1}{2\pi}\int_{M_j}\int_{\partial D_j}h(w_j,\hat w_j)|dw_j|d\mu_j(\hat w_j)$$
 for any $h\in L^1(\partial M)$.  For any $f\in H^2(D_j)$, $\gamma_j(f)$ denotes the nontangential boundary value of $f$ a.e. on $\partial D_j$. Let $\rho$ be a Lebesgue measurable function on $\partial M$ such that $\inf_{\partial M}\rho>0$.

 \begin{Definition}\label{def:1}
 	 Let $f\in L^2(\partial M,\rho d\mu)$. We call $f\in H^2_{\rho}(M,\partial M)$ if there exists $f^*\in\mathcal{O}(M)$ such that for any $1\le j\le n$, $f^*(\cdot,\hat w_j)\in H^2(D_j)$  for any  $\hat w_j\in M_j$ and $f=\gamma_j(f^*)$ a.e. on $\partial D_j\times M_j$. 
 \end{Definition}
 We give some properties about $H^2_{\rho}(M,\partial M)$ in Section \ref{sec:partial m}. $H^2_{\rho}(M,\partial M)$ is a Hilbert space equipped with the norm $\ll\cdot,\cdot\gg_{\partial M,\rho}$, which is defined by
$$\ll f,g\gg_{\partial M,\rho}:=\int_{\partial M}f\overline{g}\rho d\mu.$$
 Denote that $P_{\partial M}(f)=f^*$ for any $f\in H^2_{\rho}(M,\partial M)$.
Lemma \ref{l:b0} shows that   $P_{\partial M}$ is a linear injective map from $H^2(M,\partial D_j\times M_j)$ to $\mathcal{O}(M)$. When $n=1$, $P_{\partial M}=\gamma_1^{-1}$.

We define a kernel function $K_{\partial M,\rho}(z,\overline w)$ as follows: 
$$K_{\partial M,\rho}(z,\overline w):=\sum_{m=1}^{+\infty}P_{\partial M}(e_m)(z)\overline{P_{\partial M}(e_m)(w)}$$
for $(z,w)\in M\times M\subset \mathbb{C}^{2n}$, where $\{e_m\}_{m\in\mathbb{Z}_{\ge1}}$ is a complete orthonormal basis of $H^2_{\rho}(M,\partial M)$. The definition of $K_{\partial M,\rho}(z,\overline w)$ is independent of the choices of $\{e_m\}_{m\in\mathbb{Z}_{\ge1}}$ (see Lemma \ref{l:b5}). Denote that $K_{\partial M,\rho}(z):=K_{\partial M,\rho}(z,\overline z)$. When $n=1$, $K_{\partial M,\rho}(z,\overline{w})=K_{\rho}(z,\overline{w})$.

Let $z_0=(z_1,\ldots,z_n)\in M,$ and let  
$$\psi(w_1,\ldots,w_n)=\max_{1\le j\le n}\{2p_jG_{D_j}(w_j,z_j)\}$$ be a plurisubharmonic function on $M$, where $G_{D_j}(\cdot,z_j)$ is the Green function on $D_j$ and $p_j>0$ is a constant for any $1\le j\le n$ such that $\sum_{1\le j\le n}\frac{1}{p_j}\le 1$.

Let $\varphi_j$ be a subharmonic function on $D_j$, which satisfies that  $\varphi_j$ is continuous at $z$ for any $z\in \partial D_j$. Note that $\frac{\partial G_{D_j}(z,z_j)}{\partial v_z}$ is a positive continuous function on $\partial D_j$ by the analyticity of the boundary (see \cite{saitoh},\cite{guan-19saitoh}), where $\partial/\partial v_z$ denotes the derivative along the outer normal unit vector $v_z$.
 Let $\rho$ be a Lebesgue measurable function on $\partial M$ such that 
$$\rho(w_1,\ldots,w_n)=\frac{1}{p_j}\left(\frac{\partial G_{D_j}(w_j,z_j)}{\partial v_{w_j}}\right)^{-1}\times\prod_{1\le l\le n}e^{-\varphi_l(w_l)}$$
on $\partial D_j\times {M_j}$, thus we have $\inf_{\partial M}\rho>0$.
Let $c$ be a positive function on $[0,+\infty)$, which satisfies that $c(t)e^{-t}$ is decreasing on $[0,+\infty)$, $\lim_{t\rightarrow0+0}c(t)=c(0)=1$ and $\int_{0}^{+\infty}c(t)e^{-t}dt<+\infty$.
Denote that 
$$\tilde \rho=c(-\psi)\prod_{1\le j\le n}e^{-\varphi_j}$$
on $M$.
Let $B_{\tilde\rho}(z_0)$ be the Bergman kernel (see Section \ref{sec:B}) on $M$ with the weight $\tilde\rho.$

 We recall some notations (see \cite{OF81}, see also \cite{guan-zhou13ap,GY-concavity}).
 Let $P_j:\Delta\rightarrow D_j$ be the universal covering from unit disc $\Delta$ to $ D_j$, and let $z_j\in D_j$.
 We call the holomorphic function $f$ on $\Delta$ a multiplicative function,
 if there is a character $\chi$, which is the representation of the fundamental group of $ D_j$, such that $g^{*}f=\chi(g)f$,
 where $|\chi|=1$ and $g$ is an element of the fundamental group of $ D_j$. Denote the set of such kinds of $f$ by $\mathcal{O}^{\chi}( D_j)$.

For any harmonic function $u$ on $D_j$, as $P_j^*(u)$ is harmonic on $\Delta$ and $\Delta$ is simple connected,
there exists a character $\chi_{j,u}$ and a multiplicative function $f_{u}\in\mathcal{O}^{\chi_{j,u}}( D)$,
such that $|f_u|=P_j^{*}\left(e^{u}\right)$.
If $u_1-u_2=\log|f|$, then $\chi_{j,u_1}=\chi_{j,u_2}$,
where $u_1$ and $u_2$ are harmonic functions on $ D_j$ and $f$ is a holomorphic function on $ D_j$.
Recall that for the Green function $G_{ D_j}(z,z_j)$ on $D_j$,
there exist a character $\chi_{j,z_0}$ and a multiplicative function $f_{z_0}\in\mathcal{O}^{\chi_{j,z_0}}( D_j)$ (see \cite{suita}) such that $|f_{z_0}(z)|=P_j^{*}\left(e^{G_{ D}(z,z_0)}\right)$.

We give a product version of Saitoh's conjecture related to $H_{\rho}^2(M,\partial M)$:

\begin{Theorem}
	\label{thm:main1-1}Assume that $B_{\tilde\rho}(z_0)>0$ and $n>1$. Then 
	\begin{equation}
		\nonumber K_{\partial M,\rho}(z_0)\geq\left(\int_{0}^{+\infty}c(t)e^{-t}dt\right)\pi B_{\tilde\rho}(z_0)
	\end{equation}
	 holds, and the equality holds if and only if the following statements hold:
	
	$(1)$ $\sum_{1\le j\le n}\frac{1}{p_j}=1$;
	
	$(2)$ $\varphi_j=2\log|g_j|+2u_j$  on $D_j$ for any $1\le j\le n$, where $u_j$ is a harmonic function on $D_j$ and $g_j$ is a holomorphic function on $\mathbb{C}$ such that $g_j(z_j)\not=0$;
	
	$(3)$ $\chi_{j,z_j}=\chi_{j,-u_j}$, where $\chi_{j,-u_j}$ and $\chi_{j,z_j}$ are the  characters associated to the functions $-u_j$ and $G_{D_j}(\cdot,z_j)$ respectively.
\end{Theorem}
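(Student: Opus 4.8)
The plan is to prove the inequality by relating the weighted Szegő-type kernel $K_{\partial M,\rho}(z_0)$ to a reproducing property, and then extracting an inequality from an $L^2$ extension/optimal-constant argument with the weight $\tilde\rho$. Concretely, I would first establish that $K_{\partial M,\rho}(z_0,\overline{z_0})$ has the extremal characterization
$$
K_{\partial M,\rho}(z_0) = \sup\left\{ |f^*(z_0)|^2 : f\in H^2_{\rho}(M,\partial M),\ \ll f,f\gg_{\partial M,\rho}\le 1\right\},
$$
which follows from the definition of the kernel via an orthonormal basis together with the injectivity of $P_{\partial M}$ (Lemma \ref{l:b0}) and the Bessel/Parseval machinery referenced in Lemma \ref{l:b5}. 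Dually, the weighted Bergman kernel $B_{\tilde\rho}(z_0)$ is the supremum of $|F(z_0)|^2$ over holomorphic $F$ on $M$ with $\int_M |F|^2\tilde\rho\le 1$. So the whole inequality reduces to comparing these two extremal problems through the constant $\left(\int_0^{+\infty}c(t)e^{-t}dt\right)\pi$.

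The key step, and what I expect to be the main analytic engine, is a product boundary-to-interior comparison. For a single factor $D_j$ with weight $\lambda_j=\frac{1}{p_j}\bigl(\partial G_{D_j}(\cdot,z_j)/\partial v\bigr)^{-1}e^{-\sum_l\varphi_l}$ on $\partial D_j$, the one-dimensional weighted Saitoh's conjecture (Theorem \ref{thm:saitoh-1d} and its concavity-property proof, cited as \cite{GY-weightedsaitoh,GY-concavity}) already gives the comparison between the conjugate Hardy kernel and the weighted Bergman kernel on $D_j$, incorporating the factor $\int_0^\infty c(t)e^{-t}dt$ via the Green-function sublevel-set (the function $\psi$ restricted to one variable is $2p_jG_{D_j}$). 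The plan is to take a minimizer $F$ for the weighted Bergman problem on $M$, use the product structure $M=D_j\times M_j$ and the formula for $d\mu$ as a sum over the faces $\partial D_j\times \overline{M_j}$, and apply the one-variable result fiberwise in each variable $w_j$ with $\hat w_j\in M_j$ fixed. Integrating the fiberwise inequalities against $d\mu_j$ and summing over $j$ should produce a function in $H^2_\rho(M,\partial M)$ whose boundary norm controls $\int_M|F|^2\tilde\rho$, yielding the stated inequality after taking suprema. The role of $\psi=\max_j\{2p_jG_{D_j}\}$ and the constraint $\sum 1/p_j\le 1$ is to make the multi-variable sublevel sets $\{\psi<-t\}$ behave like products, so that $c(-\psi)$ disintegrates correctly across the factors.

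The equality analysis is where the bulk of the delicate work lies, and I expect it to be the hardest part. Equality in the summed fiberwise inequalities forces equality in each one-dimensional weighted Saitoh inequality simultaneously and on a full-measure set of fibers; by the rigidity part of Theorem \ref{thm:saitoh-1d} (the concavity-degeneration analysis), equality in the one-variable problem on $D_j$ is equivalent to a precise decomposition of $\varphi_j$ as $2\log|g_j|+2u_j$ together with the character-matching condition $\chi_{j,z_j}=\chi_{j,-u_j}$, which is exactly conditions $(2)$ and $(3)$. The condition $(1)$, namely $\sum_j 1/p_j=1$, should emerge from requiring that the product sublevel-set geometry is not strictly sub-optimal: if $\sum 1/p_j<1$, the disintegration of $c(-\psi)$ loses mass relative to the product of one-dimensional integrals, producing a strict inequality. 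The main obstacle will therefore be making the fiberwise-to-global equality transfer rigorous — showing that equality globally implies equality for almost every fiber and in every coordinate direction, and that the one-dimensional rigidity characterizations for the $n$ directions are mutually compatible and assemble into the stated product form. I would handle this by exploiting the analyticity (hence pointwise a.e.-to-everywhere propagation) of the minimizers and the continuity of the weight data on the analytic-Jordan-curve boundaries.
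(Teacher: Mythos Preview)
Your plan for the inequality has a genuine gap. You propose to apply the one-variable weighted Saitoh result (Theorem~\ref{thm:saitoh-1d}) fiberwise in each coordinate direction and then sum over $j$. But the weight on the Bergman side is $\tilde\rho = c(-\psi)\prod_j e^{-\varphi_j}$ with $\psi=\max_j\{2p_jG_{D_j}\}$, and $c(-\psi)$ does \emph{not} factor across the product: restricting to a fiber $D_j\times\{\hat w_j\}$ gives the weight $c\bigl(-\max\{2p_jG_{D_j}(w_j,z_j),\text{const}\}\bigr)$, which is not the one-dimensional weight $c(-2p_jG_{D_j})$ appearing in Theorem~\ref{thm:saitoh-1d}. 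Your claim that ``$\psi$ restricted to one variable is $2p_jG_{D_j}$'' is false except near the boundary face, and the assertion that $c(-\psi)$ ``disintegrates correctly across the factors'' is precisely what fails for a maximum. So the fiberwise one-dimensional inequality you want to integrate does not exist in the form you need.

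The paper's route is different in kind. The inequality (Proposition~\ref{p:inequality}) is proved directly on $M$ as an $n$-dimensional Stein manifold, using the concavity of the minimal $L^2$ integral $G(t)=\inf\{\int_{\{\psi<-t\}}|\tilde f|^2\tilde\rho:\tilde f(z_0)=1\}$ from Theorem~\ref{thm:general_concave}. Concavity yields $\frac{\int_0^\infty c\,e^{-t}\,dt}{G(0)}\le\liminf_{t\to0^+}\frac{h(0)-h(t)}{G(0)-G(t)}$, and the right-hand side is bounded by $(\pi\|f\|^2_{\partial M,\rho})^{-1}$ via a coarea-formula argument near $\partial M$ (Proposition~\ref{p:b6}); this is where the specific structure of $\rho$ on each face $\partial D_j\times M_j$ enters. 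For the necessity of equality, the paper again works globally: equality forces $G(h^{-1}(r))$ to be linear, and the characterization of linearity on products of Riemann surfaces (Theorem~\ref{thm:linear-2d}) then delivers conditions $(1)$--$(3)$. The one-dimensional Theorem~\ref{thm:saitoh-1d} is invoked only in the \emph{sufficiency} step, to identify the explicit extremal function coordinatewise once the structure is already known. Your fiberwise-rigidity plan for necessity would run into the same obstacle as the inequality: without a valid fiberwise inequality to begin with, there is no fiberwise equality to analyze.
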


\begin{Remark}When the three statements in Theorem \ref{thm:main1-1} hold, we have 
	\label{r:product 1-1}
	$$K_{\partial M,\rho}(\cdot,\overline{z_0})=\left(\int_{0}^{+\infty}c(t)e^{-t}dt\right)\pi B_{\tilde\rho}(\cdot,\overline{z_0})=c_1\prod_{1\le j\le n}g_j ((P_j)_*(f_{z_j}))'(P_j)_*(f_{u_j}),$$
	where $c_1$ is a constant, $P_j$ is the universal covering from unit disc $\Delta$ to $ D_j$, $f_{u_j}$ is a holomorphic function on $\Delta$ such that $|f_{u_j}|=P_j^*(e^{u_j})$, and $f_{z_j}$ is a holomorphic function on $\Delta$ such that $|f_{z_j}|=P_j^*(e^{G_{D_j}(\cdot,z_j)})$ for any $1\le j\le n$.
\end{Remark}

Let $h_0$ be a holomorphic function on a neighborbood $V_0$ of $z_0$, and let $I$ be an ideal of $\mathcal{O}_{z_0}$ such that $\mathcal{I}(\psi)_{z_0}\subset I\not= \mathcal{O}_{z_0}$,  where $\mathcal{I}(\psi)$ is the multiplier ideal sheaf, which is the sheaf of germs of holomorphic functions $h$ such that $|h|^2e^{-\psi}$ is locally integrable.  Assume that $(h_0,z_0)\not\in I,$ and we denote that 
$$K_{\partial M,\rho}^{I,h_0}(z_0):=\frac{1}{\inf\left\{\|f\|_{\partial M,\rho}^2:f\in H^2_{\rho}(M,\partial M)\,\&\,(P_{\partial M}(f)-h_0,z_0)\in I\right\}}$$ 
and 
$$B_{\tilde \rho}^{I,h_0}(z_0):=\frac{1}{\inf\left\{\int_{M}|f|^2\tilde \rho:f\in\mathcal{O}(M) \,\&\,(f-h_0,z_0)\in I\right\}}.$$ 
When $I$ takes the maximal ideal of $\mathcal{O}_{z_0}$ and $h_0(z_0)=1$, we have $K_{\partial M,\rho}^{I,h_0}(z_0)=K_{\partial M,\rho}(z_0)$ (see Lemma \ref{l:sup-b}) and $B_{\tilde \rho}^{I,h_0}(z_0)=B_{\tilde\rho}(z_0)$ (see Section \ref{sec:B}). 
 There exists a unique $f_0\in H_{\rho}^{2}(M,\partial M)$ (by Lemma \ref{l:b7}) such that $(P_{\partial M}(f_0)-h_0,z_0)\in I$ and 
$$K_{\partial M,\rho}^{I,h_0}(z_0)=\frac{1}{\|f_0\|^2_{\partial M,\rho}}.$$
Let 
$E=\left\{(\alpha_1,\ldots,\alpha_n):\sum_{1\le j\le n}\frac{\alpha_j+1}{p_j}=1\,\&\,\alpha_j\in\mathbb{Z}_{\ge0}\right\}$.
 
 We give a generalization of Theorem \ref{thm:main1-1} as follows:

\begin{Theorem}
	\label{thm:main1-2}
	Assume that $B_{\tilde \rho}^{I,h_0}(z_0)>0$. Then
	\begin{equation}
		\label{eq:0831a}K_{\partial M,\rho}^{I,h_0}(z_0)\geq\left(\int_{0}^{+\infty}c(t)e^{-t}dt\right)\pi B_{\tilde \rho}^{I,h_0}(z_0)
	\end{equation}
holds, and the equality holds if and only if the following statements hold:
	
	$(1)$  $P_{\partial M}(f_0)=\sum_{\alpha\in E}d_{\alpha}w^{\alpha}+g_0$ near $z_0$ for any $t\ge0$, where  $d_{\alpha}\in\mathbb{C}$  such that $\sum_{\alpha\in E}|d_{\alpha}|\not=0$ and $g_0$ is a holomorphic function near $z_0$ such that $(g_0,z_0)\in\mathcal{I}(\psi)_{z_0}$;
	
	$(2)$ $\varphi_j=2\log|g_j|+2u_j$ on $D_j$ for any $1\le j\le n$, and $g_1\equiv1$ when $n=1$, where $u_j$ is a harmonic function on $D_j$ and $g_j$ is a holomorphic function on $\mathbb{C}$ such that $g_j(z_j)\not=0$;

    $(3)$ $\chi_{j,z_j}^{\alpha_j+1}=\chi_{j,-u_j}$ for any $j\in\{1,2,...,n\}$ and $\alpha\in E$ satisfying $d_{\alpha}\not=0$, where $\chi_{j,z_j}$ and $\chi_{j,-u_j}$ are the characters associated to functions $G_{\Omega_j}(\cdot,z_j)$ and $-u_j$ respectively.
\end{Theorem}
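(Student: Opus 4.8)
The plan is to pass from the two extremal problems defining the kernels to an inequality between minimal $L^2$ integrals, and then to exploit the concavity of such integrals. Writing $\varphi=\sum_{j}\varphi_j$ and taking reciprocals, inequality \eqref{eq:0831a} is equivalent to
$$\inf\left\{\int_M|f|^2\tilde\rho:(f-h_0,z_0)\in I\right\}\ge\left(\int_0^{+\infty}c(t)e^{-t}dt\right)\pi\inf\left\{\|g\|^2_{\partial M,\rho}:(P_{\partial M}(g)-h_0,z_0)\in I\right\}.$$
For $t\ge0$ I would introduce the minimal $L^2$ integral on the sublevel sets of $\psi$,
$$G(t):=\inf\left\{\int_{\{\psi<-t\}}|F|^2e^{-\varphi}:F\in\mathcal{O}(\{\psi<-t\})\ \&\ (F-h_0,z_0)\in I\right\},$$
so that $G$ is nonincreasing with $G(+\infty)=0$ (the sublevel sets shrink to $z_0$, and $\mathcal{I}(\psi)_{z_0}\subset I$ puts $h_0$ just above the integrability threshold of $e^{-\psi}$); the hypothesis $B_{\tilde\rho}^{I,h_0}(z_0)>0$ guarantees $G(0)<+\infty$ and the existence of minimizers (Lemma \ref{l:b7}). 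The case $I=$ maximal ideal, $h_0(z_0)=1$ recovers Theorem \ref{thm:main1-1}, so the argument is a jet-version of that proof.

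The reduced inequality compares the two ends of $G$. On the one hand, using the hypotheses that $c(t)e^{-t}$ is decreasing and integrable, the weight $\tilde\rho=c(-\psi)\prod_je^{-\varphi_j}$ together with the layer-cake formula for minimal integrals from \cite{GY-concavity} gives the Stieltjes representation
$$\inf\left\{\int_M|f|^2\tilde\rho:(f-h_0,z_0)\in I\right\}=\int_0^{+\infty}c(t)\left(-dG(t)\right),$$
and in the same place one obtains the concavity input: $r\mapsto G(-\log r)$ is concave on $(0,1]$ and vanishes at $0$. On the other hand, near the $j$-th face $\partial D_j\times M_j$ the defining function satisfies $\psi\approx 2p_jG_{D_j}(\cdot,z_j)$, so the co-area formula applied to the shell $\{-t\le\psi<0\}$, using that $\partial G_{D_j}/\partial v$ is positive and continuous on $\partial D_j$ together with the explicit form of $\rho$ and of $d\mu$, yields
$$-G'(0)=\lim_{t\to0+}\frac{G(0)-G(t)}{t}=\pi\,\inf\left\{\|g\|^2_{\partial M,\rho}:(P_{\partial M}(g)-h_0,z_0)\in I\right\}.$$
This last identification is exactly where the special factor $\frac1{p_j}(\partial G_{D_j}/\partial v_{w_j})^{-1}$ built into $\rho$ is used.

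With these two identities the inequality becomes a one-variable estimate. Substituting $r=e^{-t}$ and writing $\tilde G(r)=G(-\log r)$, I would rewrite the two relevant integrals as
$$\int_0^{+\infty}c(t)(-dG(t))=\int_0^1 c(-\log r)\,\tilde G'(r)\,dr,\qquad \int_0^{+\infty}c(t)e^{-t}dt=\int_0^1 c(-\log r)\,dr.$$
Since $\tilde G$ is concave with $\tilde G(0)=0$, its derivative is nonincreasing, whence $\tilde G'(r)\ge\tilde G'(1)=-G'(0)$ for $r\in(0,1]$; as $c>0$ this gives
$$\int_0^1 c(-\log r)\,\tilde G'(r)\,dr\ge(-G'(0))\int_0^1 c(-\log r)\,dr,$$
which is precisely the reduced inequality, and hence \eqref{eq:0831a}.

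For the equality statement I would track the inequalities above: equality forces $\tilde G'(r)=\tilde G'(1)$ for all $r\in(0,1]$, i.e. $\tilde G$ is linear, equivalently $G(t)=G(0)e^{-t}$ is a pure exponential. The hard part is converting this degeneracy into conditions $(1)$–$(3)$, and this is the main obstacle. I would treat it as the equality case of the concavity theorem on a product of planar domains, analyzed through the universal coverings $P_j:\Delta\to D_j$ and the multiplicative functions $f_{z_j},f_{u_j}$: the exponential profile can occur only when the total mass condition $\sum_j 1/p_j=1$ holds, which is what singles out the multi-degrees in $E$ and forces $P_{\partial M}(f_0)$ to have leading part a combination of the $w^\alpha$, $\alpha\in E$, modulo $\mathcal{I}(\psi)_{z_0}$, giving $(1)$; matching $e^{-\varphi_j}$ to the squared modulus of the extremal multiplicative function (as in Remark \ref{r:product 1-1}) forces $\varphi_j=2\log|g_j|+2u_j$ with $g_j(z_j)\ne0$, giving $(2)$; and single-valuedness of the resulting extremal function on each $D_j$ forces the character identities $\chi_{j,z_j}^{\alpha_j+1}=\chi_{j,-u_j}$, giving $(3)$. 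The delicate point is carrying this character bookkeeping simultaneously through the product structure and through the ideal $I$ (rather than a single point evaluation), and then checking that the function produced from $(1)$–$(3)$ indeed saturates every inequality used above, so that the three conditions are sufficient as well as necessary.
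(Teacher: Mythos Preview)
Your overall architecture is right: reduce to minimal $L^2$ integrals on sublevel sets of $\psi$, use concavity, and connect to the boundary via the coarea formula. But two of your claimed \emph{identities} are in fact only \emph{inequalities}, and this is where the argument as written breaks.

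First, the ``Stieltjes representation''
\[
\inf\Bigl\{\int_M|f|^2\tilde\rho:(f-h_0,z_0)\in I\Bigr\}=\int_0^{+\infty}c(t)\bigl(-dG(t)\bigr)
\]
is not true in general. The infimum on the left is taken over a single $f$ on all of $M$, while $G(t)$ may be realised by a different minimiser on each sublevel set; the layer--cake identity you invoke only holds for a \emph{fixed} function, and commuting the infimum inside destroys the equality. The paper avoids this entirely by defining $G(t)$ with the full weight $\tilde\rho=c(-\psi)e^{-\varphi}$ from the start, so that $G(0)=1/B_{\tilde\rho}^{I,h_0}(z_0)$ directly, and then using concavity of $G\bigl(h^{-1}(r)\bigr)$ with $h(t)=\int_t^{+\infty}c(l)e^{-l}\,dl$ (Theorem~\ref{thm:general_concave}) rather than of $G(-\log r)$.

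Second, the claimed identity $-G'(0)=\pi\inf\{\|g\|^2_{\partial M,\rho}\}$ is also not established; Proposition~\ref{p:b6} gives only the one--sided estimate
\[
\pi\|f\|^2_{\partial M,\rho}\le\liminf_{r\to1-0}\frac{1}{1-r}\int_{\{\psi\ge\log r\}}|F_0|^2\hat\rho,
\]
where $F_0$ is a minimiser for $G(0)$ and $f^*=F_0$. The paper then chains the inequalities (see Proposition~\ref{p:inequality}):
\[
\frac{\int_0^{+\infty}c(t)e^{-t}\,dt}{G(0)}\;\le\;\liminf_{t\to0+}\frac{h(0)-h(t)}{G(0)-G(t)}\;\le\;\liminf_{r\to1-0}\frac{1-r}{\int_{\{\psi\ge\log r\}}|F_0|^2\hat\rho}\;\le\;\frac{1}{\pi\|f\|^2_{\partial M,\rho}}\;\le\;\frac{K^{I,h_0}_{\partial M,\rho}(z_0)}{\pi},
\]
the first by concavity, the second because $G(t)\le\int_{\{\psi<-t\}}|F_0|^2\tilde\rho$, the third by Proposition~\ref{p:b6}. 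This is the correct replacement for your two identities.

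For the equality characterisation: your reduction to linearity of $G$ is correct, but the condition you extract (``$\sum_j 1/p_j=1$'') is the one for Theorem~\ref{thm:main1-1}, not Theorem~\ref{thm:main1-2}; here the relevant set is $E=\{\alpha:\sum_j(\alpha_j+1)/p_j=1\}$, and linearity (via Theorem~\ref{thm:linear-2d} and Remark~\ref{r:var=-infty}, after passing from $I$ to $\mathcal I(\psi)_{z_0}$) forces the leading part of $P_{\partial M}(f_0)$ to lie in the span of $\{w^\alpha:\alpha\in E\}$. The sufficiency direction is substantially more delicate than your sketch suggests: after reducing to $\varphi_j$ harmonic, the paper needs the explicit orthogonality Lemma~\ref{l:b9} (which propagates one--dimensional extremal orthogonality through the product and the ideal $\mathcal I(\psi)_{z_0}$) to show that the candidate $F$ built from the $(P_j)_*(f_{u_j}f_{z_j}^{\alpha_j})$ actually realises $1/K^{\mathcal I(\psi)_{z_0},f_0^*}_{\partial M,\rho}(z_0)$, and a separate coarea computation using $|F|\in C(\overline M)$ to turn the one--sided Proposition~\ref{p:b6} into an equality at the boundary. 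Your sketch does not supply these ingredients.
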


When $I=\mathcal{I}(\psi)_{z_0}$, statement $(1)$ in Theorem \ref{thm:main1-2} is equivalent to $(h_0-\sum_{\alpha\in E}d_{\alpha}w^{\alpha},z_0)\in\mathcal{I}(\psi)_{z_0}$, where $d_{\alpha}\in\mathbb{C}$ such that $\sum_{\alpha\in E}|d_{\alpha}|\not=0$.

Let $I$ be the maximal ideal of $\mathcal{O}_{z_0}$ and $h_0(z_0)=1$, then Theorem \ref{thm:main1-2} is Theorem \ref{thm:main1-1}  when $n>1$ and Theorem \ref{thm:main1-2} can be referred to \cite{GY-weightedsaitoh} (see also Theorem \ref{thm:saitoh-1d}) when $n=1$.

\begin{Remark}\label{product 1-2}When the three statements in Theorem \ref{thm:main1-2} hold, denote that 
	$$F=\sum_{\alpha\in E}\tilde d_{\alpha}\prod_{1\le j\le n}g_j ((P_j)_*(f_{z_j}))'(P_j)_*(f_{u_j}f_{z_j}^{\alpha_j}),$$
	where $\tilde d_{\alpha}$ is a constant for any $\alpha\in E$ such that $(F-h_0,z_0)\in \mathcal{I}(\psi)_{z_0}$, $P_j$ is the universal covering from unit disc $\Delta$ to $ D_j$, $f_{u_j}$ is a holomorphic function on $\Delta$ such that $|f_{u_j}|=P_j^*(e^{u_j})$, and $f_{z_j}$ is a holomorphic function on $\Delta$ such that $|f_{z_j}|=P_j^*(e^{G_{D_j}(\cdot,z_j)})$ for any $1\le j\le n$.
	Then we have
	$$\frac{1}{B_{\tilde \rho}^{I,h_0}(z_0)}=\|F\|_{M,\tilde\rho}^2,$$
	and
	 there is $f\in H^2_{\rho}(M,\partial M)$ such that $P_{\partial M}(f)=F$ and 
	$$\frac{1}{K_{\partial M,\rho}^{I,h_0}(z_0)}=\|f\|^2_{\partial M,\rho}.$$
\end{Remark}

\begin{Remark}
	Let $\hat\rho$ be any Lebesgue measurable function on $\overline M$, which satisfies that $\inf_{\overline{M}}\hat\rho>0$, $-\log\hat\rho$ is plurisubharmonic on $M$ and $\hat\rho(w_j,\hat w_j)\leq \liminf_{w\rightarrow w_j}\hat\rho(w,\hat w_j)$ for any $(w_j,\hat w_j)\in \partial D_j\times M_j\subset \partial M$ and any $1\le j\le n$, where $M_j=\prod_{l\not=j}D_l$. Let $\rho(w_1,\ldots,w_n)=\frac{1}{p_j}\left(\frac{\partial G_{D_j}(w_j,z_j)}{\partial v_{w_j}}\right)^{-1}\hat\rho$
on $\partial D_j\times {M_j}$, and let $\tilde \rho=c(-\psi)\hat\rho$
on $M$. Inequality \eqref{eq:0831a} in Theorem \ref{thm:main1-2} also holds for this case (see Proposition \ref{p:inequality}). 
\end{Remark}

\subsection{Hardy space $H^2_{\lambda}(M,S)$ over $S$}

Let $D_j$ be a planar regular region with finite boundary components which are analytic Jordan curves   for any $1\le j\le n$. Let $M=\prod_{1\le j\le n}D_j$ be a bounded domain in $\mathbb{C}^n$.  

Denote that $S:=\prod_{1\le j\le n}\partial D_j$. Let $\lambda$ be a Lebesgue measurable function on $S$ such that $\inf_{S}\lambda>0$.
\begin{Definition}\label{def2}
	Let $f\in L^2(S,\lambda d\sigma)$, where $d\sigma:=\frac{1}{(2\pi)^n}|dw_1|\ldots|dw_n|$. We call $f\in H^2_{\lambda}(M,S)$ if there exists $\{f_m\}_{m\in\mathbb{Z}_{\ge0}}\subset\mathcal{O}(M)\cap C(\overline M)\cap L^2(S,\lambda d\sigma)$ such that $\lim_{m\rightarrow+\infty}\|f_m-f\|_{S,\lambda}^2=0$, where $\|g\|_{S,\lambda}:=\left(\int_{S}|g|^2\lambda d\sigma\right)^{\frac{1}{2}}$ for any $g\in L^2(S,\lambda d\sigma)$.
\end{Definition}
We give some properties about $H^2_{\lambda}(M,S)$ in Section \ref{sec:S}. Denote that 
$$\ll f,g\gg_{S,\lambda}=\frac{1}{(2\pi)^n}\int_S f\overline g\lambda |dw_1|\ldots|dw_n|$$
for any $f,g\in L^2(S,\lambda d\sigma),$
then $H_{\lambda}^2(M,S)$ is a Hilbert space equipped with the inner product $\ll \cdot,\cdot\gg_{S,\lambda}$. There exists a  linear injective map $P_S:H^2_{\lambda}(M,S)\rightarrow\mathcal{O}(M)$ satisfying that $P_S(f)=f$ for any $f\in\mathcal{O}(M)\cap C(\overline M)\cap L^2(S,\lambda d\sigma)$ (see Lemma \ref{l:a2} and Lemma \ref{l:a2-injective}). 

We define a kernel function $K_{S,\lambda}(z,\overline w)$ as follows: 
$$K_{S,\lambda}(z,\overline w):=\sum_{m=1}^{+\infty}P_{S}(e_m)(z)\overline{P_{S}(e_m)(w)}$$
for $(z,w)\in M\times M\subset \mathbb{C}^{2n}$, where $\{e_m\}_{m\in\mathbb{Z}_{\ge1}}$ is a complete orthonormal basis of $H^2_{\lambda}(M,S)$. The definition of $K_{S,\lambda}(z,\overline w)$ is independent of the choices of $\{e_m\}_{m\in\mathbb{Z}_{\ge1}}$ (see Lemma \ref{l:a5}). Denote that $K_{S,\lambda}(z):=K_{S,\lambda}(z,\overline z)$.

Let $z_0=(z_1,\ldots,z_n)\in M,$ and let  
$$\psi(w_1,\ldots,w_n)=\max_{1\le j\le n}\{2p_jG_{D_j}(w_j,z_j)\}$$ be a plurisubharmonic function on $M$, where $G_{D_j}(\cdot,z_j)$ is the Green function on $D_j$ and $p_j>0$ is a constant for any $1\le j\le n$.

Let $\varphi_j$ be a subharmonic function on $D_j$, which satisfies that  $\varphi_j$ is continuous at $z_j$ for any $z_j\in \partial D_j$.  Let $\rho$ be a Lebesgue measurable function on $\partial M$ such that 
$$\rho(w_1,\ldots,w_n)=\frac{1}{p_j}\left(\frac{\partial G_{D_j}(w_j,z_j)}{\partial v_{w_j}}\right)^{-1}\times\prod_{1\le l\le n}e^{-\varphi_l(w_l)}$$
on $\partial D_j\times {M_j}$, thus we have $\inf_{\partial M}\rho>0$.
 Let 
$$\lambda (w_1,\ldots,w_n)=\prod_{1\le j\le n}\left(\frac{\partial G_{D_j}(w_j,z_j)}{\partial v_{w_j}}\right)^{-1}e^{-\varphi_j(w_j)}$$
on $S=\prod_{1\le j\le n}\partial D_j$, thus $\lambda$ is continuous on $S$. Let $c$ be a positive function on $[0,+\infty)$, which satisfies that $c(t)e^{-t}$ is decreasing on $[0,+\infty)$, $\lim_{t\rightarrow0+0}c(t)=c(0)=1$ and $\int_{0}^{+\infty}c(t)e^{-t}dt<+\infty$.
Denote that $$ \tilde\rho=c(-\psi)\prod_{1\le j\le n}e^{-\varphi_j}$$
on $M$.

When $n=1$, by definitions we know $K_{S,\lambda}(z_0)=\frac{1}{p_1}K_{\partial M,\rho}(z_0)$.
 We give a relation between $K_{S,\lambda}(z_0)$ and $K_{\partial M,\rho}(z_0)$ when $n>1$. 
\begin{Theorem}
	\label{thm:main2-1}
	Assume that $n>1$ and $K_{\partial M,\rho}(z_0)>0$. Then 
	\begin{equation}
		\label{eq:0831b}K_{S,\lambda}(z_0)\geq\left(\sum_{1\le j\le n}\frac{1}{p_j}\right)\pi^{n-1} K_{\partial M,\rho}(z_0)
	\end{equation}
	holds, and equality holds if and only if 
		the following statements hold:
	
	$(1)$ $\varphi_j$ is harmonic  on $D_j$ for any $1\le j\le n$;
	
	$(2)$ $\chi_{j,z_j}=\chi_{j,-\frac{\varphi_j}{2}}$, where $\chi_{j,-\frac{\varphi_j}{2}}$ and $\chi_{j,z_j}$ are the  characters associated to the functions $-\frac{\varphi_j}{2}$ and $G_{D_j}(\cdot,z_j)$ respectively.
\end{Theorem}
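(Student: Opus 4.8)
The plan is to combine the extremal characterizations of the two kernels with the one–dimensional weighted Saitoh's conjecture (Theorem~\ref{thm:saitoh-1d}) applied in each factor. For $1\le l\le n$ write $\lambda_l:=\left(\frac{\partial G_{D_l}(w_l,z_l)}{\partial v_{w_l}}\right)^{-1}e^{-\varphi_l}$ for the induced weight on $\partial D_l$, let $K_{\lambda_l}(\cdot,\overline{z_l})$ be the one–variable weighted Szeg\"o kernel of $D_l$ and let $B_{\varphi_l}(\cdot,\overline{z_l})$ be the Bergman kernel of $D_l$ with weight $e^{-\varphi_l}$. Since $S=\prod_l\partial D_l$, $\lambda=\prod_l\lambda_l$ and $d\sigma=\prod_l\frac{1}{2\pi}|dw_l|$ are products, $H^2_\lambda(M,S)$ is the Hilbert tensor product of the spaces $H^2_{\lambda_l}(D_l)$, so that $K_{S,\lambda}(z_0)=\prod_{l=1}^nK_{\lambda_l}(z_l,\overline{z_l})$; in particular $\frac{1}{K_{S,\lambda}(z_0)}=\prod_{l=1}^n\frac{1}{K_{\lambda_l}(z_l,\overline{z_l})}$. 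By Lemma~\ref{l:sup-b} we have $\frac{1}{K_{\partial M,\rho}(z_0)}=\inf\{\|f\|_{\partial M,\rho}^2:f\in H^2_\rho(M,\partial M),\,P_{\partial M}(f)(z_0)=1\}$, so \eqref{eq:0831b} is equivalent to the per–function estimate $\|f\|_{\partial M,\rho}^2\ge\left(\sum_l\frac{1}{p_l}\right)\pi^{\,n-1}\prod_l\frac{1}{K_{\lambda_l}(z_l,\overline{z_l})}$ for every such $f$.

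To prove this, write $F=P_{\partial M}(f)$ and, recalling $\rho=\frac{1}{p_j}\lambda_j(w_j)\prod_{l\ne j}e^{-\varphi_l(w_l)}$ on $\partial D_j\times M_j$, split $\|f\|_{\partial M,\rho}^2=\sum_{j=1}^n\frac{1}{p_j}N_j(F)$, where $N_j(F)=\frac{1}{2\pi}\int_{M_j}\int_{\partial D_j}|F|^2\lambda_j\prod_{l\ne j}e^{-\varphi_l}\,|dw_j|\,d\mu_j$. I will show $N_j(F)\ge\pi^{\,n-1}\prod_l\frac{1}{K_{\lambda_l}(z_l,\overline{z_l})}$ for each $j$. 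By Definition~\ref{def:1} the slice $F(\cdot,\hat w_j)$ lies in $H^2(D_j)$ for every $\hat w_j\in M_j$ with boundary value $\gamma_j(F)(\cdot,\hat w_j)$, so the reproducing property of $K_{\lambda_j}$ gives, for a.e.\ $\hat w_j$, $\frac{1}{2\pi}\int_{\partial D_j}|F(w_j,\hat w_j)|^2\lambda_j\,|dw_j|\ge\frac{|F(z_j,\hat w_j)|^2}{K_{\lambda_j}(z_j,\overline{z_j})}$. Integrating over $M_j$ (Tonelli, the integrand being nonnegative) and applying the Bergman reproducing inequality on $M_j=\prod_{l\ne j}D_l$, whose weighted Bergman kernel factors as $\prod_{l\ne j}B_{\varphi_l}(z_l,\overline{z_l})$, yields $N_j(F)\ge\frac{1}{K_{\lambda_j}(z_j,\overline{z_j})}\cdot\frac{|F(z_0)|^2}{\prod_{l\ne j}B_{\varphi_l}(z_l,\overline{z_l})}$. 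Finally the one–dimensional weighted Saitoh's conjecture (Theorem~\ref{thm:saitoh-1d}) gives $K_{\lambda_l}(z_l,\overline{z_l})\ge\pi B_{\varphi_l}(z_l,\overline{z_l})$ for each $l\ne j$; since $F(z_0)=1$ this produces $N_j(F)\ge\pi^{\,n-1}\prod_l\frac{1}{K_{\lambda_l}(z_l,\overline{z_l})}$. Summing $\frac{1}{p_j}N_j(F)$ over $j$ gives the required estimate, hence \eqref{eq:0831b}.

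For the equality case, since every $\frac{1}{p_j}>0$, equality in \eqref{eq:0831b} forces $N_j(F_0)=\pi^{\,n-1}\prod_l\frac{1}{K_{\lambda_l}(z_l,\overline{z_l})}$ for the extremal $f_0$ and every $j$. Equality in the last step requires $K_{\lambda_l}(z_l,\overline{z_l})=\pi B_{\varphi_l}(z_l,\overline{z_l})$ for every $l\ne j$, and letting $j$ range over $\{1,\dots,n\}$ (possible since $n>1$) this is needed for all $l$. By the equality part of Theorem~\ref{thm:saitoh-1d} this holds exactly when each $\varphi_l$ is harmonic on $D_l$ and $\chi_{l,z_l}=\chi_{l,-\varphi_l/2}$, which are statements $(1)$ and $(2)$. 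Conversely, under $(1)$ and $(2)$ the one–variable Szeg\"o and ($\pi$–scaled) Bergman minimal functions coincide; taking $F_0$ to be the product of these common minimal functions realizes equality in the slice–wise Szeg\"o bound, in the Bergman bound on each $M_j$, and in Saitoh's inequality simultaneously, so $\|f_0\|_{\partial M,\rho}^2=\left(\sum_l\frac{1}{p_l}\right)\pi^{\,n-1}\prod_l\frac{1}{K_{\lambda_l}(z_l,\overline{z_l})}$, giving equality in \eqref{eq:0831b}.

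The main obstacle is the equality analysis rather than the inequality itself: a priori the extremal $f_0$ need not correspond to a product function, so one must argue that simultaneous equality in the slice–wise Szeg\"o bound and in the Bergman reproducing bound on each $M_j$ forces $P_{\partial M}(f_0)$ to be, almost everywhere, the product of the one–variable minimal functions, and then match this rigidity with the equality statement of Theorem~\ref{thm:saitoh-1d}. The analytic points to verify along the way—that the slices $F(\cdot,\hat w_j)$ are genuinely in $H^2(D_j)$ with the correct boundary values (Definition~\ref{def:1}), that Tonelli applies to the nonnegative integrands, and that $K_{S,\lambda}(z_0)=\prod_lK_{\lambda_l}(z_l,\overline{z_l})$ via the tensor–product structure of $H^2_\lambda(M,S)$—are routine but should be recorded explicitly.
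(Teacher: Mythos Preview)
Your proposal is correct and follows essentially the same route as the paper. The paper packages the bound on each $N_j(F)$ via an orthogonality lemma (Lemma~\ref{l:b9}, which shows the product of the one-variable minimal functions is orthogonal to the space $\{g:(g^*,z_0)\in\mathcal I(\psi)_{z_0}\}$ and hence minimizes $\|\cdot\|_{\partial D_j\times M_j,p_j\rho}$), whereas you use the slice-wise reproducing inequality directly; these are two phrasings of the same Hilbert space fact, and both lead to $\|f\|^2_{\partial D_j\times M_j,p_j\rho}\ge\frac{1}{K_{\lambda_j}(z_j)\,B_{M_j,\hat\rho_j}(\hat z_j)}$. The product formula $K_{S,\lambda}(z_0)=\prod_lK_{\lambda_l}(z_l)$ is exactly Proposition~\ref{p:8}, and the Bergman product is Lemma~\ref{l:Bergman-prod}, so the ``routine'' points you flag are indeed recorded in the paper.

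One remark on your concluding worry: for the \emph{statement} of the theorem you do not need to prove that the extremal $f_0$ is a product. Your necessity argument already works as written---equality in the summed estimate and strict positivity of each $1/p_j$ force $N_j(F_0)=\pi^{n-1}\prod_l K_{\lambda_l}(z_l)^{-1}$ for every $j$, hence equality in the Saitoh step for each $l\ne j$, and varying $j$ (using $n>1$) yields $K_{\lambda_l}(z_l)=\pi B_{\varphi_l}(z_l)$ for all $l$; Theorem~\ref{thm:saitoh-1d} then gives $(1)$ and $(2)$. For sufficiency you only need a \emph{competitor} achieving the bound, and the product of the one-variable extremal functions (which, under $(1)$ and $(2)$, lies in $H^2_\rho(M,\partial M)$ since each factor is continuous up to the boundary by Remark~\ref{rem:function}) does this. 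The identification of $P_{\partial M}(f_0)$ with that product is a by-product (Remark~\ref{r:product 1-1}) rather than an input.
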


\begin{Remark}
	For any $1\le j\le n$, let $\varphi_j$ be any upper semicontinuous function on $\overline{D_j}$, which satisfies that $\sup_{\overline{D_j}}\varphi_j<+\infty$ and $\varphi_j$ is subharmonic on $D_j$. Then inequality \eqref{eq:0831b} also holds for this case (see Proposition \ref{p:inequality2}). 
\end{Remark}

Using Theorem \ref{thm:main1-2} and Theorem \ref{thm:main2-1}, we  obtain the following product version of Saitoh's conjecture.

\begin{Corollary}
	\label{thm:main2-2}Assume that $B_{\tilde\rho}(z_0)>0$ and  $\sum_{1\le j\le n}\frac{1}{p_j}\le 1$. Then 
	$$K_{S,\lambda}(z_0)\prod_{1\le j\le n}(\tilde\beta_j+1)\geq\left(\sum_{1\le j\le n}\frac{1}{p_j}\right)\left(\int_0^{+\infty}c(t)e^{-t}dt\right) \pi^n B_{\tilde\rho}(z_0)$$
	 holds, and the equality holds if and only if the following statements hold:
	
	$(1)$ $\sum_{1\le j\le n}\frac{1}{p_j}=1$;
		
	$(2)$ $\varphi_j$ is harmonic  on $D_j$ for any $1\le j\le n$.
	
	$(3)$ $\chi_{j,z_j}=\chi_{j,-\frac{\varphi_j}{2}}$, where $\chi_{j,-\frac{\varphi_j}{2}}$ and $\chi_{j,z_j}$ are the  characters associated to the functions $-\frac{\varphi_j}{2}$ and $G_{D_j}(\cdot,z_j)$ respectively.
\end{Corollary}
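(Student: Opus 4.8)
The plan is to prove the inequality by chaining Theorem~\ref{thm:main2-1} with the $n>1$ case of Theorem~\ref{thm:main1-2} (namely Theorem~\ref{thm:main1-1}), and then to obtain the equality case by intersecting the two equality characterizations. The factor $\prod_{1\le j\le n}(\tilde\beta_j+1)$ on the left satisfies $\prod_{1\le j\le n}(\tilde\beta_j+1)\ge1$, so it only weakens the inequality; it therefore suffices to prove the cleaner bound $K_{S,\lambda}(z_0)\ge\left(\sum_{j}\frac1{p_j}\right)\left(\int_0^{+\infty}c(t)e^{-t}dt\right)\pi^nB_{\tilde\rho}(z_0)$ and to track when the factor equals $1$ in the equality discussion.

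First I would settle the hypotheses. Because $\sum_{1\le j\le n}\frac1{p_j}\le1$ and $B_{\tilde\rho}(z_0)>0$, Theorem~\ref{thm:main1-1} (equivalently, the maximal-ideal, $h_0\equiv1$ instance of Theorem~\ref{thm:main1-2}) applies and gives
$$K_{\partial M,\rho}(z_0)\ge\left(\int_0^{+\infty}c(t)e^{-t}dt\right)\pi B_{\tilde\rho}(z_0)>0.$$
In particular $K_{\partial M,\rho}(z_0)>0$, which is exactly the positivity hypothesis required to invoke Theorem~\ref{thm:main2-1}. If $n=1$ there is nothing to chain: the identity $K_{S,\lambda}(z_0)=\frac1{p_1}K_{\partial M,\rho}(z_0)$ recorded before Theorem~\ref{thm:main2-1} reduces the statement to the one-dimensional weighted Saitoh conjecture of \cite{GY-weightedsaitoh}. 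Assuming $n>1$, Theorem~\ref{thm:main2-1} yields $K_{S,\lambda}(z_0)\ge\left(\sum_{j}\frac1{p_j}\right)\pi^{n-1}K_{\partial M,\rho}(z_0)$, and substituting the previous bound produces the desired inequality.

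For the equality case, equality in the Corollary forces simultaneously $\prod_{1\le j\le n}(\tilde\beta_j+1)=1$ and equality in both chained estimates, since each factor is $\ge1$ and the terms are positive. I would then impose together the equality conditions of Theorem~\ref{thm:main2-1} (that $\varphi_j$ be harmonic and $\chi_{j,z_j}=\chi_{j,-\varphi_j/2}$) and those of Theorem~\ref{thm:main1-1} ($\sum_j\frac1{p_j}=1$, $\varphi_j=2\log|g_j|+2u_j$, and $\chi_{j,z_j}=\chi_{j,-u_j}$). The reconciliation is the core point: once $\varphi_j$ is harmonic, $\log|g_j|$ is forced to be harmonic on $D_j$, so $g_j$ is zero-free there and $-\varphi_j/2$ differs from $-u_j$ by $\log|g_j|$, the modulus of a single-valued holomorphic function; hence $\chi_{j,-\varphi_j/2}=\chi_{j,-u_j}$ and the two character conditions coincide. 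Conversely, the three conditions listed in the Corollary realize both theorems' equality clauses at once (take $g_j\equiv1$ and $u_j=\varphi_j/2$), and in this extremal situation $\tilde\beta_j=0$, so $\prod_{1\le j\le n}(\tilde\beta_j+1)=1$ holds automatically. This identifies the stated conditions $(1)$--$(3)$ with equality.

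The hard part will be precisely this equality analysis, not the inequality, which is a direct substitution. The delicate steps are verifying that a single extremal configuration attains equality in Theorem~\ref{thm:main2-1} and in Theorem~\ref{thm:main1-1} simultaneously, and showing that the a priori different weight and character data of the two theorems actually coincide; the key mechanism is that harmonicity of $\varphi_j$ collapses the factorization $\varphi_j=2\log|g_j|+2u_j$ with possibly vanishing $g_j$ down to the zero-free case, after which both character conditions reduce to $\chi_{j,z_j}=\chi_{j,-\varphi_j/2}$. A secondary technical point is confirming that $\tilde\beta_j=0$ in the extremal case, so that the product factor equals $1$.
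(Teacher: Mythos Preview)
Your approach is exactly the one the paper intends: the text just before the corollary says it is obtained by combining Theorem~\ref{thm:main1-2} (in its maximal-ideal form, Theorem~\ref{thm:main1-1}) with Theorem~\ref{thm:main2-1}, and your chaining argument is the natural way to do this; the equality discussion you outline is also correct. One remark: the factor $\prod_j(\tilde\beta_j+1)$ in the stated corollary is a stray artifact (the $\tilde\beta_j$ are only introduced afterwards, for Theorem~\ref{thm:main2-3}); in the point-evaluation setting of this corollary one should simply read $\tilde\beta_j=0$, so there is nothing to verify there, and your ``reconciliation'' of the two character conditions is likewise unnecessary---Theorem~\ref{thm:main2-1}'s equality conditions already \emph{are} conditions~(2) and~(3), and Theorem~\ref{thm:main1-1} supplies~(1).
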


When $n=1$, Corollary \ref{thm:main2-2} is a weighted version of Saitoh's conjecture, which can be referred to \cite{GY-weightedsaitoh} (see also Theorem \ref{thm:saitoh-1d}).

Let $h_j$ be a holomorphic function on a neighborhood of  $z_j$, and let $$h_0=\prod_{1\le j\le n}h_j.$$
 Denote that $\beta_j:=ord_{z_j}(h_j)$ and $\beta:=(\beta_1,\cdots,\beta_n)$. Let $\tilde\beta=(\tilde\beta_1,\ldots,\tilde\beta_n)\in\mathbb{Z}_{\ge0}^n$ satisfy $\tilde\beta_j\ge\beta_j$ for any $1\le j\le n$.
Let 
$$I=\left\{(g,z_0)\in\mathcal{O}_{z_0}:g=\sum_{\alpha\in\mathbb{Z}_{\ge0}^n}b_{\alpha}(w-z_0)^{\alpha}\text{ near  }z_0\text{ s.t. $b_{\alpha}=0$ for $\alpha\in L_{\tilde\beta}$}\right\},$$
where $L_{\tilde\beta}=\{\alpha\in\mathbb{Z}_{\ge0}^n:\alpha_j\le\tilde\beta_j$ for any $1\le j\le n\}$.
It is clear that $(h_0,z_0)\not\in I$. Denote that 
\begin{displaymath}
	K_{S,\lambda}^{I,h_0}(z_0):=\frac{1}{\inf\left\{\|f\|^2_{S,\lambda}:f\in H^2_{\lambda}(M,S)\,\&\,(P_{S}(f)-h_0,z_0)\in I\right\}}.
\end{displaymath}

\begin{Theorem}
\label{thm:main2-3}
	Assume that $n>1$ and $K^{I,h_0}_{\partial M,\rho}(z_0)>0$. Then 
	\begin{equation}
		\label{eq:0831c}\left(\prod_{1\le j\le n}(\tilde\beta_j+1)\right)K_{S,\lambda}^{I,h_0}(z_0)\geq\left(\sum_{1\le j\le n}\frac{\tilde\beta_j+1}{p_j}\right)\pi^{n-1} K^{I,h_0}_{\partial M,\rho}(z_0)
	\end{equation}
holds, and equality holds if and only if 
the following statements hold:

$(1)$ $\beta=\tilde\beta$;

$(2)$ $\varphi_j$ is harmonic  on $D_j$ for any $1\le j\le n$;

$(3)$ $\chi_{j,z_j}^{\tilde\beta_j+1}=\chi_{j,-\frac{\varphi_j}{2}}$, where $\chi_{j,-\frac{\varphi_j}{2}}$ and $\chi_{j,z_j}$ are the  characters associated to the functions $-\frac{\varphi_j}{2}$ and $G_{D_j}(\cdot,z_j)$ respectively.
\end{Theorem}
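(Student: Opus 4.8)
The plan is to reformulate both kernels through their extremal (minimal-norm) characterizations and then reduce the product estimate to the one–dimensional weighted Saitoh conjecture applied in each planar factor, refining the method behind Theorem \ref{thm:main2-1} so as to incorporate the jet. By Lemma \ref{l:b7} and Lemma \ref{l:sup-b} (and Lemma \ref{l:a2}, Lemma \ref{l:b0} for the maps $P_S,P_{\partial M}$), set $m_{\partial M}:=1/K^{I,h_0}_{\partial M,\rho}(z_0)$ and $m_{S}:=1/K^{I,h_0}_{S,\lambda}(z_0)$, the infima of $\|f\|^2_{\partial M,\rho}$, resp. $\|f\|^2_{S,\lambda}$, over competitors with $(P_{\partial M}(f)-h_0,z_0)\in I$, resp. $(P_S(f)-h_0,z_0)\in I$. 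A competitor is exactly a holomorphic $F$ whose Taylor coefficients at $z_0$ on the box $L_{\tilde\beta}=\prod_{j}\{0,1,\dots,\tilde\beta_j\}$ match those of $h_0$, and \eqref{eq:0831c} is equivalent to the scalar estimate
\[
m_{\partial M}\ \ge\ C'\,m_{S},\qquad C'=\frac{\big(\sum_{1\le j\le n}\tfrac{\tilde\beta_j+1}{p_j}\big)\,\pi^{\,n-1}}{\prod_{1\le j\le n}(\tilde\beta_j+1)} .
\]
When $\beta=\tilde\beta$ the only box coefficient of $h_0$ sits at the corner $\tilde\beta$, so the constraint becomes $c_\alpha(F)=\delta_{\alpha\tilde\beta}$ on $L_{\tilde\beta}$; this is the transparent case I would present first, the general jet profile being handled by the same bookkeeping over the product box.

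First I would tensorize the $S$–side. Since $\|\cdot\|_{S,\lambda}$ integrates over the full distinguished boundary $S=\prod_j\partial D_j$ and $\lambda=\prod_j(\partial G_{D_j}/\partial v_{w_j})^{-1}e^{-\varphi_j}$ factorizes, $H^2_\lambda(M,S)$ is the Hilbert tensor product of the one–variable weighted Hardy (Szeg\"o) spaces, and the box condition factorizes through the corner. An orthogonality computation then identifies the $S$–extremal with the product of the one–variable Szeg\"o extremals, giving $m_{S}=\prod_{j}s_{j}$, where $s_{j}$ is the minimal one–dimensional weighted Szeg\"o norm in $D_j$ under the order–$\tilde\beta_j$ jet condition at $z_j$.

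Next I would treat the $\partial M$–side fiberwise (taking $\beta=\tilde\beta$ for transparency). For any competitor $F=P_{\partial M}(f)$ write $\|F\|^2_{\partial M,\rho}=\sum_{j}\tfrac1{p_j}A_j(F)$, where $A_j(F)$ is the weighted Szeg\"o integral in $w_j$ integrated against the weighted area (Bergman) measure in the remaining variables $M_j=\prod_{l\ne j}D_l$. The heart of the argument is the fiberwise lower bound $A_j(F)\ge s_{j}\big(\prod_{l\ne j}b_{l}\big)\,|c_{\tilde\beta}(F)|^2$, where $b_{l}$ is the minimal one–dimensional weighted Bergman norm in $D_l$ under the order–$\tilde\beta_l$ jet condition: the bilinear form defining $A_j$ is the tensor product of the one–variable Szeg\"o form in $D_j$ with the one–variable Bergman forms in the $D_l$ ($l\ne j$), and the jet functional is the matching tensor of one–variable jet functionals, so the bound is the associated tensor-product reproducing–kernel bound. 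Feeding in the one–dimensional weighted Saitoh conjecture with higher–order jets, Theorem \ref{thm:saitoh-1d} and Corollary \ref{c:saitoh-higher jet}, which yields $b_{l}\ge\frac{\pi}{\tilde\beta_l+1}s_{l}$ in each factor, and using $m_S=\prod_l s_l$ and $c_{\tilde\beta}(F)=1$, I obtain $A_j(F)\ge\pi^{\,n-1}\big(\prod_{l\ne j}\tfrac1{\tilde\beta_l+1}\big)m_S$. Summing with weights $1/p_j$ and invoking the elementary identity $\sum_j\tfrac1{p_j}\prod_{l\ne j}\tfrac1{\tilde\beta_l+1}=\big(\prod_l(\tilde\beta_l+1)\big)^{-1}\sum_j\tfrac{\tilde\beta_j+1}{p_j}$ gives $\|F\|^2_{\partial M,\rho}\ge C'm_S$ for every competitor, hence $m_{\partial M}\ge C'm_S$.

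The hard part will be the fiberwise lower bound for $A_j$. The ideal $I$ imposes the jet condition jointly at $z_0$ over the whole box, so on a fixed slice a competitor need not satisfy a clean per-variable order condition, and the competitor class is not a tensor product of one-variable classes; consequently the naive tensor identity only supplies the unconstrained one-dimensional constants. I expect to repair this by solving the extremal problem for $A_j$ inside the competitor class and proving, via orthogonality in the mixed Szeg\"o$\,\otimes\,$Bergman inner product, that its minimizer is the product of the one-dimensional constrained extremals; this legitimizes the use of the constrained constants $s_l,b_l$ and thus the sharp factor $C'$. Finally, for the equality analysis, equality in \eqref{eq:0831c} forces all the inequalities above to be simultaneous equalities: tightness of the one-dimensional Saitoh inequality in every factor gives, by its equality characterization, that $\varphi_j$ is harmonic and $\chi_{j,z_j}^{\tilde\beta_j+1}=\chi_{j,-\frac{\varphi_j}{2}}$; tightness of the fiberwise bound forces $F$ to be a product of the one-variable extremals, and matching this product with $h_0$ on the box forces $\beta=\tilde\beta$. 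Conversely, under (1)–(3) the resulting product function realizes equality, completing the proof.
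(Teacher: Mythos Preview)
Your plan is essentially the paper's proof: Proposition \ref{p:inequality3} supplies the inequality via exactly the fiberwise decomposition and mixed Szeg\"o$\,\otimes\,$Bergman bound you describe (your ``hard part'' is its inequality \eqref{eq:0812d}, established through the basis expansion of Lemma \ref{l:prod-d1xm1} and Lemma \ref{l:a6}), and the equality characterization is then read off from the resulting chain of inequalities together with the one-dimensional case of Theorem \ref{thm:main1-2}. The only cosmetic difference is that the paper handles the general jet $\beta\le\tilde\beta$ directly rather than via your corner case, using the product structure $h_0=\prod_jh_j$ to write the intermediate fiberwise bound as $\|f\|^2_{\partial D_j\times M_j,p_j\rho}\ge 1/\big(K^{I_{\tilde\beta_j,z_j},h_j}_{\partial D_j,\lambda_j}(z_j)\cdot B^{I'_j,\prod_{l\ne j}h_l}_{M_j,\hat\rho_j}(\hat z_j)\big)$ (then factored by Lemma \ref{l:Bergman-prod2}), and it extracts $\beta=\tilde\beta$ from the one-dimensional equality characterization in each factor rather than from matching a product extremal with $h_0$.
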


\begin{Remark}
	For any $1\le j\le n$, let $\varphi_j$ be any upper semicontinuous function on $\overline{D_j}$, which satisfies that $\sup_{\overline{D_j}}\varphi_j<+\infty$ and $\varphi_j$ is subharmonic on $D_j$. Then inequality \eqref{eq:0831c} also holds for this case (see Proposition \ref{p:inequality3}). 
\end{Remark}

When $n=1$, by definitions we know that $K_{S,\lambda}^{I,h_0}(z_0)=\frac{1}{p_1}K_{\partial M,\rho}^{I,h_0}(z_0)$. Using
Theorem \ref{thm:main1-2} and Theorem \ref{thm:main2-3}, we obtain the following result.

\begin{Corollary}
	\label{thm:main2-4}
	Assume that  $B_{\tilde \rho}^{I,h_0}(z_0)>0$ and  $\sum_{1\le j\le n}\frac{\tilde\beta_j+1}{p_j}\le1$. Then 
	$$\left(\prod_{1\le j\le n}(\tilde\beta_j+1)\right)K_{S,\lambda}^{I,h_0}(z_0)\geq\left(\sum_{1\le j\le n}\frac{\tilde\beta_j+1}{p_j}\right) \left(\int_{0}^{+\infty}c(t)e^{-t}dt\right)\pi^{n} B_{\tilde \rho}^{I,h_0}(z_0)$$
	holds, and equality holds if and only if 
	the following statements hold:
	
	$(1)$ $\beta=\tilde\beta$ and $\sum_{1\le j\le n}\frac{\beta_j+1}{p_j}=1$;
	
	$(2)$ $\varphi_j$ is harmonic  on $D_j$ for any $1\le j\le n$;
	
	$(3)$ $\chi_{j,z_j}^{\tilde\beta_j+1}=\chi_{j,-\frac{\varphi_j}{2}}$, where $\chi_{j,-\frac{\varphi_j}{2}}$ and $\chi_{j,z_j}$ are the  characters associated to the functions $-\frac{\varphi_j}{2}$ and $G_{D_j}(\cdot,z_j)$ respectively.
\end{Corollary}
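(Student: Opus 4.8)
The plan is to read off Corollary \ref{thm:main2-4} as the composition of Theorem \ref{thm:main2-3} with Theorem \ref{thm:main1-2}. For $n>1$ I would first observe that the standing hypothesis $B_{\tilde\rho}^{I,h_0}(z_0)>0$ makes Theorem \ref{thm:main1-2} applicable and forces $K_{\partial M,\rho}^{I,h_0}(z_0)\ge\left(\int_0^{+\infty}c(t)e^{-t}\,dt\right)\pi B_{\tilde\rho}^{I,h_0}(z_0)>0$; this positivity is exactly the hypothesis needed to invoke Theorem \ref{thm:main2-3}. Chaining the two bounds then gives
\[
\Bigl(\prod_{1\le j\le n}(\tilde\beta_j+1)\Bigr)K_{S,\lambda}^{I,h_0}(z_0)\ge\Bigl(\sum_{1\le j\le n}\frac{\tilde\beta_j+1}{p_j}\Bigr)\pi^{n-1}K_{\partial M,\rho}^{I,h_0}(z_0)\ge\Bigl(\sum_{1\le j\le n}\frac{\tilde\beta_j+1}{p_j}\Bigr)\Bigl(\int_{0}^{+\infty}c(t)e^{-t}\,dt\Bigr)\pi^{n}B_{\tilde\rho}^{I,h_0}(z_0),
\]
which is the asserted inequality since $\pi^{n-1}\cdot\pi=\pi^{n}$. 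The case $n=1$ falls outside the scope of Theorem \ref{thm:main2-3}, but there $K_{S,\lambda}^{I,h_0}(z_0)=\frac{1}{p_1}K_{\partial M,\rho}^{I,h_0}(z_0)$ by definition, so after cancelling the common positive factor $(\tilde\beta_1+1)/p_1$ the claim collapses to inequality \eqref{eq:0831a}.

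Since $B_{\tilde\rho}^{I,h_0}(z_0)>0$ propagates to positivity of all three kernels, the displayed chain has the shape $A\ge B\ge C>0$, so equality in the Corollary is equivalent to simultaneous equality in \eqref{eq:0831c} and \eqref{eq:0831a}. The heart of the proof is therefore to show that the conjunction of the equality conditions of Theorems \ref{thm:main2-3} and \ref{thm:main1-2} is exactly statements $(1)$--$(3)$ of the Corollary. For the forward implication I would extract from equality in \eqref{eq:0831c} the conditions $\beta=\tilde\beta$, harmonicity of each $\varphi_j$, and $\chi_{j,z_j}^{\tilde\beta_j+1}=\chi_{j,-\varphi_j/2}$; these are $(2)$, $(3)$ and the first half of $(1)$. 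The remaining equality $\sum_j\frac{\beta_j+1}{p_j}=1$ is the crux and must be squeezed out of equality in \eqref{eq:0831a}. Here I would use that, near $z_0$, $e^{-\psi}$ depends only on $(|w_1-z_1|,\dots,|w_n-z_n|)$, so angular integration kills all cross terms and makes $\mathcal I(\psi)_{z_0}$ a monomial ideal with $(w-z_0)^\gamma\in\mathcal I(\psi)_{z_0}$ precisely when $\sum_j\frac{\gamma_j+1}{p_j}>1$. The equality condition $(1)$ of Theorem \ref{thm:main1-2} writes $P_{\partial M}(f_0)=\sum_{\alpha\in E}d_\alpha(w-z_0)^\alpha+g_0$ with $(g_0,z_0)\in\mathcal I(\psi)_{z_0}$; because $(P_{\partial M}(f_0)-h_0,z_0)\in I$ and $\beta=\tilde\beta\in L_{\tilde\beta}$, the coefficient of $(w-z_0)^{\tilde\beta}$ in $P_{\partial M}(f_0)$ equals that of $h_0$, namely the nonzero product of the leading coefficients of the $h_j$. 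Since $\sum_j\frac{\tilde\beta_j+1}{p_j}\le1$ forces $(w-z_0)^{\tilde\beta}\notin\mathcal I(\psi)_{z_0}$, the term $g_0$ cannot contribute to this coefficient, so it must be supplied by an $E$-monomial; hence $\tilde\beta\in E$, i.e. $\sum_j\frac{\tilde\beta_j+1}{p_j}=1$, completing $(1)$. (When $n=1$ the same monomial bookkeeping, together with $\tilde\beta_1\le p_1-1$, simultaneously yields $\beta=\tilde\beta$ and $\frac{\beta_1+1}{p_1}=1$.)

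For the converse I would check that $(1)$--$(3)$ force equality in both theorems. The three equality conditions of Theorem \ref{thm:main2-3} are literally statements $(1)$--$(3)$ of the Corollary, giving equality in \eqref{eq:0831c}. For Theorem \ref{thm:main1-2}, harmonicity of $\varphi_j$ lets me take $g_j\equiv1$ and $u_j=\varphi_j/2$ in its condition $(2)$ (this also supplies $g_1\equiv1$ when $n=1$), so that $\chi_{j,-u_j}=\chi_{j,-\varphi_j/2}$, while Corollary condition $(1)$ gives $\tilde\beta=\beta\in E$; the character condition and the explicit extremal of Remark \ref{product 1-2} then produce the form required in condition $(1)$ of Theorem \ref{thm:main1-2}, yielding equality in \eqref{eq:0831a}. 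I expect the main obstacle to be precisely this equality analysis: the two theorems phrase their character data through different objects ($-u_j$ versus $-\varphi_j/2$), and reconciling them rests on the observation that once $\varphi_j$ is harmonic the term $\log|g_j|$ in condition $(2)$ of Theorem \ref{thm:main1-2} is itself harmonic, whence $g_j$ is zero-free on $D_j$ and $\chi_{j,-u_j}=\chi_{j,-\varphi_j/2}$. Combined with the monomial-ideal computation pinning $\sum_j\frac{\beta_j+1}{p_j}=1$, this is the delicate part, whereas the inequality itself is a one-line composition of the two previous theorems.
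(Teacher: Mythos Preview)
Your approach is exactly the paper's: the Corollary is stated as a direct consequence of Theorem~\ref{thm:main1-2} and Theorem~\ref{thm:main2-3}, and the paper gives no further proof. Your chaining of the two inequalities, the $n=1$ reduction via $K_{S,\lambda}^{I,h_0}=\tfrac1{p_1}K_{\partial M,\rho}^{I,h_0}$, and in particular your extraction of $\sum_j\frac{\beta_j+1}{p_j}=1$ from condition~(1) of Theorem~\ref{thm:main1-2} via Lemma~\ref{l:psi} are all correct and constitute the intended argument.

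One small point in your converse direction: invoking Remark~\ref{product 1-2} to ``produce the form required in condition~(1) of Theorem~\ref{thm:main1-2}'' is slightly circular, since that Remark is stated under the hypothesis that conditions (1)--(3) of Theorem~\ref{thm:main1-2} already hold. The clean way to close this is to first use the sufficiency direction of Theorem~\ref{thm:main2-3}: once equality holds there, the uniqueness in Lemma~\ref{l:b7} forces the minimizer $f_0$ to coincide with the explicit product $\tilde f$ built in that proof, whose $P_{\partial M}(\tilde f)=\prod_j g_{j}$ has leading term $d_\beta(w-z_0)^\beta$ with $\beta\in E$ and all higher terms in $\mathcal I(\psi)_{z_0}$; this verifies condition~(1) of Theorem~\ref{thm:main1-2} directly and avoids the circularity.
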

\begin{Remark}
	 Theorem \ref{thm:main2-3} and Corollary \ref{thm:main2-4} still hold without the condition that $h_0$ can be expressed as a product $\prod_{1\le j\le n}h_j$ (see Section \ref{sec:app}), and we also consider the case $I=\mathcal{I}(\psi)_{z_0}$ in Section \ref{sec:app}.
\end{Remark}

The rest parts of the article are organized as follows:

In Section \ref{sec:pre}, we recall some properties about the weighted Bergman spaces, the weighted Bergman kernels, the Hardy spaces $H^2(D)$ and the concavity property of minimal $L^2$ integrals.

In Section \ref{sec:partial m}, we discuss the weighted Hardy space $H^2_{\rho}(M,\partial M)$ over $\partial M$ and the kernel functions $K_{\partial M,\rho}$ and $K_{\partial M,\rho}^{I,h_0}$.

In Section \ref{sec:S}, we discuss the weighted Hardy space $H^2_{\lambda}(M,S)$ over $S$ and the kernel functions $K_{S,\lambda}$ and $K_{S,\lambda}^{I,h_0}$.

In Section \ref{sec:p}, we give some useful propositions, which will be used in the proofs of the main theorems.

In Section \ref{sec:proof1}, we prove Theorem \ref{thm:main1-1} and Remark \ref{r:product 1-1}.

In Section \ref{sec:proof2}, we prove Theorem \ref{thm:main1-2} and Remark \ref{product 1-2}. 

In Section \ref{sec:proof3}, we prove Theorem \ref{thm:main2-1} and Theorem \ref{thm:main2-3}.

In Section \ref{sec:app},  we do further discussion on the relations between $K_{S,\lambda}^{I,h_0}(z_0)$, $K_{\partial M,\rho}^{I,h_0}(z_0)$ and $B_{\tilde\rho}^{I,h_0}(z_0)$.

\section{Preparations}\label{sec:pre}

In this section, we do some preparations, which will be used in the discussion in Section \ref{sec:partial m}, Section \ref{sec:S} and Section \ref{sec:p}, and used in the proofs of the main theorems.

\subsection{Bergman space and Bergman kernel}\label{sec:B}

In this section, we recall and discuss some properties about the weighted Bergman spaces and weighted Bergman kernels.

Let $U\subset \mathbb{C}^n$  be an open set. Let us recall the definition of admissible weight given in \cite{Winiarski} and \cite{Winiarski2}.
\begin{Definition}
	A nonnegative measurable function $\rho$ on $U$ is called an admissible weight, if for any $z_0\in U$ the following condition is satisfied: there exists a neighborhood $V_{z_0}$ in $U$ and a constant $C_{z_0}>0$ such that
	$$|f(z)|^2\leq C_{z_0}\int_{U}|f|^2\rho$$ 
	holds for any $z\in V_{z_0}$ and any holomorphic function $f$ on  $U$. 
\end{Definition}

Let $\rho$  be an  admissible weight on $U$. The weighted Bergman space $A^2(U,\rho)$ is defined as follows:
$$A^2(U,\rho):=\left\{f\in\mathcal{O}(U):\int_U|f|^2\rho<+\infty\right\}.$$
Denote that $$\ll f,g\gg_{U,\rho}:=\int_Uf\overline g\rho$$
and
 $||f||_{U,\rho}:=(\int_U|f|^2\rho)^{\frac{1}{2}}$ for any $f,g\in A^2(U,\rho)$.

\begin{Lemma}[see \cite{Winiarski,Winiarski2}]
$A^2(U,\rho)$ is a separable Hilbert space equipped with the inner product $\ll\cdot,\cdot\gg_{U,\rho}$.
\end{Lemma}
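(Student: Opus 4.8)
The plan is to realize $A^2(U,\rho)$ as a closed subspace of the ordinary $L^2$-space $L^2(U,\rho)$, and then read off both the Hilbert space structure and separability from standard functional analysis, using the admissibility of $\rho$ only at two isolated points. First I would note that $\rho\,d\lambda$, with $d\lambda$ Lebesgue measure on $U$, is a $\sigma$-finite Borel measure on the second-countable space $U$, so that $L^2(U,\rho)$ is itself a separable Hilbert space under $\ll f,g\gg_{U,\rho}=\int_U f\overline g\rho$. On the linear subspace $A^2(U,\rho)$ the sesquilinearity and conjugate-symmetry of $\ll\cdot,\cdot\gg_{U,\rho}$ are immediate; the one point that genuinely uses admissibility is positive-definiteness. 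Indeed, if $\|f\|_{U,\rho}^2=\int_U|f|^2\rho=0$ for some $f\in\mathcal{O}(U)$, then for each $z_0\in U$ the admissibility estimate gives $|f(z)|^2\le C_{z_0}\int_U|f|^2\rho=0$ on a neighborhood of $z_0$, so $f$ vanishes near every point and hence $f\equiv0$.

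The crux is completeness, which I would obtain by showing that $A^2(U,\rho)$ is \emph{closed} in $L^2(U,\rho)$. Let $\{f_k\}\subset A^2(U,\rho)$ converge in $L^2(U,\rho)$ to some $g$; in particular $\{f_k\}$ is Cauchy in $\|\cdot\|_{U,\rho}$. Fixing $z_0\in U$, admissibility furnishes a neighborhood $V_{z_0}$ and a constant $C_{z_0}$ with
$$\sup_{z\in V_{z_0}}|f_k(z)-f_l(z)|^2\le C_{z_0}\int_U|f_k-f_l|^2\rho=C_{z_0}\|f_k-f_l\|_{U,\rho}^2,$$
so $\{f_k\}$ is uniformly Cauchy on $V_{z_0}$. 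As these neighborhoods cover $U$, the sequence converges locally uniformly on $U$ to a function $f$, which is holomorphic by the Weierstrass convergence theorem. Passing to a subsequence of $\{f_k\}$ that converges to $g$ almost everywhere, we conclude $f=g$ a.e.; thus $g$ admits the holomorphic representative $f\in\mathcal{O}(U)$ with $\int_U|f|^2\rho=\|g\|_{U,\rho}^2<+\infty$, i.e. $g\in A^2(U,\rho)$. Hence $A^2(U,\rho)$ is a closed subspace of the complete space $L^2(U,\rho)$, and therefore complete, so it is a Hilbert space.

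Finally, separability is inherited for free: every subspace of a separable metric space is separable, so the closed subspace $A^2(U,\rho)\subset L^2(U,\rho)$ is separable. I expect the only real obstacle to be the completeness step, and more precisely the passage from $L^2(U,\rho)$-convergence to locally uniform convergence of the holomorphic functions. This is exactly where the admissibility hypothesis on $\rho$ is indispensable: it converts control of the integral norm into pointwise control on compact sets, which simultaneously guarantees positive-definiteness of the norm and ensures that the $L^2$-limit again has a holomorphic representative. The remaining assertions ($\sigma$-finiteness of $\rho\,d\lambda$, separability of $L^2$, and the algebraic inner-product axioms) are routine and require no further input.
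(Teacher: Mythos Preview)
Your argument is correct and is the standard route: realize $A^2(U,\rho)$ as a closed subspace of the separable Hilbert space $L^2(U,\rho)$, using admissibility precisely to convert $L^2$-convergence into locally uniform convergence (and to secure positive-definiteness). The paper itself does not give a proof of this lemma---it simply cites \cite{Winiarski,Winiarski2}---so there is nothing further to compare; your write-up is essentially what those references contain.
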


The following lemma gives a sufficient condition for a weight to be an admissible weight.
\begin{Lemma}\label{l:admissible}
	 Let $\rho$ be a nonnegative Lebesgue measurable function on $U$, and let $S$ be an analytic subset of $U$. Assume  that for any $K\Subset U\backslash S$, there is $a>0$ such that $\int_{K}\rho^{-a}<+\infty$. Then $\rho$ is an admissible weight on $U$.
\end{Lemma}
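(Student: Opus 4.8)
The plan is to verify the defining inequality of an admissible weight directly: for each $z_0\in U$ I must produce a neighborhood $V_{z_0}$ and a constant $C_{z_0}>0$ with $|f(z)|^2\le C_{z_0}\int_U|f|^2\rho$ for every $z\in V_{z_0}$ and every $f\in\mathcal{O}(U)$. The only tools needed are the sub-mean value property and Hölder's inequality, applied to the plurisubharmonic function $|f|^{2p}=\exp(2p\log|f|)$, where $p\in(0,1)$ is chosen as $p=\frac{a}{1+a}$ so that $\frac{p}{1-p}=a$. Then for any region $\Omega\Subset U\setminus S$ over which the bound $|f(z_0)|^{2p}\le\frac{1}{|\Omega|}\int_\Omega|f|^{2p}\,d\lambda$ holds (here $d\lambda$ is Lebesgue measure and $|\Omega|$ its volume), Hölder with the conjugate exponents $\frac1p,\frac1{1-p}$ gives
\[
\int_\Omega|f|^{2p}\,d\lambda=\int_\Omega(|f|^2\rho)^p\rho^{-p}\,d\lambda\le\Big(\int_\Omega|f|^2\rho\Big)^{p}\Big(\int_\Omega\rho^{-a}\Big)^{1-p},
\]
where $\int_\Omega\rho^{-a}<\infty$ by hypothesis since $\Omega\Subset U\setminus S$. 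Raising to the power $1/p$ yields $|f(z_0)|^2\le C\int_U|f|^2\rho$.

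First, for $z_0\in U\setminus S$ this is immediate: choose a Euclidean ball $B=B(z_0,r)$ with $\overline B\subset U\setminus S$, apply the sub-mean value inequality of the subharmonic function $|f|^{2p}$ over $B$, and take $\Omega=B$. Letting $z$ range over a slightly smaller ball $V_{z_0}$ and using a ball of one fixed radius for each such $z$, all the relevant balls stay inside one fixed compact $K\Subset U\setminus S$, so $\int_\Omega\rho^{-a}\le\int_K\rho^{-a}$ gives a constant uniform over $V_{z_0}$.

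The main obstacle is the case $z_0\in S$, where no ball centered near $z_0$ stays in $U\setminus S$; the plan is to replace the centered ball by a tubular region $\Omega$ that avoids $S$ while still dominating $|f(z_0)|^{2p}$ through iterated sub-mean values. Since $S$ is a proper analytic subset, I would pick a unit vector $\mathbf v$ with $z_0+\mathbb{C}\mathbf v\not\subset S$; then $(z_0+\mathbb{C}\mathbf v)\cap S$ is discrete near $z_0$, so there are radii $0<r_1<r_2$ with the closed annulus $\{z_0+\zeta\mathbf v:r_1\le|\zeta|\le r_2\}$ contained in $U$ and disjoint from the closed set $S$, hence at positive distance $2\delta$ from $S$. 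Writing points near $z_0$ in orthonormal coordinates $(\zeta,w')\in\mathbb{C}\times\mathbb{C}^{n-1}$ adapted to $\mathbf v$, I apply the sub-mean value property of the plurisubharmonic function $|f|^{2p}$ first in the transverse variable $w'$ over a ball of radius $\eta\le\delta$, and then in $\zeta$ over circles averaged on $[r_1,r_2]$; this produces $|f(z_0)|^{2p}\le\frac{1}{|\Omega|}\int_\Omega|f|^{2p}\,d\lambda$ with $\Omega=\{r_1\le|\zeta|\le r_2\}\times\{|w'|<\eta\}$, a set with $\Omega\Subset U\setminus S$. Applying the Hölder estimate above to this $\Omega$ finishes the pointwise bound, and uniformity over a neighborhood $V_{z_0}$ follows as before by translating $\Omega$ and enclosing all translates in one fixed compact subset of $U\setminus S$.

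The points requiring the most care — the crux of the argument — are all in this last case: checking that a direction $\mathbf v$ meeting $S$ only discretely can always be chosen (using that $S$ is a proper analytic set, so not every line through $z_0$ lies in $S$), that the chosen annulus has strictly positive distance to $S$ so a genuinely $n$-dimensional tube $\Omega$ survives inside $U\setminus S$, and that the sub-mean value inequality transfers correctly through the iterated $w'$-then-$\zeta$ averaging because $|f|^{2p}$ is subharmonic on every complex affine line. Once these are in place, the exponent bookkeeping ($p=\frac{a}{1+a}$, so that $\rho^{-p/(1-p)}=\rho^{-a}$) and the passage from $\Omega$ to $U$ are routine.
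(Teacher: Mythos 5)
Your proof is correct, but it follows a different route from the paper's. The paper disposes of the points of $S$ in one stroke: citing the Local Parametrization Theorem together with the Maximum Principle, it asserts that for every compact $K\Subset U$ there is a compact $K_1\Subset U\setminus S$ with $\sup_K|f|^2\le C_1\sup_{K_1}|f|^2$ for all $f\in\mathcal{O}(U)$; it then picks a slightly larger $K_2\Subset U\setminus S$, bounds $\sup_{K_1}|f|^{2r}\le C_2\int_{K_2}|f|^{2r}$, and applies the same H\"older trick you use (with $r$ chosen small so that $\rho^{-r/(1-r)}$ is integrable on $K_2$). Your argument replaces the quoted parametrization theorem by an explicit, self-contained construction: for $z_0\in S$ you choose a complex line through $z_0$ whose germ is not contained in $S$ (justified, as you indicate, by the identity theorem applied to the local defining functions of $S$ restricted to lines — if every line germ lay in $S$, a whole ball would), use discreteness of the one-dimensional analytic set $S\cap\mathrm{line}$ to find an annulus avoiding $S$, thicken it to a tube $\Omega\Subset U\setminus S$, and chain sub-mean value inequalities for the plurisubharmonic function $|f|^{2p}$ to dominate $|f(z_0)|^{2p}$ by the average over $\Omega$; the concluding H\"older step with $p=\frac{a}{1+a}$ is identical in both proofs. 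Your tube is in effect a hands-on substitute for the compact set that the paper obtains abstractly from local parametrization, so your version is longer but more elementary, while the paper's is shorter at the cost of invoking heavier machinery. Two cosmetic points: the logical order of your iterated averaging is first subharmonicity in $\zeta$ (from $z_0$ out to the annulus) and then in the transverse variable $w'$ (thickening each annulus point to a ball), the reverse of the order in which you phrase it; and the positive distance from the annulus to $S$ uses that $S$ is closed in $U$ together with compactness of the annulus inside $U$ — both of which your write-up handles correctly in substance.
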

\begin{proof}
	As $S$ is an analytic subset of $U$, by Local Parametrization Theorem (see \cite{demailly-book}) and Maximum Principle, for any compact set $K\subset\subset U$ there exists  $ K_1\subset\subset U\backslash S$ satisfying 	
	\begin{equation}
		\label{eq:210809a}\sup_{z\in K}|f(z)|^2\le C_1\sup_{z\in K_1}|f(z)|^2
			\end{equation}
	for  any holomorphic function $f$ on $U$, where $C_1$ is a constant.
Then there exists a compact set $K_2\subset\subset U\backslash S$ satisfying $K_1\subset K_2$ and
\begin{equation}
	\label{eq:210809b}\begin{split}
	\sup_{z\in K_1}|f(z)|^{2r}\le C_2\int_{K_2}|f|^{2r}\le C_2\left(\int_{K_2}|f|^2\rho\right)^r\left(\int_{K_2}\rho^{-\frac{r}{1-r}}\right)^{1-r}	
	\end{split}\end{equation}
for any holomorphic function $f$, where $r\in(0,1)$ and $C_2$ is a constant.  As we can choose $r$ small enough such that $\int_{K_2}\rho^{-\frac{r}{1-r}}<+\infty$, it follows from inequality \eqref{eq:210809a} and inequality \eqref{eq:210809b} that 
$$\sup_{z\in K}|f(z)|^2\le C_1\sup_{z\in K_1}|f(z)|^2\le C_3\int_{K_2}|f|^2\rho,$$
where $C_3=C_1C_2^{\frac{1}{r}}\left(\int_{K_2}\rho^{-\frac{r}{1-r}}\right)^{\frac{1-r}{r}}<+\infty$. Then by definition we know  $\rho$ is an admissible weight on $U$.
\end{proof}

We recall the following well-known property of holomorphic functions of several complex variables.
\begin{Lemma}[see \cite{hormander}]
	\label{l:hartogs}Let $\Omega$ be an open subset of $\mathbb{C}^n$, and let $(w_1,\ldots,w_n)$ be the coordinate on $\mathbb{C}^n$. If $f$ is a complex valued function on $\Omega$ and $f$ is holomorphic in each variable $w_j$ when the other  variables are given arbitrary fixed values, then $f$ is holomorphic in $\Omega$.
\end{Lemma}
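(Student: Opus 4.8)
The statement is Hartogs' theorem on separate analyticity, and the absence of any continuity or boundedness hypothesis is exactly what makes it delicate. The plan is to induct on $n$ (the case $n=1$ being vacuous) and, at the inductive step, to single out the last coordinate: write $w=(w',w_n)$ with $w'=(w_1,\dots,w_{n-1})$ and work locally, so that $f$ is separately holomorphic on a polydisc which, after an affine change, we take to be the unit polydisc $\Delta'\times\Delta_n$. Applying the inductive hypothesis to $f(\cdot,w_n)$ for each fixed $w_n$ shows that $f$ is jointly holomorphic in the block $w'$ for each fixed $w_n$, while it is holomorphic in $w_n$ for each fixed $w'$; this reduces matters to the two-block (essentially $n=2$) situation. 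The whole argument then splits into an easy part and a hard part: the easy part is Osgood's lemma, that a separately holomorphic and \emph{locally bounded} function is jointly holomorphic, and the hard part is to produce local boundedness from separate holomorphy alone.

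For Osgood's lemma I would, on a polydisc where $|f|\le M$, expand $f(w',w_n)=\sum_{k\ge0}a_k(w')w_n^k$ with $a_k(w')=\frac{1}{2\pi i}\int_{|w_n|=r}f(w',w_n)w_n^{-k-1}\,dw_n$, verify by Morera's theorem together with Fubini (legitimate because $f$ is bounded and jointly measurable) that each $a_k$ is holomorphic in $w'$, and estimate $|a_k(w')|\le Mr^{-k}$. The geometrically convergent Hartogs series $\sum_k a_k(w')w_n^k$ then represents $f$ and is manifestly jointly holomorphic, so it only remains to establish local boundedness everywhere. A first Baire category argument supplies boundedness somewhere: for $0<s<1$ and $m\in\mathbb{Z}_{\ge1}$, the sets $E_m=\{w'\in\Delta':|f(w',w_n)|\le m\text{ for all }|w_n|\le s\}$ are closed (using that $f(\cdot,w_n)$ is continuous in $w'$) and cover $\Delta'$ (since each slice $f(w',\cdot)$ is bounded on $\{|w_n|\le s\}$), so some $E_m$ has nonempty interior. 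Thus $f$ is bounded on a set $B\times\{|w_n|<s\}$ with $B\subset\Delta'$ open, and Osgood's lemma makes $f$ holomorphic there.

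The step I expect to be the main obstacle is propagating this boundedness from the Baire-produced subregion to all of $\Delta'\times\Delta_n$. Here I would return to the Hartogs coefficients $a_k(w')$, now argued to be holomorphic on the whole of $\Delta'$, and to the plurisubharmonic functions $u_k:=\frac1k\log|a_k|$. Since $f(w',\cdot)$ is holomorphic on all of $\Delta_n$, the root test gives $\limsup_k u_k(w')\le 0$ pointwise, while on $B$ the $u_k$ are uniformly bounded above. Invoking Hartogs' lemma on subharmonic functions (a sequence uniformly bounded above on compacta with pointwise $\limsup_k u_k\le0$ is eventually $\le\delta$ uniformly on compacta, proved from the sub-mean-value inequality and reverse Fatou) upgrades this to a uniform bound $|a_k(w')|\le e^{k\delta}$ on compact subsets of $\Delta'$, whence $\sum_k a_k(w')w_n^k$ converges locally uniformly on $\Delta'\times\Delta_n$ and $f$ is locally bounded there. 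The delicacy is concentrated entirely in this last step: one must first establish that the coefficients $a_k$ are genuinely holomorphic on all of $\Delta'$ (a measurability-and-interchange point that, lacking an a priori bound, must itself be fed by the Baire argument), and then deploy Hartogs' lemma to convert pointwise control into uniform control. Once local boundedness is known everywhere, Osgood's lemma closes the induction.
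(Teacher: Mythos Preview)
The paper does not give a proof; the lemma is quoted from H\"ormander's textbook. Your outline is the classical argument found there---induction to two blocks, Osgood's lemma for the locally bounded case, a Baire category step to locate a region of boundedness, and Hartogs' lemma on (pluri)subharmonic functions to propagate---so at the level of strategy you match the intended reference exactly.

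There is, however, a genuine gap in your propagation step. Your Baire argument, run in $w'$, yields an open $B\subset\Delta'$ with $|f|\le m$ on $B\times\{|w_n|\le s\}$; Osgood then gives joint holomorphy on $B\times\{|w_n|<s\}$, and hence holomorphy of the coefficients $a_k$ on $B$. But your assertion that the $a_k$ are holomorphic on \emph{all} of $\Delta'$ is not justified: the Morera--Fubini interchange you gesture at would need $|f|$ integrable on $\gamma\times\{|w_n|=r\}$ for closed curves $\gamma$ in $\Delta'$, and outside $B$ you have no bound whatsoever. With only $a_k\in\mathcal O(B)$ in hand, Hartogs' lemma yields joint holomorphy merely on $B\times\Delta_n$, not on $\Delta'\times\Delta_n$. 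The fix is to run Baire in the other variable: fix an \emph{arbitrary} compact $K'\Subset\Delta'$ and apply Baire to the sets $\{w_n\in\Delta_n:\sup_{K'}|f(\cdot,w_n)|\le m\}$ (closed since each $f(w',\cdot)$ is continuous; covering since each $f(\cdot,w_n)$ is holomorphic, hence bounded on $K'$). This produces an open $B_2\subset\Delta_n$ with $f$ bounded on $K'\times B_2$; now the coefficients $a_k$ (Taylor in $w_n$ centered in $B_2$) are genuinely holomorphic on $\mathrm{int}(K')$, and Hartogs' lemma together with a short connectedness argument in $w_n$ gives joint holomorphy on $\mathrm{int}(K')\times\Delta_n$. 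Since $K'$ was arbitrary, $f$ is holomorphic on $\Delta'\times\Delta_n$.
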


For any $z\in U$, we define the evaluation functional $E_z$ on $A^2(U,\rho)$ by the formula 
$$E_z(f)=f(z),\text{   $f\in A^2(U,\rho)$.}$$
As $\rho$ is an admissible weight on $U$, we know $E_z$ is a bounded linear functional on $A^2(U,\rho)$. By Riesz representation theorem, for any $z\in D$, there exists a unique $e_{z,\rho}\in A^2(U,\rho)$ such that 
$$E_z(f)=\ll f,e_{z,\rho}\gg_{U,\rho}$$
for any $f\in A^2(U,\rho)$.
The weighted Bergman kernel $B_{U,\rho}(z,\overline{w})$ (see \cite{Winiarski2}) is defined by
$$B_{U,\rho}(z,\overline{w}):=e_{w,\rho}(z).$$
\begin{Lemma}
	[see \cite{Winiarski2}]$B_{U,\rho}(z,\overline{w})=\sum_{m=1}^{+\infty}e_m(z)\overline{e_{m}(w)},$
where $\{e_m\}_{m\in\mathbb{Z}_{\ge1}}$ is any complete orthonormal basis of $A^2(U,\rho)$.
\end{Lemma}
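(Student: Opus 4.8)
The plan is to recognize this as the standard orthonormal-basis expansion of a reproducing kernel and to derive it directly from the defining reproducing property of $e_{w,\rho}$ together with the completeness of $\{e_m\}_{m\ge1}$. Fix $w\in U$ and consider the representer $e_{w,\rho}\in A^2(U,\rho)$, which by the Riesz representation theorem satisfies the reproducing property $f(w)=\ll f,e_{w,\rho}\gg_{U,\rho}$ for every $f\in A^2(U,\rho)$, and recall that $B_{U,\rho}(z,\overline w)=e_{w,\rho}(z)$ by definition. Since $A^2(U,\rho)$ is a separable Hilbert space with complete orthonormal basis $\{e_m\}_{m\ge1}$, the vector $e_{w,\rho}$ admits the expansion
$$e_{w,\rho}=\sum_{m=1}^{+\infty}\ll e_{w,\rho},e_m\gg_{U,\rho}\,e_m,$$
with convergence in the $\|\cdot\|_{U,\rho}$-norm.

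First I would compute each Fourier coefficient. Using the conjugate-symmetry of the inner product $\ll\cdot,\cdot\gg_{U,\rho}$ and then the reproducing property applied to $f=e_m$, one gets
$$\ll e_{w,\rho},e_m\gg_{U,\rho}=\overline{\ll e_m,e_{w,\rho}\gg_{U,\rho}}=\overline{e_m(w)}.$$
Substituting this into the expansion yields $e_{w,\rho}=\sum_{m=1}^{+\infty}\overline{e_m(w)}\,e_m$, with convergence in norm. To obtain the stated identity as a pointwise equality in the variable $z$, I would invoke the admissibility of $\rho$: for each fixed $z\in U$ the evaluation functional $E_z$ is bounded on $A^2(U,\rho)$, hence continuous, so applying $E_z$ to the norm-convergent partial sums gives
$$B_{U,\rho}(z,\overline w)=e_{w,\rho}(z)=\sum_{m=1}^{+\infty}e_m(z)\,\overline{e_m(w)},$$
which is exactly the claimed formula. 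Independence of the right-hand side from the choice of orthonormal basis is then automatic, since the left-hand side $e_{w,\rho}(z)$ is determined solely by the Riesz representation theorem and makes no reference to any basis.

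The argument is essentially routine, so there is no serious conceptual obstacle; the only points requiring care are bookkeeping. One must place the complex conjugate on the correct factor — it lands on $e_m(w)$ rather than on $e_m(z)$ — which is dictated by the conjugate-linearity of $\ll\cdot,\cdot\gg_{U,\rho}$ in its second slot, and one must justify the passage from norm convergence to pointwise convergence at each fixed $z$, which is precisely where the admissibility of the weight (and the resulting boundedness of $E_z$) is used.
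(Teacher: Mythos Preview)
Your argument is correct and is the standard reproducing-kernel computation. The paper does not actually supply its own proof of this lemma---it simply cites \cite{Winiarski2}---so there is nothing to compare against; your write-up would serve perfectly well as the omitted proof.
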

The above lemma shows that $B_{U,\rho}(z,\overline{w})=\overline{B_{U,\rho}(w,\overline{z})}$, and
 Lemma \ref{l:hartogs} shows that $B_{U,\rho}$ is holomorphic on $U\times U$.
Denote that 
$B_{U,\rho}(z):=B_{U,\rho}(z,\overline{z})$ ($B_{\rho}(z)$ for short without misunderstanding), and it is clear that 
$$B_{\rho}(z)=\sup_{f\in A^2(U,\rho)}\frac{|f(z)|^2}{\|f\|^2_{U,\rho}}=\frac{1}{\inf\left\{\|f\|^2_{U,\rho}:f\in\mathcal{O}(U)\,\&\,f(z)=1\right\}}.$$

Now, we consider a generalized Bergman kernel. Let $z_0\in U$. Denote that 
$$B_{U,\rho}^{I,h_0}(z_0):=\frac{1}{\inf\left\{\int_{U}|f|^2 \rho:f\in\mathcal{O}(U) \,\&\,(f-h_0,z_0)\in I\right\}}$$
for any holomorphic function $h_0$ on a neighborhood of $z_0$ and any ideal $I$ of $\mathcal{O}_{U,z_0}$. Denote $B_{U,\rho}^{I,h_0}(z_0)$ by $B_{\rho}^{I,h_0}(z_0)$ without misunderstanding. when $h_0(z_0)=1$ and $I$ takes the maximal ideal of $\mathcal{O}_{U,z_0}$, $B_{\rho}^{I,h_0}(z_0)$ is the Bergman kernel $B_{\rho}(z_0)$.

\begin{Lemma}[see \cite{G-R}]
\label{l:closedness}
Let $N$ be a submodule of $\mathcal O_{\mathbb C^n,o}^q$, $1\leq q<+\infty$, and let $f_j\in\mathcal O_{\mathbb C^n}(U)^q$ be a sequence of $q-$tuples holomorphic in an open neighborhood $U$ of the origin $o$. Assume that the $f_j$ converge uniformly in $U$ towards  a $q-$tuple $f\in\mathcal O_{\mathbb C^n}(U)^q$, assume furthermore that all germs $(f_{j},o)$ belong to $N$. Then $(f,o)\in N$.	
\end{Lemma}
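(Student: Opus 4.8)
The plan is to combine one elementary analytic observation with the standard structure theory of the local ring $\mathcal{O}_o:=\mathcal{O}_{\mathbb{C}^n,o}$. Recall that $\mathcal{O}_o$ is a Noetherian local ring whose maximal ideal $\mathfrak{m}$ consists of the germs vanishing at $o$ (Rückert basis theorem). Since $N\subset \mathcal{O}_o^q$ is a submodule and $\mathcal{O}_o^q$ is finitely generated over the Noetherian ring $\mathcal{O}_o$, the quotient $\mathcal{O}_o^q/N$ is a finitely generated $\mathcal{O}_o$-module. Writing $\pi:\mathcal{O}_o^q\to\mathcal{O}_o^q/N$ for the canonical projection, the desired conclusion $(f,o)\in N$ is equivalent to $\pi(f,o)=0$, and this reformulation is what I would aim for.

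First I would record the analytic input. Fixing a closed polydisc $\overline{P}$ around $o$ with $\overline{P}\subset U$, the uniform convergence $f_j\to f$ holds in particular on $\overline{P}$, so by the Cauchy integral formula every Taylor coefficient of $f_j$ at $o$ converges to the corresponding coefficient of $f$. Consequently, for each integer $k\ge1$ there is an index $j(k)$ for which $f$ and $f_{j(k)}$ share the same Taylor polynomial of order $\le k-1$; equivalently, the germ $(f-f_{j(k)},o)$ lies in $\mathfrak{m}^k\mathcal{O}_o^q$.

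Next comes the algebraic input. Because $(f_{j(k)},o)\in N=\ker\pi$, we have $\pi(f,o)=\pi(f-f_{j(k)},o)$, and since $(f-f_{j(k)},o)\in\mathfrak{m}^k\mathcal{O}_o^q$ this yields $\pi(f,o)\in\mathfrak{m}^k(\mathcal{O}_o^q/N)$. As $k$ is arbitrary, $\pi(f,o)$ lies in $\bigcap_{k\ge1}\mathfrak{m}^k(\mathcal{O}_o^q/N)$. By the Krull intersection theorem, applied to the finitely generated module $\mathcal{O}_o^q/N$ over the Noetherian local ring $\mathcal{O}_o$, this intersection is zero; hence $\pi(f,o)=0$, i.e.\ $(f,o)\in N$, as claimed.

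The only non-elementary ingredient—and thus the main obstacle—is the commutative algebra: one must invoke that $\mathcal{O}_o$ is Noetherian (so that $\mathcal{O}_o^q/N$ is finitely generated and the theorem applies) together with the Krull intersection theorem $\bigcap_k\mathfrak{m}^kM=0$ for finitely generated modules $M$ over a Noetherian local ring. Both are classical, and the analytic half reduces to Cauchy estimates. If one preferred a more hands-on route avoiding a direct appeal to Krull, one could instead run the Artin--Rees lemma, but the quotient-module formulation above is the cleanest way to carry out the argument.
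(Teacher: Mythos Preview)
The paper does not supply its own proof of this lemma; it is quoted from Grauert--Remmert, \textit{Coherent Analytic Sheaves}. So there is no in-paper argument to compare against. Your overall strategy---reduce to the Krull intersection theorem for the finitely generated $\mathcal{O}_o$-module $\mathcal{O}_o^q/N$---is indeed a standard route to this result.

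That said, there is a genuine error in your analytic step. From uniform convergence $f_j\to f$ on $\overline{P}$ you correctly deduce convergence of all Taylor coefficients, but you then assert that ``for each integer $k\ge1$ there is an index $j(k)$ for which $f$ and $f_{j(k)}$ share the same Taylor polynomial of order $\le k-1$.'' This does not follow: convergence of coefficients does not mean eventual equality. For instance, with $n=q=1$ and $f_j\equiv 1/j$ converging to $f\equiv 0$, no $f_j$ agrees with $f$ even to zeroth order. Your claimed inclusion $(f-f_{j(k)},o)\in\mathfrak{m}^k\mathcal{O}_o^q$ is therefore false in general, and the argument as written breaks at precisely the point where the analytic and algebraic halves are supposed to meet.

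The repair is short and preserves your architecture. For each $k$, pass to the jet space $J_k:=\mathcal{O}_o^q/\mathfrak{m}^k\mathcal{O}_o^q$, a finite-dimensional $\mathbb{C}$-vector space in which the image of $N$ is a linear (hence closed) subspace. The images of the $(f_j,o)$ lie in this subspace and, by convergence of Taylor coefficients, converge to the image of $(f,o)$; therefore $(f,o)\in N+\mathfrak{m}^k\mathcal{O}_o^q$, i.e.\ $\pi(f,o)\in\mathfrak{m}^k(\mathcal{O}_o^q/N)$. Now Krull intersection finishes exactly as you wrote.
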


The closedness of submodules will be used in the following discussion.

\begin{Lemma}
	\label{l:bergman<+infty}Assume that $(h_0,z_0)\not\in I$ and $B_{\rho}^{I,h_0}(z_0)>0$. Then $B_{\rho}^{I,h_0}(z_0)<+\infty$ and there is a unique holomorphic function $f\in A^2(U,\rho)$ such that $(f-h_0,z_0)\in I$ and 
	$$B_{\rho}^{I,h_0}(z_0)=\frac{1}{\|f\|_{U,\rho}^2}.$$ Furthermore,
for any   $\hat{f}\in A^2(U,\rho)$ such that $(\hat f-h_0,z_0)\in I$,
we have the following equality
\begin{equation}\label{eq:0826a}
\begin{split}
\|\hat f\|_{U,\rho}^2=\|f\|_{U,\rho}^2+\|\hat f-f\|_{U,\rho}^2.
\end{split}
\end{equation}
\end{Lemma}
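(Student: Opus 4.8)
The plan is to interpret $B_{\rho}^{I,h_0}(z_0)$ as the reciprocal of a norm-minimization problem over a closed affine subspace of the separable Hilbert space $A^2(U,\rho)$, and then apply the orthogonal projection theorem. Writing $m:=\inf\{\|f\|_{U,\rho}^2:f\in\mathcal O(U)\,\&\,(f-h_0,z_0)\in I\}$, the hypothesis $B_{\rho}^{I,h_0}(z_0)>0$ says precisely that $m<+\infty$, while the asserted finiteness $B_{\rho}^{I,h_0}(z_0)<+\infty$ amounts to $m>0$.

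First I would establish $m>0$ by contradiction. If there were a sequence $f_k\in\mathcal O(U)$ with $(f_k-h_0,z_0)\in I$ and $\|f_k\|_{U,\rho}^2\to0$, then admissibility of $\rho$ (the pointwise bound $|f(z)|^2\le C_{z_0}\|f\|_{U,\rho}^2$ on a neighborhood of $z_0$) would force $f_k\to0$ uniformly near $z_0$, hence $f_k-h_0\to-h_0$ uniformly near $z_0$. Since each germ $(f_k-h_0,z_0)$ lies in the ideal $I$, the closedness of submodules (Lemma \ref{l:closedness}, with $q=1$ and $N=I$) would give $(h_0,z_0)\in I$, contradicting the hypothesis. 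Thus $m>0$ and $B_{\rho}^{I,h_0}(z_0)=1/m<+\infty$.

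Next I would consider $\mathcal A:=\{f\in A^2(U,\rho):(f-h_0,z_0)\in I\}$, which is nonempty because $m<+\infty$. The same admissibility-plus-closedness reasoning shows $\mathcal A$ is closed: if $f_k\in\mathcal A$ converges in norm to $f$, then $f_k-h_0\to f-h_0$ uniformly near $z_0$, and Lemma \ref{l:closedness} gives $(f-h_0,z_0)\in I$. Fixing any $f_1\in\mathcal A$, one has $\mathcal A=f_1+V$, where $V:=\{g\in A^2(U,\rho):(g,z_0)\in I\}$ is a closed linear subspace, so $\mathcal A$ is a nonempty closed convex set. The projection theorem then yields a unique minimal-norm element $f\in\mathcal A$, giving $\|f\|_{U,\rho}^2=m$ and $B_{\rho}^{I,h_0}(z_0)=1/\|f\|_{U,\rho}^2$, with uniqueness inherited from that of the projection.

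Finally, the orthogonality built into the projection delivers \eqref{eq:0826a}. Since $f+tg\in\mathcal A$ for every $g\in V$ and every $t\in\mathbb C$, minimality of $\|f\|_{U,\rho}$ forces $\ll f,g\gg_{U,\rho}=0$, that is, $f\perp V$. For any $\hat f\in A^2(U,\rho)$ with $(\hat f-h_0,z_0)\in I$ we have $\hat f-f\in V$, so the Pythagorean identity gives $\|\hat f\|_{U,\rho}^2=\|f\|_{U,\rho}^2+\|\hat f-f\|_{U,\rho}^2$. The only step beyond routine Hilbert-space manipulation is the conversion of $A^2(U,\rho)$-convergence into local uniform convergence together with the application of Lemma \ref{l:closedness}; I expect this to be where the essential content lies, everything else being the standard projection argument.
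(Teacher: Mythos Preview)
Your proof is correct and follows essentially the same approach as the paper: both hinge on admissibility of $\rho$ (to convert norm convergence to local uniform convergence) together with Lemma~\ref{l:closedness} (to pass the germ condition to the limit), and both deduce \eqref{eq:0826a} from the first-variation orthogonality $\ll f,\hat f-f\gg_{U,\rho}=0$. The only cosmetic difference is packaging: you invoke the projection theorem onto the closed affine set $\mathcal A=f_1+V$, whereas the paper extracts a minimizing sequence directly, uses Fatou's lemma for the norm bound, and proves uniqueness by the parallelogram identity---which is of course just the projection theorem unwound.
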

\begin{proof}
	We prove $B_{\rho}^{I,h_0}(z_0)<+\infty$ by contradiction: if not, there is $\{{f}_{j}\}_{j\in\mathbb{Z}_{>0}}\subset  A^2(U,\rho)$
such that 
$$\lim_{j\to+\infty}\|{f}_{j}\|_{U,\rho}=0$$
 and
$(f_{j}-h_0,z_0)\in I$ for any $j$. As $\rho$ is an admissible weight, then  we know that $\{f_j\}$ uniformly converges to $0$ on any compact subset of $U$.
It follows from Lemma \ref{l:closedness} and $(f_j-h_0,z_0)\in I$
that $h_0\in I$, which contradicts to the assumption $h_0\not\in I$.
Thus, we have $B_{\rho}^{I,h_0}(z_0)<+\infty.$

Firstly, we prove the existence of $f$.
As $B_{\rho}^{I,h_0}(z_0)>0,$
then there is $\{f_{j}\}_{j\in\mathbb{Z}_{>0}}\subset A^2(U,\rho)$  such that
$$\lim_{j\rightarrow+\infty}\|f_j\|^2_{U,\rho}=\frac{1}{B_{\rho}^{I,h_0}(z_0)}<+\infty$$ 
and $(f_{j}-h_0,z_0)\in I$ for any $j$. 
As $\rho$ is an admissible weight on $U$, then we know that there is a subsequence of $\{f_j\}_{j\in\mathbb{Z}_{>0}}$ denoted also by $\{f_j\}_{j\in\mathbb{Z}_{>0}}$, which uniformly converges to a holomorphic function $f$ on $U$ on any compact subset of $U$. By Lemma \ref{l:closedness}, we have 
$$(f-h_0,z_0)\in I.$$
Using Fatou's lemma, we know that 
$$\|f\|_{U,\rho}^2\le\liminf_{j\rightarrow+\infty}\|f_j\|_{U,\rho}^2=\frac{1}{B_{\rho}^{I,h_0}(z_0)}.$$
By definition of $B_{\rho}^{I,h_0}(z_0)$, we have 
$$\|f\|_{U,\rho}^2=\frac{1}{B_{\rho}^{I,h_0}(z_0)}.$$
Thus,  we obtain the existence of $f$.

Secondly, we prove the uniqueness of $f$ by contradiction:
if not, there exist two different $g_{1}\in A^2(U,\rho)$ and $g_{2}\in A^2(U,\rho)$ satisfying that $\|g_1\|_{U,\rho}^2=\|g_1\|_{U,\rho}^2=\frac{1}{B_{\rho}^{I,h_0}(z_0)}$, 
$(g_{1}-h_0,z_0)\in I$ and $(g_{2}-h_0,z_0)\in I$. It is clear that 
$$(\frac{g_{1}+g_{2}}{2}-h_0,z_0)\in I.$$
Note that
\begin{equation}\nonumber
\begin{split}
\|\frac{g_1+g_2}{2}\|_{U,\rho}^2+\|\frac{g_1-g_2}{2}\|_{U,\rho}^2=
\frac{\|g_1\|_{U,\rho}^2+\|g_2\|_{U,\rho}^2}{2}=\frac{1}{K_{U,\rho}^{I,h_0}(z_0)},
\end{split}
\end{equation}
then we obtain that
$$\|\frac{g_1+g_2}{2}\|_{U,\rho}^2<\frac{1}{B_{\rho}^{I,h_0}(z_0)},$$
 which contradicts the definition of $B_{\rho}^{I,h_0}(z_0)$.

Finally, we prove equality \eqref{eq:0826a}.
It is clear that
for any complex number $\alpha$,
$f+\alpha (\hat f-f)\in A^2(U,\rho)$, $(f+\alpha (\hat f-f),z_0)\in I$,
and 
$$\|f\|^2_{U,\rho}\le \|f+\alpha (\hat f-f)\|_{U,\rho}<+\infty.$$
Thus we have 
$$\ll f,\hat f-f\gg_{U,\rho}=0,$$
which implies that 
$$\|\hat f\|_{U,\rho}^2=\|f\|_{U,\rho}^2+\|\hat f-f\|_{U,\rho}^2.$$

Thus, Lemma \ref{l:bergman<+infty} has been proved.
\end{proof}

In the following lemma, we assume that
 $U\Subset \mathbb{C}$ is an open set. Let $\rho$ be a nonnegative Lebesgue measurable function on $U$.  Assume that for any relatively compact set $U_1\Subset U$, there exists a real number $a>0$  such that $\rho^{-a}$  is integrable on $U_1$.  Lemma \ref{l:admissible} shows that $\rho$ is an admissible weight on $U$.
\begin{Lemma}
[see \cite{GMY-boundary4}]
\label{construction of basis in dim one} Let $z_0\in U$. There exists a countable complete orthonormal basis $\{{f}_i(z)\}_{i\in \mathbb{Z}_{\ge 0}}$  of $A^2(U,\rho)$ such that $k_i:=\text{ord}_{z_0}({f}_i)$ is strictly increasing with respect to $i$.
\end{Lemma}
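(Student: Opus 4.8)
The plan is to build the basis directly from the order filtration of $A^2(U,\rho)$ at $z_0$, exploiting that we are in one complex variable so that each graded piece is at most one-dimensional. First I would set, for every integer $k\ge 0$,
$$V_k:=\{f\in A^2(U,\rho):\text{ord}_{z_0}(f)\ge k\},$$
so that $A^2(U,\rho)=V_0\supseteq V_1\supseteq V_2\supseteq\cdots$. Each $V_k$ is a closed linear subspace: it is obviously linear, and if $f_j\to f$ in $A^2(U,\rho)$ with $f_j\in V_k$, then since $\rho$ is an admissible weight (Lemma \ref{l:admissible}) the convergence is uniform on compact subsets of $U$, hence the first $k$ Taylor coefficients of $f$ at $z_0$ are limits of those of the $f_j$ and therefore vanish, giving $f\in V_k$.

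The key one-variable observation is that the linear functional $\tau_k\colon V_k\to\mathbb{C}$ sending $f$ to its $k$-th Taylor coefficient at $z_0$ has kernel exactly $V_{k+1}$, so $\dim(V_k/V_{k+1})\le 1$. I would then let $\Lambda:=\{k\ge 0:V_{k+1}\subsetneq V_k\}$ be the set of orders that actually occur. For each $k\in\Lambda$ the orthogonal complement $W_k$ of the closed subspace $V_{k+1}$ inside $V_k$ is one-dimensional; choose a unit vector $f_{(k)}\in W_k$. By construction $f_{(k)}\in V_k\setminus V_{k+1}$, so $\text{ord}_{z_0}(f_{(k)})=k$. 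Orthonormality across different $k<k'$ in $\Lambda$ is immediate, since $f_{(k')}\in V_{k'}\subseteq V_{k+1}$ while $f_{(k)}\perp V_{k+1}$. Reindexing $\Lambda=\{k_0<k_1<\cdots\}$ and setting $f_i:=f_{(k_i)}$ produces an orthonormal system whose orders $k_i=\text{ord}_{z_0}(f_i)$ are strictly increasing.

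It then remains to prove completeness, which I expect to be the main point. Suppose $g\in A^2(U,\rho)$ is orthogonal to every $f_{(k)}$; I would show by induction that $g\in V_k$ for all $k$. Indeed $g\in V_0$, and if $g\in V_k$ then either $k\notin\Lambda$ and $V_k=V_{k+1}$ forces $g\in V_{k+1}$, or $k\in\Lambda$ and the decomposition $V_k=V_{k+1}\oplus W_k$ writes $g=g'+\ll g,f_{(k)}\gg_{U,\rho}f_{(k)}$ with $g'\in V_{k+1}$ and vanishing coefficient, again giving $g\in V_{k+1}$. Hence $g\in\bigcap_{k\ge 0}V_k$, i.e. all Taylor coefficients of $g$ at $z_0$ vanish. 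Since $U$ is connected (the component containing $z_0$; in the application $U=D_j$ is a domain), analytic continuation forces $g\equiv 0$, so the orthogonal system is complete and the desired basis is obtained.

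The only delicate ingredients are the closedness of the $V_k$, which rests squarely on the admissibility hypothesis turning norm convergence into local uniform convergence, and the identity $\bigcap_{k\ge 0}V_k=\{0\}$, which is where connectedness of $U$ enters; the dimension count $\dim(V_k/V_{k+1})\le 1$ is precisely the feature of one complex variable that forces the orders to be strictly increasing rather than merely nondecreasing. If $A^2(U,\rho)$ is finite-dimensional the same argument yields a finite basis with strictly increasing orders, so no separate treatment of that case is needed.
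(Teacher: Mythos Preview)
The paper does not supply its own proof of this lemma; it simply cites \cite{GMY-boundary4}. Your argument via the order filtration $V_k=\{f:\text{ord}_{z_0}(f)\ge k\}$, taking unit vectors in the one-dimensional complements $V_k\ominus V_{k+1}$, is the standard Gram--Schmidt construction adapted to the order grading, and it is correct. The closedness of each $V_k$ indeed follows from admissibility (point evaluations, hence Taylor coefficients, are continuous), the one-dimensionality of the graded pieces is exactly the one-variable input, and the completeness step is a clean induction.

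You correctly flag the one genuine subtlety: the conclusion $\bigcap_k V_k=\{0\}$ requires $U$ to be connected, while the paper's hypothesis only says ``open set''. In every application in the paper $U=D_j$ is a domain, so this causes no trouble there, and your parenthetical is the right caveat. Note also that your construction automatically delivers the content of Remark~\ref{r:construction of basis in dim one}: the set $\Lambda$ of orders that appear in your basis is by definition the set of orders realized by some element of $A^2(U,\rho)$.
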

\begin{Remark}\label{r:construction of basis in dim one}
	If there is $h\in A^2(U,\rho)$ such that $ord_{z_0}(h)=m$ for some $m\in\mathbb{Z}_{\ge0}$, then $m\in\{ord_{z_0}(f_i):i\in\mathbb{Z}_{\ge0}\},$ where $\{{f}_i(z)\}_{i\in \mathbb{Z}_{\ge 0}}$ is the complete orthonormal basis  of $A^2(U,\rho)$ in Lemma \ref{construction of basis in dim one}.
\end{Remark}

In the following part, we consider the weighted Bergman spaces and weighted Bergman kernels on product spaces.

Let $U\subset \mathbb{C}^n$ and $W\subset \mathbb{C}^m$ be two open sets. Let $\rho_1$ and  $\rho_2$ be two nonnegative  Lebesgue measurable functions on $U$ and $W$ respectively.  Assume that for any relatively compact set $U_1\Subset U$ ($W_2\Subset W$), there exists a real number $a_1>0$ ($a_2>0$) such that $\rho_1^{-a_1}$ ($\rho_2^{-a_2}$) is integrable on $U_1$ ($W_2$).
Let $M:=U\times W$ and $\rho=\rho_1\times\rho_2$. By Lemma \ref{l:admissible}, we know that $\rho_1$, $\rho_2$ and $\rho$ are admissible weights on $U$,  on $W$ and on $M$ respectively.

The following lemma implies that the product of bases of $A^2(U,\rho_1)$ and $A^2(W,\rho_2)$ makes a basis of $A^2(M,\rho)$.

\begin{Lemma}
[see \cite{GMY-boundary4}]
\label{basis of product}Let $\{f_i(z)\}_{i\in \mathbb{Z}_{\ge 0}}$ and $\{g_j(w)\}_{j\in \mathbb{Z}_{\ge 0}}$ be the complete orthonormal basis of $A^2(U,\rho_1)$ and $A^2(W,\rho_2)$ respectively. Then $\{f_i(z) g_j(w)\}_{i,j\in\mathbb{Z}_{\ge 0}}$ is a complete orthonormal basis of $A^2(M,\rho)$.
\end{Lemma}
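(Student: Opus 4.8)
The plan is to prove Lemma \ref{basis of product} by verifying two things separately: that the family $\{f_i(z)g_j(w)\}_{i,j}$ is orthonormal in $A^2(M,\rho)$, and that it is complete. The orthonormality is the routine half. By Tonelli/Fubini, since $\rho = \rho_1\times\rho_2$ and $M = U\times W$, for any indices we have
\begin{equation}
\nonumber \ll f_i g_j, f_{i'}g_{j'}\gg_{M,\rho} = \left(\int_U f_i\overline{f_{i'}}\,\rho_1\right)\left(\int_W g_j\overline{g_{j'}}\,\rho_2\right) = \delta_{ii'}\delta_{jj'},
\end{equation}
where the factorization of the integral is justified because each factor is finite (the $f_i,g_j$ lie in the respective Bergman spaces) so the product is absolutely integrable on $M$. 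This shows in particular that $f_ig_j\in A^2(M,\rho)$, which together with Lemma \ref{l:admissible} guarantees $\rho$ is an admissible weight, so $A^2(M,\rho)$ is a well-defined separable Hilbert space.

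The substantive half is completeness. First I would fix an arbitrary $F\in A^2(M,\rho)$ that is orthogonal to every $f_ig_j$, and aim to show $F\equiv 0$. For each fixed $i$, consider the function on $W$ given by $w\mapsto \int_U F(z,w)\overline{f_i(z)}\,\rho_1(z)$; call it $c_i(w)$. The key claim is that $c_i\in A^2(W,\rho_2)$ and that the orthogonality $\ll F, f_ig_j\gg_{M,\rho}=0$ for all $j$ forces $\ll c_i, g_j\gg_{W,\rho_2}=0$ for all $j$, hence $c_i\equiv 0$ by completeness of $\{g_j\}$ in $A^2(W,\rho_2)$. Then, for each fixed $w$, the slice $F(\cdot,w)$ is holomorphic on $U$ and lies in $A^2(U,\rho_1)$ (for a.e. $w$, by Fubini applied to $\int_M|F|^2\rho<+\infty$), and $c_i(w)=\ll F(\cdot,w), f_i\gg_{U,\rho_1}=0$ for all $i$ forces $F(\cdot,w)\equiv 0$ by completeness of $\{f_i\}$ in $A^2(U,\rho_1)$. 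Since this holds for a.e. $w$ and $F$ is continuous (indeed holomorphic) on $M$, we conclude $F\equiv 0$.

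The main obstacle is the measure-theoretic bookkeeping needed to pass between the double integral over $M$ and the iterated slice integrals, and in particular to show the slice function $c_i$ is holomorphic and $\rho_2$-square-integrable on $W$. Holomorphy of $c_i$ in $w$ should follow from differentiation under the integral sign (or Morera together with Fubini), using the admissibility of $\rho_1$ to get the needed local uniform bounds on $F(z,\cdot)$; its membership in $A^2(W,\rho_2)$ follows from Cauchy--Schwarz, $|c_i(w)|^2\le \left(\int_U|F(z,w)|^2\rho_1\right)\|f_i\|^2_{U,\rho_1}$ integrated against $\rho_2$, which is finite because $\int_M|F|^2\rho<+\infty$. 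A cleaner alternative, which I would prefer if the routine Fubini steps become delicate, is to argue completeness abstractly: it suffices to show that finite linear combinations of the $f_ig_j$ are dense, and since $\{f_i\}$ and $\{g_j\}$ span dense subspaces of $A^2(U,\rho_1)$ and $A^2(W,\rho_2)$, the algebraic tensor product $A^2(U,\rho_1)\otimes A^2(W,\rho_2)$ is dense in the Hilbert space tensor product $A^2(U,\rho_1)\,\widehat\otimes\,A^2(W,\rho_2)$, and one identifies this completed tensor product with $A^2(M,\rho)$ via the natural isometry $f\otimes g\mapsto f(z)g(w)$. Either way, the heart of the argument is the same separation-of-variables principle for holomorphic $L^2$ functions on a product domain.
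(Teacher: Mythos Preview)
The paper does not prove this lemma; it is quoted from \cite{GMY-boundary4} without argument. Your proposal is correct and is the standard slice-by-slice proof one would expect in that reference: orthonormality by Fubini, completeness by showing that the partial inner products $c_i(w)=\ll F(\cdot,w),f_i\gg_{U,\rho_1}$ lie in $A^2(W,\rho_2)$ and vanish. One small correction: the local uniform bound you need to justify holomorphy of $c_i$ and to run Fubini for Morera is that $\|F(\cdot,w)\|_{U,\rho_1}$ is locally bounded in $w$, and this comes from the admissibility of $\rho_2$ (not $\rho_1$): apply the admissibility inequality on $W$ to the holomorphic slice $F(z,\cdot)$ for each fixed $z$, then integrate over $U$ against $\rho_1$ to get $\|F(\cdot,w)\|_{U,\rho_1}^2\le C_K\|F\|_{M,\rho}^2$ for $w$ in any compact $K\subset W$. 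With that in hand your argument goes through cleanly. The tensor-product alternative you mention is equivalent but hides exactly this estimate inside the identification $A^2(M,\rho)\cong A^2(U,\rho_1)\,\widehat\otimes\,A^2(W,\rho_2)$.
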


Let $U_j$ be an open subset of $\mathbb{C}$ for any $1\le j\le n$, and let $M=\prod_{1\le j\le n}U_j$.
For any $1\le j\le n$, let $\rho_j$ be a nonnegative Lebesgue measurable functions on $U_j$, and  assume that for any relatively compact set $K\Subset U_j$, there exists a real number $a>0$ such that $\rho_j^{-a}$  is integrable on $K$. Let 
$$\rho=\prod_{1\le j\le n}\rho_j$$
 on $M$. For any $1\le j\le n$, let $h_j$ be a holomorphic function on a neighborhood of  $z_j$, and let $$h_0=\prod_{1\le j\le n}h_j.$$
 
Denote that $\beta_j:=ord_{z_j}(h_j)$ and $\beta:=(\beta_1,\cdots,\beta_n)$. Let $\tilde\beta=(\tilde\beta_1,\ldots,\tilde\beta_n)\in\mathbb{Z}_{\ge0}^n$ satisfy $\tilde\beta_j\ge\beta_j$ for any $1\le j\le n$.
Let $z_0=(z_1,\ldots,z_n)\in M$, and let 
$$I=\left\{(g,z_0)\in\mathcal{O}_{z_0}:g=\sum_{\alpha\in\mathbb{Z}_{\ge0}^n}b_{\alpha}(w-z_0)^{\alpha}\text{ near  }z_0\text{ s.t. $b_{\alpha}=0$ for $\alpha\in L_{\tilde\beta}$}\right\},$$
where $L_{\tilde\beta}=\{\alpha\in\mathbb{Z}_{\ge0}^n:\alpha_j\le\tilde\beta_j$ for any $1\le j\le n\}$.
It is clear that $(h_0,z_0)\not\in I$. Assume that $A^2(M,\rho)\not=\{0\}$.

\begin{Lemma}
	\label{l:Bergman-prod2}$B_{M,\rho}^{I,h_0}(z_0)=\prod_{1\le j\le n}B_{U_j,\rho_j}^{I_{\tilde\beta_j,z_j},h_j}(z_j)$ holds, where $I_{\tilde\beta_j,z_j}=((w_j-z_j)^{\tilde\beta_j+1})$ is an ideal of $\mathcal{O}_{U_j,z_j}$.
\end{Lemma}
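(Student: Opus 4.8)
The plan is to reduce everything to the finite-dimensional linear algebra governing the low-order Taylor coefficients, using the special orthonormal bases from Lemma \ref{construction of basis in dim one}. First I would record what the ideal means: a germ $(g,z_0)$ lies in $I$ exactly when all Taylor coefficients $b_\alpha$ of $g$ at $z_0$ with $\alpha\in L_{\tilde\beta}$ vanish, so $I$ is generated by the $(w_j-z_j)^{\tilde\beta_j+1}$, and the constraint $(f-h_0,z_0)\in I$ just says that $f$ and $h_0$ share all Taylor coefficients indexed by $L_{\tilde\beta}$. Since only finite $L^2$-norm competitors affect the infima, both extremal problems may be taken over the weighted Bergman spaces. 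For each $j$ I would fix, by Lemma \ref{construction of basis in dim one}, a complete orthonormal basis $\{f_{j,i}\}_{i\ge0}$ of $A^2(U_j,\rho_j)$ with strictly increasing orders $k_{j,i}=\mathrm{ord}_{z_j}(f_{j,i})$; by Lemma \ref{basis of product} the products $e_{(i_1,\dots,i_n)}=\prod_j f_{j,i_j}$ then form a complete orthonormal basis of $A^2(M,\rho)$.

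The key structural step I would establish is that, because the orders $k_{j,i}$ are strictly increasing in $i$, a basis product $e_{(i_1,\dots,i_n)}$ lies in $I$ precisely when $k_{j,i_j}\ge\tilde\beta_j+1$ for some $j$ (its lowest monomial is $\prod_j(w_j-z_j)^{k_{j,i_j}}$, whose multi-index sits in $L_{\tilde\beta}$ iff all $k_{j,i_j}\le\tilde\beta_j$). I would then show that $I\cap A^2(M,\rho)$ — a closed subspace, being the common kernel of the bounded coefficient functionals $\ell_\alpha$ ($\alpha\in L_{\tilde\beta}$), which are bounded since $\rho$ is admissible — is exactly the closed span of these "high" products, so that its orthogonal complement is the finite-dimensional space $N:=\mathrm{span}\{e_{(i_\bullet)}:k_{j,i_j}\le\tilde\beta_j\ \forall j\}$. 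The only nontrivial point is that membership in $I$ forces the coefficients of the low products to vanish: writing out the $L_{\tilde\beta}$-coefficients of a general element yields a linear system whose matrix is the Kronecker product of the one-variable matrices $A^{(j)}=[\ell^{(j)}_k(f_{j,i})]_{0\le k\le\tilde\beta_j,\,k_{j,i}\le\tilde\beta_j}$ (here $\ell^{(j)}_k$ denotes the $k$-th Taylor-coefficient functional at $z_j$), and each $A^{(j)}$ is injective again by the strictly-increasing-order property (a nonzero combination of low basis elements has order $\le\tilde\beta_j$). A Kronecker product of injective maps is injective, which gives the claim and simultaneously identifies the space of attainable $L_{\tilde\beta}$-coefficient vectors with $\bigotimes_j\mathrm{im}(A^{(j)})$.

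With this in hand I would split into two cases. In the nondegenerate case, where $B_{U_j,\rho_j}^{I_{\tilde\beta_j,z_j},h_j}(z_j)>0$ for all $j$, Lemma \ref{l:bergman<+infty} supplies one-variable extremals $f_j\in A^2(U_j,\rho_j)$ (note $(h_j,z_j)\notin I_{\tilde\beta_j,z_j}$ since $\beta_j\le\tilde\beta_j$); each $f_j$ lies in the low span and satisfies $\|f_j\|_{U_j,\rho_j}^2=1/B_{U_j,\rho_j}^{I_{\tilde\beta_j,z_j},h_j}(z_j)$. Setting $F=\prod_j f_j\in N$, I would check $(F-h_0,z_0)\in I$ by expanding $\prod_j f_j-\prod_j h_j$ into terms each carrying a factor $f_j-h_j$ of order $\ge\tilde\beta_j+1$, so every monomial escapes $L_{\tilde\beta}$. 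Since $F\in N=(I\cap A^2(M,\rho))^{\perp}$ and $(F-h_0,z_0)\in I$, the Pythagorean identity as in \eqref{eq:0826a} shows $F$ is the minimizer, while Fubini gives $\|F\|_{M,\rho}^2=\prod_j\|f_j\|_{U_j,\rho_j}^2$; taking reciprocals yields the product formula. In the degenerate case some factor kernel vanishes, so the $L_{\tilde\beta_{j_0}}$-coefficient vector of $h_{j_0}$ fails to lie in $\mathrm{im}(A^{(j_0)})$; since the $L_{\tilde\beta}$-coefficient vector of $h_0$ is the decomposable tensor $\bigotimes_j b^{(j)}$ (with $b^{(j)}\neq0$ the coefficient vector of $h_j$), it cannot lie in $\bigotimes_j\mathrm{im}(A^{(j)})$, hence no competitor exists and $B_{M,\rho}^{I,h_0}(z_0)=0$, matching the vanishing product on the right.

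I expect the main obstacle to be the middle step: proving that $I\cap A^2(M,\rho)$ is spanned by the high basis products and computing its orthogonal complement, i.e. verifying the injectivity of the relevant Kronecker product and the tensor-image description. Once the bases are in place and that linear-algebraic core is settled, the construction of $F$, the orthogonality, and both cases are routine bookkeeping.
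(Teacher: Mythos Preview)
Your proof is correct and takes a genuinely different route from the paper. The paper proves the two inequalities separately: for $\frac{1}{B_{M,\rho}^{I,h_0}}\le\prod_j\frac{1}{B_{U_j,\rho_j}^{I_{\tilde\beta_j,z_j},h_j}}$ it uses products of one-variable competitors (as you do), but for the reverse inequality it argues by induction on $n$, expanding an arbitrary competitor $f=\sum_m e_{1,m}f_{1,m}$ along the basis in the first variable, recognizing that each $f_{1,m}$ with $k_{1,m}\le\tilde\beta_1$ is (up to a constant) an $(n-1)$-dimensional competitor for $\prod_{j\ge2}h_j$, and invoking the induction hypothesis; the degenerate case then falls out as the contrapositive of this inductive statement. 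Your approach instead identifies $N=(I\cap A^2(M,\rho))^\perp$ explicitly as the finite-dimensional span of low products via the Kronecker-product injectivity, and then observes that the product of one-variable extremals already lies in $N$, so it is \emph{the} minimizer by orthogonality rather than merely a candidate. This is more structural and avoids induction, at the cost of the extra (easy) linear-algebra facts about Kronecker products of injective maps and membership of nonzero decomposable tensors in tensor products of subspaces; the paper's induction is more hands-on but needs no such facts. Both methods handle the degenerate case, yours via the tensor-image description and the paper's via the contrapositive of its inductive claim.
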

\begin{proof}
As $A^2(M,\rho)\not=\{0\}$, it is clear that $A^2(U_j,\rho_j)\not=\emptyset$ for any $1\le j\le n$.
 Using Lemma \ref{construction of basis in dim one}, for any $1\le j\le n$,  there is 
	a complete orthonormal basis $\{e_{j,m}\}_{m\in\mathbb{Z}_{\ge0}}$   for $A^2(U_j,\rho_j)$ satisfying that $k_{j,m}:=ord_{z_j}(e_{j,m}^*)$ is strictly increasing with respect to $m$. It follows from Lemma \ref{basis of product} that $\{\prod_{1\le j\le n}e_{j,\sigma_j}\}_{\sigma\in \mathbb{Z}_{\ge0}^n}$ is a complete orthonormal basis for $A^2(M,\rho)$.
	
If $f_j\in A^2(U_j,\rho_j)$ satisfies $(f_j-h_j,z_j)\in I_{\tilde\beta_j,z_j}$ for $1\le j\le n$, then $\prod_{1\le j\le n}f_j\in A^2(M,\rho)$ and  $(\prod_{1\le j\le n}f_j-h_0,z_0)\in I$. Thus, we have 
\begin{equation}
	\label{eq:0811c}
	\begin{split}
		\frac{1}{B_{M,\rho}^{I,h_0}(z_0)}&=\inf\left\{\|f\|_{M,\rho}^2:f\in A^2(M,\rho)\,\&\,(f-h_0,z_0)\in I\right\}\\
		&\le\inf\left\{\|\prod_{1\le j\le n}f_j\|_{M,\rho}^2:f_j\in A^2(U_j,\rho_j)\,\&\,(f_j-h_j,z_j)\in I_{\tilde\beta_j,z_j}\right\}\\
		&=\frac{1}{\prod_{1\le j\le n}B_{U_j,\rho_j}^{I_{\tilde\beta_j,z_j},h_j}(z_j)}.
	\end{split}
\end{equation}

Next, we prove the following statement:
 If  $f\in A^2(M,\rho)$ satisfies  $(f-h_0,z_0)\in I$, then there is $f_j\in A^2(U_j,\rho_j)$ satisfies $(f_j-h_j,z_j)\in I_{\tilde\beta_j,z_j}$ for $1\le j\le n$ and $\|f\|_{M,\rho}^2\ge\prod_{1\le j\le n}\|f_j\|^2_{U_j,\rho_j}$.
 
  We prove the statement by induction on $n$. 
 When $n=1$, the statement is clear. Assume that the statement holds for   $n=k-1$ for integer $k\ge2$. Now, we prove the case $n=k$. Denote that 
 $$M_1:=\prod_{2\le j\le k}U_j\text{ and }\hat\rho_1:=\prod_{2\le j\le k}\rho_j.$$
   Let  $f\in A^2(M,\rho)$ satisfies  $(f-h_0,z_0)\in I$, then we have $$f=\sum_{m\ge0}e_{1,m}f_{1,m},$$ 
 where $f_{1,m}\in A^2(M_1,\hat\rho_1)$.  Let $m_1$ be the largest integer such that $k_{1,m_1}\le\tilde\beta_1$, then there is a constant $c_m$ such that 
 $$(f_{1,m}-c_m\prod_{2\le j\le k}h_j,\hat z_1)\in I'$$
 for $0\le m\le m_1$,
 where $\hat z_1=(z_2,\ldots,z_k)$ and 
 \begin{displaymath}
 	\begin{split}
 		I'=\Bigg\{(g,\hat z_1)\in\mathcal{O}_{M_1, \hat z_1}:g=\sum_{\alpha=(\alpha_2,\ldots,\alpha_k)\in\mathbb{Z}_{\ge0}^{k-1}}b_{\alpha}&\prod_{2\le j\le k}(w_j-z_j)^{\alpha_j}\text{ near  }\hat z_1\\
 		&\text{s.t. $b_{\alpha}=0$ for any $\alpha\in L_1$}\Bigg\},
 	\end{split}
 \end{displaymath}
where $L_1=\{\alpha=(\alpha_2,\ldots,\alpha_k)\in\mathbb{Z}_{\ge0}^{k-1}:\alpha_j\le\tilde\beta_j$ for any $2\le j\le k\}$.
Note that $\sum_{0\le m\le m_1}|c_m|>0$, then there is $\tilde f_1\in A^2(M_1,\hat\rho_1)$ such that $(\tilde f_{1}-\prod_{2\le j\le k}h_j,\hat z_1)\in I'$ and 
$$\|\tilde f_{1}\|^2_{M_1,\hat\rho_1}=\inf\left\{\|f\|^2_{M_1,\hat\rho_1}:f\in A^2(M_1,\hat\rho_1)\,\&\,(f-\prod_{2\le j\le k}h_j,\hat z_1)\in I'\right\}.$$
Thus, we have $(f-\sum_{0\le m\le m_1}c_me_{1,m}\tilde f_1,z_0)\in I$ and
\begin{equation}
	\label{eq:0811d}
	\begin{split}
			\|f\|^2_{M,\rho}&\ge\|\sum_{0\le m\le m_1}e_{1,m}f_{1,m}\|^2_{M,\rho}\\
			&=\sum_{0\le m\le m_1}\|f_{1,m}\|_{M_1,\hat\rho_1}^2\\
			&\ge\sum_{0\le m\le m_1}|c_m|^2\|\tilde f_{1}\|_{M_1,\hat\rho_1}^2.
	\end{split}
\end{equation}
 Denote that $f_1:=\sum_{0\le m\le m_1}c_me_{1,m}$, then we have $(f_1-h_1,z_1)\in I_{\tilde\beta_1,z_1}$ and $f_1\in A^2(U_1,\rho_1)$.
By assumption, there is $f_j\in A^2(U_j,\rho_j)$ satisfies $(f_j-h_j,z_j)\in I_{\tilde\beta_j,z_j}$ for $2\le j\le k$ and 
$$\|\tilde f_1\|_{M_1,\hat\rho_1}^2\ge\prod_{2\le j\le k}\|f_j\|^2_{U_j,\rho_j},$$
 Inequality \eqref{eq:0811d} implies that 
$$\|f\|^2_{M,\rho}\ge \|f_1\|_{U_1,\rho_1}^2\|\tilde f_{1}\|_{M_1,\hat\rho_1}^2\ge\prod_{1\le j\le k}\|f_j\|_{U_j,\rho_j}^2.$$
Thus, we have proved the case $n=k$. 

Now, we have 
\begin{equation}
	\label{eq:0811e}
	\begin{split}
		&\frac{1}{\prod_{1\le j\le n}B_{U_j,\rho_j}^{I_{\tilde\beta_j,z_j},h_j}(z_j)}\\
		=&\inf\left\{\|\prod_{1\le j\le n}f_j\|_{M,\rho}^2:f_j\in A^2(U_j,\rho_j)\,\&\,(f_j-h_j,z_j)\in I_{\tilde\beta_j,z_j}\right\}\\
		\le&\inf\left\{\|f\|_{M,\rho}^2:f\in A^2(M,\rho)\,\&\,(f-h_0,z_0)\in I\right\}\\
		=&\frac{1}{B_{M,\rho}^{I,h_0}(z_0)}.
	\end{split}
\end{equation}
It follows from inequality \eqref{eq:0811c} and inequality \eqref{eq:0811e} that 
$$B_{M,\rho}^{I,h_0}(z_0)=\prod_{1\le j\le n}B_{U_j,\rho_j}^{I_{\tilde\beta_j,z_j},h_j}(z_j),$$ thus Lemma \ref{l:Bergman-prod2} holds.
\end{proof}

Let $z_0=(z_1,\ldots,z_n)\in M$, and let $f$ be a holomorphic function near $z_0$. Let 
$$\psi=\max_{1\le j\le n}\{2p_j\log|w_j-z_j|\},$$ 
where $p_j>0$ is a constant.

We recall a characterization for $(f,z_0)\in\mathcal{I}(\psi)_{z_0}$.
 Let $f=\sum_{\alpha\in\mathbb{Z}_{\ge0}^n}b_{\alpha}(w-z_0)^{\alpha}$ (Taylor expansion) be a holomorphic function  on $\{w\in\mathbb{C}^n:|w_j-z_j|<r_0$ for any $j\in\{1,2,...,n\}\}$, where $r_0>0$. 	
\begin{Lemma}[see \cite{guan-20}, see also \cite{GY-concavity4}]\label{l:psi}
$(f,z_0)\in\mathcal{I}(\psi)_{z_0}$ if and only if $\sum_{1\le j\le n}\frac{\alpha_j+1}{p_j}>1$ for any $\alpha\in\mathbb{Z}_{\ge0}^n$ satisfying $b_{\alpha}\not=0$.
\end{Lemma}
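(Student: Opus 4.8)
The plan is to reduce the germ condition to a concrete integrability question over a small polydisc, settle it for monomials by an explicit computation, and then bootstrap from monomials to a general $f$ using the ideal structure of $\mathcal{I}(\psi)_{z_0}$ together with the finiteness of the set of ``bad'' exponents. First I would normalise $z_0=0$ and rewrite the weight as
$$e^{-\psi}=\Big(\max_{1\le j\le n}|w_j|^{2p_j}\Big)^{-1},$$
which is invariant under the torus action $w_j\mapsto e^{i\theta_j}w_j$. By definition $(f,0)\in\mathcal I(\psi)_0$ means $|f|^2e^{-\psi}$ is integrable on some polydisc $U=\prod_j\{|w_j|<r\}$ with $r\le r_0$, and integrability on one such $U$ implies it on every smaller one, so the whole question is about the germ at the origin.

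The quantitative crux is the single-monomial computation of $J_\alpha:=\int_U|w^\alpha|^2e^{-\psi}\,dV$. Passing to polar coordinates $w_j=\rho_je^{i\theta_j}$ and partitioning $[0,r)^n$ into the regions $R_k=\{\rho:\rho_k^{p_k}\ge\rho_j^{p_j}\text{ for all }j\}$, on which $\max_j\rho_j^{2p_j}=\rho_k^{2p_k}$, I integrate out the variables $\rho_j$ ($j\ne k$) over $(0,\rho_k^{p_k/p_j})$ and am left with a one-dimensional integral whose integrand behaves near $\rho_k=0$ like $\rho_k^{2p_k\gamma_\alpha-1}$, where $\gamma_\alpha:=\sum_j\frac{\alpha_j+1}{p_j}-1$. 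Hence each $R_k$-integral, and therefore $J_\alpha$, is finite if and only if $\gamma_\alpha>0$, i.e. $\sum_j\frac{\alpha_j+1}{p_j}>1$.

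For the forward implication I would exploit the torus invariance: integrating first in $\theta$ and applying Parseval (all integrands are nonnegative, so Tonelli justifies every interchange) gives
$$\int_U|f|^2e^{-\psi}\,dV=\sum_{\alpha}|b_\alpha|^2\,J_\alpha.$$
If the left-hand side is finite then $|b_\alpha|^2J_\alpha<+\infty$ for every $\alpha$, so $b_\alpha\ne0$ forces $J_\alpha<+\infty$, which by the monomial computation forces $\sum_j\frac{\alpha_j+1}{p_j}>1$. This gives exactly the stated necessary condition.

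The reverse implication is where the only genuine difficulty lies: knowing that each monomial occurring in $f$ is individually in $\mathcal I(\psi)_0$ does not immediately give $f\in\mathcal I(\psi)_0$, since $f$ is an infinite series and $\mathcal I(\psi)_0$ is only closed under \emph{finite} sums. I would resolve this through finiteness and the ideal property. The bad set $N=\{\alpha\in\mathbb Z_{\ge0}^n:\sum_j\frac{\alpha_j+1}{p_j}\le1\}$ is finite (since $\frac{\alpha_j+1}{p_j}\le1$ forces $\alpha_j\le p_j-1$) and is a down-set, so its complement is upward closed and, by Dickson's lemma, is generated as an up-set by finitely many minimal exponents $\mu^{(1)},\dots,\mu^{(L)}$. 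Since the Taylor support of $f$ lies in this complement, regrouping the absolutely convergent Taylor series lets me write $f=\sum_{l}w^{\mu^{(l)}}g_l$ with each $g_l$ holomorphic near $0$; thus $f$ lies in the monomial ideal $(w^{\mu^{(1)}},\dots,w^{\mu^{(L)}})$. Because $\mathcal I(\psi)_0$ is an ideal of $\mathcal O_0$ (if $|h|^2e^{-\psi}\in L^1_{\mathrm{loc}}$ and $g\in\mathcal O_0$ then $|gh|^2e^{-\psi}\le C|h|^2e^{-\psi}\in L^1_{\mathrm{loc}}$, and it is closed under finite sums) and each generator $w^{\mu^{(l)}}$ satisfies $\sum_j\frac{\mu^{(l)}_j+1}{p_j}>1$, hence lies in $\mathcal I(\psi)_0$ by the monomial computation, the whole monomial ideal is contained in $\mathcal I(\psi)_0$; in particular $f\in\mathcal I(\psi)_0$. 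I expect the main obstacle to be precisely this passage from monomials to $f$, and the observation that unlocks it is that the exceptional set $N$ is finite, which reduces an infinite-series issue to finitely many ideal generators.
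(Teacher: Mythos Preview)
The paper does not prove this lemma: it is stated with a citation to \cite{guan-20} and \cite{GY-concavity4} and no argument is given. So there is no ``paper's own proof'' to compare against. Your proposal stands on its own, and it is correct.

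A few remarks. The monomial computation is right: after angular integration and the partition into the regions $R_k$, the radial exponent in $\rho_k$ works out to $2p_k\gamma_\alpha-1$ exactly as you say. The forward implication via the Parseval/orthogonality identity
\[
\int_U|f|^2e^{-\psi}\,dV=\sum_\alpha|b_\alpha|^2J_\alpha
\]
is clean and correct (the angular integration kills all cross terms, and Tonelli justifies the interchange). Your handling of the reverse implication is the non-obvious part and you have identified the right mechanism: the bad set $N=\{\alpha:\sum_j\frac{\alpha_j+1}{p_j}\le1\}$ is finite and a down-set, its complement is an up-set with finitely many minimal elements, and regrouping the Taylor series around those minimal monomials expresses $f$ as a finite $\mathcal{O}_0$-combination of monomials already in $\mathcal{I}(\psi)_0$. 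The only point worth making explicit in a write-up is the convergence of the regrouped series $g_l$: Cauchy estimates on a slightly smaller polydisc handle this immediately, so the $g_l$ are genuine holomorphic germs.
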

Denote that $E_1:=\{\alpha\in\mathbb{Z}_{\ge0}^n:\sum_{1\le j\le n}\frac{\alpha_j+1}{p_j}\leq 1\}$. 
We call $\alpha>\beta$ for any $\alpha,\beta\in \mathbb{Z}_{\ge0}^n$ if $\alpha_j\ge \beta_j$ for any $1\le j\le n$ but $\alpha\not=\beta$.
Let $L\not=\emptyset$ be a subset of $E_1$ satisfying that if $\alpha\in L$ then $\beta\not\in E_1$ for any $\beta >\alpha$.
Let 
$$f(w)=\sum_{\alpha\in L}b_{\alpha}(w-z_0)^{\alpha}$$ be a holomorphic function on $M$, where $b_{\alpha}\not=0$ is a constant.

\begin{Lemma}
	\label{l:Bergman-prod} 
	$$\frac{1}{B_{M,\rho}^{\mathcal{I}(\psi)_{z_0},f}(z_0)}=\sum_{\alpha \in L}\frac{|b_{\alpha}|^2}{\prod_{1\le j\le n}B_{U_j,\rho_j}^{I_{j,\alpha_j},(w_j-z_j)^{\alpha_j}}(z_j)}$$
 holds, where $I_{j,\alpha_j}=((w_j-z_j)^{\alpha_j+1})$ is an ideal of $\mathcal{O}_{U_j,z_j}$ and $w_j$ is the coordinate on $U_j$.
\end{Lemma}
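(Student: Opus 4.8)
The plan is to diagonalize the problem by choosing, in each factor $U_j$, the special orthonormal basis of Lemma~\ref{construction of basis in dim one}, so that the one–variable extremal functions become single basis vectors; the product basis of Lemma~\ref{basis of product} then turns the extremal function for $f$ into an \emph{orthogonal} sum indexed by the corners $\alpha\in L$, and the hypothesis that $L$ consists of maximal points of $E_1$ makes everything decouple.

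Concretely, I would first fix for each $1\le j\le n$ a complete orthonormal basis $\{f_{j,i}\}_{i\ge0}$ of $A^2(U_j,\rho_j)$ with $k_{j,i}:=\mathrm{ord}_{z_j}(f_{j,i})$ strictly increasing (Lemma~\ref{construction of basis in dim one}), writing $a_{j,i}$ for the leading Taylor coefficient of $f_{j,i}$. For a multi-index $\sigma$ set $e_\sigma=\prod_{j}f_{j,\sigma_j}$ and $k(\sigma)=(k_{1,\sigma_1},\dots,k_{n,\sigma_n})$; by Lemma~\ref{basis of product} the family $\{e_\sigma\}$ is a complete orthonormal basis of $A^2(M,\rho)$. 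For each $\alpha\in L$ whose components are all attained as vanishing orders (the degenerate case is treated last), let $\sigma(\alpha)$ be the unique index with $k(\sigma(\alpha))=\alpha$ and put $G_\alpha:=e_{\sigma(\alpha)}/\prod_j a_{j,\sigma(\alpha)_j}$. A short one-variable computation against $\{f_{j,i}\}$ identifies $G_\alpha$ with $\prod_j h_{j,\alpha_j}$, where $h_{j,\alpha_j}$ is the minimal-norm element of $A^2(U_j,\rho_j)$ subject to $(h_{j,\alpha_j}-(w_j-z_j)^{\alpha_j},z_j)\in I_{j,\alpha_j}$, whence $\|G_\alpha\|_{M,\rho}^2=\prod_j\|h_{j,\alpha_j}\|_{U_j,\rho_j}^2=\prod_j B_{U_j,\rho_j}^{I_{j,\alpha_j},(w_j-z_j)^{\alpha_j}}(z_j)^{-1}$ (consistent with Lemma~\ref{l:Bergman-prod2} for a single corner).

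For the inequality $1/B_{M,\rho}^{\mathcal I(\psi)_{z_0},f}(z_0)\le$ RHS I would exhibit the competitor $G:=\sum_{\alpha\in L}b_\alpha G_\alpha$. Each $G_\alpha$ has Taylor support inside $\{\gamma:\gamma_j\ge\alpha_j\ \forall j\}$, and since $\alpha$ is maximal in $E_1$ (i.e. $\gamma>\alpha\Rightarrow\gamma\notin E_1$) the only index of $E_1$ in this support is $\alpha$ itself, with coefficient $1$. Hence the $E_1$-coefficients of $G$ coincide with those of $f$ ($b_\alpha$ at $\alpha\in L$, $0$ on $E_1\setminus L$), so Lemma~\ref{l:psi} gives $(G-f,z_0)\in\mathcal I(\psi)_{z_0}$; as the $e_{\sigma(\alpha)}$ are distinct orthonormal vectors, $\|G\|_{M,\rho}^2=\sum_{\alpha\in L}|b_\alpha|^2\|G_\alpha\|^2$ is exactly the right-hand side. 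For the reverse inequality I would show $G\perp V$, where $V=\{v\in A^2(M,\rho):(v,z_0)\in\mathcal I(\psi)_{z_0}\}$. For fixed $\gamma$ the $(w-z_0)^\gamma$-coefficient of any $v=\sum_\tau c_\tau e_\tau$ is the \emph{finite} sum $\sum_{k(\tau)\le\gamma}c_\tau p^\gamma_\tau$ (with $p^\gamma_\tau$ the $(w-z_0)^\gamma$-coefficient of $e_\tau$, nonzero only if $k(\tau)\le\gamma$), legitimate because admissibility of $\rho$ makes $A^2$-convergence locally uniform. For $v\in V$ all such coefficients with $\gamma\in E_1$ vanish, and $k(\tau)\le\gamma\in E_1$ forces $\tau\in P:=\{\tau:k(\tau)\in E_1\}$; since the diagonal entries $p^{k(\tau)}_\tau=\prod_j a_{j,\tau_j}$ are nonzero, an induction up the partial order on the finite set $P$ forces $c_\tau=0$ for all $\tau\in P$, in particular $\langle v,e_{\sigma(\alpha)}\rangle=0$. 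Thus $G\perp V$, and since every competitor $g$ satisfies $g-G\in V$, Pythagoras yields $\|g\|^2=\|G\|^2+\|g-G\|^2\ge\|G\|^2$, so $G$ is extremal and equality holds.

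The main obstacle is precisely this decoupling: proving that the single function $G$ \emph{attains} the infimum, not merely bounds it. This is where maximality of $L$ in $E_1$ is used twice — once to ensure each block $G_\alpha$ meets $E_1$ only at its own corner (so $G$ is admissible), and once to supply the strictly triangular structure forcing $G\perp V$ — while the monotone-order basis of Lemma~\ref{construction of basis in dim one} is the device that keeps the one-dimensional extremals, and hence their products, orthogonal. Finally, in the degenerate case where some $\alpha_j$ is not attained, one has $B_{U_j,\rho_j}^{I_{j,\alpha_j},(w_j-z_j)^{\alpha_j}}(z_j)=0$, so the corresponding term and the whole right-hand side are $+\infty$; the same triangular induction applied to a hypothetical competitor $g$, using that every index strictly below $\alpha$ lies in $E_1\setminus L$, forces the $\alpha$-coefficient of $g$ to vanish, contradicting $b_\alpha\ne0$. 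Hence no competitor exists and $1/B_{M,\rho}^{\mathcal I(\psi)_{z_0},f}(z_0)=+\infty$ as well, so the asserted identity persists.
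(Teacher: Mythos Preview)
Your proof is correct and follows essentially the same strategy as the paper: both arguments hinge on the special orthonormal basis of Lemma~\ref{construction of basis in dim one} in each factor, the product basis of Lemma~\ref{basis of product}, and the maximality of $L$ in $E_1$ to make the blocks indexed by $\alpha\in L$ decouple. The only organizational difference is that the paper starts from the minimizer $f_0$ (provided by Lemma~\ref{l:bergman<+infty}) and shows its basis expansion is supported on $\{\beta_\alpha:\alpha\in L\}$ by invoking minimality, whereas you build the candidate $G$ first and prove it is extremal via the explicit triangular argument $G\perp V$; these are the two standard ways to phrase a Hilbert-space projection and lead to the same computation.
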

\begin{proof}As $(f,z_0)\not\in \mathcal{I}(\psi)_{z_0}$ and $((w_j-z_j)^{\alpha_j},z_j)\not\in I_{j,\alpha_j}$, it follows from Lemma \ref{l:bergman<+infty} that 
$B_{M,\rho}^{\mathcal{I}(\psi)_{z_0},f}(z_0)<+\infty$ and $B_{U_j,\rho_j}^{I_{j,\alpha_j},(w_j-z_j)^{\alpha_j}}(z_j)<+\infty$.

If $B_{U_j,\rho_j}^{I_{j,\alpha_j},(w_j-z_j)^{\alpha_j}}(z_j)>0$ for any $\alpha\in L$ and $1\le j\le n$, it follows from Lemma \ref{l:bergman<+infty} that there exists $g_{j,\alpha_j}\in A^2(U_j,\rho_j)$ such that $(g_{j,\alpha_j}-(w_j-z_j)^{\alpha_j},z_j)\in I_{j,\alpha_j}$. Note that $\{\beta>\alpha:\alpha\in L \}\cap E_1=\emptyset$,  then it follows from Lemma \ref{l:psi} that 
$$(\sum_{\alpha\in L}b_{\alpha}\prod_{1\le j\le n}g_{j,\alpha}-f,z_0)\in\mathcal{I}(\psi)_{z_0},$$
which implies $B_{M,\rho}^{\mathcal{I}(\psi)_{z_0},f}(z_0)\geq\frac{1}{\|\sum_{\alpha\in L}b_{\alpha}\prod_{1\le j\le n}g_{j,\alpha}\|^2_{M,\rho}}>0$. Hence, we obtain that $B_{M,\rho}^{\mathcal{I}(\psi)_{z_0},f}(z_0)=0$ implies $\sum_{\alpha \in L}\frac{|b_{\alpha}|^2}{\prod_{1\le j\le n}B_{U_j,\rho_j}^{I_{j,\alpha_j},(w_j-z_j)^{\alpha_j}}(z_j)}=+\infty$.

In the following, we consider the case $B_{M,\rho}^{\mathcal{I}(\psi)_{z_0},f}(z_0)>0$.
	Using Lemma \ref{construction of basis in dim one}, let $\{f_{j,l}\}_{l\in\mathbb{Z}_{\ge0}}$ be a complete orthonormal basis of $A^2(U_j,\rho_j)$ such that $k_{j,l}:=ord_{z_j}(f_{j,l})$ is strictly increasing with respect to $l$ for any $1\le j\le n$. There is a constant $c_{j,l}\not=0$ such that 
	$$ord_{z_j}(f_{j,l}(w_j)-c_{j,l}(w_j-z_j)^l)>k_{j,l}$$
	 for any $1\le j\le n$ and $l\in\mathbb{Z}_{\ge0}$.
	By Lemma \ref{l:bergman<+infty} there is $f_0\in A^2(M,\rho)$ such that $(f_0-f,z_0)\in I$ and   
	\begin{equation}
		\label{eq:0826b}\|f_0\|_{M,\rho}^2=\frac{1}{B_{M,\rho}^{\mathcal{I}(\psi)_{z_0},f}(z_0)}.
	\end{equation}
	Lemma \ref{basis of product} shows that $\{\prod_{1\le j\le n}f_{j,\alpha_j}\}_{\alpha\in\mathbb{Z}_{\ge0}^n}$ is a complete orthonormal basis of $A^2(M,\rho)$. Then we have 
	$$f_0=\sum_{\alpha\in \mathbb{Z}_{\ge0}^n}d_{\alpha}\prod_{1\le j\le n}f_{j,\alpha_j},$$
	where $d_{\alpha}$ is a constant for any $\alpha$. Since $(f_0-f,z_0)\in I$, $f(w)=\sum_{\alpha\in L}b_{\alpha}(w-z_0)^{\alpha}$.and $\{\beta>\alpha:\alpha\in L \}\cap E_1=\emptyset$,  then it follows from Lemma \ref{l:psi} that for any $\alpha\in L$, there exists $\beta_{\alpha}\in\mathbb{Z}_{\ge0}^n$ such that  
	$$(d_{\beta_{\alpha}}\prod_{1\le j\le n}f_{j,\beta_{\alpha,j}}-b_{\alpha}(w-z_0)^{\alpha},z_0)\in\mathcal{I}(\psi)_{z_0},$$ which implies that 
	$$ord_{z_j}f_{j,\beta_{\alpha,j}}=\alpha_j.$$
	Note that 
	$\int_{U_j}f_{j,l}\overline g\rho_j=0$
	holds for any $g\in A^2(U_j,\rho_j)$ satisfying $ord_{z_j}(g)>ord_{z_j}(f_{j,l})$.
	 Then we have 
	$$\inf\left\{\int_{U_j}|\tilde f|^2\rho_j:\tilde f\in\mathcal{O}(U_j)\,\&\,(\tilde f-(w_j-z_j)^{\alpha_j},z_j)\in I_{j,\alpha_j}\right\}=\int_{U_j}\left|\frac{f_{j,\beta_{\alpha,j}}}{c_{j,\alpha_j}}\right|^2\rho_j,$$
	i.e., 
	\begin{equation}
		\label{eq:220803b}B_{U_j,\rho_j}^{I_{j,\alpha_j},(w_j-z_j)^{\alpha_j}}(z_j)=\frac{|c_{j,\alpha_j}|^2}{\int_{U_j}|f_{j,\beta_{\alpha,j}}|^2\rho_j},
	\end{equation}
	where $c_{j,\alpha_j}=\lim_{w_j\rightarrow z_j}\frac{f_{j,\beta_{\alpha,j}}(w_j)}{(w_j-z_j)^{\alpha_j}}$. Following from $(d_{\beta_{\alpha}}\prod_{1\le j\le n}f_{j,\beta_{\alpha,j}}-b_{\alpha}(w-z_0)^{\alpha},z_0)\in\mathcal{I}(\psi)_{z_0},$ we know that 
	\begin{equation}
		\label{eq:0826c}d_{\beta_{\alpha}}\prod_{1\le j\le n}c_{j,\alpha_j}=b_{\alpha}
	\end{equation}
	holds for any $\alpha\in L$.
 As $\{\prod_{1\le j\le n}f_{j,\alpha_j}\}_{\alpha\in\mathbb{Z}_{\ge0}^n}$ is a complete orthonormal basis of $A^2(M,\rho)$, it follows from equality \eqref{eq:0826b}, equality \eqref{eq:220803b} and equality \eqref{eq:0826c} that
	$$f_0=\sum_{\alpha\in L}d_{\beta_{\alpha}}\prod_{1\le j\le n}f_{j,\beta_{\alpha,j}}$$
	and  
	\begin{equation}
		\nonumber
		\begin{split}
			\frac{1}{B_{M,\rho}^{\mathcal{I}(\psi)_{z_0},f}(z_0)}&=\|f_0\|_{M,\rho}^2\\
			&=\sum_{\alpha\in L}|d_{\beta_{\alpha}}|^2\|\prod_{1\le j\le n}f_{j,\beta_{\alpha,j}}\|_{M,\rho}\\
			&=\sum_{\alpha\in L}|d_{\beta_{\alpha}}|^2\prod_{1\le j\le n}\frac{|c_{j,\alpha_j}|^2}{B_{U_j,\rho_j}^{I_{j,\alpha_j},(w_j-z_j)^{\alpha_j}}(z_j)}\\
			&=\sum_{\alpha \in L}\frac{|b_{\alpha}|^2}{\prod_{1\le j\le n}B_{U_j,\rho_j}^{I_{j,\alpha_j},(w_j-z_j)^{\alpha_j}}(z_j)}.
		\end{split}
	\end{equation}
	
Thus, Lemma \ref{l:Bergman-prod}	 holds.
		 \end{proof}

\subsection{Hardy space $H^2(D)$}\label{sec:2.1}
Let $D$ be a planar regular region with finite boundary components which are analytic Jordan curves  (see \cite{saitoh}, \cite{yamada}). In this section, we recall some properties related to Hardy space $H^2(D)$ and its corresponding kernel functions.

Let $H^2(D)$ (see \cite{saitoh}) denote the analytic Hardy class on $D$ defined as the set of all analytic functions $f(z)$ on $D$ such that the subharmonic functions $|f(z)|^2$ have harmonic majorants $U(z)$: 
$$|f(z)|^2\le U(z)\,\, \text{on}\,\,D.$$
Then each function $f(z)\in H^2(D)$ has Fatou's nontangential boundary value a.e. on $\partial D$ belonging to $L^2(\partial D)$ (see \cite{duren}). It is well know (see \cite{rudin55}) that if a subharmonic function has a harmonic majorant in $D$, then there exists a least harmonic majorant. Denote the least harmonic majorant of $|f|^2$ by $u_f$.

Let $z_0\in D$. Let $L^2(\partial D,\rho)$ be the space of complex valued measurable fucntion $h$ on $\partial D$, normed by 
$$\|h\|_{\partial D,\rho}^2=\frac{1}{2\pi}\int_{\partial D}|h|^2\rho |dz|,$$
where $\rho=\frac{\partial G_D(z,z_0)}{\partial v_z}$ is a positive continuous function on $\partial D$ by the analyticity of $\partial D$, $G_D(z,z_0)$ is the Green function on $D$, and $\partial/\partial v_z$ denotes the derivative along the outer normal unit vector $v_z$.

The following lemma gives some properties related to the Hardy space $H^2(D)$.
\begin{Lemma}[\cite{rudin55}]
\label{l:0-1}
	$(a)$ If $f\in H^2(D)$, there is a function $f_*$ on $\partial D$ such that $f$  has nontangential boundary value $f_*$ almost everywhere on $\partial D$. The map $\gamma:f\mapsto f_*$ is an injective linear map from $H^2(D)$ into $L^2(\partial D,\rho)$ and
	$$\|f_*\|_{\partial D,\rho}^2=u_f(z_0)$$
	holds for any $f\in H^2(D)$, where $u_f$ is the least harmonic majorant of $|f|^2$. 
	
	$(b)$ $g\in \gamma(H^2(D))$ if and only if 
	$$\int_{\partial D}g(z)\phi(z)dz=0$$
	holds for any holomorphic function $\phi$ on a neighborhood of  $\overline D$.
	 
	 $(c)$ The inverse of $\gamma $ is given by 
	\begin{equation}
		\label{eq:0728a}f(w)=\frac{1}{2\pi\sqrt{-1}}\int_{\partial D}\frac{f_*(z)}{z-w}dz
	\end{equation}
	for any $z\in D$.
\end{Lemma}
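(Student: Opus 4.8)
The plan is to transfer the classical disc theory to $D$ via its Green function together with a smooth exhaustion, and to identify the measure $\rho\,|dz|/2\pi$ with harmonic measure at $z_0$. Write $d\omega(z):=\frac{1}{2\pi}\frac{\partial G_D(z,z_0)}{\partial v_z}|dz|$. Green's identity applied to a harmonic $u$ and to $G_D(\cdot,z_0)$ (using $G_D=0$ on $\partial D$ and the logarithmic singularity at $z_0$) shows that $d\omega$ is exactly the harmonic measure at $z_0$: it is a probability measure on $\partial D$ and every bounded harmonic $u$ with boundary values $u_*$ satisfies $u(z_0)=\int_{\partial D}u_*\,d\omega$. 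In particular the norm in the statement is $\|h\|_{\partial D,\rho}^2=\int_{\partial D}|h|^2\,d\omega$.

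For part $(a)$ I would first produce the nontangential boundary value $f_*$. Since $\partial D$ consists of finitely many analytic Jordan curves, each boundary point has a neighborhood in which a conformal change of variable turns $f$ into a Hardy-class function on a half-disc; the classical Fatou theorem then supplies nontangential limits a.e., and patching the finitely many charts yields $f_*$ a.e. on $\partial D$. The norm identity $\|f_*\|_{\partial D,\rho}^2=u_f(z_0)$ is the core of $(a)$: one shows that the least harmonic majorant $u_f$ of the subharmonic function $|f|^2$ is exactly the harmonic extension (the Poisson integral against $\omega$) of its own boundary values $|f_*|^2$, so that $u_f(z_0)=\int_{\partial D}|f_*|^2\,d\omega$. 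Injectivity of $\gamma$ is then immediate: if $f_*=0$ a.e. then $u_f(z_0)=0$, and since $u_f\ge|f|^2\ge0$ is harmonic and attains its minimum $0$ at the interior point $z_0$, the minimum principle forces $u_f\equiv0$, whence $f\equiv0$.

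For parts $(b)$ and $(c)$ I would use the exhaustion $D_\varepsilon:=\{z\in D: G_D(z,z_0)<-\varepsilon\}$, which for small $\varepsilon>0$ are relatively compact subdomains with analytic boundary $\{G_D=-\varepsilon\}$ tending to $\partial D$. On each $D_\varepsilon$ the function $f$ is holomorphic up to the boundary, so Cauchy's theorem gives $\int_{\partial D_\varepsilon}f\phi\,dz=0$ for $\phi$ holomorphic near $\overline D$, and Cauchy's formula gives $f(w)=\frac{1}{2\pi\sqrt{-1}}\int_{\partial D_\varepsilon}\frac{f(z)}{z-w}\,dz$ for $w\in D_\varepsilon$. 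Letting $\varepsilon\to0$ and using the $L^2(\partial D,\rho)$-convergence of the restrictions $f|_{\partial D_\varepsilon}$ towards $f_*$ yields both the necessity in $(b)$ and the reproducing formula \eqref{eq:0728a}, which is $(c)$. For the sufficiency in $(b)$, given $g\in L^2(\partial D,\rho)$ orthogonal to every such $\phi$, I would define $f$ by the Cauchy integral \eqref{eq:0728a}; for $w$ outside $\overline D$ the kernel $z\mapsto(z-w)^{-1}$ extends holomorphically across $\overline D$, so the orthogonality hypothesis makes the exterior Cauchy integral vanish, and the Plemelj jump relations then identify the inner boundary value of $f$ with $g$, while the norm identity of $(a)$ shows $f\in H^2(D)$.

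The main obstacle is the norm identity in $(a)$: proving that the least harmonic majorant $u_f$ coincides with the Poisson integral of $|f_*|^2$, equivalently that the boundary measure representing $u_f$ in its Riesz decomposition is absolutely continuous with density $|f_*|^2$ and carries no singular part. This is precisely the Hardy--Fatou content that does not follow formally and must be established by localizing at the analytic boundary and invoking the disc theory; once it is in place, the injectivity, the Cauchy-theorem identities, and the jump-relation argument are routine, modulo the $L^2$-convergence of $f|_{\partial D_\varepsilon}$ to $f_*$, which itself rests on the same local analysis.
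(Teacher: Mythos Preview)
The paper does not give a proof of this lemma at all; it is simply quoted from Rudin's 1955 paper \cite{rudin55} as a known result, so there is no argument in the paper to compare your proposal against. Your sketch is a reasonable outline of the classical approach (harmonic measure via Green's function, local reduction to the disc for Fatou-type limits, Cauchy's theorem on an exhaustion, Plemelj jump relations), and indeed this is essentially the circle of ideas Rudin uses, so for the purposes of this paper your proposal is adequate as background justification.
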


Lemma \ref{l:0-1} implies the following
\begin{Lemma}
	\label{l:0-1b}If $\lim_{n\rightarrow+\infty}\|\gamma(f_n)\|_{\partial D,\rho}=0$ for $f_n\in H^2(D)$, then $f_n$ uniformly converges to $0$ on any compact subset of $D$.
\end{Lemma}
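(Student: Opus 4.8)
The plan is to use the explicit Cauchy-type inversion formula for $\gamma$ recorded in part $(c)$ of Lemma \ref{l:0-1}, together with the fact that the weight $\rho$ is bounded away from $0$ on the compact boundary $\partial D$. First I would fix a compact subset $K\Subset D$ and set $\delta:=\mathrm{dist}(K,\partial D)$, which is strictly positive since $K$ is compact and contained in the open set $D$, so that $|z-w|\ge\delta$ for all $z\in\partial D$ and $w\in K$. For every $w\in K$, formula \eqref{eq:0728a} applied to $f_n$ gives
$$f_n(w)=\frac{1}{2\pi\sqrt{-1}}\int_{\partial D}\frac{\gamma(f_n)(z)}{z-w}\,dz,$$
whence, uniformly in $w\in K$,
$$|f_n(w)|\le\frac{1}{2\pi\delta}\int_{\partial D}|\gamma(f_n)(z)|\,|dz|.$$

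Next I would estimate the $L^1$-norm on the right by the weighted $L^2$-norm $\|\gamma(f_n)\|_{\partial D,\rho}$. Since $\rho=\partial G_D(z,z_0)/\partial v_z$ is positive and continuous on the compact set $\partial D$, we have $c:=\inf_{\partial D}\rho>0$, and $\partial D$ has finite total length $L:=\int_{\partial D}|dz|<+\infty$. Two applications of the Cauchy--Schwarz inequality then yield
$$\int_{\partial D}|\gamma(f_n)|\,|dz|\le L^{1/2}\left(\int_{\partial D}|\gamma(f_n)|^2\,|dz|\right)^{1/2}\le L^{1/2}c^{-1/2}\left(\int_{\partial D}|\gamma(f_n)|^2\rho\,|dz|\right)^{1/2},$$
and the last integral equals $2\pi\|\gamma(f_n)\|_{\partial D,\rho}^2$ by the definition of the norm $\|\cdot\|_{\partial D,\rho}$.

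Combining these bounds produces a constant $C=C(K,D,\rho)$, independent of $n$, with $\sup_{w\in K}|f_n(w)|\le C\|\gamma(f_n)\|_{\partial D,\rho}$; letting $n\to+\infty$ and using the hypothesis $\|\gamma(f_n)\|_{\partial D,\rho}\to 0$ then gives the asserted uniform convergence of $f_n$ to $0$ on $K$. I expect no genuine obstacle here: the argument is essentially routine, and the only points requiring care are the strict positivity of $\delta$ (which uses $K\Subset D$) and the strict positivity of $c=\inf_{\partial D}\rho$, which is exactly the positivity and continuity of $\rho$ on the compact curve $\partial D$ recorded above. The Cauchy kernel $1/(z-w)$ is uniformly bounded on $\partial D\times K$, so all the estimates are elementary.
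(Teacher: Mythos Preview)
Your proof is correct and follows exactly the route the paper intends: the paper simply states that Lemma~\ref{l:0-1} implies Lemma~\ref{l:0-1b} without giving any details, and your argument spells out the natural derivation from part~(c) (the Cauchy-type inversion formula~\eqref{eq:0728a}) together with the positivity and continuity of $\rho$ on the compact boundary $\partial D$.
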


\begin{Lemma}[\cite{rudin55}] \label{l:0-2}$ H^2(D)$ is a Hilbert space equipped with the inner product
	$$\ll f,g\gg_{\partial D,\rho}=\frac{1}{2\pi}\int_{\partial D} f_*\overline {g_*}\rho|dz|,$$
	where $\rho=\frac{\partial G_D(z,z_0)}{\partial v_z}$. Moreover,	$\mathcal{O}(D)\cap C(\overline D)$ is dense in $ H^2(D)$.
\end{Lemma}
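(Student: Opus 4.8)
The plan is to treat the two assertions separately, leaning on Lemma \ref{l:0-1}. For the \textbf{Hilbert space} claim, the form $\ll f,g\gg_{\partial D,\rho}=\frac{1}{2\pi}\int_{\partial D}f_*\overline{g_*}\rho|dz|$ is visibly sesquilinear and conjugate-symmetric, and it is positive-definite because $\ll f,f\gg_{\partial D,\rho}=\|f_*\|_{\partial D,\rho}^2$, which vanishes only if $f_*=0$ in $L^2(\partial D,\rho)$, hence $f=0$ by the injectivity of $\gamma$ in Lemma \ref{l:0-1}$(a)$. For completeness I would identify $H^2(D)$ with its image under $\gamma$: by Lemma \ref{l:0-1}$(a)$ the map $\gamma\colon f\mapsto f_*$ is a linear isometry of $H^2(D)$ onto $\gamma(H^2(D))\subset L^2(\partial D,\rho)$, so it suffices to check that this image is a \emph{closed} subspace. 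By Lemma \ref{l:0-1}$(b)$, $\gamma(H^2(D))$ is exactly the common kernel of the functionals $\Lambda_\phi(g)=\int_{\partial D}g(z)\phi(z)\,dz$, indexed by the holomorphic functions $\phi$ on neighborhoods of $\overline D$. Each $\Lambda_\phi$ is bounded on $L^2(\partial D,\rho)$: since $\rho$ is continuous and strictly positive on the compact $\partial D$, we have $\rho\ge c_0>0$, so Cauchy--Schwarz gives $|\Lambda_\phi(g)|\le\sup_{\partial D}|\phi|\cdot(\int_{\partial D}\rho^{-1}|dz|)^{1/2}\,\|g\|_{\partial D,\rho}$ with $\int_{\partial D}\rho^{-1}|dz|<+\infty$. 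Thus $\gamma(H^2(D))=\bigcap_\phi\ker\Lambda_\phi$ is closed, hence complete, and completeness transports back through the isometry $\gamma$.

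For \textbf{density} the key reduction is that $\mathcal{O}(\text{nbhd }\overline D)\subset \mathcal{O}(D)\cap C(\overline D)$, so it is enough to prove that functions holomorphic on a neighborhood of $\overline D$ are dense; I would do this by Hilbert-space duality. Suppose $h\in H^2(D)$ satisfies $\ll g,h\gg_{\partial D,\rho}=0$ for every $g$ holomorphic near $\overline D$; the goal is $h=0$. Writing $|dz|=\overline{\tau}\,dz$ with $\tau=dz/|dz|$ the unit tangent to $\partial D$, the orthogonality becomes $\int_{\partial D}g\,(\overline{h_*}\rho\overline{\tau})\,dz=0$ for all such $g$. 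Setting $k_*:=\overline{h_*}\rho\overline{\tau}$, this is precisely the condition in Lemma \ref{l:0-1}$(b)$, so $k_*=\gamma(k)$ for some $k\in H^2(D)$.

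The final step is to feed $h$ and $k$ into the $H^1$ Cauchy vanishing identity. The product $hk$ lies in $H^1(D)$ with boundary value $h_*k_*=|h_*|^2\rho\overline{\tau}$, and integrating over the Green-function level curves $\partial D_s=\{G_D(\cdot,z_0)=-s\}$ (analytic Jordan curves in $D$ for small $s>0$) gives $\int_{\partial D_s}hk\,dz=0$ by Cauchy's theorem, since $hk$ is holomorphic on a neighborhood of $\overline{D_s}$. Letting $s\to 0^+$ and using convergence of the boundary integrals yields $\int_{\partial D}h_*k_*\,dz=0$, which equals $\int_{\partial D}|h_*|^2\rho\,\overline{\tau}\tau|dz|=\int_{\partial D}|h_*|^2\rho|dz|\ge0$; positivity of $\rho$ then forces $h_*=0$, so $h=0$, and density follows. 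I expect the \emph{main obstacle} to be precisely this last convergence over level curves, i.e.\ the $L^1$ convergence of $hk$ on $\partial D_s$ to its nontangential boundary value $(hk)_*$ together with the attendant $H^1$ Cauchy theorem; this is standard $H^p$-theory on domains with analytic boundary, and I would either invoke it through the least-harmonic-majorant machinery recalled before Lemma \ref{l:0-1} or cite \cite{rudin55,duren} rather than reprove it.
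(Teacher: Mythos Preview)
The paper does not supply its own proof of this lemma; it simply cites \cite{rudin55}. Your argument is correct and is in fact the classical one that appears in Rudin's paper: completeness via the identification of $\gamma(H^2(D))$ with a closed subspace of $L^2(\partial D,\rho)$ through Lemma~\ref{l:0-1}(b), and density via the duality trick of recognizing $k_*:=\overline{h_*}\rho\overline{\tau}$ as the boundary value of some $k\in H^2(D)$ and then applying Cauchy's theorem to the product $hk\in H^1(D)$. The one genuine technical input you correctly flag---that $\int_{\partial D_s} hk\,dz\to\int_{\partial D}(hk)_*\,dz$ as $s\to0^+$---is precisely the $H^1$ boundary behavior that Rudin establishes via least harmonic majorants, so citing \cite{rudin55,duren} for this step is the appropriate move rather than reproving it.
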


\begin{Lemma}
	\label{l:0-3}Let $f_n\in H^2(D)$ for any $n\in \mathbb{Z}_{>0}$. Assume that $f_n$ uniformly converges to $0$ on any compact subset of $D$ and   there exists $f\in L^2(\partial D,\rho)$ such that  $\lim_{n\rightarrow+\infty}\|\gamma(f_n)-f\|_{\partial D,\rho}=0$. Then we have $f=0$.  
\end{Lemma}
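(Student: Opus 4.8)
The plan is to exploit that $\gamma$ is an \emph{isometry} from the Hilbert space $H^2(D)$ into $L^2(\partial D,\rho)$, so that the hypothesis $\|\gamma(f_n)-f\|_{\partial D,\rho}\to 0$ forces $\{f_n\}$ to be Cauchy in $H^2(D)$, and then to use completeness together with Lemma \ref{l:0-1b} to identify the limit. The point is that the assumed boundary convergence, interpreted through the isometry, already controls the interior behaviour of the $f_n$, while the assumed interior convergence to $0$ then pins $f$ down.

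First I would observe that the Hilbert-space norm on $H^2(D)$ furnished by Lemma \ref{l:0-2}, namely $\ll f,f\gg_{\partial D,\rho}^{1/2}$, coincides with $\|\gamma(f)\|_{\partial D,\rho}$ by the very definition of $\ll\cdot,\cdot\gg_{\partial D,\rho}$ (equivalently, both equal $u_f(z_0)^{1/2}$ by Lemma \ref{l:0-1}(a)); thus $\gamma$ is an isometric embedding. Since $\{\gamma(f_n)\}$ converges in $L^2(\partial D,\rho)$ it is Cauchy there, so $\{f_n\}$ is Cauchy in $H^2(D)$, and completeness (Lemma \ref{l:0-2}) produces $g\in H^2(D)$ with $\|\gamma(f_n)-\gamma(g)\|_{\partial D,\rho}\to 0$; continuity of $\gamma$ then gives $f=\gamma(g)$.

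Next I would show $g\equiv 0$. By linearity $\gamma(f_n-g)=\gamma(f_n)-\gamma(g)$, so $\|\gamma(f_n-g)\|_{\partial D,\rho}\to 0$, and Lemma \ref{l:0-1b} applied to the sequence $f_n-g\in H^2(D)$ shows that $f_n-g\to 0$ uniformly on every compact subset of $D$. Since by hypothesis $f_n\to 0$ uniformly on compact subsets, the triangle inequality forces $g\equiv 0$ on $D$, whence $f=\gamma(g)=\gamma(0)=0$ in $L^2(\partial D,\rho)$.

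As an alternative route that bypasses completeness, one can argue directly through the Cauchy representation of Lemma \ref{l:0-1}(c): for fixed $w\in D$ the kernel $z\mapsto (z-w)^{-1}$ is bounded on the compact set $\partial D$, and since $\rho$ is continuous and positive there (hence bounded away from $0$ and $\infty$), the functional $h\mapsto\int_{\partial D}h(z)(z-w)^{-1}\,dz$ is continuous on $L^2(\partial D,\rho)$ by Cauchy--Schwarz; passing to the limit in $f_n(w)=\tfrac{1}{2\pi\sqrt{-1}}\int_{\partial D}\gamma(f_n)(z)(z-w)^{-1}\,dz$ and using $f_n(w)\to 0$ gives $\int_{\partial D}f(z)(z-w)^{-1}\,dz=0$ for all $w\in D$. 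Testing against holomorphic $\phi$ on a neighbourhood of $\overline D$ and taking the limit in the orthogonality relation of Lemma \ref{l:0-1}(b) shows $f\in\gamma(H^2(D))$, so $f=\gamma(g)$; Lemma \ref{l:0-1}(c) then forces $g\equiv 0$ and hence $f=0$. In either route the only step requiring genuine care is the justification of the limit interchange, which rests on the comparability of $\rho\,|dz|$ with arclength measure on the compact analytic boundary $\partial D$; everything else is a formal consequence of the structure already recorded in Lemma \ref{l:0-1}, Lemma \ref{l:0-1b} and Lemma \ref{l:0-2}.
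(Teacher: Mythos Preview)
Your main argument is correct and is essentially the paper's own proof: use completeness of $H^2(D)$ (Lemma \ref{l:0-2}) together with the isometry $\gamma$ to conclude that the $L^2$-limit $f$ lies in $\gamma(H^2(D))$, say $f=\gamma(g)$, then apply Lemma \ref{l:0-1b} to $f_n-g$ and combine with $f_n\to 0$ on compacta to get $g=0$. The paper simply compresses your second paragraph into the sentence ``by Lemma \ref{l:0-1} and Lemma \ref{l:0-2} there exists $f_0\in H^2(D)$ with $\gamma(f_0)=f$''; your alternative route via the Cauchy kernel is extra but not needed.
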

\begin{proof}
	It follows from Lemma \ref{l:0-1} and Lemma \ref{l:0-2} that there exists $f_0\in H^2(D)$ such that $\gamma(f_0)=f$. Using Lemma \ref{l:0-1b}, we get that $f_n-f_0$ uniformly converges to $0$ on any compact subset of $D$, i.e. $f_0=0$, which implies that $f=0$.
\end{proof}

Let $\{D_k\}_{k\in\mathbb{Z}_{>0}}$ be an increasing sequence of domains with analytic boundaries, such that $z_0\in D_1$ and $\cup_{k=1}^{+\infty} D_k=D$. Let $G_{D_k}(\cdot,z_0)$ be the Green function of $D_k$.
\begin{Lemma}[see \cite{rudin55}] \label{l:0-4}
	$\|f_*\|^2_{\partial D,\rho}=\lim_{k\rightarrow+\infty}\frac{1}{2\pi}\int_{\partial D_k}|f|^2\frac{\partial G_{D_k}(z,z_0)}{\partial v_z}|dz|$
	holds for any $f\in H^2(D)$.
\end{Lemma}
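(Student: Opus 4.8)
The plan is to read both sides of the identity as the value at $z_0$ of a least harmonic majorant of $|f|^2$ --- the right-hand side on the exhausting domains $D_k$ and the left-hand side on $D$ --- and then to let $k\to+\infty$ via Harnack's principle. Throughout I use that $\|f_*\|_{\partial D,\rho}^2=u_f(z_0)$ by Lemma \ref{l:0-1}$(a)$, where $u_f$ is the least harmonic majorant of $|f|^2$ on $D$.

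First I would treat a single $D_k$. Since $f\in\mathcal O(D)$ and $\overline{D_k}\subset D$, the function $|f|^2$ is subharmonic on $D_k$ and continuous up to $\partial D_k$; as $\partial D_k$ is analytic, the Dirichlet problem on $D_k$ with boundary data $|f|^2|_{\partial D_k}$ has a solution $u_k$, harmonic on $D_k$ and continuous on $\overline{D_k}$. By the maximum principle applied to the subharmonic function $|f|^2-u_k$ (which vanishes on $\partial D_k$) we get $u_k\ge|f|^2$ on $D_k$, so $u_k$ is the least harmonic majorant of $|f|^2$ on $D_k$. The key analytic input is that $\frac{1}{2\pi}\frac{\partial G_{D_k}(\cdot,z_0)}{\partial v}\,|dz|$ is the harmonic measure of $D_k$ at $z_0$: this follows from Green's second identity together with $G_{D_k}(\cdot,z_0)=0$ on $\partial D_k$ and the logarithmic singularity of $G_{D_k}(\cdot,z_0)$ at $z_0$. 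Applying this representation to the harmonic function $u_k$ and using $u_k=|f|^2$ on $\partial D_k$ gives
$$\frac{1}{2\pi}\int_{\partial D_k}|f|^2\frac{\partial G_{D_k}(z,z_0)}{\partial v_z}\,|dz|=u_k(z_0),$$
so the lemma reduces to proving $u_k(z_0)\to u_f(z_0)$.

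Next I would establish monotonicity together with a uniform bound. For $k<l$ the function $u_l$ is harmonic on $D_l\supset\overline{D_k}$ and satisfies $u_l\ge|f|^2=u_k$ on $\partial D_k$; since $u_l-u_k$ is harmonic on $D_k$ and nonnegative on $\partial D_k$, the minimum principle gives $u_l\ge u_k$ on $D_k$. The same comparison with $u_f$ in place of $u_l$ (using $u_f\ge|f|^2$ on $\partial D_k$) yields $u_k\le u_f$ on $D_k$. Hence on each fixed $D_m$ the tail $\{u_k\}_{k\ge m}$ is an increasing sequence of harmonic functions bounded above by $u_f$, so by Harnack's principle it converges locally uniformly on $D$ to a harmonic function $u_\infty$ with $u_\infty\le u_f$. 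Passing to the limit in $u_k\ge|f|^2$ shows $u_\infty\ge|f|^2$, so $u_\infty$ is a harmonic majorant of $|f|^2$ on $D$; by minimality $u_f\le u_\infty$, whence $u_\infty=u_f$. Evaluating at $z_0$ gives $u_k(z_0)\to u_f(z_0)$, which combined with the displayed formula and Lemma \ref{l:0-1}$(a)$ proves the lemma.

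The main obstacle is the passage to the limit $u_k\to u_f$: one must both identify the integral $I_k$ with $u_k(z_0)$ through the harmonic-measure interpretation of $\frac{1}{2\pi}\frac{\partial G_{D_k}(\cdot,z_0)}{\partial v}\,|dz|$, and verify that the monotone limit of the least harmonic majorants on the $D_k$ is again a \emph{least} harmonic majorant on $D$; the latter is exactly where Harnack's principle and the minimality defining $u_f$ enter.
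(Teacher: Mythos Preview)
Your proof is correct. The paper does not supply its own argument for this lemma---it simply cites \cite{rudin55}---so there is no independent proof to compare against; the approach you give (identify the boundary integral on $D_k$ as the least harmonic majorant $u_k(z_0)$ via the Poisson/Green representation, show monotonicity $u_k\uparrow$ with uniform bound $u_f$, then apply Harnack's principle and minimality of $u_f$) is exactly the standard argument underlying the cited result.
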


In the following, we recall a weighted version of Saitoh's conjecture on $D$ and a higher derivatives version of it, which can be referred to \cite{GY-weightedsaitoh}.

Let $\rho$ be a positive continuous function on $\partial D$. 
The kernel function  $K_{\rho}(z,\overline w)$ (see \cite{nehari}) is defined as follows: 
$$f(w)=\frac{1}{2\pi}\int_{\partial D}f(z)\overline{K_{\rho}(z,\overline w)}\rho(z)|dz|$$
holds for any $f\in H^{2}(D)$, and we have $K_{\rho}(z,\overline w)=\sum_{k=1}^{+\infty}e_k(z)\overline{e_k(w)}$ (see \cite{nehari}), where $\{e_k\}_{k\in\mathbb{Z}_{\ge1}}$ is a complete orthonormal basis of the Hilbert space $H^2(D)$ equipped with the norm $\ll f,g\gg_{\partial D,\rho}:=\frac{1}{2\pi}\int_{\partial D}f\overline{g}\rho|dw|$.  Denote that $K_{\rho}(z):=K_{\rho}(z,\overline{z})$.

 Let $G_D(p,t)$ be the Green function on $D$, and let $\partial/\partial v_p$ denote the derivative along the outer normal unit vector $v_p$. When $\rho(p)=\left(\frac{\partial G_D(p,t)}{\partial v_p}\right)^{-1}$ on $\partial D$, $\hat K_t(z,\overline w)$ denotes $K_{\rho}(z,\overline w)$, which is the so-called conjugate Hardy $H^2$ kernel on $D$ (see \cite{saitoh}).

Let $z_0\in D,$ and let $\psi=p_0G_{D}(\cdot,z_0)$, where $p_0>0$ is a constant. 
Let $\varphi$ be a Lebesgue measurable function on $\overline D$ such that $\varphi+2\psi$ is subharmonic on $D$, the Lelong number 
$$v(dd^c(\varphi+2\psi),z_0)\ge2$$ and $\varphi$ is continuous at $z$ for any $z\in\partial D$. Let $c$ be a positive Lebesgue measurable function on $[0,+\infty)$ satisfying that $c(t)e^{-t}$ is decreasing on $[0,+\infty)$, $\lim_{t\rightarrow0+0}c(t)=c(0)=1$ and $\int_0^{+\infty}c(t)e^{-t}dt<+\infty$.

Denote that  
$$\rho:=\frac{1}{p_0}\left(\frac{\partial G_D(z,t)}{\partial v_z}\right)^{-1}e^{-\varphi}c(-2\psi)$$
on $\partial D$, and that 
$$\tilde\rho:=e^{-\varphi}c(-2\psi)$$
on $D$. 
By Lemma \ref{l:admissible}, we know that $\tilde\rho$ is an admissible weight on $D$.

We recall a weighted version of Saitoh's conjecture as follows:
\begin{Theorem}[\cite{GY-weightedsaitoh}]
\label{thm:saitoh-1d}Assume that $B_{\tilde\rho}(z_0)>0$. Then
$$K_{\rho}(z_0)\ge \left(\int_0^{+\infty}c(t)e^{-t}dt\right)\pi B_{\tilde\rho}(z_0)$$ holds, and the equality holds if and only if the following statements hold:

$(1)$ $\varphi+2\psi=2G_{D}(\cdot,z_0)+2u$, where $u$ is a harmonic function on $D$;

$(2)$ $\chi_{z_0}=\chi_{-u}$, where $\chi_{-u}$ and $\chi_{z_0}$ are the  characters associated to the functions $-u$ and $G_{D}(\cdot,z_0)$ respectively.
\end{Theorem}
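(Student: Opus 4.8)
The plan is to route both the boundary kernel $K_{\rho}$ and the interior kernel $B_{\tilde\rho}$ through a single family of minimal $L^2$ integrals attached to the sublevel sets of $2\psi=2p_0G_D(\cdot,z_0)$, and then read off the inequality from the concavity property of such integrals recalled in Section \ref{sec:pre}. Write $h(t):=\int_t^{+\infty}c(s)e^{-s}\,ds$, so $h(0)=\int_0^{+\infty}c(t)e^{-t}\,dt$ and $h(+\infty)=0$; note $\{2\psi<-t\}$ exhausts $D$ as $t\to0^+$ and shrinks to $z_0$ as $t\to+\infty$. For $t\ge0$ set
\[
G(t):=\inf\left\{\int_{\{2\psi<-t\}}|f|^2e^{-\varphi}c(-2\psi):f\in\mathcal{O}(\{2\psi<-t\})\text{ and }(f-1,z_0)\in\mathcal{I}(\varphi+2\psi)_{z_0}\right\}.
\]
The Lelong number hypothesis $v(dd^c(\varphi+2\psi),z_0)\ge2$ makes $\mathcal{I}(\varphi+2\psi)_{z_0}$ the maximal ideal, so the constraint is just $f(z_0)=1$; in particular $\{2\psi<0\}=D$ gives $G(0)=1/B_{\tilde\rho}(z_0)$ by the extremal description of the Bergman kernel, and $B_{\tilde\rho}(z_0)>0$ makes $G(0)<+\infty$.

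The first key step is to identify the weighted Hardy norm with the one-sided $t$-derivative of the sublevel integral at the boundary. Using the co-area formula for $2\psi$ together with the boundary identity $\left(\frac{\partial G_D}{\partial v_z}\right)^{-1}=2p_0/|\nabla(2\psi)|$ on $\partial D$ and $c(0)=1$, I would prove that for every $f\in H^2(D)$, writing $g_f(t):=\int_{\{2\psi<-t\}}|f|^2e^{-\varphi}c(-2\psi)$, one has $\|f_*\|_{\partial D,\rho}^2=-\tfrac1\pi g_f'(0^+)$. Making this rigorous — exhausting $D$ by the analytic subdomains $D_k$ and passing to the limit as in Lemma \ref{l:0-4}, and controlling the boundary behaviour through the continuity of $\varphi$ up to $\partial D$ — is the \emph{main technical obstacle}. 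Since the Bergman extremal $f_0$ (which lies in $H^2(D)$ by the boundary regularity of the weight, as $c(-2\psi)\to1$ at $\partial D$) is a competitor for $G(t)$ at every $t\ge0$, we have $G(t)\le g_{f_0}(t)$ with equality at $t=0$, whence $G'_+(0)\le g_{f_0}'(0^+)$ and
\[
-G'_+(0)\ \ge\ -g_{f_0}'(0^+)\ =\ \pi\|(f_0)_*\|_{\partial D,\rho}^2\ \ge\ \frac{\pi}{K_{\rho}(z_0)}.
\]

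The second key step is the concavity property: after the substitution $r=h(t)$ the function $r\mapsto G(h^{-1}(r))$ is concave on $(0,h(0)]$ and tends to $0$ as $r\to0^+$ (because $\{2\psi<-t\}$ shrinks to $z_0$ and the constraint is only $f(z_0)=1$). For a concave function vanishing at the left endpoint, the left derivative at the right endpoint is at most the chord slope, so, since $-G'_+(0)=\frac{d}{dr}\big|_{r=h(0)}G(h^{-1}(r))$, we obtain $-G'_+(0)\le G(0)/h(0)=1/(h(0)B_{\tilde\rho}(z_0))$. Combining the two steps,
\[
\frac{\pi}{K_{\rho}(z_0)}\ \le\ -G'_+(0)\ \le\ \frac{1}{h(0)\,B_{\tilde\rho}(z_0)},
\]
which is precisely $K_{\rho}(z_0)\ge\left(\int_0^{+\infty}c(t)e^{-t}\,dt\right)\pi B_{\tilde\rho}(z_0)$.

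For the equality case, both inequalities must be equalities. Equality in the concavity step forces $r\mapsto G(h^{-1}(r))$ to be \emph{linear}, and the rigidity part of the concavity property characterizes this linearity; translating it back to $D$ yields $\varphi+2\psi=2G_D(\cdot,z_0)+2u$ with $u$ harmonic, i.e. statement $(1)$. Equality in the first step forces the Bergman extremal to coincide with the Hardy extremal, so the extremal function is, up to a constant, the descent to $D$ of a multiplicative function on the universal covering $\Delta\to D$ built from the Suita function $f_{z_0}$ (with $|f_{z_0}|=P^{*}e^{G_D(\cdot,z_0)}$) and from the multiplicative function $f_u$ associated with $u$; demanding that this product be single-valued on $D$ forces the associated characters to match, which is exactly $\chi_{z_0}=\chi_{-u}$ in statement $(2)$. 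The converse direction would follow by exhibiting this extremal explicitly under $(1)$–$(2)$ and verifying that $G(h^{-1}(r))$ is then linear with coincident extremals. I expect converting the abstract linearity/rigidity into the concrete conditions $(1)$–$(2)$, and in particular pinning down the character condition via single-valuedness, to be the subtlest point after the boundary identity.
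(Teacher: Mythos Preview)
This theorem is only cited in the present paper (from \cite{GY-weightedsaitoh}) and not proved here; however, your strategy is precisely the one the paper deploys to prove its higher-dimensional generalizations (Theorems \ref{thm:main1-1} and \ref{thm:main1-2}), via Proposition \ref{p:b6} and Proposition \ref{p:inequality} for the inequality, and Theorem \ref{thm:linear-2d}/Corollary \ref{c:linear} for the equality case. So the approach is correct and essentially identical to the paper's method.

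One genuine slip: you assert that the Lelong number hypothesis $v(dd^c(\varphi+2\psi),z_0)\ge2$ ``makes $\mathcal{I}(\varphi+2\psi)_{z_0}$ the maximal ideal.'' It only gives the \emph{inclusion} $\mathcal{I}(\varphi+2\psi)_{z_0}\subset\mathfrak m_{z_0}$ (Skoda); equality can fail if the Lelong number exceeds $2$. With your definition of $G(t)$ via the constraint $(f-1,z_0)\in\mathcal{I}(\varphi+2\psi)_{z_0}$, you would then get $G(0)\ge 1/B_{\tilde\rho}(z_0)$ rather than equality, and the chain of inequalities breaks. The fix is exactly what the paper does in the proof of Theorem \ref{thm:main1-1}: in the concavity theorem (Theorem \ref{thm:general_concave}) one may take any coherent $\mathcal F\supset\mathcal I(\varphi+2\psi)$, so take $\mathcal F_{z_0}=\mathfrak m_{z_0}$ directly and define $G(t)$ with the bare constraint $f(z_0)=1$. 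Then $G(0)=1/B_{\tilde\rho}(z_0)$ on the nose, and your two-step argument goes through. For the equality analysis one then passes, as the paper does, from $G$ to the auxiliary $G_1$ with constraint $(f-F_0,z_0)\in\mathcal I(\psi)_{z_0}$ and checks $G\equiv G_1$ before invoking the linearity characterization.
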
 

\begin{Remark}[\cite{GY-weightedsaitoh}]\label{rem:function}
Let $p$ be the universal covering from unit disc $\Delta$ to $ D$. 
When the two statements  in  Theorem \ref{thm:saitoh-1d} hold, 
$$K_{\rho}(\cdot,\overline{z_0})=\left(\int_0^{+\infty}c(t)e^{-t}dt\right)\pi B_{\tilde\rho}(\cdot,\overline{z_0})=c_1(p_*(f_{z_0}))'p_*(f_u),$$
where  $c_1$ is a constant, $f_u$ is a holomorphic function on $\Delta$ such that $|f_u|=p^*(e^u)$, and $f_{z_0}$ is a holomorphic function on $\Delta$ such that $|f_{z_0}|=p^*(e^{G_D(\cdot,z_0)})$.
\end{Remark}

Let $k$ be a nonnegative integer.   
Let $\rho$ be a continuous positive function on $\partial D$. Denote that 
\begin{displaymath}
	\begin{split}
		K_{\rho}^{(k)}(z_0):=\sup\Bigg\{\left|\frac{f^{(k)}(z_0)}{k!}\right|^2&:f\in H^{2}(D),\\
		&\int_{\partial D}|f|^2\rho|dz|\le1\,\,\&\,\,f(z_0)=\ldots=f^{(k-1)}(z_0)=0 \Bigg\}.
	\end{split}
\end{displaymath}
Especially, when $k=0$, $K_{\rho}^{(k)}(z_0)$ is the weighted  Szeg\"o kernel $K_{\rho}(z_0)$.

Let $\psi=(k+1)G_{D}(\cdot,z_0)$.   
Let $\varphi$ be a Lebesgue measurable function on $\overline D$ satisfying that $\varphi$ is subharmonic on $D$ 
 and $\varphi$ is continuous at $z$ for any $z\in\partial D$. Let $c$ be a positive Lebesgue measurable function on $[0,+\infty)$ satisfying that $c(t)e^{-t}$ is decreasing on $[0,+\infty)$, $\lim_{t\rightarrow0+0}c(t)=c(0)=1$ and $\int_0^{+\infty}c(t)e^{-t}dt<+\infty$. Denote that
$$\rho:=\frac{1}{k+1}\left(\frac{\partial G_D(z,t)}{\partial v_z}\right)^{-1}e^{-\varphi}c(-2\psi)$$
on $\partial D$, and 
$$\tilde \rho:=e^{-\varphi}c(-2\psi)$$
on $D$. 

Denote that $B^{(k)}_{\tilde\rho}(z_0):=B^{I_k,(w-z_0)^k}_{\tilde\rho}(z_0)$ (the definition of $B^{I_k,(w-z_0)^k}_{\tilde\rho}(z_0)$ can be seen in Section \ref{sec:B}), where $I_k=((w-z_0)^{k+1})$ is an ideal of $\mathcal{O}_{z_0}$.
It is clear that
\begin{displaymath}
	\begin{split}
		&B^{(k)}_{\tilde\rho}(z_0)\\
		&=\sup\left\{\left|\frac{f^{(k)}(z_0)}{k!}\right|^2:f\in\mathcal{O}(D),\,\,\int_D|f|^2\tilde\rho\le1\,\,\&\,\,f(z_0)=\ldots=f^{(k-1)}(z_0)=0 \right\}.
	\end{split}
\end{displaymath}
When $\tilde\rho\equiv1$, $B^{(k)}_{\tilde\rho}(z_0)$ is the Bergman kernel for higher derivatives (see \cite{Berg70}).

We recall a weighted version of Saitoh's conjecture for higher derivatives as follows:

\begin{Corollary}[\cite{GY-weightedsaitoh}]Assume that $B^{(k)}_{\tilde\rho}(z_0)>0$. Then
	\label{c:saitoh-higher jet}
	$$K_{\rho}^{(k)}(z_0)\ge\left(\int_0^{+\infty}c(t)e^{-t}dt\right) \pi B^{(k)}_{\tilde\rho}(z_0)$$
	 holds, and the equality holds if and only if  the following statements hold:
	
	$(1)$ $\varphi$ is  harmonic  $D$;
	 	
	$(2)$ $\chi_{z_0}^{k+1}=\chi_{-\frac{\varphi}{2}}$, where $\chi_{-\frac{\varphi}{2}}$ and $\chi_{z_0}$ are the  characters associated to the functions $-\frac{\varphi}{2}$ and $G_{D}(\cdot,z_0)$ respectively.
\end{Corollary}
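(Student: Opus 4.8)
The plan is to deduce Corollary \ref{c:saitoh-higher jet} from the weighted Saitoh conjecture in order zero, Theorem \ref{thm:saitoh-1d}, by absorbing the $k$-th order vanishing into the weight via the multiplicative function $f_{z_0}$ of the Green function. Concretely, I would keep $\psi=(k+1)G_{D}(\cdot,z_0)$ (so that the constant $p_0$ of Theorem \ref{thm:saitoh-1d} is played by $k+1$) and replace the weight datum $\varphi$ by $\tilde\varphi:=\varphi-2k\,G_{D}(\cdot,z_0)$. First I would verify the hypotheses of Theorem \ref{thm:saitoh-1d} for the pair $(\tilde\varphi,\psi)$: one has $\tilde\varphi+2\psi=\varphi+2G_{D}(\cdot,z_0)$, which is subharmonic with Lelong number $v(dd^{c}(\varphi),z_0)+2\ge 2$, and since $G_{D}(\cdot,z_0)$ vanishes and is continuous on $\partial D$, the function $\tilde\varphi$ is measurable on $\overline D$, continuous at each boundary point, and agrees with $\varphi$ on $\partial D$. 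In particular the boundary weight $\rho=\frac1{k+1}\bigl(\partial G_{D}/\partial v\bigr)^{-1}e^{-\varphi}c(-2\psi)$ is unchanged (using $c(0)=1$ and $G_D|_{\partial D}=0$), while the interior weight becomes $e^{-\tilde\varphi}c(-2\psi)=e^{2kG_D}\tilde\rho$.

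The second step is to set up the dictionary between the order-$k$ extremal problems for $(\rho,\tilde\rho)$ and the order-$0$ problems for the substituted weights. Recall $|f_{z_0}|=p^{*}(e^{G_{D}(\cdot,z_0)})$, so $f_{z_0}$ has a simple zero over $z_0$ and $|f_{z_0}|=1$ on $\partial D$. Sending a competitor $f$ with $\mathrm{ord}_{z_0}f\ge k$ to $F:=p^{*}(f)/f_{z_0}^{k}$, I would check: (i) the boundary norm is preserved, $\int_{\partial D}|f|^{2}\rho|dz|=\int_{\partial D}|F|^{2}\rho|dz|$, because $|f_{z_0}|=1$ there; (ii) the area norm transforms as $\int_{D}|f|^{2}\tilde\rho=\int_{D}|F|^{2}e^{2kG_D}\tilde\rho$, i.e. into the area norm of the substituted weight $e^{-\tilde\varphi}c(-2\psi)$; and (iii) the functionals are intertwined by $f^{(k)}(z_0)/k!=c_{0}^{k}F(z_0)$, where $c_{0}=\lim_{w\to z_0}f_{z_0}(w)/(w-z_0)$. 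Hence both $K^{(k)}_{\rho}(z_0)$ and $B^{(k)}_{\tilde\rho}(z_0)$ equal $|c_{0}|^{2k}$ times the corresponding Szeg\"o and Bergman kernels of the substituted problem, and the common factor $|c_{0}|^{2k}$ cancels in the ratio.

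I would then apply the conclusion of Theorem \ref{thm:saitoh-1d} (equivalently, its underlying concavity-of-minimal-$L^{2}$-integrals argument recalled in Section \ref{sec:pre}, which is insensitive to the automorphy factor) to the substituted problem. This yields $K^{(k)}_{\rho}(z_0)\ge\bigl(\int_{0}^{+\infty}c(t)e^{-t}dt\bigr)\pi B^{(k)}_{\tilde\rho}(z_0)$ with the constant $\pi$ carried over unchanged, since $f\mapsto F$ rescales the Szeg\"o and Bergman normalizations by the same factor. For the equality case I would read off Theorem \ref{thm:saitoh-1d}: equality forces $\tilde\varphi+2\psi=2G_{D}(\cdot,z_0)+2u$ for a harmonic $u$, i.e. $\varphi=2u$ is harmonic (statement (1)); and it forces the character identity $\chi_{z_0}=\chi_{-u}$ for the substituted problem. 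The final point is to track the character through the twist $f=F f_{z_0}^{k}$: since $f_{z_0}$ carries the character $\chi_{z_0}$, the extremal $F$ is $\chi_{z_0}^{-k}$-automorphic, and matching it against the extremal function of Remark \ref{rem:function} turns $\chi_{z_0}=\chi_{-u}$ into $\chi_{z_0}^{k+1}=\chi_{-\varphi/2}$ (statement (2)).

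The main obstacle I anticipate is precisely this character bookkeeping. Theorem \ref{thm:saitoh-1d} is stated for single-valued functions on $D$, whereas dividing by $f_{z_0}^{k}$ produces genuinely multiplicative functions on the universal cover; the crux is to run the comparison and the extremal-function analysis of Remark \ref{rem:function} in the category of $\chi$-automorphic functions and to verify that it is the power $k+1$ (rather than $1$) that appears in the final character identity. Everything else — the hypothesis check, the two norm identities, and the cancellation of $|c_0|^{2k}$ — should be routine once the multiplicative framework is in place.
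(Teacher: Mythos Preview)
Your strategy---reduce to Theorem \ref{thm:saitoh-1d} by absorbing the $k$-th order vanishing into the weight---is exactly what the paper does (see the proof of Remark \ref{rem:saitoh2}, which carries out this reduction). However, the paper makes a simpler choice of divisor that sidesteps the obstacle you flag.

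Instead of dividing by the multiplicative function $f_{z_0}^{k}$, the paper divides by the single-valued factor $(z-z_0)^{k}$: it sets $\varphi_1:=\varphi-2k\log|z-z_0|$, so that $\tilde\rho_1:=e^{-\varphi_1}c(-2\psi)=|z-z_0|^{2k}\tilde\rho$ on $D$ and $\rho_1:=|z-z_0|^{2k}\rho$ on $\partial D$. The map $f\mapsto f/(z-z_0)^{k}$ is a bijection between order-$k$ competitors and order-$0$ competitors, and one checks directly that $B^{(k)}_{\tilde\rho}(z_0)=B_{\tilde\rho_1}(z_0)$ and $K^{(k)}_{\rho}(z_0)=K_{\rho_1}(z_0)$. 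Since $\varphi_1+2\psi=\varphi+2G_D(\cdot,z_0)+2k\bigl(G_D(\cdot,z_0)-\log|z-z_0|\bigr)$ is subharmonic with Lelong number $\ge 2$, and $\varphi_1$ is continuous at every boundary point, Theorem \ref{thm:saitoh-1d} applies verbatim---no automorphic functions enter. For the equality case one finds $u=\tfrac{\varphi}{2}+k\bigl(G_D(\cdot,z_0)-\log|z-z_0|\bigr)$; harmonicity of $u$ forces $\varphi$ harmonic, and since $G_D-\log|z-z_0|$ differs from $G_D$ by $\log$ of a holomorphic function, the paper's character rule gives $\chi_{-u}=\chi_{-\varphi/2}\chi_{z_0}^{-k}$, so $\chi_{z_0}=\chi_{-u}$ becomes $\chi_{z_0}^{k+1}=\chi_{-\varphi/2}$.

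Your route via $f_{z_0}^{k}$ keeps the boundary weight fixed but forces you to work with genuinely multiplicative competitors, which is precisely the difficulty you anticipated. The paper's choice of $(z-z_0)^{k}$ trades a harmless change of boundary weight for staying entirely within single-valued functions, making the character bookkeeping a one-line computation rather than a structural issue.
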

\begin{Remark}
	\label{rem:saitoh2}Let $p$ be the universal covering from unit disc $\Delta$ to $ D$. 
	Assume that the two statements in  Corollary \ref{c:saitoh-higher jet} hold, and denote that 
	$$F= c_2(p_*(f_{z_0}))'p_*(f_{\frac{\varphi}{2}}f_{z_0}^{k}),$$ 
	where   $f_{\frac{\varphi}{2}}$ is a holomorphic function on $\Delta$ such that $|f_{\frac{\varphi}{2}}|=p^*(e^{\frac{\varphi}{2}})$,  $f_{z_0}$ is a holomorphic function on $\Delta$ such that $|f_{z_0}|=p^*(e^{G_D(\cdot,z_0)})$, and $c_2$ is a constant such that $F^{(k)}(z_0)=k!$.
	Then we have
	$$\frac{1}{K_{\rho}^{(k)}(z_0)}=\frac{1}{2\pi}\int_{\partial D}|F|^2\rho|dz|\text{ and } \frac{1}{B^{(k)}_{\tilde\rho}(z_0)}=\int_{D}|F|^2\tilde\rho.$$
\end{Remark}
\begin{proof}
	Let $\varphi_1=\varphi-2k\log|z-z_0|$. It is clear that $\varphi_1+2\psi=\varphi+2(k+1)G_D(\cdot,z_0)-2k\log|z-z_0|$ is subharmonic on $D$ and $v(dd^c(\varphi_1+2\psi),z_0)\ge2$.  Denote that 
	$$\tilde\rho_1:=e^{-\varphi_1}c(-2\psi)=|z-z_0|^{2k}\tilde\rho$$
	 on $D$ and 
	 $$\rho_1:=\frac{1}{k+1}\left(\frac{\partial G_D(z,t)}{\partial v_z}\right)^{-1}e^{-\varphi_1}c(-2\psi)=|z-z_0|^{2k}\rho$$ 
	 on $\partial D$.
	Note that 
	\begin{displaymath}
		\begin{split}
			&B^{(k)}_{\tilde\rho}(z_0)\\
			=&\sup\left\{\left|\frac{f^{(k)}(z_0)}{k!}\right|^2:f\in\mathcal{O}(D),\,\,\int_D|f|^2\tilde\rho\le1\,\,\&\,\,f(z_0)=\ldots=f^{(k-1)}(z_0)=0 \right\}\\
			=&\sup\left\{|g(z_0)|^2:g\in\mathcal{O}(D),\,\,\int_D|g|^2|z-z_0|^{2k}\tilde\rho\le1 \right\}\\
			=&B_{\tilde\rho_1}(z_0),
		\end{split}
	\end{displaymath}
	and
	\begin{displaymath}
		\begin{split}
			&K_{\rho}^{(k)}(z_0)\\
			=&\sup\Bigg\{\left|\frac{f^{(k)}(z_0)}{k!}\right|^2:f\in H^2(D),\,\, \int_{\partial D}|f|^2\rho|dz|\le1\\
			&\&\,\, f(z_0)=\ldots=f^{(k-1)}(z_0)=0 \Bigg\}\\
			=&\sup\Bigg\{|g(z_0)|^2:g\in H^2(D)\,\,\&\,\,\int_{\partial D}|g|^2|z-z_0|^{2k}\rho|dz|\le1\Bigg\}\\
			=&K_{\rho_1}(z_0).
		\end{split}
	\end{displaymath} 
Denote that $2u:=\varphi_1+2\psi-2G_D(\cdot,z_0)=\varphi+2k(G_D(\cdot,z_0)-\log|z-z_0|)$ and $\tilde F:=c_3(p_*(f_{z_0}))'p_*(f_u)$, where $c_3$ is a constant such that $\tilde F(z_0)=1$.
It follows from Remark \ref{rem:function} that $B_{\tilde\rho_1}(z_0)=\frac{1}{\int_D|\tilde F|^2\tilde\rho_1}$ and $K_{\rho_1}(z_0)=\frac{1}{\frac{1}{2\pi}\int_D|\tilde F|^2\rho_1|dz|}$. Thus, we have 
$$B^{(k)}_{\tilde\rho}(z_0)=\frac{1}{\int_D|(z-z_0)^k\tilde F|^2\tilde\rho}$$
and 
$$K_{\rho}^{(k)}(z_0)=\frac{1}{\frac{1}{2\pi}\int_{\partial D}|(z-z_0)^k\tilde F|^2\rho|dz|}.$$
As $2u=\varphi+2k(G_D(\cdot,z_0)-\log|z-z_0|)$, we know $f_u=f_{\frac{\varphi}{2}}\left(\frac{f_{z_0}}{p^*(z-z_0)}\right)^k$. 
Note that $\left((z-z_0)^k\tilde F\right)^{(k)}(z_0)=k!$ and 
$$(z-z_0)^k\tilde F=c_3(z-z_0)^k(p_*(f_{z_0}))'p_*(f_u)=c_3(p_*(f_{z_0}))'p_*(f_{\frac{\varphi}{2}}f_{z_0}^{k}).$$ Thus, we have $F=(z-z_0)^k\tilde F$.
\end{proof}

\subsection{Concavity of minimal $L^2$ integrals}

In this section, we recall some results about the concavity property of minimal $L^2$ integrals (see \cite{GY-concavity,GY-concavity4}).

Let $M$ be an $n-$dimensional Stein manifold,  and let $K_{M}$ be the canonical (holomorphic) line bundle on $M$.
Let $\psi$ be a plurisubharmonic function on $M$,
and let  $\varphi$ be a Lebesgue measurable function on $M$,
such that $\varphi+\psi$ is a plurisubharmonic function on $M$. Take $T=-\sup_{M}\psi>-\infty$.

\begin{Definition}
\label{def:gain}
We call a positive measurable function $c$  on $(T,+\infty)$ in class $\mathcal{P}_T$ if the following two statements hold:

$(1)$ $c(t)e^{-t}$ is decreasing with respect to $t$;

$(2)$ there is a closed subset $E$ of $M$ such that $E\subset \{z\in Z:\psi(z)=-\infty\}$ and for any compact subset $K\subseteq M\backslash E$, $e^{-\varphi}c(-\psi)$ has a positive lower bound on $K$, where $Z$ is some analytic subset of $M$.	
\end{Definition}

Let $Z_{0}$ be a subset of $\{\psi=-\infty\}$ such that $Z_{0}\cap Supp(\{\mathcal{O}/\mathcal{I(\varphi+\psi)}\})\neq\emptyset$.
Let $U\supseteq Z_{0}$ be an open subset of $M$
and let $f$ be a holomorphic $(n,0)$ form on $U$.
Let $\mathcal{F}\supseteq\mathcal{I}(\varphi+\psi)|_{U}$ be an  analytic subsheaf of $\mathcal{O}$ on $U$.

Denote
\begin{equation*}
\begin{split}
\inf\Bigg\{\int_{\{\psi<-t\}}|\tilde{f}|^{2}e^{-\varphi}c(-\psi):(\tilde{f}-f)\in H^{0}(Z_0,&
(\mathcal{O}(K_{M})\otimes\mathcal{F})|_{Z_0})\\&\&{\,}\tilde{f}\in H^{0}(\{\psi<-t\},\mathcal{O}(K_{M}))\Bigg\},
\end{split}
\end{equation*}
by $G(t;c)$ (without misunderstanding, we denote $G(t;c)$ by $G(t)$),  where $t\in[T,+\infty)$,  $c\in\mathcal{P}_{T}$ satisfying $\int_T^{+\infty}c(l)e^{-l}dl<+\infty$,
$|f|^{2}:=\sqrt{-1}^{n^{2}}f\wedge\bar{f}$ for any $(n,0)$ form $f$ and $(\tilde{f}-f)\in H^{0}(Z_0,
(\mathcal{O}(K_{M})\otimes\mathcal{F})|_{Z_0})$ means $(\tilde{f}-f,z_0)\in(\mathcal{O}(K_{M})\otimes\mathcal{F})_{z_0}$ for all $z_0\in Z_0$.

We recall some results about the concavity for $G(t)$.
\begin{Theorem} [\cite{GY-concavity}]
\label{thm:general_concave}
 Assume that $G(T)<+\infty$. Then $G(h^{-1}(r))$ is concave with respect to $r\in(0,\int_{T}^{+\infty}c(l)e^{-l}dl)$, $\lim_{t\rightarrow T+0}G(t)=G(T)$ and $\lim_{t\rightarrow +\infty}G(t)=0$, where $h(t)=\int_{t}^{+\infty}c(l)e^{-l}dl$.
\end{Theorem}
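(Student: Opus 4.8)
The plan is to follow the standard three-step route for concavity of minimal $L^2$ integrals: first establish the basic finiteness/monotonicity of $G$ together with existence of minimizers, then prove a sharp $L^2$-extension inequality comparing $G$ on nested sublevel sets, and finally repackage that inequality as the asserted concavity after the change of variable $r=h(t)$, together with the two endpoint limits.

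First I would record the elementary properties. Since $\{\psi<-t'\}\subseteq\{\psi<-t\}$ whenever $t'\ge t$, any competitor on the larger set restricts to a competitor on the smaller one, so $G$ is non-increasing; combined with $G(T)<+\infty$ this yields $G(t)\le G(T)<+\infty$ for all $t\ge T$. Next, for each fixed $t$ I would show the infimum defining $G(t)$ is attained by a unique holomorphic $(n,0)$-form $f_t$ on $\{\psi<-t\}$. Existence follows by taking a minimizing sequence, extracting a subsequence converging locally uniformly (by Definition \ref{def:gain} the weight $e^{-\varphi}c(-\psi)$ is bounded below on compacts of $\{\psi<-t\}\setminus E$, so the uniform $L^2$ bound controls local sup-norms via the mean value inequality), and applying Fatou's lemma to the norm; the germ condition $(\tilde f-f)\in H^0(Z_0,(\mathcal{O}(K_M)\otimes\mathcal F)|_{Z_0})$ survives the limit by the closedness of submodules (Lemma \ref{l:closedness}). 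Uniqueness is the parallelogram argument, identical in form to Lemma \ref{l:bergman<+infty}.

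The analytic heart is a sharp extension inequality. For $T\le t_1<t_0$ I would construct, starting from the minimizer $f_{t_0}$ on $\{\psi<-t_0\}$, a holomorphic $F$ on the larger set $\{\psi<-t_1\}$ satisfying the germ condition and
\[
\int_{\{\psi<-t_1\}}|F|^{2}e^{-\varphi}c(-\psi)\;\le\;G(t_0)+\big(h(t_1)-h(t_0)\big)\,a_{t_0},
\]
where $h(t)=\int_t^{+\infty}c(l)e^{-l}\,dl$ and $a_{t_0}$ is a sharp density attached to $f_{t_0}$ at the level $\{\psi=-t_0\}$. The construction cuts off $f_{t_0}$ by a function of $\psi$ supported past the level $-t_0$ and corrects the resulting $\bar\partial$-error by solving $\bar\partial$ against a suitably twisted plurisubharmonic weight; the precise choice of cutoff and auxiliary weight is engineered so that the gain factor $c(-\psi)$ and the hypothesis that $c(t)e^{-t}$ is decreasing conspire to deliver the optimal Ohsawa--Takegoshi constant. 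This is the step I expect to be the main obstacle: it requires the optimal-constant $L^2$ extension theorem and a delicate limiting argument over a family of cutoffs, and it is exactly here that the assumptions $c\in\mathcal P_T$ and $\int_T^{+\infty}c(l)e^{-l}\,dl<+\infty$ are consumed. By definition of $G(t_1)$ the displayed bound gives $G(t_1)\le G(t_0)+(h(t_1)-h(t_0))\,a_{t_0}$, i.e. the chord slope of $G$ against $h$ between levels $t_1<t_0$ is bounded by $a_{t_0}$.

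Finally I would convert this into concavity. Since $h$ is strictly decreasing, $h^{-1}$ is well defined on $(0,h(T))$, and writing $g(r):=G(h^{-1}(r))$ the slope bound reads $\frac{g(h(t_1))-g(h(t_0))}{h(t_1)-h(t_0)}\le a_{t_0}$; showing that $a_{t_0}$ is monotone in $t_0$ (which follows by comparing the extension estimates across three levels $t_1<t_0<t_2$ and invoking uniqueness of the minimizers) forces the chord slopes of $g$ to decrease as $r$ increases, which is precisely concavity of $g$ on $(0,h(T))$. For the endpoints, $\lim_{t\to T+0}G(t)=G(T)$ follows because the sublevel sets increase to $\{\psi<-T\}$ as $t\downarrow T$: monotonicity gives $\lim_{t\to T+0}G(t)\le G(T)$, and the reverse inequality comes from letting the minimizers $f_t$ converge to a competitor for $G(T)$. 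The limit $\lim_{t\to+\infty}G(t)=0$ follows because $h(t)\to 0$ and the minimizers have uniformly bounded mass while $\{\psi<-t\}$ shrinks to within the pluripolar set $\{\psi=-\infty\}$, so the integrals tend to zero. Concavity on $(0,h(T))$ together with these two boundary values completes the proof.
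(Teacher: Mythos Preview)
The paper does not contain its own proof of this theorem: it is quoted verbatim from \cite{GY-concavity} and used as a black box in Sections \ref{sec:proof1}--\ref{sec:proof3}. So there is nothing in the present paper to compare your sketch against; what you have written is a (reasonable) outline of how the cited reference proceeds.

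Your three-step plan---existence/uniqueness of minimizers via compactness and Lemma~\ref{l:closedness}, a sharp $L^2$-extension inequality between nested sublevel sets, and a change of variable $r=h(t)$---is indeed the architecture of the proof in \cite{GY-concavity}. The one place where your sketch is genuinely loose is the introduction of the quantity $a_{t_0}$ (``a sharp density attached to $f_{t_0}$'') and the claim that its monotonicity yields concavity. As stated this is circular: if $a_{t_0}$ is defined as a one-sided derivative of $G\circ h^{-1}$, then its monotonicity \emph{is} concavity, not a route to it. In the actual argument of \cite{GY-concavity} the extension step delivers, for every pair $t_0<t_1$, the inequality
\[
\frac{G(t_0)-G(t_1)}{h(t_0)-h(t_1)}\;\le\;\liminf_{t\to t_1^{+}}\frac{G(t_1)-G(t)}{h(t_1)-h(t)},
\]
obtained by extending the minimizer $F_{t_1}$ outward with the optimal Ohsawa--Takegoshi constant and comparing with $G(t_0)$; the right-hand side is a concrete limit of integrals of $|F_{t_1}|^2e^{-\varphi}c(-\psi)$ over thin collars $\{-t\le\psi<-t_1\}$, not an abstract ``density''. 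This one-sided slope comparison, together with the monotonicity and finiteness you already recorded, is what forces concavity of $G\circ h^{-1}$. If you revise, replace the undefined $a_{t_0}$ by this collar-integral limit and state the displayed slope inequality as the output of the extension step; the two endpoint limits then follow exactly as you describe.
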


The following corollary gives a necessary condition for the concavity of $G(h^{-1}(r))$ degenerating to linearity.

\begin{Corollary}[\cite{GY-concavity}]	\label{c:linear}
 Assume that $G(T)\in(0,+\infty)$. If $G( {h}^{-1}(r))$ is linear with respect to $r\in(0,\int_{T}^{+\infty}c(l)e^{-l}dl)$, where $ {h}(t)=\int_{t}^{+\infty}c(l)e^{-l}dl$,
 then there is a unique holomorphic $(n,0)$ form $F$ on $M$ satisfying $(F-f)\in H^{0}(Z_0,(\mathcal{O}(K_{M})\otimes\mathcal F)|_{Z_0})$ and $G(t;c)=\int_{\{\psi<-t\}}|F|^2e^{-\varphi}c(-\psi)$ for any $t\geq T$. Furthermore,
\begin{equation}
\nonumber	\int_{\{-t_1\leq\psi<-t_2\}}|F|^2e^{-\varphi}a(-\psi)=\frac{G(T_1;c)}{\int_{T_1}^{+\infty}c(l)e^{-l}dl}\int_{t_2}^{t_1} a(t)e^{-t}dt
\end{equation}
for any nonnegative measurable function $a$ on $(T,+\infty)$, where $+\infty\geq t_1>t_2\geq T$.
\end{Corollary}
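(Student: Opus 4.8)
The plan is to exploit the rigidity that linearity imposes on the minimizing sections at every sublevel simultaneously, and then read off both the extremal form and the measure identity. First I would pin down the exact shape of $G$. Since $G(h^{-1}(r))$ is concave by Theorem \ref{thm:general_concave} and linear by hypothesis on $(0,\int_T^{+\infty}c(l)e^{-l}dl)$, and since $\lim_{t\to+\infty}G(t)=0$ forces this linear function to vanish as $r\to0+$, I obtain
$$G(t)=\frac{G(T_1)}{\int_{T_1}^{+\infty}c(l)e^{-l}dl}\,h(t),\qquad h(t)=\int_t^{+\infty}c(l)e^{-l}dl,$$
for every $t\ge T$, the slope being independent of the chosen $T_1\ge T$; write $C$ for it. In particular $G(t_2)-G(t_1)=C\int_{t_2}^{t_1}c(l)e^{-l}dl$ for all $T\le t_2<t_1$.

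Next I would produce the global extremal form. For each $t\ge T$ the functional $\tilde f\mapsto\int_{\{\psi<-t\}}|\tilde f|^2e^{-\varphi}c(-\psi)$ is minimized over the admissible affine class $\{\tilde f:(\tilde f-f)\in H^0(Z_0,(\mathcal{O}(K_M)\otimes\mathcal F)|_{Z_0})\}$ by a unique holomorphic $(n,0)$ form $F_t$ on $\{\psi<-t\}$; existence and uniqueness come from the Hilbert-space projection argument together with the Pythagorean identity, exactly as in the proof of Lemma \ref{l:bergman<+infty}, using $G(T)<+\infty$ to guarantee the class is nonempty. The essential step is to glue these. For $t_1>t_2\ge T$ one has $Z_0\subset\{\psi<-t_1\}\subset\{\psi<-t_2\}$, so $F_{t_2}$ restricted to $\{\psi<-t_1\}$ is admissible for $G(t_1)$; combining $\int_{\{\psi<-t_2\}}|F_{t_2}|^2e^{-\varphi}c(-\psi)=G(t_2)$ with the identity $G(t_2)-G(t_1)=C\int_{t_2}^{t_1}c(l)e^{-l}dl$ forces $\int_{\{\psi<-t_1\}}|F_{t_2}|^2e^{-\varphi}c(-\psi)=G(t_1)$, and by uniqueness of the level-$t_1$ minimizer this yields $F_{t_2}|_{\{\psi<-t_1\}}=F_{t_1}$. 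Hence the $F_t$ are mutually compatible and glue to a single holomorphic $(n,0)$ form $F$ on $\bigcup_{t>T}\{\psi<-t\}=\{\psi<-T\}$, which extends across the negligible set $\{\psi=\sup_M\psi\}$ to all of $M$; it satisfies $(F-f)\in H^0(Z_0,(\mathcal{O}(K_M)\otimes\mathcal F)|_{Z_0})$ and $G(t)=\int_{\{\psi<-t\}}|F|^2e^{-\varphi}c(-\psi)$ for every $t$, and it is unique since a second such form would be a second level-$T$ minimizer.

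Finally I would upgrade the relation to the stated identity for arbitrary $a$. With $F$ fixed, $\int_{\{-t_1\le\psi<-t_2\}}|F|^2e^{-\varphi}c(-\psi)=G(t_2)-G(t_1)=C\int_{t_2}^{t_1}c(l)e^{-l}\,dl$ for all $T\le t_2<t_1$. I then regard $d\mu:=(-\psi)_*\bigl(|F|^2e^{-\varphi}\bigr)$ as a Borel measure on $(T,+\infty)$, where $|F|^2=\sqrt{-1}^{\,n^2}F\wedge\bar F$. The displayed equalities say $\int_{(t_2,t_1]}c\,d\mu=C\int_{t_2}^{t_1}c(l)e^{-l}\,dl$ for every such interval, and since $c>0$ this identifies $d\mu=Ce^{-t}\,dt$. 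Integrating a general nonnegative measurable $a$ against $d\mu$ then gives $\int_{\{-t_1\le\psi<-t_2\}}|F|^2e^{-\varphi}a(-\psi)=C\int_{t_2}^{t_1}a(t)e^{-t}\,dt$, which is the asserted formula with $C=G(T_1)/\int_{T_1}^{+\infty}c(l)e^{-l}dl$.

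The hard part will be the gluing in the second paragraph: one must verify that the unique minimizer on a larger sublevel set genuinely restricts to the unique minimizer on the smaller one. This is precisely where linearity, rather than mere concavity, is indispensable—any strict concavity would create a strict loss in the restriction inequality and prevent the $F_t$ from agreeing, whereas linearity saturates the inequality and pins them down. Secondary care is needed at the top level $t=T$, since $\{\psi<-T\}$ omits $\{\psi=\sup_M\psi\}$; for nonconstant plurisubharmonic $\psi$ the maximum principle renders this set of measure zero, so the Riemann-type extension of $F$ to all of $M$ is harmless.
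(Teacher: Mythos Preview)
The paper does not supply its own proof of this corollary; it is imported verbatim from \cite{GY-concavity}. So there is no in-paper argument to compare against, and your proposal must stand on its own.

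It does not quite. The gluing step in your second paragraph contains an unjustified implication. From
\[
\int_{\{\psi<-t_2\}}|F_{t_2}|^{2}e^{-\varphi}c(-\psi)=G(t_2)
\qquad\text{and}\qquad
\int_{\{\psi<-t_1\}}|F_{t_2}|^{2}e^{-\varphi}c(-\psi)\ge G(t_1)
\]
(the second by admissibility of the restriction), you only obtain
\[
\int_{\{-t_1\le\psi<-t_2\}}|F_{t_2}|^{2}e^{-\varphi}c(-\psi)\ \le\ G(t_2)-G(t_1)\ =\ C\!\int_{t_2}^{t_1}c(l)e^{-l}\,dl.
\]
Linearity tells you the value of the right-hand side, but supplies \emph{no matching lower bound} on the ring integral. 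Your sentence ``combining \ldots\ forces $\int_{\{\psi<-t_1\}}|F_{t_2}|^{2}e^{-\varphi}c(-\psi)=G(t_1)$'' is therefore unsupported: nothing you have written excludes strict inequality $>G(t_1)$, and with it the $F_t$ need not coincide. You even flag this as ``the hard part,'' but the justification you offer (``linearity saturates the inequality'') begs the question of \emph{which} inequality is being saturated.

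The missing ingredient is precisely the mechanism that proves Theorem~\ref{thm:general_concave} in \cite{GY-concavity}: concavity is obtained by \emph{extending} the minimizer $F_{t_1}$ from $\{\psi<-t_1\}$ to $\{\psi<-t_2\}$ via an optimal $L^{2}$ extension estimate, which yields an upper bound $G(t_2)\le G(t_1)+C\bigl(h(t_2)-h(t_1)\bigr)$ plus lower-order control. Linearity is exactly the saturation of \emph{that} estimate; equality in the extension bound then identifies the extended form with $F_{t_2}$ and hence forces $F_{t_2}|_{\{\psi<-t_1\}}=F_{t_1}$. Without invoking this extension step (or an equivalent device), your compatibility argument does not close. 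Once the gluing is repaired, your pushforward argument for the identity with arbitrary nonnegative $a$ is correct and clean.
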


The following lemma is a characterization of $G(t)= 0$, where $t\geq T$.

\begin{Lemma}[\cite{GY-concavity}]
The following two statements are equivalent:
\par
$(1)$ $(f) \in
H^0(Z_0,(\mathcal{O} (K_M) \otimes \mathcal{F})|_{Z_0} )$.
\par
$(2)$ $G(t) = 0$.
\label{l:G equal to 0}
\end{Lemma}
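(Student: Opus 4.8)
The statement is an equivalence, so I treat the two implications separately. The implication $(1)\Rightarrow(2)$ is immediate: if $(f)\in H^0(Z_0,(\mathcal{O}(K_M)\otimes\mathcal{F})|_{Z_0})$, then the zero form $\tilde f\equiv 0$ is an admissible competitor in the definition of $G(t)$, since $\tilde f\in H^0(\{\psi<-t\},\mathcal{O}(K_M))$ trivially and $(\tilde f-f,z_0)=(-f,z_0)\in(\mathcal{O}(K_M)\otimes\mathcal{F})_{z_0}$ for every $z_0\in Z_0$ by hypothesis. As $\int_{\{\psi<-t\}}|\tilde f|^2e^{-\varphi}c(-\psi)=0$ and $G(t)\ge 0$, we conclude $G(t)=0$.

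The real content is $(2)\Rightarrow(1)$. The plan is to extract a sequence of competitors whose integrals tend to $0$, upgrade this $L^2$-smallness to locally uniform convergence, and then transport membership in the sheaf to the limit via the closedness result Lemma \ref{l:closedness}. Concretely, since $G(t)=0$ I would choose holomorphic $(n,0)$-forms $\tilde f_j$ on $\{\psi<-t\}$ with $(\tilde f_j-f,z_0)\in(\mathcal{O}(K_M)\otimes\mathcal{F})_{z_0}$ for all $z_0\in Z_0$ and $\int_{\{\psi<-t\}}|\tilde f_j|^2e^{-\varphi}c(-\psi)\to 0$. By the defining property of the class $\mathcal{P}_T$ (Definition \ref{def:gain}), the weight $e^{-\varphi}c(-\psi)$ admits a positive lower bound on every compact subset of $M\setminus E$; hence the convergence of the integrals forces $\int_K|\tilde f_j|^2\to 0$ for each compact $K\subset\{\psi<-t\}\setminus E$, and the sub-mean-value inequality for holomorphic functions then yields $\tilde f_j\to 0$ uniformly on compact subsets of $\{\psi<-t\}\setminus E$.

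The main obstacle is that the points of $Z_0$ at which I must verify the germ condition lie in $\{\psi=-\infty\}$, which is exactly where the weight degenerates and where the uniform convergence above has not yet been established. To cross this set I would use that $E$ is contained in an analytic subset $Z$ with $E\subset\{\psi=-\infty\}$. Around a point $z_0\in Z_0$ I choose a coordinate polydisc $P\Subset\{\psi<-t\}$ whose distinguished boundary $b_0P$ is a compact subset of $\{\psi<-t\}\setminus E$; since $\{\psi=-\infty\}$ is thin (in the applications of interest $E=\{z_0\}$ is a single point, so any small polydisc works), the previous paragraph gives $\tilde f_j\to 0$ uniformly on $b_0P$, and the maximum modulus principle on $P$ then promotes this to $\tilde f_j\to 0$ uniformly on $P$, irrespective of the weight's behaviour at $z_0$ itself.

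Consequently $\tilde f_j-f\to -f$ uniformly near $z_0$; as each germ $(\tilde f_j-f,z_0)$ lies in $(\mathcal{O}(K_M)\otimes\mathcal{F})_{z_0}$, Lemma \ref{l:closedness} (applied after trivialising $K_M$ near $z_0$ so that the forms become $q$-tuples of holomorphic functions) shows that the limit germ $(-f,z_0)$, hence $(f,z_0)$, lies in $(\mathcal{O}(K_M)\otimes\mathcal{F})_{z_0}$. Since $z_0\in Z_0$ was arbitrary, statement $(1)$ follows. The one delicate point to treat with care is the selection of the polydisc (or, for positive-dimensional $Z$, of a suitable boundary hypersurface) avoiding $E$, where the analyticity and pluripolarity of the degeneration set are essential.
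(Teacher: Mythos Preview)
The paper does not supply a proof of this lemma; it is quoted from \cite{GY-concavity} and used as a black box. Your argument is correct and is the expected one: the forward implication is trivial (take $\tilde f=0$), and for the reverse implication you extract a sequence of near-minimizers, use the positive lower bound of $e^{-\varphi}c(-\psi)$ on compacta of $M\setminus E$ (Definition \ref{def:gain}) together with the sub-mean-value inequality to get $\tilde f_j\to 0$ locally uniformly off $E$, then cross the analytic set $Z\supset E$ via the maximum principle (exactly the device used in the proof of Lemma \ref{l:admissible}), and finally invoke Lemma \ref{l:closedness} after trivialising $K_M$. There is nothing further to compare.
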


We recall the existence and uniqueness of the holomorphic $(n,0)$ form related to $G(t)$.
\begin{Lemma}[\cite{GY-concavity}]
\label{l:unique}
Assume that $G(t)<+\infty$ for some $t\in[T,+\infty)$.
Then there exists a unique holomorphic $(n,0)$ form $F_{t}$ on
$\{\psi<-t\}$ satisfying $(F_{t}-f)\in H^{0}(Z_0,(\mathcal{O}(K_{M})\otimes\mathcal{F})|_{Z_0})$ and $\int_{\{\psi<-t\}}|F_{t}|^{2}e^{-\varphi}c(-\psi)=G(t)$.
Furthermore,
for any holomorphic $(n,0)$ form $\hat{F}$ on $\{\psi<-t\}$ satisfying $(\hat{F}-f)\in H^{0}(Z_0,(\mathcal{O}(K_{M})\otimes\mathcal{F})|_{Z_0})$ and
$\int_{\{\psi<-t\}}|\hat{F}|^{2}e^{-\varphi}c(-\psi)<+\infty$,
we have the following equality
\begin{equation}
\nonumber \begin{split}
&\int_{\{\psi<-t\}}|F_{t}|^{2}e^{-\varphi}c(-\psi)+\int_{\{\psi<-t\}}|\hat{F}-F_{t}|^{2}e^{-\varphi}c(-\psi)
\\=&
\int_{\{\psi<-t\}}|\hat{F}|^{2}e^{-\varphi}c(-\psi).
\end{split}
\end{equation}
\end{Lemma}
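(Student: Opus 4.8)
The plan is to realize $G(t)$ as the squared distance from the origin to a closed affine subspace of a suitable Hilbert space of holomorphic forms, and then to read off both the existence-uniqueness of $F_t$ and the ``Pythagorean'' identity directly from the orthogonal projection theorem.

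First I would fix $t$ with $G(t)<+\infty$, write $\Omega:=\{\psi<-t\}$, and introduce the weight $w:=e^{-\varphi}c(-\psi)$ on $\Omega$. Consider the space
$$\mathcal{H}:=\Big\{\tilde f\in H^0(\Omega,\mathcal{O}(K_M)):\int_{\Omega}|\tilde f|^2 w<+\infty\Big\}$$
with inner product $\langle f,g\rangle:=\int_{\Omega}\sqrt{-1}^{n^2}f\wedge\bar g\,w$. The key point is completeness. Since $c$ lies in the class $\mathcal{P}_T$ of Definition \ref{def:gain}, there is a closed set $E\subset Z\cap\{\psi=-\infty\}$ with $Z$ analytic, such that $w$ has a positive lower bound on every compact subset of $M\setminus E$; hence on compact subsets of $\Omega\setminus E$ an $L^2(w)$-Cauchy sequence is $L^2_{loc}$-Cauchy, and the sub-mean-value inequality for holomorphic $(n,0)$ forms upgrades this to locally uniform convergence there, so the $L^2$-limit agrees with a holomorphic form on $\Omega\setminus E$. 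Because this limit has locally finite $L^2$-norm near the measure-zero analytic set $Z$, the $L^2$ Riemann extension theorem lets it extend holomorphically across $E$; thus the limit lies in $\mathcal{H}$, and $\mathcal{H}$ is a Hilbert space.

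Next I would describe the constraint. Let
$$\mathcal{A}:=\{\tilde f\in\mathcal{H}:(\tilde f-f)\in H^0(Z_0,(\mathcal{O}(K_M)\otimes\mathcal{F})|_{Z_0})\},$$
which is nonempty precisely because $G(t)<+\infty$, and let $\mathcal{A}_0:=\{g\in\mathcal{H}:(g)\in H^0(Z_0,(\mathcal{O}(K_M)\otimes\mathcal{F})|_{Z_0})\}$ be the associated linear subspace, so that $\mathcal{A}=f_0+\mathcal{A}_0$ for any fixed $f_0\in\mathcal{A}$. I claim $\mathcal{A}_0$ is closed: if $g_j\to g$ in $\mathcal{H}$, then by the same estimate as above $g_j\to g$ locally uniformly on $\Omega$, so the germ condition at every point of $Z_0$ passes to the limit by the closedness of submodules (Lemma \ref{l:closedness}), giving $g\in\mathcal{A}_0$. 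Hence $\mathcal{A}$ is a nonempty closed affine (in particular convex) subset of $\mathcal{H}$, and the Hilbert-space projection theorem yields a unique element $F_t\in\mathcal{A}$ of minimal norm, with $\|F_t\|^2=G(t)$ and the orthogonality characterization $\langle F_t,g\rangle=0$ for every $g\in\mathcal{A}_0$.

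Finally, the ``furthermore'' equality is immediate: for any $\hat F\in\mathcal{A}$ of finite norm we have $\hat F-F_t\in\mathcal{A}_0$, hence $\langle F_t,\hat F-F_t\rangle=0$, and expanding $\|\hat F\|^2=\|F_t+(\hat F-F_t)\|^2$ gives exactly $\|\hat F\|^2=\|F_t\|^2+\|\hat F-F_t\|^2$. I expect the only genuine difficulty to be the two analytic facts underlying completeness of $\mathcal{H}$ and closedness of $\mathcal{A}_0$ — namely that $L^2(w)$-convergence forces local uniform convergence of the holomorphic forms and that the limit extends across the small set $E$ — since once these are in hand everything else is formal Hilbert-space theory. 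Uniqueness is part of the projection theorem, and the degenerate case $G(t)=0$ (where $F_t=0$) is consistent with Lemma \ref{l:G equal to 0}.
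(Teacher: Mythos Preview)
The paper does not prove this lemma here; it is quoted from \cite{GY-concavity} without argument. That said, the paper proves several structurally identical results (Lemma~\ref{l:bergman<+infty}, Lemma~\ref{l:b7}, Lemma~\ref{l:a K<+infty}) by the direct route: take a minimizing sequence, extract a locally uniformly convergent subsequence, use Fatou for the norm bound, Lemma~\ref{l:closedness} for the germ condition, the parallelogram law for uniqueness, and the first-variation argument $\|F_t\|^2\le\|F_t+\alpha(\hat F-F_t)\|^2$ for the orthogonality that gives the Pythagorean identity. Your Hilbert-projection packaging is equivalent to this and is correct in outline.

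There is one genuine imprecision. In your completeness argument you obtain a holomorphic limit on $\Omega\setminus E$ and then invoke an ``$L^2$ Riemann extension theorem'' to cross $E$, asserting the limit has locally finite (unweighted) $L^2$-norm near $Z$. You have not justified that: the hypothesis on $c\in\mathcal P_T$ only gives a positive \emph{lower} bound for $w=e^{-\varphi}c(-\psi)$ on compacta of $M\setminus E$, so finite $\|\cdot\|_{L^2(w)}$ does not by itself control the unweighted $L^2$ mass near $E$. The clean fix, which the paper itself uses in the proof of Lemma~\ref{l:admissible}, is to avoid extension altogether: the Cauchy sequence consists of forms already holomorphic on all of $\Omega$, $E$ sits inside an analytic set $Z$, and by the Local Parametrization Theorem plus the Maximum Principle one has $\sup_K|F|\le C\sup_{K_1}|F|$ for a suitable $K_1\Subset\Omega\setminus Z$, so local uniform convergence on $\Omega\setminus Z$ already forces local uniform convergence on all of $\Omega$. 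This same correction is needed in your closedness argument for $\mathcal A_0$, since $Z_0\subset\{\psi=-\infty\}$ can meet $E$ and you must verify the germ condition at those points too. With this adjustment your proof goes through.
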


In the following, we recall a characterization for the concavity degenerating to linearity  when $M$ is a product manifold of open Riemann surfaces.

Let $\Omega_j$  be an open Riemann surface, which admits a nontrivial Green function $G_{\Omega_j}$ for any  $1\le j\le n$. Let 
$$M=\prod_{1\le j\le n}\Omega_j$$ be an $n-$dimensional complex manifold, and let $\pi_j$ be the natural projection from $M$ to $\Omega_j$. Let $K_M$ be the canonical (holomorphic) line bundle on $M$. Let
 $Z_0=\{z_0\}=\{(z_1,\ldots,z_n)\}\subset M$. For any $j\in\{1,\ldots,n\}$, let $\varphi_j$ be a subharmonic function on $\Omega_j$, and let 
 $$\varphi=\sum_{1\le j\le n}\pi_j^*(\varphi_j).$$ 
Let 
$$\psi=\max_{1\le j\le n}\left\{2p_j\pi_j^{*}(G_{\Omega_j}(\cdot,z_j))\right\},$$
 where $p_j$ is positive real number.
Let $c$ be a positive function on $(0,+\infty)$ such that $\int_{0}^{+\infty}c(t)e^{-t}dt<+\infty$ and $c(t)e^{-t}$ is decreasing on $(0,+\infty)$. Let $\mathcal{F}_{z}=\mathcal{I}(\psi)_z$ for any $z\in Z_0$.

Let $w_j$ be a local coordinate on a neighborhood $V_{z_j}$ of $z_j\in\Omega_j$ satisfying $w_j(z_j)=0$. Denote that $V_0:=\prod_{1\le j\le n}V_{z_j}$, and $w:=(w_1,\ldots,w_n)$ is a local coordinate on $V_0$ of $z_0\in M$.
Let $f$ be a holomorphic $(n,0)$ form on $V_0$. Denote that 
$$E:=\left\{(\alpha_1,\ldots,\alpha_n):\sum_{1\le j\le n}\frac{\alpha_j+1}{p_j}=1\,\&\,\alpha_j\in\mathbb{Z}_{\ge0}\right\}.$$

We recall a characterization of the concavity of $G(h^{-1}(r))$ degenerating to linearity. 
\begin{Theorem}[\cite{GY-concavity4}]
	\label{thm:linear-2d}
	Assume that $G(0)\in(0,+\infty)$ and $\varphi_j(z_j)>-\infty$ for any $1\le j\le n$.  $G(h^{-1}(r))$ is linear with respect to $r\in(0,\int_{0}^{+\infty}c(t)e^{-t}dt]$  if and only if the  following statements hold:
	
	$(1)$ $f=\left(\sum_{\alpha\in E}d_{\alpha}w^{\alpha}+g_0\right)dw_1\wedge\ldots\wedge dw_n$ on $V_0$, where  $d_{\alpha}\in\mathbb{C}$ such that $\sum_{\alpha\in E}|d_{\alpha}|\not=0$ and $g_0$ is a holomorphic function on $V_0$ such that $(g_0,z_0)\in\mathcal{I}(\psi)_{z_0}$;
	
	$(2)$ $\varphi_j=2\log|g_j|+2u_j$, where $g_j$ is a holomorphic function on $\Omega_j$ such that $g_j(z_j)\not=0$ and $u_j$ is a harmonic function on $\Omega_j$ for any $1\le j\le n$;

    $(3)$ $\chi_{j,z_j}^{\alpha_j+1}=\chi_{j,-u_j}$ for any $j\in\{1,2,...,n\}$ and $\alpha\in E$ satisfying $d_{\alpha}\not=0$, $\chi_{j,z_j}$ and $\chi_{j,-u_j}$ are the characters associated to functions $G_{\Omega_j}(\cdot,z_j)$ and $-u_j$ respectively.
\end{Theorem}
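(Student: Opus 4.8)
\emph{Strategy.} Since Theorem \ref{thm:general_concave} guarantees that $G(h^{-1}(r))$ is always concave on $(0,h(0)]$ (here $T=-\sup_M\psi=0$ because each $G_{\Omega_j}(\cdot,z_j)<0$, and $h(t)=\int_t^{+\infty}c(l)e^{-l}dl$), linearity is exactly the extremal case, and the natural tool is Corollary \ref{c:linear}. The geometric input that makes the product manifold tractable is that the sublevel sets of $\psi$ are genuine products: since $\psi=\max_{1\le j\le n}\{2p_j\pi_j^*G_{\Omega_j}(\cdot,z_j)\}$, we have $\{\psi<-t\}=\prod_{1\le j\le n}\Omega_j^{(t)}$ with $\Omega_j^{(t)}=\{w_j\in\Omega_j:G_{\Omega_j}(w_j,z_j)<-t/(2p_j)\}$. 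I would prove the two implications separately, using this product structure to transport the one-dimensional ($n=1$) rigidity to each factor.

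\emph{Necessity: linearity $\Rightarrow$ (1),(2),(3).} Assume $G(h^{-1}(r))$ is linear. Corollary \ref{c:linear} produces a unique holomorphic $(n,0)$ form $F$ on $M$ with $(F-f)\in H^0(Z_0,(\mathcal{O}(K_M)\otimes\mathcal{I}(\psi))|_{Z_0})$, $G(t)=\int_{\{\psi<-t\}}|F|^2e^{-\varphi}c(-\psi)$, and the sharp equidistribution identity, which with $T=0$ reads
\[
\int_{\{-t_1\le\psi<-t_2\}}|F|^2e^{-\varphi}a(-\psi)=\frac{G(0)}{h(0)}\int_{t_2}^{t_1}a(t)e^{-t}\,dt
\]
for every nonnegative measurable $a$; equivalently the pushforward $(-\psi)_*\big(|F|^2e^{-\varphi}\big)$ equals $\frac{G(0)}{h(0)}e^{-t}\,dt$ on $(0,+\infty)$. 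Statement (1) I would obtain first: finiteness $G(0)<+\infty$ forces $\int_M|F|^2e^{-\varphi}c(-\psi)<+\infty$, so by Lemma \ref{l:psi} the Taylor expansion of $F$ at $z_0$ can carry no monomial $w^\alpha$ with $\sum_j(\alpha_j+1)/p_j<1$ (such a term is non-integrable even against the gain $c$), while $G(0)>0$ together with Lemma \ref{l:G equal to 0} forces $f\notin\mathcal I(\psi)_{z_0}$; hence modulo $\mathcal I(\psi)_{z_0}$ the germ of $F$ (equivalently of $f$) is a nonzero combination of the critical monomials $\{w^\alpha:\alpha\in E\}$, which is precisely (1).

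\emph{Necessity, the substantive part.} Statements (2) and (3) are where the real work lies. Feeding the product decomposition $\{\psi<-t\}=\prod_j\Omega_j^{(t)}$ into the equidistribution identity and slicing by Fubini, I would reduce the global balance to a one-dimensional balance on each $\Omega_j$ for the minimal integral attached to $G_{\Omega_j}(\cdot,z_j)$ and the weight $e^{-\varphi_j}$; the one-dimensional rigidity (the $n=1$ instance, governed by the Green-function extremal and the Suita-type equality underlying Theorem \ref{thm:saitoh-1d}) then forces each $\varphi_j$ to split as $\varphi_j=2\log|g_j|+2u_j$ with $g_j$ holomorphic, $g_j(z_j)\ne0$ and $u_j$ harmonic, giving (2). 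Finally, the extremal $F$ is necessarily built, factor by factor on the universal coverings $P_j:\Delta\to\Omega_j$, from the multiplicative functions $f_{z_j}$ (with $|f_{z_j}|=P_j^*e^{G_{\Omega_j}(\cdot,z_j)}$) and $f_{u_j}$; the object $(P_j)_*(f_{z_j})'(P_j)_*(f_{u_j}f_{z_j}^{\alpha_j})$ descends to a single-valued holomorphic datum on $\Omega_j$ only when its total character is trivial, i.e.\ when $\chi_{j,z_j}^{\alpha_j+1}=\chi_{j,-u_j}$ for each $\alpha\in E$ with $d_\alpha\ne0$ (the exponent $\alpha_j+1$ collecting the $\alpha_j$ factors of $f_{z_j}$ and the one extra carried by the derivative). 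This is (3).

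\emph{Sufficiency and the main obstacle.} Conversely, given (1),(2),(3) I would write down the candidate extremal $F$ explicitly as $\sum_{\alpha\in E}\tilde d_\alpha\prod_j g_j(P_j)_*(f_{z_j})'(P_j)_*(f_{u_j}f_{z_j}^{\alpha_j})\,dw_1\wedge\cdots\wedge dw_n$, use (3) to confirm single-valuedness on $M$ and (1) to match $f$ modulo $\mathcal I(\psi)_{z_0}$, and then compute directly, using the product form of the level sets, that $\int_{\{\psi<-t\}}|F|^2e^{-\varphi}c(-\psi)=\frac{G(0)}{h(0)}h(t)$; by the uniqueness in Lemma \ref{l:unique} this $F$ realizes $G(t)$, so $G(h^{-1}(r))$ is linear. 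The principal difficulty throughout is the necessity direction, and specifically the passage from the single global equidistribution identity to the pointwise structural conclusion (2) and the character matching (3): the coupling introduced by the maximum in $\psi$ (and hence in $c(-\psi)$) obstructs any naive factorization of the integrand, so the reduction to one dimension must be carried out carefully through the product geometry of the sets $\{\psi<-t\}$ rather than by separating variables.
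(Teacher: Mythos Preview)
The paper does not prove this theorem: it is quoted verbatim from \cite{GY-concavity4} (note the citation attached to the theorem header), as part of the background recalled in Section~\ref{sec:pre}. There is therefore no in-paper proof to compare your proposal against. The only related argument the paper supplies is the short proof of Remark~\ref{r:var=-infty}, which removes the hypothesis $\varphi_j(z_j)>-\infty$ by combining Corollary~\ref{c:linear} with the explicit level-set computation of Lemma~\ref{l:m2}.

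As a standalone sketch your outline is broadly in the right spirit (Corollary~\ref{c:linear} plus a reduction to the one-variable rigidity behind Theorem~\ref{thm:saitoh-1d}/Corollary~\ref{c:saitoh-higher jet}), but one step as written does not go through. In your derivation of (1) you claim that finiteness of $\int_M|F|^2e^{-\varphi}c(-\psi)$ already excludes monomials $w^\alpha$ with $\sum_j(\alpha_j+1)/p_j<1$ ``by Lemma~\ref{l:psi}.'' Lemma~\ref{l:psi} concerns membership in $\mathcal I(\psi)_{z_0}$, i.e.\ integrability of $|h|^2e^{-\psi}$, which is a strictly stronger condition near $z_0$ than integrability against $c(-\psi)$ (since $c(t)e^{-t}\le1$ gives $c(-\psi)\le e^{-\psi}$, not the reverse). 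Subcritical monomials are ruled out not by $G(0)<\infty$ but by the equidistribution identity from Corollary~\ref{c:linear}: writing $\int_{\{\psi<-t\}}|F|^2e^{-\varphi}=\tfrac{G(0)}{h(0)}e^{-t}$ and comparing with Lemma~\ref{l:m2} on a small polydisc forces every surviving exponent to satisfy $\sum_j(\alpha_j+1)/p_j\ge1$, and the presence of at least one critical exponent follows from $(F,z_0)\notin\mathcal I(\psi)_{z_0}$. This is exactly the mechanism used in the proof of Remark~\ref{r:var=-infty}.
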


The following Lemma will be used in the proof of Remark \ref{r:var=-infty}.

\begin{Lemma}[see \cite{GY-concavity4}] \label{l:m2}
Let $\psi=\max_{1\le j\le n}\{2p_j\log|w_j|\}$ be a plurisubharmonic function on $\mathbb{C}^n$, where $p_j>0$.
	Let $f=\sum_{\alpha\in \mathbb{Z}_{\ge0}^n}b_{\alpha}w^{\alpha}$ (Taylor expansion) be a holomorphic function on $\{\psi<-t_0\}$, where $t_0>0$. Then
	$$\int_{\{\psi<-t\}}|f|^2d\lambda_n=\sum_{\alpha\in\mathbb{Z}_{\ge0}^n}e^{-\sum_{1\le j\le n}\frac{\alpha_j+1}{p_j}t}\frac{|b_{\alpha}|^2\pi^n}{\Pi_{1\le j\le n}(\alpha_j+1)}$$
	holds for any $t\ge t_0$. \end{Lemma}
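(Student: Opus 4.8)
The plan is to observe that the sublevel set $\{\psi<-t\}$ is nothing but a polydisc centered at the origin, and then to exploit the orthogonality of monomials in the $L^2$ space of a polydisc. First I would unravel the definition of $\psi$: the inequality $\psi(w)=\max_{1\le j\le n}\{2p_j\log|w_j|\}<-t$ holds exactly when $2p_j\log|w_j|<-t$ for every $j$, that is, when $|w_j|<e^{-t/(2p_j)}=:r_j(t)$. Hence $\{\psi<-t\}=\prod_{1\le j\le n}\{|w_j|<r_j(t)\}$ is the polydisc $\Delta_{r(t)}$ with polyradius $r(t)=(r_1(t),\ldots,r_n(t))$, and for $t\ge t_0$ it is contained in the domain of holomorphy $\{\psi<-t_0\}$ of $f$.

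Next I would record the elementary moment computation on a single disc. Using polar coordinates $w_j=se^{i\theta}$ and $d\lambda_1=s\,ds\,d\theta$, one gets $\int_{\{|w_j|<r\}}w_j^{a}\overline{w_j}^{b}\,d\lambda_1=0$ whenever $a\ne b$, while $\int_{\{|w_j|<r\}}|w_j|^{2a}\,d\lambda_1=\frac{\pi r^{2a+2}}{a+1}$. By Fubini's theorem this yields, for multi-indices $\alpha,\beta\in\mathbb{Z}_{\ge0}^n$, the orthogonality relation $\int_{\Delta_{r(t)}}w^{\alpha}\overline{w^{\beta}}\,d\lambda_n=0$ for $\alpha\ne\beta$, together with $\int_{\Delta_{r(t)}}|w^{\alpha}|^2\,d\lambda_n=\prod_{1\le j\le n}\frac{\pi r_j(t)^{2\alpha_j+2}}{\alpha_j+1}$. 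Substituting $r_j(t)=e^{-t/(2p_j)}$ turns $\prod_j r_j(t)^{2\alpha_j+2}$ into $\exp\bigl(-t\sum_{1\le j\le n}\frac{\alpha_j+1}{p_j}\bigr)$, so that $\int_{\Delta_{r(t)}}|w^{\alpha}|^2\,d\lambda_n=\frac{\pi^n}{\prod_{1\le j\le n}(\alpha_j+1)}\exp\bigl(-t\sum_{1\le j\le n}\frac{\alpha_j+1}{p_j}\bigr)$, which is exactly the $\alpha$-th summand appearing in the claimed formula.

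Finally I would integrate the Taylor series of $f$ term by term. For $t>t_0$ one has $\overline{\Delta_{r(t)}}\Subset\{\psi<-t_0\}$, so the series $f=\sum_{\alpha}b_{\alpha}w^{\alpha}$ converges uniformly on $\overline{\Delta_{r(t)}}$; expanding $|f|^2=\sum_{\alpha,\beta}b_{\alpha}\overline{b_{\beta}}\,w^{\alpha}\overline{w^{\beta}}$ and integrating, all cross terms vanish by orthogonality and the desired identity drops out. The boundary case $t=t_0$ then comes for free by letting $t\downarrow t_0$: the left-hand side converges to $\int_{\{\psi<-t_0\}}|f|^2\,d\lambda_n$ by monotone convergence, since the sets $\{\psi<-t\}$ increase to $\{\psi<-t_0\}$, while the right-hand side converges by monotone convergence applied to the series in $\alpha$ (each summand is monotone in $t$).

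The one genuinely delicate point is this justification of interchanging summation and integration, i.e. the uniform convergence of the Taylor expansion on the relevant compact polydisc and the passage to the limit $t\downarrow t_0$; once that is in place, every other step is a direct computation, so I do not expect any serious obstacle here.
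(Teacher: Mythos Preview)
Your argument is correct and is the natural one: identify $\{\psi<-t\}$ as the polydisc $\prod_j\{|w_j|<e^{-t/(2p_j)}\}$, use the orthogonality of monomials in $L^2$ of a polydisc together with the one-variable moment $\int_{|w|<r}|w|^{2a}\,d\lambda_1=\pi r^{2a+2}/(a+1)$, and handle the endpoint $t=t_0$ by monotone convergence. The paper itself does not give a proof of this lemma; it simply quotes the statement from \cite{GY-concavity4}, so there is nothing to compare against beyond noting that your computation is the standard derivation one would expect behind such a citation.
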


\begin{Remark}
	\label{r:var=-infty}The requirement ``$\varphi_j(z_j)>-\infty$ for any $1\le j\le n$" in Theorem \ref{thm:linear-2d} can be removed.
\end{Remark}
\begin{proof}It suffices to prove that the linearity of $G(h^{-1}(r))$ can deduce $\varphi(z_j)>-\infty$ for any $1\le j\le n$. 

Assume that $G(h^{-1}(r))$ is linear with respect to $r\in(0,\int_{0}^{+\infty}c(t)e^{-t}dt]$.
	It follows from Corollary \ref{c:linear} that there is a holomorphic $(n,0)$ form $F$ on $M$ such that $(F-f,z_0)\in\mathcal{I}(\psi)_{z_0}$ and
	\begin{equation}
		\label{eq:220807a}\int_{\{\psi<-t\}}|F|^2e^{-\varphi}=\frac{G(0)}{\int_0^{+\infty}c(t)e^{-t}dt}e^{-t}
	\end{equation}
	for any $t\ge0$. Without loss of generality, assume that $|w_j(z)|=e^{G_{\Omega_j}(z,z_j)}$ on $V_{z_j}$, hence $\psi=\max_{1\le j\le n}\{2p_j\log|w_j|\}$ on $V_0$. As $G_{\Omega_j}(z,z_j)$ is the Green function on $\Omega_j$,  there is $t_0>0$ such that
	$$\{\psi<-t_0\}\Subset V_0.$$
	 Denote that 
	 $$c_t=\sup_{\{\psi<-t\}}\varphi<+\infty$$
	for any $t\ge t_0$. As $\varphi=\sum_{1\le j\le n}\varphi_j$ is plurisubharmonic, we know that $\lim_{t\rightarrow+\infty}c_t=\varphi(z_0)=\sum_{1\le j\le n}\varphi_j(z_j)$.
	Let $F=\sum_{\alpha\in \mathbb{Z}_{\geq0}^n}b_{\alpha}w^{\alpha}dw_1\wedge\ldots\wedge dw_n$ near $z_0$. Denote that $E_1=\{\alpha\in \mathbb{Z}_{\geq0}^n:\sum_{1\le j\le n}\frac{\alpha_j+1}{p_j}\le 1\}$. Since $(F,z_0)\not\in\mathcal{I}(\psi)_{z_0}$, we have 
	$$\sum_{\alpha\in E_1}|d_{\alpha}|^2>0.$$
	  Lemma \ref{l:m2} tells us that 
	  \begin{equation}
	  	\label{eq:220807b}
	  	\begin{split}
	  			  	\int_{\{\psi<-t\}}|F|^2e^{-\varphi}&\ge e^{-c_t}\int_{\{\psi<-t\}}|F|^2\\
	  			  	&=e^{-c_t}\sum_{\alpha\in\mathbb{Z}_{\ge0}^n}e^{-\sum_{1\le j\le n}\frac{\alpha_j+1}{p_j}t}\frac{|d_{\alpha}|^2(2\pi)^n}{\Pi_{1\le j\le n}(\alpha_j+1)}\\
	  			  	&\geq e^{-c_t}\sum_{\alpha\in E_1}e^{-\sum_{1\le j\le n}\frac{\alpha_j+1}{p_j}t}\frac{|d_{\alpha}|^2(2\pi)^n}{\Pi_{1\le j\le n}(\alpha_j+1)}
	  	\end{split}
	  \end{equation} 
	  for any $t\ge t_0$. It follows from equality \eqref{eq:220807a} and inequality \eqref{eq:220807b} that 
	  \begin{equation}\label{eq:220807c}
	  \begin{split}
	  		  	\frac{G(0)}{\int_0^{+\infty}c(t)e^{-t}dt}&=\lim_{t\rightarrow+\infty}e^t \int_{\{\psi<-t\}}|F|^2e^{-\varphi}\\
	  		  	&\geq \lim_{t\rightarrow+\infty}e^{-c_t}\sum_{\alpha\in E_1}e^{\left(1-\sum_{1\le j\le n}\frac{\alpha_j+1}{p_j}\right)t}\frac{|d_{\alpha}|^2(2\pi)^n}{\Pi_{1\le j\le n}(\alpha_j+1)},
	  \end{split}
	  \end{equation}
	  Note that $\frac{G(0)}{\int_0^{+\infty}c(t)e^{-t}dt}\in(0,+\infty)$ and $1-\sum_{1\le j\le n}\frac{\alpha_j+1}{p_j}\ge0$ for any $\alpha\in E_1$, inequality \eqref{eq:220807c} shows that 
	  $$\lim_{t\rightarrow+\infty}c_t>-\infty,$$
	  hence we have $\varphi_j(z_j)>-\infty$ for any $1\le j\le n$.
\end{proof}

Let $c_j(z_j)$ be the logarithmic capacity (see \cite{S-O69}) on $\Omega_j$, which is locally defined by
$$c_j(z_j):=\exp\lim_{z\rightarrow z_j}(G_{\Omega_j}(z,z_j)-\log|w_j(z)|).$$
\begin{Remark}[\cite{GY-concavity4}]
	\label{r:1.1}When the three statements in Theorem \ref{thm:linear-2d} hold,
$$\sum_{\alpha\in E}\tilde{d}_{\alpha}\wedge_{1\le j\le n}\pi_j^*\left(g_j(P_j)_*\left(f_{u_j}f_{z_j}^{\alpha_j}df_{z_j}\right)\right)$$
 is the unique holomorphic $(n,0)$ form $F$ on $M$ such that $(F-f,z_0)\in(\mathcal{O}(K_{M}))_{z_0}\otimes\mathcal{I}(\psi)_{z_0}$ and
	$$G(t)=\int_{\{\psi<-t\}}|F|^2e^{-\varphi}c(-\psi)=\left(\int_t^{+\infty}c(l)e^{-l}dl\right)\sum_{\alpha\in E}\frac{|d_{\alpha}|^2(2\pi)^ne^{-\varphi(z_{0})}}{\prod_{1\le j\le n}(\alpha_j+1)c_{j}(z_j)^{2\alpha_{j}+2}}$$
	 for any $t\ge0$, where $P_j:\Delta \rightarrow\Omega_j$ is the universal covering, $f_{u_j}$ is a holomorphic function on $\Delta$ such that $|f_{u_j}|=P_j^*(e^{u_j})$ for any $j\in\{1,\ldots,n\}$, $f_{z_j}$ is a holomorphic function on $\Delta$ such that $|f_{z_j}|=P_j^*\left(e^{G_{\Omega_j}(\cdot,z_j)}\right)$ for any $j\in\{1,\ldots,n\}$ and $\tilde{d}_{\alpha}$ is a constant such that $\tilde{d}_{\alpha}=\lim_{z\rightarrow z_0}\frac{d_{\alpha}w^{\alpha}dw_1\wedge\ldots\wedge dw_n}{\wedge_{1\le j\le n}\pi_j^*\left(g_j(P_j)_*\left(f_{u_j}f_{z_j}^{\alpha_j}df_{z_j}\right)\right)}$ for any $\alpha\in E$.
\end{Remark}

\section{Hardy space over $\partial M$}\label{sec:partial m}

Let $D_j$ be a planar regular region with finite boundary components which are analytic Jordan curves   for any $1\le j\le n$. Let $$M=\prod_{1\le j\le n}D_j$$ be a bounded domain in $\mathbb{C}^n$. 

 Let  $M_j=\prod_{1\le l\le n,l\not=j}D_l$, then $M=D_j\times M_j$. Note that $\partial M=\cup_{j=1}^n\partial D_j\times \overline{M_j}$. Let $z_j\in D_j$ for any $1\le j\le n$. Recall that $H^2(D_j)$ denotes  the Hardy space on $D_j$ and there exists a norm-preserving linear map  $\gamma_j:H^2(D_j)\rightarrow L^2(\partial D_j,\frac{\partial G_{D_j}(z,z_j)}{\partial v_z})$ (see Section \ref{sec:2.1}) satisfying that $\gamma_j(f)$ denotes the nontangential boundary value of $f$ a.e. on $\partial D_j$ for any $f\in H^2(D_j)$.

Let $d\mu_j$ be the Lebesgue measure on $M_j$ for any $1\le j\le n$ and $d\mu$ is a measure on $\partial M$ defined by
$$\int_{\partial M}hd\mu=\sum_{1\le j\le n}\frac{1}{2\pi}\int_{M_j}\int_{\partial D_j}h(w_j,\hat w_j)|dw_j|d\mu_j(\hat w_j)$$
 for any $h\in L^1(\partial M)$. For simplicity, denote $d\mu|_{\partial D_j\times M_j}$ by $d\mu$. Before defining the Hardy spaces over $\partial M$, we consider a space over $\partial D_j\times M_j$.
Denote 
\begin{displaymath}\begin{split}
	\{f\in L^2(\partial D_j\times M_j,d\mu):\exists f^*\in \mathcal{O}(M)\text{, s.t.  }&f^*(\cdot,\hat w_j)\in H^2(D_j) \text{ for any  $\hat w_j\in M_j$}\\
	&\,\&\, f=\gamma_j(f^*) \text{ a.e. on } \partial D_j\times M_j\}	
\end{split}\end{displaymath}
by  $H^2(M,\partial D_j\times M_j)$.  It follows from Lemma \ref{l:0-1} that $f^*$ is unique for any $f\in H^2(M,\partial D_j\times M_j).$
Thus, there exists a map $P_{\partial M,j}$ from $H^2(M,\partial D_j\times M_j)$ to $\mathcal{O}(M)$. When $n=1$, we consider $M_j$ as a single point set.

\begin{Lemma}
	\label{l:b0-p}There exists a unique linear injective map $P_{\partial M,j}$ from $H^2(M,\partial D_j\times M_j)$ to $\mathcal{O}(M)$ such that $P_{\partial M,j}(f)$ satisfies the following conditions for any $f\in H^2(M,\partial D_j\times M_j)$:
	
	$(1)$ $P_{\partial M,j}(f)(\cdot,\hat w_j)\in H^2(D_j)$ for any  $\hat w_j\in M_j$;

	$(2)$ $f=\gamma_j(P_{\partial M,j}(f))$ a.e. on $\partial D_j\times M_j$.
\end{Lemma}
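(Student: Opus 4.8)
The plan is to define the map by $P_{\partial M,j}(f):=f^*$, where $f^*\in\mathcal{O}(M)$ is the holomorphic extension furnished by the very definition of $H^2(M,\partial D_j\times M_j)$; thus the existence of \emph{some} $f^*$ satisfying $(1)$ and $(2)$ is automatic, and the content of the lemma lies in three points: that $f^*$ is \emph{unique} (so that $P_{\partial M,j}$ is genuinely a map), and that the resulting map is linear and injective. I emphasize that holomorphy of $f^*$ on all of $M$ is built into the hypothesis, so no Hartogs-type argument (Lemma \ref{l:hartogs}) is needed to \emph{produce} $f^*$; it is only needed implicitly to know that $f^*$ is a function to which the identity theorem applies.

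First I would prove uniqueness of $f^*$. Suppose $f_1^*,f_2^*\in\mathcal{O}(M)$ both satisfy $(1)$ and $(2)$. Since $d\mu$ restricted to $\partial D_j\times M_j$ is the product measure $\tfrac{1}{2\pi}|dw_j|\,d\mu_j(\hat w_j)$, Fubini's theorem converts the almost-everywhere identity $f=\gamma_j(f_i^*)$ on the product into the fiberwise statement that, for almost every $\hat w_j\in M_j$, one has $\gamma_j\big(f_1^*(\cdot,\hat w_j)\big)=\gamma_j\big(f_2^*(\cdot,\hat w_j)\big)$ a.e. on $\partial D_j$ (the positive continuous weight does not affect null sets). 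The injectivity of $\gamma_j$ recorded in Lemma \ref{l:0-1}$(a)$ then yields $f_1^*(\cdot,\hat w_j)=f_2^*(\cdot,\hat w_j)$ on $D_j$ for almost every $\hat w_j$, so $g:=f_1^*-f_2^*$ vanishes on $D_j\times A$ with $A\subset M_j$ of full measure. To upgrade this to $g\equiv0$, I would fix $w_j\in D_j$ and note that $g(w_j,\cdot)\in\mathcal{O}(M_j)$ vanishes on $A$; since the zero set of a holomorphic function that is not identically zero is Lebesgue-null, $g(w_j,\cdot)\equiv0$, and letting $w_j$ range over $D_j$ gives $g\equiv0$, i.e. $f_1^*=f_2^*$. (Alternatively, uniqueness follows at once from the explicit inversion formula in Lemma \ref{l:0-1}$(c)$, which recovers $f^*(\cdot,\hat w_j)$ from $f(\cdot,\hat w_j)$ by a Cauchy-type integral for a.e. $\hat w_j$.)

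With well-definedness in hand, linearity and injectivity are short. For $a,b\in\mathbb{C}$ the function $af^*+bg^*$ lies in $\mathcal{O}(M)$, satisfies $(1)$ because $H^2(D_j)$ is a linear space, and satisfies $(2)$ because $\gamma_j$ is linear; hence it is the (unique) extension of $af+bg$, giving $P_{\partial M,j}(af+bg)=aP_{\partial M,j}(f)+bP_{\partial M,j}(g)$. For injectivity, if $P_{\partial M,j}(f)=0$ then $(2)$ forces $f=\gamma_j(0)=0$ a.e., so $f=0$ in $L^2(\partial D_j\times M_j,d\mu)$.

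I expect the only real obstacle to be the measure-theoretic bookkeeping: making precise the meaning of ``$f=\gamma_j(f^*)$ a.e. on $\partial D_j\times M_j$'' as a fiberwise statement through Fubini, and then passing from agreement on a full-measure family of slices to genuine agreement on all of $M$ via the identity theorem (equivalently, the nullity of zero sets of nonzero holomorphic functions). Everything else is a direct transcription of the injectivity and linearity of the one-variable boundary-value map $\gamma_j$ from Lemma \ref{l:0-1}.
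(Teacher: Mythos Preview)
Your proposal is correct and follows essentially the same approach as the paper: existence of $f^*$ comes from the definition, uniqueness from the injectivity of $\gamma_j$ in Lemma \ref{l:0-1}, linearity is immediate, and injectivity of $P_{\partial M,j}$ follows from condition $(2)$ applied with $P_{\partial M,j}(f)=0$. The only difference is that the paper dispatches uniqueness of $f^*$ in a single sentence citing Lemma \ref{l:0-1}, whereas you spell out the Fubini step and the passage from agreement on almost every slice to agreement on all of $M$ via the identity theorem; your version is more explicit but not materially different.
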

\begin{proof}
	The uniqueness of $P_{\partial M,j}$ follows from Lemma \ref{l:0-1}. By the definition of $H^2(M,\partial D_j\times M_j)$, we know that there is a map $P_{\partial M,j}$ from $H^2(M,\partial D_j\times M_j)$ to $\mathcal{O}(M)$ such that $P_{\partial M,j}(f)$ satisfies the two conditions for any $f\in H^2(M,\partial D_j\times M_j)$, thus it suffices to prove that $P_{\partial M,j}$ is a linear injective map. 
	
	It is clear that $P_{\partial M,j}$ is a linear map from $H^2(M,\partial D_j\times M_j)$ to $\mathcal{O}(M)$. If $P_{\partial M,j}(f)=0$ for some $f\in H^2(M,\partial D_j\times M_j)$, noting that $f=\gamma_j(P_{\partial M,j}(f))$ a.e. on $\partial D_j\times M_j$ and $\gamma_j(P_{\partial M,j}(f))(\cdot,\hat w_1)$ denotes the nontangential boundary value of $P_{\partial M,j}(f)(\cdot,\hat w_j)$ a.e. on $\partial D_j$ for any $\hat w_j\in M_j$, then we have $f=0$ a.e. on $\partial D_j\times M_j$.
	
	Thus, Lemma \ref{l:b0-p} holds.
\end{proof}

It is clear that $P_{\partial M,1}=\gamma_1^{-1}$ when $n=1$. Denote that $f^*=P_{\partial M,j}(f)$  for any $f\in H^2(M,\partial D_j\times M_j)$. Denote that $\ll\cdot,\cdot\gg_{\partial D_j\times M_j}$ is the inner product of the Hilbert space $L^2(\partial D_j\times M_j ,d\mu)$, i.e.,
$$\ll f,g\gg_{\partial D_j\times M_j}=\frac{1}{2\pi}\int_{M_j}\int_{\partial D_j}f(w_j,\hat w_j)\overline{g(w_j,\hat w_j)} |dw_j|d\mu_j(\hat w_j)$$
for any $f,g\in L^2(\partial D_j\times M_j,d\mu)$. Denote that $\|f\|_{\partial D_j\times M_j}:=\left(\ll f,f\gg_{\partial D_j\times M_j}\right)^{\frac{1}{2}}$.
\begin{Lemma}
	\label{l:b1-p}For any compact subset $K$ of $M$, there exists a positive constant $C_K$ such that 
	$$|f^*(z)|\le C_K\|f\|_{\partial D_j\times M_j}$$
	holds for any $z\in K$ and $f\in H^2(M,\partial D_j\times M_j)$.
\end{Lemma}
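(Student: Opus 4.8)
The plan is to deduce the pointwise bound by combining a one-variable point-evaluation estimate in the $D_j$-direction with the sub-mean value inequality for plurisubharmonic functions in the $M_j$-directions. Since $K$ is compact in the product $M=D_j\times M_j$, I would first fix compact sets $K_1\Subset D_j$ and $\hat K\Subset M_j$ with $K\subset K_1\times\hat K$, and then choose a polyradius $r>0$ small enough that the polydisc $P(\hat z_j,r):=\prod_{l\not=j}\{|w_l-z_l|<r\}$ is relatively compact in $M_j$ for every $\hat z_j=(z_l)_{l\not=j}\in\hat K$; such a uniform $r$ exists because $\hat K\Subset M_j$. Write $V:=(\pi r^2)^{n-1}$ for the Lebesgue measure of $P(\hat z_j,r)$. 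When $n=1$ the set $M_j$ is a single point and only the first step below is needed.

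For the one-variable step, fix $\hat w_j\in M_j$. Then $f^*(\cdot,\hat w_j)\in H^2(D_j)$ by the definition of $H^2(M,\partial D_j\times M_j)$, and by Fubini's theorem the identity $f=\gamma_j(f^*)$ a.e. on $\partial D_j\times M_j$ gives, for a.e. $\hat w_j\in M_j$, that $f(\cdot,\hat w_j)$ is the nontangential boundary value $\gamma_j(f^*(\cdot,\hat w_j))$ a.e. on $\partial D_j$. For such $\hat w_j$, Lemma \ref{l:0-1}$(c)$ yields
$$f^*(z_j',\hat w_j)=\frac{1}{2\pi\sqrt{-1}}\int_{\partial D_j}\frac{f(w_j,\hat w_j)}{w_j-z_j'}\,dw_j,$$
and the Cauchy--Schwarz inequality together with $\delta:=\inf\{|w_j-z_j'|:w_j\in\partial D_j,\ z_j'\in K_1\}>0$ produces a constant $C_{K_1}>0$, depending only on $K_1$ and the length of $\partial D_j$, with
$$|f^*(z_j',\hat w_j)|^2\le C_{K_1}\,\frac{1}{2\pi}\int_{\partial D_j}|f(w_j,\hat w_j)|^2\,|dw_j|$$
for every $z_j'\in K_1$. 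The crucial feature is that $C_{K_1}$ is independent of the parameter $\hat w_j$.

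For the several-variable step, fix $z'=(z_j',\hat z_j)\in K$. Since $f^*(z_j',\cdot)$ is holomorphic on $M_j$, the function $\hat w_j\mapsto|f^*(z_j',\hat w_j)|^2$ is plurisubharmonic, so the sub-mean value inequality over $P(\hat z_j,r)$ gives $|f^*(z_j',\hat z_j)|^2\le V^{-1}\int_{P(\hat z_j,r)}|f^*(z_j',\hat w_j)|^2\,d\mu_j(\hat w_j)$. Substituting the one-variable bound into the integrand and enlarging the domain of integration from $P(\hat z_j,r)$ to $M_j$, I obtain
$$|f^*(z_j',\hat z_j)|^2\le\frac{C_{K_1}}{V}\int_{M_j}\frac{1}{2\pi}\int_{\partial D_j}|f(w_j,\hat w_j)|^2\,|dw_j|\,d\mu_j(\hat w_j)=\frac{C_{K_1}}{V}\|f\|_{\partial D_j\times M_j}^2,$$
which is the asserted inequality with $C_K:=(C_{K_1}/V)^{1/2}$.

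I expect the only delicate points to be bookkeeping rather than substance: justifying the reduction of $f=\gamma_j(f^*)$ to almost-every slice via Fubini, and—more importantly—making sure the one-variable constant $C_{K_1}$ is genuinely uniform in $\hat w_j$ so that it may be pulled out of the integral over $M_j$. Both are settled by the explicit Cauchy-kernel estimate above, in which $\delta$ and the length of $\partial D_j$ are parameters of $K_1$ and $D_j$ alone. No weight enters the argument because the target norm is the unweighted $L^2(\partial D_j\times M_j,d\mu)$-norm; were a weight $\rho$ present, its positivity and continuity on $\partial D_j$ would allow passage between the weighted and unweighted norms up to constants.
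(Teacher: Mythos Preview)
Your proof is correct and follows essentially the same route as the paper: both combine the Cauchy-kernel estimate from Lemma~\ref{l:0-1}(c) in the $D_j$-variable (yielding a constant $C_{K_1}$ independent of $\hat w_j$) with the sub-mean-value inequality for holomorphic functions in the $M_j$-variables, then enlarge the domain of integration to recover $\|f\|_{\partial D_j\times M_j}$. The paper presents the two steps in the reverse order and simply asserts the existence of the $M_j$-direction constant rather than spelling out the polydisc averaging, but the argument is the same.
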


\begin{proof}
Without loss of generality, assume that $j=1$.
There are a compact subset $K_1$ of $D_1$ and a compact subset $K_2$ of $M_1$ such that $K\subset K_1\times K_2$.
	Note that $f^*(w_1,\cdot)\in \mathcal{O}(M_1)$ for any $w_1\in D_1$, then there is a positive constant $C_{K_2}$  such that 
		\begin{equation}
		\label{eq:220730a}\sup_{\hat w_1\in K_2}|f^*(w_1,\hat w_1)|^2\le C_{K_2}\int_{M_1}|f^*(w_1,\cdot)|^2d\mu_1,
	\end{equation}
	where $C_{K_2}$ is independent of $w_1\in D_1$. 
As $f=\gamma_1(f^*)$ a.e. on $\partial D_1\times M_1$, it follows from Lemma \ref{l:0-1} that 
	\begin{displaymath}
		f^*(w_1,\hat w_1)=\frac{1}{2\pi\sqrt{-1}}\int_{\partial D_1}\frac{f(z_1,\hat w_1)}{z_1-w_1}dz_1
	\end{displaymath}
	holds for a.e. $\hat w_1\in M_1$ and for any $w_1\in D_1$, which implies that 
	\begin{equation}
		\label{eq:220730b}
		\sup_{w_1\in K_1}|f^*(w_1,\hat w_1)|^2\le C_{K_1}\frac{1}{2\pi} \int_{\partial D_1}|f(z_1,\hat w_1)|^2|dz_1|,
	\end{equation}
	holds for a.e. $\hat w_1\in M_1$, where $C_{K_1}>0$ is independent of $\hat w_1$. Following from inequality \eqref{eq:220730a} and inequality \eqref{eq:220730b}, we get that 
	\begin{displaymath}
		\begin{split}
			|f^*(z)|^2=&|f^*(w_1,\hat w_1)|^2\\
			\le & C_{K_2}\int_{M_1}|f^*(w_1,\hat z_1)|^2d\mu_1(\hat z_1)\\
			\le&C_{K_2}\int_{M_1}C_{K_1}\frac{1}{2\pi} \int_{\partial D_1}|f(z_1,\hat z_1)|^2|dz_1|d\mu_1(\hat z_1)\\
			=&C_{K_1}C_{K_2}\|f\|^2_{\partial D_j\times M_j}
		\end{split}
	\end{displaymath}
	holds for any $z=(w_1,\hat w_1)\in K$, hence Lemma \ref{l:b1-p} holds.
\end{proof}

We recall the following basic formula.
\begin{Lemma}[see \cite{GY-weightedsaitoh}]
	\label{l:v} $\frac{\partial G_{D_1}(\cdot,z_1)}{\partial v_z}=\left(\left(\frac{\partial G_{D_1}(\cdot,z_1)}{\partial x}\right)^2+\left(\frac{\partial G_{D_1}(\cdot,z_1)}{\partial y}\right)^2\right)^{\frac{1}{2}}$ on $\partial D_1$, where $z_1\in D_1$ and $\partial/\partial v_z$ denotes the derivative along the outer normal unit vector $v_z$. 
\end{Lemma}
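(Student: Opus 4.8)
The plan is to exploit the fact that the Green function $G_{D_1}(\cdot,z_1)$ vanishes identically on $\partial D_1$, so that on the boundary its gradient carries no tangential component and is therefore purely normal. First I would record the regularity that is being used implicitly: since each boundary component of $D_1$ is an analytic Jordan curve, the harmonic function $g:=G_{D_1}(\cdot,z_1)$ extends real-analytically across $\partial D_1$ by the Schwarz reflection principle, and in particular it is $C^1$ up to the boundary. This is precisely the input that makes the boundary derivatives $\partial g/\partial x$, $\partial g/\partial y$ and $\partial g/\partial v_z$ well defined and that justifies differentiating the boundary values below.

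Next, write $\nabla g=(\partial g/\partial x,\,\partial g/\partial y)$ for the (real) gradient of $g$. Fix a point $z\in\partial D_1$ and let $\tau_z$ be a unit tangent vector to $\partial D_1$ at $z$, so that $\{\tau_z,v_z\}$ is an orthonormal frame of $\mathbb{R}^2\cong\mathbb{C}$. Because $g\equiv 0$ on $\partial D_1$, the directional derivative of $g$ along the boundary curve vanishes, i.e. $\nabla g\cdot\tau_z=0$. Hence the gradient lies entirely in the normal direction: $\nabla g=(\nabla g\cdot v_z)\,v_z=\frac{\partial g}{\partial v_z}\,v_z$.

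Taking Euclidean norms of both sides gives $|\nabla g|=\bigl|\partial g/\partial v_z\bigr|$, and it remains to fix the sign. With the convention $g=\log|z-z_1|+(\text{harmonic})$ used throughout the paper, one has $g<0$ in $D_1$ and $g=0$ on $\partial D_1$, so $g$ increases in the outward normal direction and $\partial g/\partial v_z\ge 0$; this is consistent with the positivity of $\partial G_{D_1}(\cdot,z_1)/\partial v_z$ on $\partial D_1$ already noted earlier by the analyticity of the boundary. Therefore $\bigl|\partial g/\partial v_z\bigr|=\partial g/\partial v_z$, and substituting $|\nabla g|=\bigl((\partial g/\partial x)^2+(\partial g/\partial y)^2\bigr)^{1/2}$ yields the claimed identity.

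The only delicate point is the smoothness of $g$ up to $\partial D_1$: once the analyticity of the boundary secures $C^1$-regularity — so that the tangential derivative may legitimately be computed from the constant boundary values — the remainder is a one-line orthogonal decomposition of the gradient into its normal and tangential parts. No compactness argument or finer potential-theoretic estimate beyond the reflection principle is needed.
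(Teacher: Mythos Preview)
Your argument is correct and is the standard one: $G_{D_1}(\cdot,z_1)$ vanishes on $\partial D_1$, so its tangential derivative is zero and the gradient is purely normal, with the sign fixed by $G_{D_1}<0$ in $D_1$. The paper itself gives no proof of this lemma; it is simply recalled as a basic formula with a citation to \cite{GY-weightedsaitoh}, so there is nothing further to compare.
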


The following lemma shows the requirement ``$f^*(\cdot,\hat w_j)\in H^2(D_j)$ for any $\hat w_j\in M_j$" in the definition of $H^2(M,\partial D_j\times M_j)$ can be reduced to ``$f^*(\cdot,\hat w_j)\in H^2(D_j)$ for a.e. $\hat w_j\in M_j$".

\begin{Lemma}
	\label{l:b3-p} Let $f\in L^2(\partial D_j\times M_j,d\mu)$. Assume that  there is $ f^*\in \mathcal{O}(M)$ such that  $f^*(\cdot,\hat w_j)\in H^2(D_j)$  for a.e.  $\hat w_j\in M_j$ and $  f=\gamma_j(f^*)$ a.e. on $ \partial D_j\times M_j$. Then we have $f^*(\cdot,\hat w_j)\in H^2(D_j)$ for any $\hat w_j\in M_j$, and for any compact subset $K$ of $M_j$, there exists a positive constant $C_K$ such that
	\begin{displaymath}
		\sup_{\hat w_j\in K}\frac{1}{2\pi}\int_{\partial D_j}|\gamma_j(f^*(\cdot,\hat w_j))|^2d|w_j|\le C_K\|f\|^2_{\partial D_j\times M_j},
	\end{displaymath}
	holds for any $f\in H^2(M,\partial D_j\times M_j)$.
\end{Lemma}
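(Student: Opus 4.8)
The plan is to reduce the statement to the classical description of $H^2(D_j)$ via harmonic majorants encoded in Lemma \ref{l:0-4}, using plurisubharmonicity in the variable $\hat w_j$ together with a sub-mean-value inequality to convert an $L^1$-bound into a bound that is uniform on compact subsets. Throughout write $\rho_j:=\frac{\partial G_{D_j}(\cdot,z_j)}{\partial v}$ on $\partial D_j$; since $\partial D_j$ is compact and $\rho_j$ is positive and continuous there, we have $0<c_0\le\rho_j\le C_0<+\infty$, so the weighted and unweighted $L^2$-norms on $\partial D_j$ are equivalent. I fix an increasing exhaustion $\{D_{j,k}\}_{k}$ of $D_j$ by domains with analytic boundary containing $z_j$, as in Lemma \ref{l:0-4}, and define for $\hat w_j\in M_j$
\[
\Phi_k(\hat w_j):=\frac{1}{2\pi}\int_{\partial D_{j,k}}|f^*(w_j,\hat w_j)|^2\,\frac{\partial G_{D_{j,k}}(w_j,z_j)}{\partial v_{w_j}}\,|dw_j|.
\]
Because $f^*\in\mathcal{O}(M)$ and $\partial D_{j,k}$ is a compact subset of $D_j$, for each fixed $w_j$ the map $\hat w_j\mapsto f^*(w_j,\hat w_j)$ is holomorphic, so $|f^*(w_j,\cdot)|^2$ is plurisubharmonic; integrating against the positive measure $\frac{1}{2\pi}\frac{\partial G_{D_{j,k}}(\cdot,z_j)}{\partial v}|dw_j|$ shows that each $\Phi_k$ is a nonnegative continuous plurisubharmonic function on $M_j$.

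Next I would record two structural facts about $\Phi_k$. First, $\Phi_k(\hat w_j)$ is the value at $z_j$ of the solution of the Dirichlet problem on $D_{j,k}$ with boundary data $|f^*(\cdot,\hat w_j)|^2$, i.e.\ of the least harmonic majorant of $|f^*(\cdot,\hat w_j)|^2$ on $D_{j,k}$; the maximum principle applied on $D_{j,k}\subset D_{j,k+1}$ then gives $\Phi_k\le\Phi_{k+1}$, so $\Phi_\infty:=\lim_{k\to\infty}\Phi_k$ exists pointwise. Second, for a.e.\ $\hat w_j$ the hypothesis gives $f^*(\cdot,\hat w_j)\in H^2(D_j)$, and Lemma \ref{l:0-4} identifies $\Phi_\infty(\hat w_j)=\|\gamma_j(f^*(\cdot,\hat w_j))\|^2_{\partial D_j,\rho_j}$. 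Using $\rho_j\le C_0$ and $f=\gamma_j(f^*)$ a.e., this yields, for a.e.\ $\hat w_j$, the estimate $\Phi_k(\hat w_j)\le\Phi_\infty(\hat w_j)\le C_0\cdot\frac{1}{2\pi}\int_{\partial D_j}|f(w_j,\hat w_j)|^2|dw_j|$. Integrating in $\hat w_j$ gives $\int_{M_j}\Phi_k\,d\mu_j\le C_0\|f\|^2_{\partial D_j\times M_j}$ for every $k$.

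Now comes the key passage from an integral bound to a uniform pointwise bound. For a compact $K\subset M_j$ choose $r>0$ so small that $B(\hat w_j,r)\Subset M_j$ for every $\hat w_j\in K$. The sub-mean-value inequality for the plurisubharmonic $\Phi_k$, together with $\Phi_k\ge0$, gives for $\hat w_j\in K$
\[
\Phi_k(\hat w_j)\le\frac{1}{|B(0,r)|}\int_{B(\hat w_j,r)}\Phi_k\,d\mu_j\le\frac{1}{|B(0,r)|}\int_{M_j}\Phi_k\,d\mu_j\le\frac{C_0}{|B(0,r)|}\|f\|^2_{\partial D_j\times M_j}.
\]
Letting $k\to\infty$, $\Phi_\infty(\hat w_j)\le C_K'\|f\|^2_{\partial D_j\times M_j}$ holds for \emph{every} $\hat w_j\in K$, with $C_K'=C_0/|B(0,r)|$ depending only on $K$ (not on $f$).

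Finally I would extract both conclusions. Since $M_j$ is exhausted by such compacts, $\Phi_\infty(\hat w_j)<+\infty$ for every $\hat w_j\in M_j$. As $\Phi_\infty(\hat w_j)$ is the increasing limit of the values $h_k(z_j)$ of the harmonic majorants $h_k$ of $|f^*(\cdot,\hat w_j)|^2$ on $D_{j,k}$, Harnack's principle shows that the finite limit $\lim_k h_k$ is harmonic on $D_j$ and dominates $|f^*(\cdot,\hat w_j)|^2$; hence $f^*(\cdot,\hat w_j)\in H^2(D_j)$ for every $\hat w_j\in M_j$, which is the first assertion. For the second, Lemma \ref{l:0-4} now applies at every $\hat w_j$, giving $\Phi_\infty(\hat w_j)=\|\gamma_j(f^*(\cdot,\hat w_j))\|^2_{\partial D_j,\rho_j}\ge c_0\cdot\frac{1}{2\pi}\int_{\partial D_j}|\gamma_j(f^*(\cdot,\hat w_j))|^2|dw_j|$ (using $\rho_j\ge c_0$), so $\sup_{\hat w_j\in K}\frac{1}{2\pi}\int_{\partial D_j}|\gamma_j(f^*(\cdot,\hat w_j))|^2|dw_j|\le C_K\|f\|^2_{\partial D_j\times M_j}$ with $C_K=C_K'/c_0$. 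The main obstacle is the middle step: verifying that $\Phi_k$ is genuinely plurisubharmonic in $\hat w_j$ and increasing in $k$, and that its monotone limit carries the harmonic-majorant meaning at \emph{every} point (not merely a.e.), so that the sub-mean-value inequality can legitimately upgrade the a.e.\ information into the uniform-on-compacts bound and the everywhere membership in $H^2(D_j)$.
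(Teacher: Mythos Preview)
Your proof is correct and follows essentially the same route as the paper's: define the harmonic-majorant values $\Phi_k(\hat w_j)=U_{r,\hat w_j}(z_j)$ via Poisson integrals on an exhaustion, use monotonicity in the exhaustion parameter, identify the a.e.\ limit through Lemma~\ref{l:0-4}, then upgrade the $L^1$-bound on $\Phi_k$ to a uniform-on-compacts bound via a sub-mean-value inequality, and conclude with Harnack's principle. The only cosmetic difference is that the paper applies the sub-mean-value inequality to $|f^*(z,\cdot)|^2$ for each fixed $z\in\partial D_{j,k}$ and then integrates (its inequality~\eqref{eq:220730d}), whereas you observe directly that $\Phi_k$ is plurisubharmonic and apply the sub-mean-value inequality to $\Phi_k$ itself; these are two ways of writing the same estimate.
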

\begin{proof}
	Without loss of generality, assume that $j=1$. For fixed $z_0\in D_1$, denote that 
	$$D_{1,r}:=\{z\in D_1:G_{D_1}(z,z_0)<\log r\},$$
	where $G_{D_1}(\cdot,z_0)$ is the Green function on $D_1$ and $r\in(0,1)$. It is well-known that $G_{D_1}(\cdot,z_0)-\log r$ is the Green function on $D_{1,r}$.
	By the analyticity of the boundary of $D_1$, we have $G_{D_1}(z,w)$ has an analytic extension on $U\times V\backslash\{z=w\}$ and $\frac{\partial G_{D_1}(z,z_0)}{\partial v_z}$ is positive and smooth on $\partial D_1$, where  $\partial/\partial v_z$ denotes the derivative along the outer normal unit vector $v_z$, $U$ is a neighborhood of $\overline{D_1}$ and $V\Subset D_1$. Then there exist $r_0\in(0,1)$ and $C_1>0$ such that  $\frac{1}{C_1}\le|\bigtriangledown G_{D_1}(\cdot,z_0)|\le C_1$ on $\{z\in D_1:G_D(z,z_0)>\log r_0\}$, which implies 
	\begin{equation}
		\label{eq:0709b}\frac{1}{C_1}\le\frac{\partial G_{D_1}(z,z_0)}{\partial v_z}\le C_1
	\end{equation}
	holds on $\{z\in D:G_{D_1}(z,z_0)>\log r_0\}$ (by using Lemma \ref{l:v}).
Denote that 
$$U_{r,\hat w_1}(w_1):=\frac{1}{2\pi}\int_{\partial D_{1,r}}|f^*(z,\hat w_1)|^2\frac{\partial G_{D_{1,r}}(z,w_1)}{\partial v_z} |dz|$$
for any $\hat w_1\in M_1,$	
	where $r\in (r_0,1)$ and $G_{D_{1,r}}(\cdot,\cdot)$ is the Green function on $D_{1,r}$. Then we know that $U_{r,\hat w_1}(w_1)\in C(\overline{D_{1,r}})$, $U_{r,\hat w_1}(w_1)|_{\partial D_{1,r}}=|f^*(z,\hat w_1)|^2$ and $U_{r,\hat w_1}(w_1)$ is harmonic on $D_{1,r}$ for any $\hat w_1\in M_1$.
	As $|f^*(z,\hat w_1)|^2$ is subharmonic on $D_1$ for any $\hat w_1\in M_1$, then $U_{r,\hat w_1}$ is increasing with respect ro $r$. Note that $f^*(\cdot,\hat w_1)\in H^2(D_1)$ holds for a.e. $\hat w_1\in M_1$ and $G_{D_{1,r}}(\cdot,z_0)=G_{D_1}(\cdot,z_0)-\log r$, then it follows from Lemma \ref{l:0-4} that 
	\begin{equation}
		\label{eq:220730c}\lim_{r\rightarrow1-0}U_{r,\hat w_1}(z_0)=\frac{1}{2\pi}\int_{\partial D_1}|\gamma_1(f^*(\cdot,\hat w_1))|^2\frac{\partial G_{D_1}(z,z_0)}{\partial v_z}|dz|
	\end{equation}
	for a.e. $\hat w_1\in M_1$.
	As $f^*(z,\cdot)\in \mathcal{O}(M_1)$ for any $z\in D_1$, there is a constant $\tilde C_K>0$ such that 
	\begin{equation}
		\label{eq:220730d}\begin{split}
			U_{r,\hat w_1}(z_0)&=\frac{1}{2\pi}\int_{\partial D_{1,r}}|f^*(z,\hat w_1)|^2\frac{\partial G_{D_{1}}(z,z_0)}{\partial v_z} |dz|\\
			&\le\frac{1}{2\pi}\int_{\partial D_{1,r}}\tilde C_{K}\left(\int_{M_1}|f^*(z,\hat z_1)|^2d\mu_1(\hat z_1) \right)\frac{\partial G_{D_{1}}(z,z_0)}{\partial v_z} |dz|\\
			&=\tilde C_K\int_{M_1}\left(\frac{1}{2\pi}\int_{\partial D_{1,r}}|f^*(z,\hat z_1)|^2\frac{\partial G_{D_{1}}(z,z_0)}{\partial v_z} |dz|\right)d\mu_1(\hat z_1)\\
			&=\tilde C_K\int_{M_1}U_{r,\hat z_1}(z_0)d\mu_1(\hat z_1)
		\end{split}
	\end{equation}
	for any $\hat w_1\in K$ and any $r\in(r_0,1)$.
	Note that $U_{r,\hat z_1}(z_0)$ is increasing with respect to $r$ for any $\hat z_1\in M_1$ and As $\gamma_1(f^*)=f$ a.e. on $\partial D_1\times M_1$, then following from equality \eqref{eq:220730c}, we have 
\begin{equation}
	\label{eq:220730f}
	\begin{split}
			&\liminf_{r\rightarrow1-0}\int_{M_1}U_{r,\hat z_1}(z_0)d\mu_1(\hat z_1)\\
			=&\int_{M_1}\lim_{r\rightarrow1-0}U_{r,\hat z_1}(z_0)d\mu_1(\hat z_1)\\
			=&\int_{M_1}\frac{1}{2\pi}\left(\int_{\partial D_1}|f(\cdot,\hat z_1)|^2\frac{\partial G_{D_1}(z,z_0)}{\partial v_z}|dz|\right)d\mu_1(\hat z_1).
	\end{split}
\end{equation}
Inequality \eqref{eq:220730d} and equality \eqref{eq:220730f} show that $\lim_{r\rightarrow1-0}U_{r,\hat w_1}(z_0)<+\infty$ for any $\hat w_1\in M_1$. Note that $U_{r,\hat w_1}$ is increasing with respect to $r$. By Harnack's principle (see \cite{ahlfors}), the sequence $U_{r,\hat w_1}$ converges to a harmonic function $U_{\hat w_1}$ on $D_1$ when $r\rightarrow1-0$ for any $\hat w_1\in M_1$, which satisfies that $|f^*(\cdot,\hat w_1)|^2\le U_{\hat w_1}$. Thus, we have 
	$$f^*(\cdot,\hat w_1)\in H^2(D_1)$$
	 for any $\hat w_1\in M_1$. 	Thus, equality \eqref{eq:220730c} holds for any $\hat w_1\in M_1$.
	
Following from inequality \eqref{eq:0709b}, equality \eqref{eq:220730c} and inequality \eqref{eq:220730d}, we obtain that 	
\begin{equation}
	\label{eq:220730e}\begin{split}
			&\sup_{\hat w_1\in K}\frac{1}{2\pi}\int_{\partial D_1}|\gamma_j(f^*(\cdot,\hat w_1))|^2d|z|\\
			\le& C_1\sup_{\hat w_1\in K}\frac{1}{2\pi}\int_{\partial D_1}|\gamma_1(f^*(\cdot,\hat w_1))|^2\frac{\partial G_{D_1}(z,z_0)}{\partial v_z}d|z|\\
			=& C_1\sup_{\hat w_1\in K}\lim_{r\rightarrow1-0}\frac{1}{2\pi}\int_{\partial D_{1,r}}|f^*(z,\hat w_1)|^2\frac{\partial G_{D_{1}}(z,z_0)}{\partial v_z} |dz|\\
			\le&C_1\tilde C_K\liminf_{r\rightarrow1-0}\int_{M_1}U_{r,\hat z_1}(z_0)d\mu_1(\hat z_1).
		\end{split}
\end{equation}
It follows from inequality \eqref{eq:0709b}, inequality \eqref{eq:220730e} and inequality \eqref{eq:220730f} that 	
	\begin{displaymath}
		\begin{split}
			&\sup_{\hat w_1\in K}\frac{1}{2\pi}\int_{\partial D_1}|\gamma_j(f^*(\cdot,\hat w_1))|^2d|z|\\
			\le&C_1\tilde C_K\liminf_{r\rightarrow1-0}\int_{M_1}U_{r,\hat z_1}(z_0)d\mu_1(\hat z_1)\\
						=&C_1^2\tilde C_K\int_{M_1}\frac{1}{2\pi}\int_{\partial D_1}|f(z,\hat z_1)|^2|dz|d\mu_1(\hat z_1)\\
			=& C_1^2\tilde C_K\|f\|^2_{\partial D_1\times M_1}.
		\end{split}
	\end{displaymath}
	Thus, Lemma \ref{l:b3-p} holds.
	\end{proof}

The following lemma shows that  $H^2(M,\partial D_j\times M_j)$ is a Hilbert space equipped with the inner product $\ll \cdot,\cdot\gg_{\partial D_j\times M_j}$.
\begin{Lemma}
	\label{l:b2-p}Let $\{f_m\}_{m\in\mathbb{Z}_{>0}}\subset H^2(M,\partial D_j\times M_j)$ satisfy that 
	$\lim_{m\rightarrow+\infty}\|f_m-f\|_{\partial D_j\times M_j}=0,$ where $f\in L^2(\partial D_j\times M_j,d\mu)$. Then we have $f\in H^2(M,\partial D_j\times M_j)$ and $f_m^*$ uniform converges  $f^*$ on any compact subset of $M$. Especially, $H^2(M,\partial D_j\times M_j)$ is a closed subset of $L^2(\partial D_j\times M_j,d\mu)$ under the topology deduced by the norm $\|\cdot\|_{\partial D_j\times M_j}$.
\end{Lemma}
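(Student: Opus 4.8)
The plan is to first produce the holomorphic limit $f^*$ on $M$ by an interior estimate, then identify its boundary values slice by slice over $M_j$, and finally promote the slicewise information to the full definition of $H^2(M,\partial D_j\times M_j)$ using Lemma \ref{l:b3-p}.

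First I would exploit linearity. Since $H^2(M,\partial D_j\times M_j)$ is a linear space and $P_{\partial M,j}$ is linear (Lemma \ref{l:b0-p}), for all $m,m'$ we have $f_m-f_{m'}\in H^2(M,\partial D_j\times M_j)$ with $(f_m-f_{m'})^*=f_m^*-f_{m'}^*$. As $\{f_m\}$ converges in $\|\cdot\|_{\partial D_j\times M_j}$ it is Cauchy, so Lemma \ref{l:b1-p} applied to the differences gives, for every compact $K\subset M$,
\[
\sup_{z\in K}|f_m^*(z)-f_{m'}^*(z)|\le C_K\|f_m-f_{m'}\|_{\partial D_j\times M_j}\longrightarrow 0.
\]
Hence $\{f_m^*\}$ is locally uniformly Cauchy on $M$, and by the Weierstrass theorem its limit $f^*$ is holomorphic on $M$ with $f_m^*\to f^*$ uniformly on compact subsets of $M$. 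This already delivers the claimed local uniform convergence once we verify $f^*=P_{\partial M,j}(f)$.

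Next I would identify the boundary values of $f^*$. Writing out $d\mu$, the $L^2$-convergence means $\int_{M_j}\bigl(\int_{\partial D_j}|f_m-f|^2|dw_j|\bigr)d\mu_j\to 0$, so the inner slice norms tend to $0$ in $L^1(M_j)$; passing to a subsequence we may assume $\int_{\partial D_j}|f_m(\cdot,\hat w_j)-f(\cdot,\hat w_j)|^2|dw_j|\to 0$ for a.e.\ $\hat w_j\in M_j$. Fix such a $\hat w_j$ lying also in the (full-measure, countable-intersection) set on which $f_m(\cdot,\hat w_j)=\gamma_j(f_m^*(\cdot,\hat w_j))$ for every $m$, and set $g_m:=f_m^*(\cdot,\hat w_j)\in H^2(D_j)$. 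Because the weight $\frac{\partial G_{D_j}(\cdot,z_j)}{\partial v}$ is continuous and bounded above and below by positive constants on the compact set $\partial D_j$, the $H^2(D_j)$-norm is equivalent to the unweighted $L^2(\partial D_j,|dw_j|)$-norm appearing in $\|\cdot\|_{\partial D_j\times M_j}$, so $\{\gamma_j(g_m)\}$ is Cauchy in the $H^2(D_j)$-norm with limit $f(\cdot,\hat w_j)$. Completeness of $H^2(D_j)$ (Lemma \ref{l:0-2}) yields $g\in H^2(D_j)$ with $g_m\to g$ in norm and $\gamma_j(g)=f(\cdot,\hat w_j)$, and Lemma \ref{l:0-1b} then forces $g_m\to g$ locally uniformly on $D_j$. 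Since also $g_m\to f^*(\cdot,\hat w_j)$ locally uniformly, uniqueness of limits gives $f^*(\cdot,\hat w_j)=g\in H^2(D_j)$ and $\gamma_j(f^*(\cdot,\hat w_j))=f(\cdot,\hat w_j)$ a.e.\ on $\partial D_j$, for a.e.\ $\hat w_j\in M_j$.

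Finally I would conclude. We have exhibited $f^*\in\mathcal{O}(M)$ with $f^*(\cdot,\hat w_j)\in H^2(D_j)$ for a.e.\ $\hat w_j$ and $f=\gamma_j(f^*)$ a.e.\ on $\partial D_j\times M_j$, while $f\in L^2(\partial D_j\times M_j,d\mu)$ holds by hypothesis; Lemma \ref{l:b3-p} upgrades the first property to \emph{all} $\hat w_j\in M_j$, so $f\in H^2(M,\partial D_j\times M_j)$ and $P_{\partial M,j}(f)=f^*$. Together with the first step this proves both assertions, and closedness of $H^2(M,\partial D_j\times M_j)$ in $L^2(\partial D_j\times M_j,d\mu)$ is immediate. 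The main obstacle is the careful bookkeeping of the two layers of ``almost everywhere'' — in $\hat w_j\in M_j$ and in $w_j\in\partial D_j$ for each $m$ — together with the passage to a subsequence needed to obtain a.e.\ convergence of the slice norms; the subsequence is harmless because $f^*$ is already pinned down as the full-sequence locally uniform limit in Step 1 and $P_{\partial M,j}$ is injective.
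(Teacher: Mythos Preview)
Your proof is correct and follows essentially the same approach as the paper: both use Lemma~\ref{l:b1-p} for the locally uniform interior limit, pass to a subsequence to obtain a.e.\ slicewise $L^2(\partial D_j)$-convergence, and identify the boundary values via the $H^2(D_j)$ theory (Lemmas~\ref{l:0-1b} and~\ref{l:0-2}). The only cosmetic difference is that the paper invokes the estimate part of Lemma~\ref{l:b3-p} on the differences $f_m-f_{m'}$ upfront to obtain slicewise Cauchy sequences for \emph{every} $\hat w_j$, whereas you first establish the conclusion for a.e.\ $\hat w_j$ and then apply the ``a.e.\ $\Rightarrow$ all'' part of Lemma~\ref{l:b3-p} at the end.
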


\begin{proof}Without loss of generality, assume that $j=1$.
	It follows from Lemma \ref{l:b1-p} that $f_m^*$ uniformly converges to a holomorphic function $g$ on $M$ on any compact subset of $M$. It follows from Lemma \ref{l:b3-p} that $\{\gamma_1(f_m^*(\cdot,\hat w_1))\}$ is a Cauchy sequence in $H^2(D_1,\partial D_1)$ for any $\hat w_1\in M_1$. Using Lemma \ref{l:0-1}, Lemma \ref{l:0-1b} and $f_m^*$ converges to $g$, we know that $g(\cdot,\hat w_1)\in H^2(D_1)$ and 
	\begin{equation}
		\label{eq:220730g}\lim_{m\rightarrow +\infty}\int_{\partial D_1}|\gamma_1 (f_m^*(\cdot,\hat w_1))-\gamma_1(g(\cdot,\hat w_1))|^2|dz|=0
	\end{equation}
	for any $\hat w_1\in M_1$.
	
	As 
	\begin{displaymath}
		\lim_{m\rightarrow+\infty}\int_{M_1}\frac{1}{2\pi}\int_{\partial D_1}|f_m(z,\hat w_1)-f(z,\hat w_1)|^2|dz|d\mu_1(\hat w_1) =0,
	\end{displaymath}
	then there is a subsequence of $\{f_m\}_{m\in\mathbb{Z}_{>0}}$ denoted also by $\{f_m\}_{m\in\mathbb{Z}_{>0}}$ such that 
	$$\lim_{m\rightarrow +\infty}\int_{\partial D_1}|f_m(z,\hat w_1)-f(z,\hat w_1)|^2|dz|=0$$
	holds for a.e. $\hat w_1\in M_1$. As  $\gamma_1(f_m^*)=f_m$ a.e. on $\partial D_1\times M_1$, equality \eqref{eq:220730g} shows that $\gamma_1(g)=f$ a.e. on $\partial D_1\times M_1$, which implies that $f\in H^2(M,\partial D_j\times M_j)$ and $f^*=g$. 
	
	Thus, Lemma \ref{l:b2-p} holds.
\end{proof}

Now, we consider the Hardy space $H^2(M,\partial M)$ over $\partial M$. We recall Definition \ref{def:1} for the case $\rho\equiv1$:

\emph{ For any $f\in L^2(\partial M,d\mu)$, we call $f\in H^2_{\rho}(M,\partial M)$ if there exists $f^*\in\mathcal{O}(M)$ such that for any $1\le j\le n$, $f^*(\cdot,\hat w_j)\in H^2(D_j)$  for any  $\hat w_j\in M_j$ and $f=\gamma_j(f^*)$ a.e. on $\partial D_j\times M_j$.} 

For any $f\in L^2(\partial M,d\mu)$, it is clear that $f\in H^2(M,\partial M)$ if and only if $f|_{\partial D_j\times M_j}\in H^2(M,\partial D_j\times M_j)$ for any $1\le j\le n$ and $P_{\partial M,j}(f|_{\partial D_j\times M_j})=P_{\partial M,k}(f|_{\partial D_j\times M_k})$ for any $j\not=k$.

Define $P_{\partial M}(f)=P_{\partial M,j}(f|_{\partial D_j\times M_j})$ for any $f\in H^2(M,\partial M)$. Following from Lemma \ref{l:b0-p}, we have the following lemma. 

\begin{Lemma}
	\label{l:b0}There exists a unique linear injective map $P_{\partial M}$ from $H^2(M,\partial M)$ to $\mathcal{O}(M)$ such that $P_{\partial M}(f)$ satisfies the following conditions for any $f\in H^2(M,\partial M)$:
	
	$(1)$ $P_{\partial M}(f)(\cdot,\hat w_1)\in H^2(D_j)$ for any $1\le j\le n$ and $\hat w_1\in M_j$;

	$(2)$ $f=\gamma_j(P_{\partial M}(f))$ a.e. on $\partial D_j\times M_j$.
\end{Lemma}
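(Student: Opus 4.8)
The plan is to build $P_{\partial M}$ directly out of the per-face maps $P_{\partial M,j}$ already furnished by Lemma \ref{l:b0-p}, using the gluing characterization of $H^2(M,\partial M)$ recorded just before the statement. First I would recall that, for $f\in H^2(M,\partial M)$, the restriction $f|_{\partial D_j\times M_j}$ lies in $H^2(M,\partial D_j\times M_j)$ for every $1\le j\le n$, and that the compatibility condition $P_{\partial M,j}(f|_{\partial D_j\times M_j})=P_{\partial M,k}(f|_{\partial D_k\times M_k})$ holds for all $j\neq k$. This makes the assignment $P_{\partial M}(f):=P_{\partial M,j}(f|_{\partial D_j\times M_j})$ independent of the index $j$, so it is an unambiguously defined element of $\mathcal{O}(M)$.

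Next I would verify conditions $(1)$ and $(2)$ together with linearity and injectivity. Condition $(1)$ is immediate: by part $(1)$ of Lemma \ref{l:b0-p}, $P_{\partial M}(f)(\cdot,\hat w_j)=P_{\partial M,j}(f|_{\partial D_j\times M_j})(\cdot,\hat w_j)\in H^2(D_j)$ for every $\hat w_j\in M_j$. Condition $(2)$ follows likewise from part $(2)$ of Lemma \ref{l:b0-p}, which gives $f=\gamma_j(P_{\partial M,j}(f|_{\partial D_j\times M_j}))=\gamma_j(P_{\partial M}(f))$ a.e. on $\partial D_j\times M_j$. Linearity is inherited from the linearity of each $P_{\partial M,j}$ and of the restriction maps $f\mapsto f|_{\partial D_j\times M_j}$. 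For injectivity, if $P_{\partial M}(f)=0$ then $P_{\partial M,j}(f|_{\partial D_j\times M_j})=0$ for each $j$, and since each $P_{\partial M,j}$ is injective, $f=0$ a.e. on $\partial D_j\times M_j$ for every $j$; as $\partial M=\cup_{j=1}^n\partial D_j\times\overline{M_j}$, this forces $f=0$ a.e. on $\partial M$.

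For uniqueness, suppose $\tilde P$ is another map satisfying $(1)$ and $(2)$. Then for each $j$ and each $\hat w_j\in M_j$, both $\tilde P(f)(\cdot,\hat w_j)$ and $P_{\partial M}(f)(\cdot,\hat w_j)$ lie in $H^2(D_j)$ and share the same nontangential boundary value $f(\cdot,\hat w_j)$ a.e. on $\partial D_j$; the injectivity of $\gamma_j$ from Lemma \ref{l:0-1} then yields $\tilde P(f)=P_{\partial M}(f)$. There is no serious obstacle in this lemma: it is a transcription of the single-face statement Lemma \ref{l:b0-p} through the gluing definition of $H^2(M,\partial M)$. The only point that needs a word of care is the independence of $P_{\partial M}(f)$ on the index $j$, which is exactly what the compatibility condition in the definition of $H^2(M,\partial M)$ guarantees; everything else is a routine assembly of the per-face properties.
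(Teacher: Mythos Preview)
Your proposal is correct and follows exactly the approach of the paper: the paper defines $P_{\partial M}(f):=P_{\partial M,j}(f|_{\partial D_j\times M_j})$ using the gluing characterization stated just before Lemma \ref{l:b0}, and then simply records that the lemma follows from Lemma \ref{l:b0-p}. Your argument is just a careful unpacking of that one-line deduction.
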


 Denote that $f^*=P_{\partial M}(f)$  for any $f\in H^2(M,\partial M)$.
  Denote that $\ll\cdot,\cdot\gg_{\partial M}$ is the inner product of the Hilbert space $L^2(\partial M,d\mu)$, i.e.,
$$\ll f,g\gg_{\partial M}=\sum_{1\le j\le n}\frac{1}{2\pi}\int_{M_j}\int_{\partial D_j}f(w_j,\hat w_j)\overline{g(w_j,\hat w_j)} |dw_j|d\mu_j(\hat w_j)$$
for any $f,g\in L^2(\partial M,d\mu)$. Denote that $\|f\|_{\partial M}:=\left(\ll f,f\gg_{\partial M}\right)^{\frac{1}{2}}$.

Following from Lemma \ref{l:b2-p}, we have the following lemma, which shows that $H^2(M,\partial M)$ is a Hilbert space equipped with the inner product $\ll f,g\gg_{\partial M}$.

\begin{Lemma}
	\label{l:b2}Let $\{f_m\}_{m\in\mathbb{Z}_{>0}}\subset H^2(M,\partial M)$ satisfy that $\lim_{m\rightarrow+\infty}\|f_m-f\|_{\partial M}=0$, where $f\in L^2(\partial M,d\mu)$. Then we have $f\in H^2(M,\partial M)$ and $f_m^*$ uniform converges  $f^*$ on any compact subset of $M$. Especially, $H^2(M,\partial M)$ is a close subset of $L^2(\partial M,d\mu)$ under the topology deduced by the norm $\|\cdot\|_{\partial M}$.
\end{Lemma}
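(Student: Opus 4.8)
The plan is to reduce everything to the face-wise statement Lemma \ref{l:b2-p}, via the characterization recalled just before this lemma: a function $g\in L^2(\partial M,d\mu)$ lies in $H^2(M,\partial M)$ exactly when $g|_{\partial D_j\times M_j}\in H^2(M,\partial D_j\times M_j)$ for every $1\le j\le n$ and the associated holomorphic extensions $P_{\partial M,j}(g|_{\partial D_j\times M_j})$ all coincide. The starting observation is that, by the very definition of $\ll\cdot,\cdot\gg_{\partial M}$ as a sum over the faces, the norm splits as
\begin{equation}
\nonumber \|g\|_{\partial M}^2=\sum_{1\le j\le n}\|g|_{\partial D_j\times M_j}\|^2_{\partial D_j\times M_j}
\end{equation}
for any $g\in L^2(\partial M,d\mu)$. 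Hence $\lim_{m\rightarrow+\infty}\|f_m-f\|_{\partial M}=0$ forces $\lim_{m\rightarrow+\infty}\|f_m|_{\partial D_j\times M_j}-f|_{\partial D_j\times M_j}\|_{\partial D_j\times M_j}=0$ for each fixed $j$.

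First I would apply Lemma \ref{l:b2-p} on each face. Since each $f_m$ lies in $H^2(M,\partial M)$, we have $f_m|_{\partial D_j\times M_j}\in H^2(M,\partial D_j\times M_j)$, and these converge in the $\partial D_j\times M_j$ norm to $f|_{\partial D_j\times M_j}$. Lemma \ref{l:b2-p} then yields simultaneously that $f|_{\partial D_j\times M_j}\in H^2(M,\partial D_j\times M_j)$ and that the holomorphic extensions $P_{\partial M,j}(f_m|_{\partial D_j\times M_j})$ converge uniformly on every compact subset of $M$ to $P_{\partial M,j}(f|_{\partial D_j\times M_j})$. Writing $f_m^*=P_{\partial M}(f_m)=P_{\partial M,j}(f_m|_{\partial D_j\times M_j})$, this already produces a holomorphic limit on $M$; the remaining point is that this limit does not depend on $j$, i.e. the compatibility condition.

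The crux is to pass the compatibility condition to the limit. For each $m$ the membership $f_m\in H^2(M,\partial M)$ means $P_{\partial M,j}(f_m|_{\partial D_j\times M_j})=P_{\partial M,k}(f_m|_{\partial D_k\times M_k})$ for all $j,k$; letting $m\rightarrow+\infty$ and using the locally uniform convergence just obtained, uniqueness of locally uniform limits of holomorphic functions gives $P_{\partial M,j}(f|_{\partial D_j\times M_j})=P_{\partial M,k}(f|_{\partial D_k\times M_k})$ for all $j,k$. By the characterization this is exactly what is needed to conclude $f\in H^2(M,\partial M)$, with $f^*=P_{\partial M}(f)$ equal to the common holomorphic limit; consequently $f_m^*\rightarrow f^*$ uniformly on compact subsets of $M$. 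Finally, since $f$ was an arbitrary $L^2(\partial M,d\mu)$ limit of a sequence in $H^2(M,\partial M)$, the closedness assertion follows at once. The only genuinely nontrivial input is Lemma \ref{l:b2-p} itself; granting it, the remaining work is the routine bookkeeping of the face decomposition together with the passage of the gluing identity to the limit, which I expect to be the main (though mild) obstacle.
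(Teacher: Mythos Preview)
Your proposal is correct and follows essentially the same approach as the paper: reduce to Lemma~\ref{l:b2-p} on each face $\partial D_j\times M_j$, then glue. The paper's proof is terser and leaves the compatibility step implicit (it simply notes that $f_m^*$ converges locally uniformly to $P_{\partial M,j}(f|_{\partial D_j\times M_j})$ for every $j$, so uniqueness of limits forces these to coincide), whereas you spell this out explicitly; otherwise the arguments are the same.
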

\begin{proof}
	Lemma \ref{l:b2-p} shows that  $f|_{\partial D_j\times M_j}\in H^2(M,\partial D_j\times M_j)$  and $f_m^*$ uniform converges  $P_{\partial M,j}(f|_{\partial D_j\times M_j})$ on any compact subset of $M$ for any $1\le j\le n$. By definition of $H^2(M,\partial M)$, we have  $f\in H^2(M,\partial M)$. 
\end{proof}

 Let $\rho$ be a Lebesgue measurable function on $\partial M$ such that $\inf_{\partial M}\rho>0$. Denote that  
$$\ll f,g\gg_{\partial M,\rho}:=\sum_{1\le j\le n}\frac{1}{2\pi}\int_{M_j}\int_{\partial D_j}f(w_j,\hat w_j)\overline{g(w_j,\hat w_j)}\rho |dw_j|d\mu_j(\hat w_j)$$
for any $f,g\in L^2(\partial M,\rho d\mu)\subset L^2(\partial M,d\mu)$.
 The weighted Hardy space over $\partial M$ is defined as follows:
 \begin{displaymath}
 	H^2_{\rho}(M,\partial M):=\{f\in H^2(M,\partial M):\|f\|_{\partial M,\rho}<+\infty \}.
 \end{displaymath}
 As $\|f\|_{\partial M}\le\frac{1}{\inf_{\partial M}\rho>0} \|f\|_{\partial M,\rho}$ and $H^2(M,\partial M)$ is a Hilbert space equipped with the inner product $\ll \cdot,\cdot\gg_{\partial M}$, then $H^2_{\rho}(M,\partial M)$ is a Hilbert space equipped with the inner product $\ll \cdot,\cdot\gg_{\partial M,\rho}$.

  Let $\{e_m\}_{m\in\mathbb{Z}_{>0}}$ be a complete orthonormal basis for $H^2_{\rho}(M,\partial M)$.

\begin{Lemma}
	\label{l:b4}$\sup_{z\in K}\sum_{m=1}^{+\infty}|e^*_m(z)|^2<+\infty$ holds for any compact subset $K$ of $M$.
\end{Lemma}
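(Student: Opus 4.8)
The plan is to establish, for a fixed compact $K\subset M$, a uniform pointwise bound $|f^*(z)|\le\tilde C_K\|f\|_{\partial M,\rho}$ valid for all $z\in K$ and all $f\in H^2_\rho(M,\partial M)$, and then to feed this into the standard Bessel-type argument for an orthonormal basis. The only genuine content is the uniform pointwise bound; everything afterward is the textbook reproducing-kernel computation.

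First I would obtain the uniform bound by combining Lemma \ref{l:b1-p} with the elementary norm comparisons. Fixing any index, say $j=1$, note that $\|f\|_{\partial M}^2=\sum_{1\le j\le n}\|f\|_{\partial D_j\times M_j}^2$ gives $\|f\|_{\partial D_1\times M_1}\le\|f\|_{\partial M}$, and since $\rho\ge\inf_{\partial M}\rho>0$ on $\partial M$ we have $\|f\|_{\partial M}\le(\inf_{\partial M}\rho)^{-1/2}\|f\|_{\partial M,\rho}$. Lemma \ref{l:b1-p} then produces a constant $C_K>0$ with
$$|f^*(z)|\le C_K\|f\|_{\partial D_1\times M_1}\le\tilde C_K\|f\|_{\partial M,\rho}\quad\text{for all }z\in K\text{ and }f\in H^2_\rho(M,\partial M),$$
where $\tilde C_K:=C_K(\inf_{\partial M}\rho)^{-1/2}$. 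In other words, evaluation $f\mapsto f^*(z)$ is a bounded linear functional on $H^2_\rho(M,\partial M)$ with operator norm bounded by $\tilde C_K$ uniformly in $z\in K$.

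Next I would run the classical argument. Fix $z\in K$ and $N\in\mathbb{Z}_{>0}$, and set $g_N:=\sum_{m=1}^N\overline{e_m^*(z)}\,e_m\in H^2_\rho(M,\partial M)$. Orthonormality of $\{e_m\}$ yields $\|g_N\|_{\partial M,\rho}^2=\sum_{m=1}^N|e_m^*(z)|^2$, while the linearity of $P_{\partial M}$ (Lemma \ref{l:b0}) gives $g_N^*=\sum_{m=1}^N\overline{e_m^*(z)}\,e_m^*$, so that $g_N^*(z)=\sum_{m=1}^N|e_m^*(z)|^2$. Applying the uniform bound to $g_N$,
$$\sum_{m=1}^N|e_m^*(z)|^2=|g_N^*(z)|\le\tilde C_K\|g_N\|_{\partial M,\rho}=\tilde C_K\left(\sum_{m=1}^N|e_m^*(z)|^2\right)^{1/2},$$
whence $\sum_{m=1}^N|e_m^*(z)|^2\le\tilde C_K^2$. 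Letting $N\to+\infty$ and taking the supremum over $z\in K$ gives $\sup_{z\in K}\sum_{m=1}^{+\infty}|e_m^*(z)|^2\le\tilde C_K^2<+\infty$, which is the claim.

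I do not expect any serious obstacle here: the crux is that Lemma \ref{l:b1-p} already supplies a pointwise-on-compacts estimate of $f^*$ by a single-factor boundary norm, and the passage to the weighted norm $\|\cdot\|_{\partial M,\rho}$ is immediate from $\inf_{\partial M}\rho>0$. The remaining Bessel/reproducing-kernel step is routine and is exactly what guarantees that the series defining $K_{\partial M,\rho}(z,\overline w)$ converges locally uniformly. The one point to state carefully is that $g_N$ indeed lies in $H^2_\rho(M,\partial M)$ and that $P_{\partial M}$ commutes with finite linear combinations, both of which are given by the linearity in Lemma \ref{l:b0}.
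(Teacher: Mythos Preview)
Your proof is correct and essentially identical to the paper's: both use Lemma~\ref{l:b1-p} together with $\inf_{\partial M}\rho>0$ to get the uniform bound $|f^*(z)|\le C_K\|f\|_{\partial M,\rho}$, then apply it to the finite sum $\sum_{m=1}^{N}\overline{e_m^*(z)}\,e_m$ to obtain $\sum_{m=1}^{N}|e_m^*(z)|^2\le C_K^2$. You spell out the passage from $\|\cdot\|_{\partial D_1\times M_1}$ to $\|\cdot\|_{\partial M,\rho}$ more explicitly than the paper does, but the argument is the same.
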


\begin{proof}
	For any $z_0\in K$, let $f_{l,z_0}=\sum_{m=1}^{l}\overline{e_m^*(z_0)}e_m\in H^2_{\rho}(M,\partial M)$. As $\{e_m\}_{m\in\mathbb{Z}_{>0}}$ is a complete orthonormal basis for $H^2_{\rho}(M,\partial M)$, we have 
	$$\|f_{l,z_0}\|_{\partial M,\rho}^2=\sum_{m=1}^{l}|e_m^*(z_0)|^2.$$ It is clear that $f_{l,z_0}^*=\sum_{m=1}^{l}\overline{e_m^*(z_0)}e_m^*$, hence 
	$$f_{l,z_0}^*(z_0)=\sum_{m=1}^{l}|e_m^*(z_0)|^2.$$ It follows from Lemma \ref{l:b1-p} and $\inf_{\partial M}\rho>0$ that there exists a positive constant $C_K$ such that
	\begin{displaymath}
		\begin{split}
			\sum_{m=1}^{l}|e_m^*(z_0)|^2=|f_{l,z_0}^*(z_0)|\le C_K\|f_{l,z_0}\|_{\partial M,\rho}=C_K\left(	\sum_{m=1}^{l}|e_m^*(z_0)|^2\right)^{\frac{1}{2}},
		\end{split}
	\end{displaymath}
	which implies that $\sup_{z\in K}\sum_{m=1}^{+\infty}|e^*_m(z)|^2<+\infty.$
\end{proof}

We define a kernel function as follows: 
$$K_{\partial M,\rho}(z,\overline w):=\sum_{m=1}^{+\infty}e_m^*(z)\overline{e_m^*(w)}$$
for $(z,w)\in M\times M\subset \mathbb{C}^{2n}$. When $n=1$, $K_{\partial M,\rho}(z,\overline w)=K_{\rho}(z,\overline w)$ (the definition of $K_{\rho}(z,\overline w)$ can be seen in Section \ref{sec:2.1}). It follows from Lemma \ref{l:b4} and Lemma \ref{l:hartogs} that $K_{\partial M,\rho}$ is a holomorphic function on $M\times M$. Denote that 
$$K_{\partial M,\rho}(z):=K_{\partial M,\rho}(z,\overline z)$$
for $z\in M$.
\begin{Lemma}
	\label{l:b5}$\ll f,\sum_{m=1}^{+\infty}e_m\overline{e_m^*(w)}\gg_{\partial M,\rho}=f^*(w)$ holds for any $w\in M$ and any $f\in H_{\rho}^2(M,\partial M)$. 
\end{Lemma}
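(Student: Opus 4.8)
The plan is to read the statement as the reproducing property of the kernel element $g_w:=\sum_{m=1}^{+\infty}\overline{e_m^*(w)}e_m$, so the first task is to check that this series actually defines an element of $H^2_{\rho}(M,\partial M)$. Since $\{e_m\}$ is orthonormal, the Riesz--Fischer theorem tells us the series converges in $\|\cdot\|_{\partial M,\rho}$ precisely when its coefficients are square-summable, i.e. when $\sum_{m=1}^{+\infty}|e_m^*(w)|^2<+\infty$. This is immediate from Lemma \ref{l:b4} applied to the compact set $K=\{w\}$, which yields $\sum_{m=1}^{+\infty}|e_m^*(w)|^2<+\infty$. Hence $g_w\in H^2_{\rho}(M,\partial M)$, and its extension is $g_w^*(\cdot)=K_{\partial M,\rho}(\cdot,\overline w)$.

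Next I would fix an arbitrary $f\in H^2_{\rho}(M,\partial M)$ and expand it in the basis as $f=\sum_{m=1}^{+\infty}a_m e_m$ with Fourier coefficients $a_m=\ll f,e_m\gg_{\partial M,\rho}$. Using the conjugate-linearity of $\ll\cdot,\cdot\gg_{\partial M,\rho}$ in its second slot together with the orthonormality of $\{e_m\}$, a direct computation gives
$$\ll f,g_w\gg_{\partial M,\rho}=\ll \sum_{m}a_m e_m,\sum_{k}\overline{e_k^*(w)}e_k\gg_{\partial M,\rho}=\sum_{m=1}^{+\infty}a_m e_m^*(w).$$
Thus the lemma is reduced to the pointwise identity $\sum_{m=1}^{+\infty}a_m e_m^*(w)=f^*(w)$.

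To establish this last identity I would invoke the continuity of the extension-and-evaluation functional. Writing $S_N=\sum_{m=1}^{N}a_m e_m$, we have $S_N\to f$ in $\|\cdot\|_{\partial M,\rho}$, and by the linearity of $P_{\partial M}$ (Lemma \ref{l:b0}) the extension satisfies $S_N^*=\sum_{m=1}^{N}a_m e_m^*$. Lemma \ref{l:b1-p}, combined with $\inf_{\partial M}\rho>0$ and the elementary bound $\|h\|_{\partial D_j\times M_j}\le\|h\|_{\partial M}\le(\inf_{\partial M}\rho)^{-1/2}\|h\|_{\partial M,\rho}$, shows that for each fixed $w$ the map $h\mapsto h^*(w)$ is a bounded linear functional on $H^2_{\rho}(M,\partial M)$; hence $S_N^*(w)\to f^*(w)$. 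Comparing with $S_N^*(w)=\sum_{m=1}^{N}a_m e_m^*(w)$ yields $\sum_{m=1}^{+\infty}a_m e_m^*(w)=f^*(w)$, completing the proof. The only genuinely non-formal step is this final continuity passage, and it is supplied entirely by Lemma \ref{l:b1-p}; everything else is the standard orthonormal-basis bookkeeping in a reproducing kernel Hilbert space.
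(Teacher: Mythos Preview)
Your proof is correct and follows essentially the same approach as the paper: expand $f$ in the orthonormal basis, compute the inner product term by term, and use continuity of $h\mapsto h^*(w)$ to identify $\sum_m a_m e_m^*(w)$ with $f^*(w)$. The only cosmetic difference is that the paper cites Lemma~\ref{l:b2} (uniform convergence of $S_N^*$ to $f^*$ on compacta) in place of your direct appeal to Lemma~\ref{l:b1-p}, but the latter is precisely what underlies Lemma~\ref{l:b2}, so the arguments are effectively identical.
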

\begin{proof}
	As  $\{e_m\}_{m\in\mathbb{Z}_{>0}}$ is a complete orthonormal basis for $H^2_{\rho}(M,\partial M)$, we have $f=\sum_{m=1}^{+\infty}a_me_m$ (convergence under the norm $\|\cdot\|_{\partial M,\rho}$), where $a_m$ is a constant for any $m$. Lemma \ref{l:b2} shows that $f^*=\sum_{m=1}^{+\infty}a_me^*_m$ (uniform convergence on any compact subset of $M$). Then we have 
	$$\ll f,\sum_{m=1}^{+\infty}e_m\overline{e_m^*(w)}\gg_{\partial M,\rho}=\sum_{m=1}^{+\infty}a_me_m^*(w)=f^*(w)$$
	for any $w\in M$.
\end{proof}

Lemma \ref{l:b5} implies that the definition of $K_{\partial M,\rho}(z,\overline w)$ is independent of the choices of the orthonormal basis. 
\begin{Lemma}
	\label{l:sup-b}
	$K_{\partial M,\rho}(z_0)=\sup_{f\in H^2_{\rho}(M,\partial M)}\frac{|f^*(z_0)|^2}{\|f\|_{\partial M,\rho}^2}$ holds for any $z_0\in M$.
\end{Lemma}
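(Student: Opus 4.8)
The plan is to recognize this as the standard reproducing-kernel supremum formula and to exploit Lemma \ref{l:b5}, which already supplies the reproducing property. First I would introduce the candidate reproducing element at $z_0$, namely
$$k_{z_0}:=\sum_{m=1}^{+\infty}\overline{e_m^*(z_0)}e_m.$$
By Lemma \ref{l:b4} its formal norm satisfies $\|k_{z_0}\|_{\partial M,\rho}^2=\sum_{m=1}^{+\infty}|e_m^*(z_0)|^2=K_{\partial M,\rho}(z_0)<+\infty$, using that $\{e_m\}$ is orthonormal; hence $k_{z_0}\in H^2_{\rho}(M,\partial M)$ and the series converges in norm. Setting $w=z_0$ in Lemma \ref{l:b5} gives the key identity $f^*(z_0)=\ll f,k_{z_0}\gg_{\partial M,\rho}$ for every $f\in H^2_{\rho}(M,\partial M)$.

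Next I would derive the upper bound by Cauchy--Schwarz: from the reproducing identity,
$$|f^*(z_0)|^2=\left|\ll f,k_{z_0}\gg_{\partial M,\rho}\right|^2\le\|f\|_{\partial M,\rho}^2\,\|k_{z_0}\|_{\partial M,\rho}^2=\|f\|_{\partial M,\rho}^2\,K_{\partial M,\rho}(z_0),$$
so that $\frac{|f^*(z_0)|^2}{\|f\|_{\partial M,\rho}^2}\le K_{\partial M,\rho}(z_0)$ for every nonzero $f$, giving $\sup\le K_{\partial M,\rho}(z_0)$.

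For the reverse inequality I would split into two cases. If $K_{\partial M,\rho}(z_0)>0$, I test with $f=k_{z_0}$. Here Lemma \ref{l:b2} guarantees $k_{z_0}^*=\sum_m\overline{e_m^*(z_0)}e_m^*$ (uniform convergence on compacts), whence $k_{z_0}^*(z_0)=\sum_m|e_m^*(z_0)|^2=K_{\partial M,\rho}(z_0)$, and therefore
$$\frac{|k_{z_0}^*(z_0)|^2}{\|k_{z_0}\|_{\partial M,\rho}^2}=\frac{K_{\partial M,\rho}(z_0)^2}{K_{\partial M,\rho}(z_0)}=K_{\partial M,\rho}(z_0),$$
attaining the supremum. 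If instead $K_{\partial M,\rho}(z_0)=0$, then $e_m^*(z_0)=0$ for all $m$, so for any $f=\sum_m a_me_m$ the uniform convergence of $f^*=\sum_m a_me_m^*$ (Lemma \ref{l:b2}) gives $f^*(z_0)=0$, and both sides equal $0$.

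This is essentially routine, so I do not expect a genuine obstacle; the only points requiring a little care are verifying that $k_{z_0}$ actually lies in $H^2_{\rho}(M,\partial M)$ (which is exactly what Lemma \ref{l:b4} provides) and handling the degenerate case $K_{\partial M,\rho}(z_0)=0$ separately so that the quotient is never formed with a zero denominator.
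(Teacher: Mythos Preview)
Your proof is correct and follows essentially the same approach as the paper: both use the reproducing element $k_{z_0}=\sum_m\overline{e_m^*(z_0)}e_m$ and Lemma \ref{l:b5} to identify $f\mapsto f^*(z_0)$ with inner product against $k_{z_0}$, then equate $K_{\partial M,\rho}(z_0)$ with $\|k_{z_0}\|^2$ and hence with the supremum. The paper phrases the last step via the Riesz norm identity for the evaluation functional, whereas you spell it out with Cauchy--Schwarz plus testing at $f=k_{z_0}$ and treat the case $K_{\partial M,\rho}(z_0)=0$ separately; these are cosmetic differences.
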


\begin{proof}
	It follows from Lemma \ref{l:b5} that $f\mapsto f^*(z_0)$ is a bounded linear map from $H^2_{\rho}(M,\partial M)$ and the norm of the map equals 
	$$\sup_{f\in H^2_{\rho}(M,\partial M)}\frac{|f^*(z_0)|}{\|f\|_{\partial M,\rho}}=\|\sum_{m=1}^{+\infty}e_m\overline{e_m^*(z_0)}\|_{\partial M,\rho}.$$ Thus, we have 
	\begin{displaymath}
		\begin{split}
			K_{\partial M,\rho}(z_0)&=\sum_{m=1}^{+\infty}e_m^*(z_0)\overline{e_m^*(z_0)}\\
			&=\ll \sum_{m=1}^{+\infty}e_m\overline{e_m^*(z_0)},\sum_{m=1}^{+\infty}e_m\overline{e_m^*(z_0)}\gg_{\partial M,\rho}\\
			&
			=\sup_{f\in H^2_{\rho}(M,\partial M)}\frac{|f^*(z_0)|^2}{\|f\|_{\partial M,\rho}^2}.
		\end{split}
	\end{displaymath}
\end{proof}

Let $h_0$ be a holomorphic function on a neighborbood $V_0$ of $z_0$, and let $I$ be a proper ideal of $\mathcal{O}_{z_0}$.   We generalize the kernel function $K_{\partial M,\rho}(z_0)$ as follows: 
$$K_{\partial M,\rho}^{I,h_0}(z_0):=\frac{1}{\inf\left\{\|f\|_{\partial M,\rho}^2:f\in H^2_{\rho}(M,\partial M)\,\&\,(f^*-h_0,z_0)\in I\right\}}.$$ 
\begin{Lemma}
	\label{l:b7}Assume that $(h_0,z_0)\not\in I$ and $K_{\partial M,\rho}^{I,h_0}(z_0)>0$. Then $K_{\partial M,\rho}^{I,h_0}(z_0)<+\infty$ and there is a unique holomorphic function $f\in H^2_{\rho}(M,\partial M)$ such that $(f^*-h_0,z_0)\in I$ and $K_{\partial M,\rho}^{I,h_0}(z_0)=\frac{1}{\|f\|_{\partial M,\rho}^2}$. Furthermore,
for any   $\hat{f}\in H^2_{\rho}(M,\partial M)$ such that $(\hat f^*-h_0,z_0)\in I$,
we have the following equality
\begin{equation}\label{eq:0803b}
\begin{split}
\|\hat f\|_{\partial M,\rho}^2=\|f\|_{\partial M,\rho}^2+\|\hat f-f\|_{\partial M,\rho}^2.
\end{split}
\end{equation}
\end{Lemma}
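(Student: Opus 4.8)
The plan is to realize $1/K_{\partial M,\rho}^{I,h_0}(z_0)$ as the squared distance from the origin to a closed convex subset of the Hilbert space $H^2_{\rho}(M,\partial M)$, and then invoke the Hilbert-space projection theorem, which delivers existence, uniqueness and the orthogonality relation \eqref{eq:0803b} in one stroke. This parallels the proof of the Bergman analogue, Lemma \ref{l:bergman<+infty}. Writing $f^*=P_{\partial M}(f)$ as in Lemma \ref{l:b0}, set
$$A:=\left\{f\in H^2_{\rho}(M,\partial M):(f^*-h_0,z_0)\in I\right\}.$$
Since $K_{\partial M,\rho}^{I,h_0}(z_0)>0$, the defining infimum is finite, so $A\neq\emptyset$, and by construction $\inf_{g\in A}\|g\|_{\partial M,\rho}^2=1/K_{\partial M,\rho}^{I,h_0}(z_0)$.

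First I would verify that $A$ is convex (indeed affine) and closed. Affineness is immediate: since $f\mapsto f^*$ is linear (Lemma \ref{l:b0}) and $I$ is an ideal, for $f,g\in A$ and any $\alpha\in\mathbb{C}$ the germ $\big((f+\alpha(g-f))^*-h_0,z_0\big)=\big((f^*-h_0)+\alpha(g^*-f^*),z_0\big)$ lies in $I$, because both $f^*-h_0$ and $g^*-f^*$ have germs in $I$. For closedness, suppose $f_m\in A$ with $\|f_m-f\|_{\partial M,\rho}\to 0$. By Lemma \ref{l:b2}, $f\in H^2_{\rho}(M,\partial M)$ and $f_m^*$ converges to $f^*$ uniformly on compact subsets of $M$; in particular $f_m^*-h_0\to f^*-h_0$ uniformly on a closed neighborhood of $z_0$. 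As all germs $(f_m^*-h_0,z_0)$ lie in $I$, the closedness of submodules (Lemma \ref{l:closedness}) yields $(f^*-h_0,z_0)\in I$, so $f\in A$.

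With $A$ a nonempty closed convex subset of the Hilbert space $H^2_{\rho}(M,\partial M)$, the projection theorem gives a unique element $f\in A$ of minimal norm, so $\|f\|_{\partial M,\rho}^2=1/K_{\partial M,\rho}^{I,h_0}(z_0)$, which is the asserted existence and uniqueness. Finiteness $K_{\partial M,\rho}^{I,h_0}(z_0)<+\infty$ follows because $0\notin A$: otherwise $(-h_0,z_0)\in I$, hence $(h_0,z_0)\in I$, contradicting the hypothesis; as $A$ is closed and $0\notin A$, the distance $\inf_{g\in A}\|g\|_{\partial M,\rho}$ is strictly positive. For \eqref{eq:0803b}, I use the minimality of $f$: for any $\hat f\in A$ and $\alpha\in\mathbb{C}$ the element $f+\alpha(\hat f-f)$ again lies in $A$ (by affineness), so $\|f\|_{\partial M,\rho}\le\|f+\alpha(\hat f-f)\|_{\partial M,\rho}$ for all $\alpha$, which forces $\ll f,\hat f-f\gg_{\partial M,\rho}=0$ and hence $\|\hat f\|_{\partial M,\rho}^2=\|f\|_{\partial M,\rho}^2+\|\hat f-f\|_{\partial M,\rho}^2$.

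The one genuinely nontrivial point is the closedness of $A$, where the boundary/interior interplay enters: convergence in the boundary norm $\|\cdot\|_{\partial M,\rho}$ must be upgraded to local uniform convergence of the interior holomorphic extensions $f_m^*$ before the analytic closedness of $I$ can be applied. In the Bergman setting this upgrade came for free from admissibility of the weight; here it is exactly packaged in Lemma \ref{l:b2}, which itself rests on the pointwise estimate $|f^*(z)|\le C_K\|f\|_{\partial M,\rho}$ of Lemma \ref{l:b1-p}. Once that upgrade is secured, the remaining steps are the standard Hilbert-space minimization already used for the weighted Bergman kernel.
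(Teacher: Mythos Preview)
Your proof is correct and reaches the same conclusion via a slightly cleaner packaging than the paper. The paper argues existence by extracting a weakly convergent subsequence from a minimizing sequence, then uses Lemma~\ref{l:b1-p} to get locally uniform convergence of the $f_j^*$ and Lemma~\ref{l:b5} to identify the pointwise limit with $f^*$, before invoking Lemma~\ref{l:closedness}; uniqueness is done separately by the parallelogram law. You instead prove once and for all that the constraint set $A$ is norm-closed (via Lemma~\ref{l:b2}, which already encapsulates the estimate of Lemma~\ref{l:b1-p}) and then invoke the Hilbert projection theorem, which delivers existence, uniqueness, and the orthogonality \eqref{eq:0803b} simultaneously. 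Both arguments rest on the same two analytic ingredients---the upgrade from boundary-norm convergence to locally uniform convergence of the interior extensions, and the closedness of germs in $I$---so the difference is organizational rather than substantive; your route is shorter and avoids the explicit weak-convergence bookkeeping. One small remark: Lemma~\ref{l:b2} is stated for the unweighted norm, but since $\inf_{\partial M}\rho>0$ the weighted convergence implies unweighted convergence, so its conclusion applies here and the limit automatically lies in $H^2_\rho(M,\partial M)$.
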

\begin{proof}
	We prove $K_{\partial M,\rho}^{I,h_0}(z_0)<+\infty$ by contradiction: if not, there is $\{{f}_{j}\}_{j\in\mathbb{Z}_{>0}}\subset  H^2_{\rho}(M,\partial M)$
such that $\lim_{j\to+\infty}\|{f}_{j}\|_{\partial M,\rho}=0$ and
$(f^*_{j}-h_0,z_0)\in I$ for any $j$. 
It follows from Lemma \ref{l:b1-p} that $f^*_j$ uniformly converges to $0$ on any compact subset of $M$.
It follows from Lemma \ref{l:closedness}
that $h_0\in I$, which contradicts to the assumption $h_0\not\in I$.
Thus, we have $K_{\partial M,\rho}^{I,h_0}(z_0)<+\infty.$

Firstly, we prove the existence of $f$.
As $K_{\partial M,\rho}^{I,h_0}(z_0)>0,$
then there is $\{f_{j}\}_{j\in\mathbb{Z}_{>0}}\subset H^2_{\rho}(M,\partial M)$  such that
$\lim_{j\rightarrow+\infty}\|f_j\|^2_{\partial M,\rho}=\frac{1}{K_{\partial M,\rho}^{I,h_0}(z_0)}<+\infty$ 
and $(f^*_{j}-h_0,z_0)\in I$ for any $j$. 
Then there is a subsequence of $\{f_j\}_{j\in\mathbb{Z}_{>0}}$ denoted also by $\{f_j\}_{j\in\mathbb{Z}_{>0}}$, which weakly converges to an element $f\in H_{\rho}^2(M,\partial M)$, i.e.,
	\begin{equation}\label{eq:0803a}
		\lim_{j\rightarrow+\infty}\ll f_j,g\gg_{\partial M,\rho}=\ll f,g\gg_{\partial M,\rho}
	\end{equation}
holds for any $g\in H^2_{\rho}(M,\partial M)$. Hence we have 
\begin{equation}
	\label{eq:220807g}\|f\|_{\partial M,\rho}^2\le \lim_{j\rightarrow+\infty}\|f_j\|^2_{\partial M,\rho}=\frac{1}{K_{\partial M,\rho}^{I,h_0}(z_0)}.
\end{equation}
It follows from Lemma \ref{l:b1-p} that there is a subsequence of $\{f_j\}_{j\in\mathbb{Z}_{>0}}$ denoted also by $\{f_j\}_{j\in\mathbb{Z}_{>0}}$, which satisfies that $f_j^*$ uniformly converges to a holomorphic function $g_0$ on $M$ on any compact subset of $M$.
By Lemma \ref{l:b5} and equality \eqref{eq:0803a}, we get that 
$$\lim_{j\rightarrow+\infty}f_j^*(z)=f^*(z)$$
for any $z\in M$, hence we know that $f^*=g_0$ and $f_j^*$ uniformly converges to $f^*$ on any compact subset of $M$. Following from Lemma \ref{l:closedness} and $(f^*_{j}-h_0,z_0)\in I$ for any $j$, we get 
$$(f^*-h_0,z_0)\in I.$$
By definition of $K_{\partial M,\rho}^{I,h_0}(z_0)$ and inequality \eqref{eq:220807g}, we have 
$$\|f\|_{\partial M,\rho}^2=\frac{1}{K_{\partial M,\rho}^{I,h_0}(z_0)}.$$
Thus,  we obtain the existence of $f$.

Secondly, we prove the uniqueness of $f$ by contradiction:
if not, there exist two different $g_{1}\in H^2_{\rho}(M,\partial M)$ and $g_{2}\in H^2_{\rho}(M,\partial M)$ satisfying that $\|g_1\|_{\partial M,\rho}^2=\|g_1\|_{\partial M,\rho}^2=\frac{1}{K_{\partial M,\rho}^{I,h_0}(z_0)}$, 
$(g_{1}^*-h_0,z_0)\in I$ and $(g_{2}^*-h_0,z_0)\in I$. It is clear that 
$$(\frac{g^*_{1}+g^*_{2}}{2}-h_0,z_0)\in I.$$
Note that
\begin{equation}\nonumber
\begin{split}
\|\frac{g_1+g_2}{2}\|_{\partial M,\rho}^2+\|\frac{g_1-g_2}{2}\|_{\partial M,\rho}^2=
\frac{\|g_1\|_{\partial M,\rho}^2+\|g_2\|_{\partial M,\rho}^2}{2}=\frac{1}{K_{\partial M,\rho}^{I,h_0}(z_0)},
\end{split}
\end{equation}
then we obtain that
$$\|\frac{g_1+g_2}{2}\|_{\partial M,\rho}^2<\frac{1}{K_{\partial M,\rho}^{I,h_0}(z_0)},$$
 which contradicts the definition of $K_{\partial M,\rho}^{I,h_0}(z_0)$.

Finally, we prove equality \eqref{eq:0803b}.
It is clear that
for any complex number $\alpha$,
$f+\alpha (\hat f-f)\in H^2_{\rho}(M,\partial M)$ and $(f^*+\alpha (\hat f^*-f^*),z_0)\in I$,
and 
$$\|f\|^2_{\partial M,\rho}\le \|f+\alpha (\hat f-f)\|_{\partial M,\rho}<+\infty.$$
Thus we have 
$$\ll f,\hat f-f\gg_{\partial M,\rho}=0,$$
which implies that 
$$\|\hat f\|_{\partial M,\rho}^2=\|f\|_{\partial M,\rho}^2+\|\hat f-f\|_{\partial M,\rho}^2.$$

Thus, Lemma \ref{l:b7} has been proved.
\end{proof}

\begin{Lemma}
	\label{l:b8}Let $h_l$ be a holomorphic function on a neighborhood of $z_0$ for $l\in\{1,\ldots,m\}$. Let $f_l\in H^2_{\rho}(M,\partial M)$ satisfy that $(f_l^*-h_l,z_0)\in I$ and $K_{\partial M,\rho}^{I,h_l}(z_0)=\frac{1}{\|f_l\|_{\partial M,\rho}^2}$ for any $1\le l\le m$. Then we have 
	$(\sum_{1\le l\le m}f_l^*-\sum_{1\le l\le m}h_l,z_0)\in I$ and 
	$$K_{\partial M,\rho}^{I,\sum_{1\le l\le m}h_l}(z_0)=\frac{1}{\|\sum_{1\le l\le m}f_l\|_{\partial M,\rho}^2}.$$
\end{Lemma}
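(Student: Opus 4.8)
The plan is to recognize Lemma \ref{l:b8} as a linearity statement for the Hilbert-space projection defining $K^{I,h}_{\partial M,\rho}(z_0)$: I will show that $F:=\sum_{1\le l\le m}f_l$ is exactly the norm-minimizing competitor for the datum $\sum_{1\le l\le m}h_l$, and then read off the value of the kernel. First I would dispose of the membership claim. Since each $(f_l^*-h_l,z_0)\in I$ and $I$ is an ideal (in particular a $\mathbb{C}$-vector subspace, closed under addition), summing over $l$ gives $(\sum_l f_l^*-\sum_l h_l,z_0)\in I$. Moreover $F\in H^2_{\rho}(M,\partial M)$ because that space is a vector space and each $f_l$ lies in it, with $\|F\|_{\partial M,\rho}<+\infty$. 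Hence $F$ is an admissible competitor in the infimum defining $K_{\partial M,\rho}^{I,\sum_l h_l}(z_0)$.

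The crux is a variational (orthogonality) characterization of each minimizer $f_l$: I claim that $\ll f_l,g\gg_{\partial M,\rho}=0$ for every $g\in H^2_{\rho}(M,\partial M)$ with $(g^*,z_0)\in I$. This is precisely the mechanism already used to derive \eqref{eq:0803b} in Lemma \ref{l:b7}. Indeed, for such a $g$ and any $t\in\mathbb{C}$, the element $f_l+tg$ still satisfies $((f_l+tg)^*-h_l,z_0)=(f_l^*-h_l,z_0)+(t\,g^*,z_0)\in I$, so by the minimality of $\|f_l\|_{\partial M,\rho}$ we get $\|f_l\|_{\partial M,\rho}^2\le\|f_l+tg\|_{\partial M,\rho}^2$ for all $t$, which forces $\ll f_l,g\gg_{\partial M,\rho}=0$. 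Summing this identity over $l$ yields $\ll F,g\gg_{\partial M,\rho}=0$ for every $g$ with $(g^*,z_0)\in I$.

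To finish, I would take an arbitrary admissible competitor $\hat f\in H^2_{\rho}(M,\partial M)$ with $(\hat f^*-\sum_l h_l,z_0)\in I$ and set $g:=\hat f-F$. Then $(g^*,z_0)=(\hat f^*-F^*,z_0)\in I$, being the difference of two germs each congruent to $\sum_l h_l$ modulo $I$. The orthogonality from the previous step gives $\ll F,\hat f-F\gg_{\partial M,\rho}=0$, whence $\|\hat f\|_{\partial M,\rho}^2=\|F\|_{\partial M,\rho}^2+\|\hat f-F\|_{\partial M,\rho}^2\ge\|F\|_{\partial M,\rho}^2$. Since $F$ is itself an admissible competitor, the infimum defining $K_{\partial M,\rho}^{I,\sum_l h_l}(z_0)$ equals $\|F\|_{\partial M,\rho}^2=\|\sum_{1\le l\le m}f_l\|_{\partial M,\rho}^2$, which gives the asserted formula.

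The argument is a standard projection computation, so I do not expect a serious obstacle; the only point requiring care is to confirm that the perturbed elements $f_l+tg$ and the difference $\hat f-F$ remain admissible competitors, which rests on $I$ being closed under addition and $\mathbb{C}$-scaling together with $H^2_{\rho}(M,\partial M)$ being a vector space. One should also note the harmless boundary case $\|F\|_{\partial M,\rho}=0$, where the formula reads as the value $+\infty$.
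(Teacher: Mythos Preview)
Your proof is correct and follows essentially the same route as the paper: both establish the orthogonality $\ll f_l,g\gg_{\partial M,\rho}=0$ for all $g$ with $(g^*,z_0)\in I$, sum over $l$, and then use the resulting Pythagorean identity to conclude that $F=\sum_l f_l$ is the minimizer. The only cosmetic difference is that the paper cites the orthogonality as a consequence of the identity \eqref{eq:0803b} already proved in Lemma~\ref{l:b7}, whereas you re-derive it directly from the variational characterization of each $f_l$; these are the same argument.
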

\begin{proof}Following from $(f_l^*-h_l,z_0)\in I$  for any 
$1\le l\le m$, we have 
$$(\sum_{1\le l\le m}f_l^*-\sum_{1\le l\le m}h_l,z_0)\in I.$$
	It follows from equality \eqref{eq:0803b} that 
	\begin{displaymath}
		\ll f_l,g\gg_{\partial M,\rho}=0
	\end{displaymath}
	holds for any $1\le l\le m$ and any $g\in H^2_{\rho}(M,\partial M)$ satisfying that $(g^*,z_0)\in I$. Hence we have that  	\begin{displaymath}
		\ll \sum_{1\le l\le m}f_l,g\gg_{\partial M,\rho}=0
	\end{displaymath}
	holds for any $g\in H^2_{\rho}(M,\partial M)$ satisfying that $(g^*,z_0)\in I$, which implies that 
	\begin{equation}\nonumber
\|\hat f\|_{\partial M,\rho}^2=\|\sum_{1\le l\le m}f_l\|_{\partial M,\rho}^2+\|\hat f-\sum_{1\le l\le m}f_l\|_{\partial M,\rho}^2
	\end{equation}
	holds for any
$\hat{f}\in H^2_{\rho}(M,\partial M)$ such that $(\hat f^*-\sum_{1\le l\le m}h_l ,z_0)\in I$. 
Thus, we obtain that $K_{\partial M,\rho}^{I,\sum_{1\le l\le m}h_l}(z_0)=\frac{1}{\|\sum_{1\le l\le m}f_l\|_{\partial M,\rho}^2}$.
\end{proof}

In the following part, we consider the case $n>1$.

Assume that there is  a holomorphic function $g\not\equiv0$ on a neighborhood of $\overline D_2$ such that $g$ has no zero point on  $\partial D_2$ and $\inf_{\partial M}\rho_*>0$, where $\rho_*:=\rho|g|^2$.

Let $I$ be a proper ideal of $\mathcal{O}_{z_0}$, and let  $h_0$ be a holomorphic function on a neighborbood $V_0$ of $z_0$ such that $(h_0,z_0)\not\in I.$ 

\begin{Lemma}
	\label{l:divide g}Assume that $K_{\partial M,\rho}^{I,h_0}(z_0)>0$. There exists a norm-preserving linear isomorphism $P:H^2_{\rho}(M,\partial M)\rightarrow H^2_{\rho_*}(M,\partial M)$ such that $P(f)=\frac{f}{g}$ and $P(f)^*=\frac{f^*}{g}$.
\end{Lemma}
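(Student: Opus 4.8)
The plan is to realize $P$ as division by $g$, setting $P(f)=f/g$ with holomorphic representative $P(f)^{*}=f^{*}/g$, and to exhibit its inverse as multiplication by $g$. First I would dispose of the easy direction by defining $Q\colon H^2_{\rho_*}(M,\partial M)\to H^2_{\rho}(M,\partial M)$ via $Q(h)=gh$, with $Q(h)^{*}=gh^{*}$. Since $g$ is holomorphic on a neighborhood of the compact set $\overline{D_2}$ it is bounded there, so $gh^{*}\in\mathcal{O}(M)$; fiber by fiber, multiplication by the bounded holomorphic function $g(w_2)$ (a constant in the variable $w_j$ when $j\neq 2$) sends $H^2(D_j)$ into itself and commutes with $\gamma_j$, so by the characterization preceding Lemma \ref{l:b0} we get $Q(h)\in H^2(M,\partial M)$ with $Q(h)^{*}=gh^{*}$. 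As $g\neq 0$ a.e. on $\partial M$, the identity $\int_{\partial M}|gh|^2\rho\,d\mu=\int_{\partial M}|h|^2\rho_*\,d\mu$ shows $Q$ is a linear isometry.

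The substantive direction is to show $P(f)=f/g$ lands in $H^2_{\rho_*}(M,\partial M)$, and the heart of the matter is that $f^{*}/g$ is genuinely holomorphic on $M$, i.e. that $g$ divides $f^{*}$. This is where $\inf_{\partial M}\rho_*>0$ enters: it gives $\rho\geq(\inf_{\partial M}\rho_*)\,|g|^{-2}$, so finiteness of $\|f\|_{\partial M,\rho}$ forces local integrability of $|f^{*}|^2|g|^{-2}$ across the zero divisor of $g$ (which lies in $D_2$, as $g\neq 0$ on $\partial D_2$). Concretely, fixing an interior point $w_1^{0}\in D_1$ and bounding $|f^{*}(w_1^{0},\hat w_1)|^2$ by $\int_{\partial D_1}|\gamma_1(f^{*}(\cdot,\hat w_1))|^2|dw_1|$ as in the proof of Lemma \ref{l:b1-p} (via the Cauchy representation in Lemma \ref{l:0-1}), the $j=1$ summand of the norm yields $\int_{M_1}|f^{*}(w_1^{0},\hat w_1)|^2|g(w_2)|^{-2}\,d\mu_1<+\infty$. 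On a.e. complex line $w_2\mapsto f^{*}(w_1^{0},w_2,w_3^{0},\dots,w_n^{0})$ this is a one-variable holomorphic function with $|f^{*}|^2|g|^{-2}$ integrable; since $|w_2-b|^{-2k}$ fails to be integrable near a zero $b$ of $g$ of order $k$ unless $f^{*}$ vanishes there to order $\geq k$, each such slice is divisible by $g$. As the order of vanishing of $f^{*}$ along $\{w_2=b\}$ is constant on a dense subset, $f^{*}$ vanishes to order $\geq k$ on all of $\{w_2=b\}$, so by the Riemann extension theorem $f^{*}/g\in\mathcal{O}(M)$.

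Granting holomorphy, I would check the defining properties of $H^2_{\rho_*}(M,\partial M)$ for $F:=f^{*}/g$. For each $j\neq 2$ and a.e. $\hat w_j$ (those with $g(w_2)\neq 0$) one has $F(\cdot,\hat w_j)=g(w_2)^{-1}f^{*}(\cdot,\hat w_j)\in H^2(D_j)$ and $\gamma_j(F)=\gamma_j(f^{*})/g=f/g$, and Lemma \ref{l:b3-p} upgrades the fiberwise $H^2$ membership from a.e. $\hat w_j$ to every $\hat w_j$. For $j=2$, since $|g|\geq c>0$ on a neighborhood of $\partial D_2$, on $\partial D_{2,r}$ with $r$ near $1$ we have $|F|^2\leq c^{-2}|f^{*}|^2$, so Lemma \ref{l:0-4} gives $F(\cdot,\hat w_2)\in H^2(D_2)$ with $\gamma_2(F)=f/g$. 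By the characterization of $H^2(M,\partial M)$ preceding Lemma \ref{l:b0}, $f/g\in H^2(M,\partial M)$ with $(f/g)^{*}=F$, and the isometry $\int_{\partial M}|f/g|^2\rho_*\,d\mu=\int_{\partial M}|f|^2\rho\,d\mu$ shows both that $f/g\in H^2_{\rho_*}(M,\partial M)$ and that $P$ is norm-preserving, hence injective.

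Finally, $P$ and $Q$ are mutually inverse at the level of holomorphic representatives: $P(Q(h))^{*}=(gh^{*})/g=h^{*}$ and $Q(P(f))^{*}=g\cdot(f^{*}/g)=f^{*}$, whence $P(Q(h))=h$ and $Q(P(f))=f$. Thus $P$ is a norm-preserving linear isomorphism with $P(f)=f/g$ and $P(f)^{*}=f^{*}/g$, as claimed. The standing hypothesis $K_{\partial M,\rho}^{I,h_0}(z_0)>0$ plays no role in the construction itself; it merely guarantees that the spaces under consideration are nontrivial. I expect the divisibility step, namely deducing $f^{*}/g\in\mathcal{O}(M)$ from the weighted $L^2$ bound $\rho\gtrsim|g|^{-2}$, to be the main obstacle.
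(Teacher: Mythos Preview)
Your proposal is correct and follows essentially the same approach as the paper: establish the isometry $\|f\|_{\partial M,\rho}=\|f/g\|_{\partial M,\rho_*}$, use the bound $\rho\gtrsim|g|^{-2}$ coming from $\inf_{\partial M}\rho_*>0$ together with the Cauchy estimate of Lemma~\ref{l:0-1} to force $f^*/g\in\mathcal{O}(M)$, then verify the fiberwise $H^2$ conditions (with Lemma~\ref{l:b3-p} upgrading a.e.\ to everywhere for $j\neq 2$, and the boundedness of $1/g$ near $\partial D_2$ handling $j=2$), while multiplication by the bounded function $g$ gives the inverse. Your divisibility argument via one-variable slices is just a more explicit rendering of what the paper states in a single line, and your observation that the hypothesis $K_{\partial M,\rho}^{I,h_0}(z_0)>0$ is not actually used in the construction is accurate.
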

\begin{proof}
	It is clear that $\|f\|_{\partial M,\rho}=\|\frac{f}{g} \|_{\partial M,\rho_*}$, thus it suffices to prove that $f\in H^2_{\rho}(M,\partial M)$ if and only if $\frac{f}{g}\in H^2_{\rho_*}(M,\partial M).$
	
	Take any $f\in H^2_{\rho}(M,\partial M).$ As $f^*(\cdot,\hat w_1)\in H^2(D_1)$ for any $\hat w_1\in M_1$ and $\gamma_1(f^*)=f$ a.e. on $\partial D_1\times M_1$, it follows from Lemma \ref{l:0-1} that for any $K\Subset D_1$, there is $C_K>0$ such that 
	\begin{equation}
		\label{eq:0806a}
		\sup_{w_1\in K_1}\left|f^*(w_1,\hat w_1)\right|^2\le C_{K_1}\frac{1}{2\pi} \int_{\partial D_1}\left|f(z_1,\hat w_1)\right|^2|dz_1|,
	\end{equation}
	holds for a.e. $\hat w_1\in M_1$. Since $g\not\equiv0$ and $\inf_{\partial M}\rho_*>0$, inequality \eqref{eq:0806a} implies that 
	\begin{equation}
\nonumber		\begin{split}&\int_{K_1}\int_{M_1}\left|\frac{f}{g}^*(w_1,\hat w_1)\right|^2d\mu_1(\hat w_1)d\mu_1(\hat w_1) \\
		\le&		C_1\sup_{w_1\in K_1}\int_{M_1}\left|\frac{f}{g}^*(w_1,\hat w_1)\right|^2d\mu_1(\hat w_1)\\
		\le& C_1C_{K_1}\int_{M_1}\left(\frac{1}{2\pi} \int_{\partial D_1}\left|\frac{f}{g}(z_1,\hat w_1)\right|^2|dz_1|\right)d\mu_1(\hat w_1)\\
					\leq& \frac{C_1C_{K_1}}{\inf_{\partial M}\rho_*}\int_{M_1}\left(\frac{1}{2\pi} \int_{\partial D_1}\left|f\right|^2\rho|dz_1|\right)d\mu_1(\hat w_1)\\
					<&+\infty,
		\end{split}
	\end{equation}
	which implies that $\frac{f^*}{g}$ is holomorphic on $M$. For $j\not=2$, as $g\not\equiv0$ and $f\in H^2_{\rho}(M,\partial M)$, we know that $\frac{f^*}{g}(\cdot,\hat w_j)\in H^2(D_j)$ for a.e. $\hat w_j\in M_j$ and $\gamma_j(\frac{f^*}{g})=\frac{f}{g}$ a.e. on $\partial D_j\times M_j$. Since $g$ has no zero point on $\partial D_2$ and $f\in H^2_{\rho}(M,\partial M)$, we know that  $\frac{f^*}{g}(\cdot,\hat w_2)\in H^2(D_2)$ for any $\hat w_2\in M_2$ and $\gamma_2(\frac{f^*}{g})=\frac{f}{g}$ a.e. on $\partial D_2\times M_2$.
It is clear that $\frac{f}{g}\in L^2(\partial M,\rho_*d\mu)$, then it follows from the difinition of $H^2_{\rho_*}(M,\partial M)$ and Lemma \ref{l:b3-p} that $\frac{f}{g}\in H^2_{\rho_*}(M,\partial M)$ and $\left(\frac{f}{g}\right)^*=\frac{f^*}{g}$.

Take any $h\in H^2_{\rho_*}(M,\partial M)$. Note that $g$ is  holomorphic on a neighborhood of $\overline{D_2}$. By definition of $H^2(D_j)$ and $h^*(\cdot,\hat w_j)\in H^2(D_j)$ for any $\hat w_j\in M_j$, we know that $gh^*(\cdot,\hat w_j)\in H^2(D_j)$ for any $\hat w_j\in M_j$ for any $1\le j\le n$. As $\gamma_j(f)$ denotes the nontangential boundary value of $f$ a.e. on $\partial D_j$ for any $f\in H^2(D_j)$, we have  $\gamma_j(gh^*)=gh$ a.e. on $\partial D_j\times M_j$ for any $1\le j\le n$. It is clear that $gh\in L^2(\partial M,\rho d\mu)$, then it follows from the definition of $H^*_{\rho}(M,\partial M)$ that $gh\in H^2_{\rho}(M,\partial M)$ and $(gh)^*=gh^*$.

Thus, Lemma \ref{l:divide g} holds.	
\end{proof}

 Denote that  
$$\ll f,g\gg_{\partial D_j\times M_j,\rho}:=\frac{1}{2\pi}\int_{M_j}\int_{\partial D_j}f(w_j,\hat w_j)\overline{g(w_j,\hat w_j)}\rho |dw_j|d\mu_j(\hat w_j)$$
for any $f,g\in L^2(\partial D_j\times M_j,\rho d\mu)\subset L^2(\partial D_j\times M_j,d\mu)$.
 Denote that
 \begin{displaymath}
 	H^2_{\rho}(M,\partial D_j\times M_j):=\{f\in H^2(M,\partial D_j\times M_j):\|f\|_{\partial D_j\times M_j,\rho}<+\infty \}.
 \end{displaymath}
Since $H^2(M,\partial D_j\times M_j)$ is a Hilbert space equipped with the inner product $\ll \cdot,\cdot\gg_{\partial D_j\times M_j}$,  we have that $H^2_{\rho}(M,\partial D_j\times M_j)$ is a Hilbert space equipped with the inner product $\ll \cdot,\cdot\gg_{\partial D_j\times M_j,\rho}$.

Assume that $\rho|_{\partial D_1\times M_1}=\rho_1\times\lambda_1$,  where $\rho_1$ is a positive Lebesgue measurable function on $\partial D_1$ and $\lambda_1$ is a positive Lebesgue measurable function on $M_1$.

\begin{Lemma}\label{l:prod-d1xm1}
	Assume that $H^2_{\rho}(M,\partial D_1\times M_1)\not=\{0\}$. Then we have $H^2_{\rho_1}(D_1,\partial D_1)\not=\{0\}$ and $A^2(M_1,\lambda_1)\not=\{0\}$. Furthermore,
	$\{e_m(z)\tilde e_l(w)\}_{m,l\in\mathbb{Z}_{>0}}$ is a complete orthonormal basis for $H^2_{\rho}(M,\partial D_1\times M_1)$, where $\{e_m\}_{m\in\mathbb{Z}_{>0}}$ is a complete orthonormal basis for $H^2_{\rho_1}(D_1,\partial D_1)$, and $\{\tilde e_m\}_{m\in\mathbb{Z}_{>0}}$ is a complete orthonormal basis for $A^2(M_1,\lambda_1)$. 
\end{Lemma}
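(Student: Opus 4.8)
The plan is to mirror the Bergman-space factorization of Lemma~\ref{basis of product}, with the boundary analysis of Section~\ref{sec:partial m} supplying the extra ingredients. First I would record two reductions forced by the hypotheses. Since $\rho|_{\partial D_1\times M_1}=\rho_1\times\lambda_1$ is a product of positive functions with $\inf_{\partial M}\rho>0$, the essential infimum of $\rho_1\lambda_1$ is positive; as the essential infimum of a product on a product space factors, both $\inf_{\partial D_1}\rho_1>0$ and $\inf_{M_1}\lambda_1>0$. In particular $\lambda_1^{-a}$ is locally bounded for every $a>0$, so Lemma~\ref{l:admissible} (with empty analytic subset) shows $\lambda_1$ is an admissible weight and $A^2(M_1,\lambda_1)$ is a separable Hilbert space, while $H^2_{\rho_1}(D_1,\partial D_1)$ is the weighted Hardy space of the $n=1$ case.

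For nontriviality, take $0\neq f\in H^2_{\rho}(M,\partial D_1\times M_1)$. By Tonelli, $\|f\|^2_{\partial D_1\times M_1,\rho}=\int_{M_1}\|f(\cdot,\hat w_1)\|^2_{\partial D_1,\rho_1}\lambda_1\,d\mu_1$ is finite, so $f(\cdot,\hat w_1)\in H^2_{\rho_1}(D_1,\partial D_1)$ for a.e. $\hat w_1$, and since $f\not\equiv0$ there is a positive-measure set of $\hat w_1$ with $f(\cdot,\hat w_1)\neq0$; this gives $H^2_{\rho_1}(D_1,\partial D_1)\neq\{0\}$. For $A^2(M_1,\lambda_1)$, fix $w_1^0\in D_1$ with $f^*(w_1^0,\cdot)\not\equiv0$. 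Applying the Cauchy representation of Lemma~\ref{l:0-1}(c) slicewise together with $\inf_{\partial D_1}\rho_1>0$ gives a Cauchy--Schwarz bound $|f^*(w_1^0,\hat w_1)|^2\le C_{w_1^0}\|f(\cdot,\hat w_1)\|^2_{\partial D_1,\rho_1}$ for a.e. $\hat w_1$, with $C_{w_1^0}$ locally bounded in $w_1^0$; integrating against $\lambda_1\,d\mu_1$ yields $f^*(w_1^0,\cdot)\in A^2(M_1,\lambda_1)$, which is nonzero.

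Next I would verify that $\{e_m(z)\tilde e_l(w)\}$ is orthonormal. The holomorphic function $e_m^*(w_1)\tilde e_l(\hat w_1)$ on $M$ is the $P_{\partial M,1}$-extension of $e_m(w_1)\tilde e_l(\hat w_1)$: its $D_1$-slices lie in $H^2(D_1)$ and its $\gamma_1$-boundary value is $e_m\tilde e_l$, so $e_m\tilde e_l\in H^2_{\rho}(M,\partial D_1\times M_1)$. By Tonelli the weighted inner products factor, $\ll e_m\tilde e_l,e_{m'}\tilde e_{l'}\gg_{\partial D_1\times M_1,\rho}=\ll e_m,e_{m'}\gg_{\partial D_1,\rho_1}\ll\tilde e_l,\tilde e_{l'}\gg_{M_1,\lambda_1}=\delta_{mm'}\delta_{ll'}$.

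The heart of the proof, and the step I expect to be the main obstacle, is completeness. Suppose $f$ is orthogonal to every $e_m\tilde e_l$. For each $l$ define $d_l(w_1):=\int_{M_1}f^*(w_1,\hat w_1)\overline{\tilde e_l(\hat w_1)}\lambda_1\,d\mu_1$ on $D_1$; since the domain $M_1$ is fixed and $f^*$ is holomorphic in $w_1$, Morera's theorem together with the domination from the nontriviality step shows $d_l\in\mathcal{O}(D_1)$, and $d_l(w_1)$ is precisely the $\tilde e_l$-Fourier coefficient of $f^*(w_1,\cdot)\in A^2(M_1,\lambda_1)$. The hard part is to show $d_l\in H^2_{\rho_1}(D_1,\partial D_1)$ with $\gamma_1(d_l)=b_l$, where $b_l(w_1):=\int_{M_1}f(w_1,\hat w_1)\overline{\tilde e_l}\lambda_1\,d\mu_1$. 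I would obtain this by integrating the harmonic-majorant and nontangential-limit analysis of Lemma~\ref{l:b3-p} against $\overline{\tilde e_l}\lambda_1\,d\mu_1$: the estimate $|d_l(w_1)|^2\le\int_{M_1}|f^*(w_1,\hat w_1)|^2\lambda_1\,d\mu_1$ is dominated by the superposition $\int_{M_1}U_{\hat w_1}\lambda_1\,d\mu_1$ of the least harmonic majorants $U_{\hat w_1}$ of $|f^*(\cdot,\hat w_1)|^2$, which is harmonic and finite (its value at the base point is controlled by $\|f\|^2_{\partial D_1\times M_1,\rho}$ because $\partial G_{D_1}/\partial v$ is bounded and $\rho_1$ is bounded below), so $d_l\in H^2(D_1)$; dominated convergence for the nontangential limits then identifies $\gamma_1(d_l)=b_l$ and gives $\|d_l\|_{\partial D_1,\rho_1}\le\|f\|_{\partial D_1\times M_1,\rho}$. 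Granting this, Fubini turns the assumed orthogonality into $\ll d_l,e_m\gg_{\partial D_1,\rho_1}=\ll f,e_m\tilde e_l\gg_{\partial D_1\times M_1,\rho}=0$ for all $m$, so completeness of $\{e_m\}$ forces $d_l\equiv0$ for every $l$; hence every Fourier coefficient of $f^*(w_1,\cdot)$ vanishes, so $f^*(w_1,\cdot)\equiv0$ for each $w_1\in D_1$, i.e. $f^*\equiv0$ and $f=\gamma_1(f^*)=0$. Thus $\{e_m\tilde e_l\}$ is complete.
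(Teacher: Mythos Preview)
Your approach is correct, and structurally parallels the paper's, but the paper takes a significant shortcut at exactly the point you flag as the ``hard part''. Instead of defining the interior coefficient $d_l(w_1)=\int_{M_1}f^*(w_1,\cdot)\overline{\tilde e_l}\lambda_1\,d\mu_1$ and building a harmonic majorant for it, the paper works directly with the boundary coefficient $f_l(z)=\int_{M_1}f(z,\cdot)\overline{\tilde e_l}\lambda_1\,d\mu_1$ on $\partial D_1$ and invokes the characterization of $\gamma_1(H^2(D_1))$ in Lemma~\ref{l:0-1}(b): for any holomorphic $\phi$ on a neighborhood of $\overline{D_1}$, Fubini gives $\int_{\partial D_1}f_l(z)\phi(z)\,dz=\int_{M_1}\bigl(\int_{\partial D_1}f(z,\hat w_1)\phi(z)\,dz\bigr)\overline{\tilde e_l}\lambda_1\,d\mu_1=0$, since the inner integral vanishes for a.e.\ $\hat w_1$ by $f(\cdot,\hat w_1)\in\gamma_1(H^2(D_1))$. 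This one line places $f_l$ in $H^2_{\rho_1}(D_1,\partial D_1)$ without any harmonic-majorant or nontangential-limit analysis; the rest of the completeness argument then runs exactly as you describe.

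What each route buys: the paper's Lemma~\ref{l:0-1}(b) trick eliminates the need to justify dominated convergence for the nontangential limits (which in your sketch really would require an $H^2$ maximal-function bound or the Cauchy integral representation of Lemma~\ref{l:0-1}(c), as done in Lemma~\ref{l:b9}). On the other hand, your concluding step---deducing $f^*\equiv0$ on $M$ from $d_l\equiv0$ and then $f=0$ by injectivity of $P_{\partial M,1}$---is cleaner than the paper's, which passes from $f_l=0$ a.e.\ on $\partial D_1$ to $f=0$ a.e.\ on $\partial D_1\times M_1$ by completeness of $\{\tilde e_l\}$ (a step that tacitly needs $f(z,\cdot)\in A^2(M_1,\lambda_1)$ for a.e.\ boundary point $z$).
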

\begin{proof}
	Let $f\in H^2_{\rho}(M,\partial D_1\times M_1)$ satisfying $f\not\equiv0$. By definition, we know that $f(\cdot,\hat w_1)\in H^2_{\rho_1}(D_1,\partial D_1)$ for a.e. $\hat w_1\in M_1$, hence $H^2_{\rho_1}(D_1,\partial D_1)\not=\{0\}$. Note that $f^*(\cdot,\hat w_1)\in H^2(D_1)$, then it follows from Lemma \ref{l:0-1} that 
	\begin{equation}\nonumber
		|f^*(z_1,\hat w_1)|^2\le C_{z_1}\frac{1}{2\pi}\int_{\partial D_1}|f(w_1,\hat w_1)|^2|dw_1|
	\end{equation}
	holds for any $\hat w_1\in M_1$, where $z_1\in D_1$ and $C_{z_1}>0$ is a constant independent of $\hat w_1$. Then we have 
	\begin{displaymath}
		\begin{split}
			\int_{M_1}|f^*(z_1,\cdot)|^2\lambda_1 d\mu_1&\le \frac{C_{w_1}}{\inf_{\partial D_1}\rho_1}\int_{M_1}\left(\frac{1}{2\pi}\int_{\partial D_1}|f(w_1,\hat w_1)|^2\rho_1|dw_1|\right)\lambda_1d\mu_1\\
			&=\frac{C_{w_1}}{\inf_{\partial D_1}\rho_1}\|f\|_{\partial D_1\times M_1,\rho}\\
			&<+\infty,
		\end{split}
	\end{displaymath}
	which shows that $f^*(z_1,\cdot)\in A^2(M_1,\lambda_1)$ for any $z_1\in D_1$. Hence $A^2(M_1,\lambda_1)\not=\{0\}.$
	
	Denote that $e_{m,l}:=e_m\tilde e_l$, and it is clear that $e_{m,l}\in H^2_{\rho}(M,\partial D_1\times M_1)$.
	It follows from Fubini's theorem that 
	$$\ll e_{m,l},e_{m',l'} \gg_{\partial D_1\times M_1,\rho}=0$$
	for any $(m,l)\not=(m',l')$, and $\|e_{m,l}\|_{\partial D_1\times M_1,\rho}=1$ for any $m,l\in \mathbb{Z}_{>0}$. Thus, it suffices to prove that, if $f\in H^2_{\rho}(M,\partial D_1\times M_1)$ and $\ll f,e_{m,l}\gg_{\partial D_1\times M_1,\rho}=0$ for any $m,l\in \mathbb{Z}_{>0}$ then $f=0$.
	
	Let $f\in H^2_{\rho}(M,\partial D_1\times M_1)$ satisfy $\ll f,e_{m,l}\gg_{\partial D_1\times M_1,\rho}=0$ for any $m,l\in \mathbb{Z}_{>0}$. Denote that  
	$$f_l(z):=\int_{M_1}f(z,\cdot)\overline{\tilde e_l}\lambda_1 d\mu_1$$ on $\partial D_1$ for any $l\ge0$. 
Using Fubini's theorem, we know that $f_l\in L^2(\partial D_1,\rho_1)$. 
	 For any holomorphic function $\phi$ on a neighborhood of $\overline{D_1}$, it follows from Lemma \ref{l:0-1} that 
	 \begin{equation}\nonumber
	 	\begin{split}
	 			 	\int_{\partial D_1}f_l(z)\phi(z)dz&=	\int_{\partial D_1}\left(\int_{M_1}f(z,\cdot)\overline{\tilde e_l}\lambda_1 d\mu_1\right)\phi(z)dz\\
	 			 	&=\int_{M_1}\left(\int_{\partial D_1}f(z,\hat w_1)\phi(z)dz\right)\overline{\tilde e_l}\lambda_1d\mu_1(\hat w_1)\\
	 			 	&=0,
	 	\end{split}
	 \end{equation}
	  which shows that $f_l\in H^2_{\rho_1}(D_1 ,\partial D_1)$. 
	  
	Note that $\ll f,e_{m,l}\gg_{\partial D_1\times M_1,\rho}=0$ for any $m,l\in \mathbb{Z}_{>0}$ and $ f_l(z)=\ll f(z,\cdot), \tilde e_l\gg_{M_1,\lambda_1}$, then  it follows from Fubini's theorem that $\ll f_l,e_m\gg_{\partial D_1,\rho_1}=0$ for any $m,l\in \mathbb{Z}_{>0}$. As $\{e_m\}_{m\in\mathbb{Z}_{>0}}$ is a complete orthonormal basis for $H^2_{\rho_1}(D_1,\partial D_1)$, then we have $ f_l=0$ a.e. on $\partial D_1$ for any $l$. As $\{\tilde e_m\}_{m\in\mathbb{Z}_{>0}}$ is a complete orthonormal basis for $A^2(M_1,\lambda_1)$, then we have $f=0$ a.e. on $\partial D_1\times M_1$. 
	
	Thus, Lemma \ref{l:prod-d1xm1} has been proved.
\end{proof}

Let $z_0=(z_1,\ldots,z_n)\in M$, and let  
$$\psi=\max_{1\le j\le n}\{2p_j\log|w_j-z_j|\}$$ be a plurisubharmonic function on $\mathbb{C}^n$, where $p_j>0$ is a real number for any $1\le j\le n$.
Let $\gamma$ be a nonnegative integer such that $\frac{\gamma+1}{p_1}<1$, and let 
$$\tilde{\psi}_{\gamma}=\max_{2\le j\le n}\{2p_j(1-\frac{\gamma+1}{p_1})\log|w_j-z_j|\}$$
 be a plurisubharmonic function on $\mathbb{C}^{n-1}$. Denote that $\hat z_1:=\{z_2,\ldots,z_n\}\in M_1$.
 
The following lemma will be used in the proof of Lemma \ref{l:b9}.
\begin{Lemma}[see \cite{GY-concavity4}]\label{l:b9-0} Let $f$ be any holomorphic function on a neighborhood of $z_0$. If $(f,z_0)\in\mathcal{I}(\psi)_{z_0}$, then $((\frac{\partial}{\partial w_1})^{\sigma}f(z_1,\cdot),\hat z_1)\in\mathcal{I}(\tilde{\psi}_{\gamma})_{\hat z_1}$ for any integer $\sigma$ satisfying $0\le \sigma\le \gamma$.
	\end{Lemma}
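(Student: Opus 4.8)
The plan is to reduce everything to the combinatorial characterization of membership in $\mathcal{I}(\psi)_{z_0}$ provided by Lemma \ref{l:psi}, applied once to $\psi$ in the $n$ variables $(w_1,\ldots,w_n)$ and once to $\tilde\psi_\gamma$ in the $n-1$ variables $(w_2,\ldots,w_n)$ with the modified weights $q_j:=p_j(1-\frac{\gamma+1}{p_1})$ for $2\le j\le n$. Note that the hypothesis $\frac{\gamma+1}{p_1}<1$ guarantees each $q_j>0$, so $\tilde\psi_\gamma$ is of the same type as $\psi$ and Lemma \ref{l:psi} does apply to it.

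First I would write the Taylor expansion $f=\sum_{\alpha\in\mathbb{Z}_{\ge0}^n}b_\alpha(w-z_0)^\alpha$ near $z_0$. Differentiating $\sigma$ times in $w_1$ and then setting $w_1=z_1$ kills every term with $\alpha_1\ne\sigma$, so that $(\frac{\partial}{\partial w_1})^\sigma f(z_1,\cdot)=\sum_{\hat\alpha}\sigma!\,b_{(\sigma,\hat\alpha)}\prod_{2\le j\le n}(w_j-z_j)^{\alpha_j}$, where $\hat\alpha=(\alpha_2,\ldots,\alpha_n)$. Thus the Taylor coefficient of the restricted derivative at the multi-index $\hat\alpha$ is $\sigma!\,b_{(\sigma,\hat\alpha)}$, and it is nonzero precisely when $b_{(\sigma,\hat\alpha)}\ne0$. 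In particular the restricted derivative is holomorphic near $\hat z_1$, so Lemma \ref{l:psi} is applicable to it.

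The key computation is then the following. By Lemma \ref{l:psi} and $(f,z_0)\in\mathcal{I}(\psi)_{z_0}$, every $\alpha=(\sigma,\hat\alpha)$ with $b_{(\sigma,\hat\alpha)}\ne0$ satisfies $\frac{\sigma+1}{p_1}+\sum_{2\le j\le n}\frac{\alpha_j+1}{p_j}>1$, hence $\sum_{2\le j\le n}\frac{\alpha_j+1}{p_j}>1-\frac{\sigma+1}{p_1}\ge1-\frac{\gamma+1}{p_1}>0$, where the middle inequality uses $\sigma\le\gamma$. Dividing through by the positive number $1-\frac{\gamma+1}{p_1}$ and recalling $q_j=p_j(1-\frac{\gamma+1}{p_1})$ gives $\sum_{2\le j\le n}\frac{\alpha_j+1}{q_j}>1$ for every such $\hat\alpha$. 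By the converse direction of Lemma \ref{l:psi} (applied to $\tilde\psi_\gamma$ with weights $q_j$), this says exactly that $((\frac{\partial}{\partial w_1})^\sigma f(z_1,\cdot),\hat z_1)\in\mathcal{I}(\tilde\psi_\gamma)_{\hat z_1}$.

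There is no serious obstacle here: the content is entirely the bookkeeping of Taylor coefficients under the derivative-and-restriction operation together with the elementary inequality manipulation. The only point requiring care is to track the exact weight $q_j=p_j(1-\frac{\gamma+1}{p_1})$ appearing in $\tilde\psi_\gamma$ and to verify that dividing by $1-\frac{\gamma+1}{p_1}$ is legitimate and preserves the strict inequality, which is precisely where the two hypotheses $\frac{\gamma+1}{p_1}<1$ and $0\le\sigma\le\gamma$ enter.
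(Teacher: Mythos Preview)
Your argument is correct. The paper does not give its own proof of this lemma---it cites \cite{GY-concavity4}---so there is no in-paper proof to compare against; your reduction to the Taylor-coefficient criterion of Lemma~\ref{l:psi} (applied first to $\psi$ with weights $p_j$ and then to $\tilde\psi_\gamma$ with weights $q_j=p_j(1-\frac{\gamma+1}{p_1})$) is exactly the natural route and is presumably what the cited reference does as well.
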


\begin{Lemma}
	\label{l:b9}Let $f_1\in H^2_{\rho_1}(D_1,\partial D_1)$ such that 
	$$\frac{1}{2\pi}\int_{\partial D_1} f_1\overline g\rho_1|dw_1|=0$$ for any $g\in H^2_{\rho_1}(D_1,\partial D_1)$ satisfying $ord_{z_1}(g^*)\ge\gamma+1$. Let $f_2\in A^2_{\lambda_1}(M_1)$ such that 
	$$\int_{M_1}f_2\overline g\lambda_1 d\mu_1=0$$
	for any $g\in A^2_{\lambda_1}(M_1)$ satisfying $(g,\hat z_1)\in\mathcal{I}(\tilde\psi_{\gamma})_{z_1}$. Then we have 
	$$\ll f_1f_2,g\gg_{\partial D_1\times M_1,\rho}=0$$
	for any $g\in H^2_{\rho}(M,\partial D_1\times M_1)$ satisfying $(g^*,z_0)\in\mathcal{I}(\psi)_{z_0}$.
\end{Lemma}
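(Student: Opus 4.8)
The plan is to reduce the statement to the product structure of the orthonormal basis provided by Lemma \ref{l:prod-d1xm1}, and then to separate the roles played by $f_1$ and $f_2$. Assuming $H^2_{\rho}(M,\partial D_1\times M_1)\neq\{0\}$ (otherwise there is nothing to prove), I would first choose, analogously to Lemma \ref{construction of basis in dim one}, a complete orthonormal basis $\{e_m\}_{m\in\mathbb{Z}_{>0}}$ of $H^2_{\rho_1}(D_1,\partial D_1)$ for which the orders $k_m:=\mathrm{ord}_{z_1}(e_m^*)$ are strictly increasing, while keeping an arbitrary complete orthonormal basis $\{\tilde e_l\}_{l\in\mathbb{Z}_{>0}}$ of $A^2(M_1,\lambda_1)$. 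By Lemma \ref{l:prod-d1xm1}, $\{e_m\tilde e_l\}$ is then a complete orthonormal basis of $H^2_{\rho}(M,\partial D_1\times M_1)$, so I may write $g=\sum_{m}e_m h_m$ with $h_m:=\sum_l c_{m,l}\tilde e_l\in A^2(M_1,\lambda_1)$ (convergence in $\|\cdot\|_{\partial D_1\times M_1,\rho}$). Since $f_1f_2\in L^2(\partial D_1\times M_1,\rho d\mu)$ and $\rho|_{\partial D_1\times M_1}=\rho_1\times\lambda_1$, Fubini's theorem gives $\ll f_1f_2,e_m\tilde e_l\gg_{\partial D_1\times M_1,\rho}=\ll f_1,e_m\gg_{\partial D_1,\rho_1}\ll f_2,\tilde e_l\gg_{M_1,\lambda_1}$, and regrouping over $l$ yields
\begin{equation}\nonumber
\ll f_1f_2,g\gg_{\partial D_1\times M_1,\rho}=\sum_{m}\ll f_1,e_m\gg_{\partial D_1,\rho_1}\ll f_2,h_m\gg_{M_1,\lambda_1}.
\end{equation}

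Next I would exploit the hypothesis on $f_1$: as $e_m\in H^2_{\rho_1}(D_1,\partial D_1)$ with $\mathrm{ord}_{z_1}(e_m^*)=k_m$, the assumption forces $\ll f_1,e_m\gg_{\partial D_1,\rho_1}=0$ whenever $k_m\ge\gamma+1$, so that only the finitely many indices with $k_m\le\gamma$ survive in the displayed sum. For these indices I would show $(h_m,\hat z_1)\in\mathcal{I}(\tilde\psi_{\gamma})_{\hat z_1}$. To this end, Lemma \ref{l:b2-p} (applied over $\partial D_1\times M_1$) gives that $g^*=\sum_m e_m^* h_m$ converges uniformly on compact subsets of $M$; writing $e_m^*(w_1)=\sum_{\sigma\ge k_m}a_{m,\sigma}(w_1-z_1)^{\sigma}$ with $a_{m,k_m}\neq0$ and extracting the $(w_1-z_1)^{\sigma}$-coefficient of $g^*(\cdot,\hat w_1)$ through the Cauchy integral (legitimate by the uniform convergence), I obtain $g_{\sigma}=\sum_{m:\,k_m\le\sigma}a_{m,\sigma}h_m$, where $g_{\sigma}(\hat w_1):=\tfrac{1}{\sigma!}(\partial/\partial w_1)^{\sigma}g^*(z_1,\hat w_1)$. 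Because the $k_m$ are strictly increasing, each $\sigma=k_{m'}$ contributes exactly the finitely many terms with $k_m\le k_{m'}$, and this triangular system inverts to give $h_m\in\mathrm{span}\{g_{k_1},\dots,g_{k_m}\}$; in particular, for $k_m\le\gamma$ one gets $h_m\in\mathrm{span}\{g_{\sigma}:0\le\sigma\le\gamma\}$.

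Finally, since $(g^*,z_0)\in\mathcal{I}(\psi)_{z_0}$ and $\frac{\gamma+1}{p_1}<1$, Lemma \ref{l:b9-0} yields $(g_{\sigma},\hat z_1)\in\mathcal{I}(\tilde\psi_{\gamma})_{\hat z_1}$ for every $0\le\sigma\le\gamma$; as $\mathcal{I}(\tilde\psi_{\gamma})_{\hat z_1}$ is an ideal, hence closed under linear combinations, the previous step gives $(h_m,\hat z_1)\in\mathcal{I}(\tilde\psi_{\gamma})_{\hat z_1}$ for every surviving index $m$. The hypothesis on $f_2$ then forces $\ll f_2,h_m\gg_{M_1,\lambda_1}=0$ for precisely those indices, so every nonzero term of the displayed sum vanishes and $\ll f_1f_2,g\gg_{\partial D_1\times M_1,\rho}=0$, as claimed. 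The step I expect to be the main obstacle is the triangular-inversion argument of the second paragraph: justifying the termwise extraction of Taylor coefficients from the infinite series $g^*=\sum_m e_m^*h_m$ and the resulting expression of each relevant $h_m$ as a finite linear combination of the low-order jets $g_{\sigma}$ — this is exactly where the strictly increasing-order basis and the uniform-on-compacts convergence from Lemma \ref{l:b2-p} are indispensable. A secondary point needing care is the existence of such an ordered Hardy-space basis, which I would establish in the same way as Lemma \ref{construction of basis in dim one}.
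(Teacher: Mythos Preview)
Your argument is correct and reaches the same conclusion via a genuinely different route from the paper. The paper does not expand $g$ in a product basis. Instead it first integrates $g$ against $\overline{f_2}$ over $M_1$, obtaining a function $g_1(z)=\int_{M_1}g(z,\cdot)\overline{f_2}\lambda_1\,d\mu_1$ on $\partial D_1$; it then checks directly (via the Hardy--space criterion of Lemma~\ref{l:0-1} and the Cauchy formula) that $g_1\in H^2_{\rho_1}(D_1,\partial D_1)$ with $g_1^*=\tilde g_1(w):=\int_{M_1}g^*(w,\cdot)\overline{f_2}\lambda_1\,d\mu_1$, and verifies $\mathrm{ord}_{z_1}(\tilde g_1)\ge\gamma+1$ by differentiating under the integral sign and invoking Lemma~\ref{l:b9-0} together with the hypothesis on $f_2$; the hypothesis on $f_1$ then gives $\ll f_1,g_1\gg_{\partial D_1,\rho_1}=0$, and Fubini finishes.

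Both approaches rest on Lemma~\ref{l:b9-0} to place the low--order $w_1$--jets of $g^*$ in $\mathcal{I}(\tilde\psi_\gamma)_{\hat z_1}$, but they trade off different technical work. The paper's route is basis--free and avoids your triangular inversion, at the cost of carefully justifying that $g_1$ belongs to the boundary Hardy class and that one may differentiate $\tilde g_1$ under the integral. Your route outsources those analytic verifications to Lemma~\ref{l:prod-d1xm1} and the uniform--on--compacta convergence of Lemma~\ref{l:b2-p}, making the finite--dimensional reduction explicit and transparent; the price is the existence of an ordered orthonormal basis for $H^2_{\rho_1}(D_1,\partial D_1)$, which the paper does supply (Lemma~\ref{l:a6} in the $n=1$ case, where $H^2_\lambda(M,S)$ coincides with $H^2_{\rho_1}(D_1,\partial D_1)$). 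The step you flag as the main obstacle---termwise extraction of Taylor coefficients and the triangular inversion $h_m\in\mathrm{span}\{g_{k_1},\dots,g_{k_m}\}$---is sound: uniform convergence on compacta justifies termwise differentiation, and the strictly increasing orders give a triangular linear system with nonzero diagonal $a_{m,k_m}$.
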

\begin{proof}
Let $g\in H^2_{\rho}(M,\partial D_1\times M_1)$ satisfying $(g^*,z_0)\in\mathcal{I}(\psi)_{z_0}$. It follows from Lemma \ref{l:0-1} and $\inf_{\partial D_1}\rho_1>0$ that for any compact subset $K$ of $D_1$, there exists a constant $C_K>0$ such that 
	\begin{equation}\label{eq:0804a}
	\sup_{w_1\in K}|g^*(w_1,\hat w_1)|^2\le \frac{C_K}{2\pi}\int_{\partial D_1}|g(\cdot,\hat w_1)|^2\rho_1|dw_1|
	\end{equation}
	holds for any $\hat w_1\in M_1$, which shows that $g^*(w_1,\cdot)\in L^2(M_1,d\mu_1)$ for any $w_1\in D_1$.  Denote that  
	$$g_1(z):=\int_{M_1}g(z,\cdot)\overline{f_2}\lambda_1 d\mu_1$$ on $\partial D_1$ and 
	$$\tilde g_1(z):=\int_{M_1}g^*(z,\cdot)\overline{f_2}\lambda_1 d\mu_1$$ on $D_1$. 
Using Fubini's theorem, we know that $g_1\in L^2(\partial D_1,\rho_1)$. 
	 For any holomorphic function $\phi$ on a neighborhood of $\overline{D_1}$, it follows from Lemma \ref{l:0-1} that 
	 \begin{equation}\nonumber
	 	\begin{split}
	 			 	\int_{\partial D_1}g_1(z)\phi(z)dz&=	\int_{\partial D_1}\left(\int_{M_1}g(z,\cdot)\overline{f_2}\lambda_1 d\mu_1\right)\phi(z)dz\\
	 			 	&=\int_{M_1}\left(\int_{\partial D_1}g(z,\hat w_1)\phi(z)dz\right)\overline{f_2}\lambda_1d\mu_1(\hat w_1)\\
	 			 	&=0,
	 	\end{split}
	 \end{equation}
	  which shows that $g_1\in H^2_{\rho_1}(D_1 ,\partial D_1)$. As $g\in H^2_{\rho}(M,\partial D_1\times M_1)$, it follows from Lemma \ref{l:0-1}, Lemma \ref{l:b0} and Fubini's theorem that 
	 \begin{displaymath}
	 	\begin{split}
	 	&\frac{1}{2\pi\sqrt{-1}}\int_{\partial D_1}\frac{g_1(w_1)}{w_1-w}|dw_1|\\
	 		=&\frac{1}{2\pi\sqrt{-1}}\int_{\partial D_1}\frac{1}{w_1-w}\left(\int_{M_1}g(w_1,\cdot)\overline{f_2}\lambda_1 d\mu_1\right)d|w_1|\\
	 		=&\int_{M_1}\left(\frac{1}{2\pi\sqrt{-1}}\int_{\partial D_1}\frac{g(w_1,\hat w_1)}{w_1-w}|dw_1|\right)\overline{f_2}\lambda_1d\mu_1(\hat w_1)\\
	 		=&\int_{M_1}g^*(w,\cdot)\overline{f_2}\lambda_1d\mu_1(\hat w_1)\\
	 		=&\tilde g_1(w),
	 	\end{split}
	 \end{displaymath}
	 which implies that $g_1\in\mathcal{O}(D_1)$ and $g_1^*=\tilde g_1$. 
	 	
	Now, we prove that $ord_{z_1}(\tilde g_1)\ge \gamma+1$.
	It follows from inequality \eqref{eq:0804a} and $g\in H^2_{\rho}(M,\partial M)$ that $\left(\frac{\partial}{\partial w_1}\right)^{\sigma}g^*(w_1,\cdot)\in A^2_{\lambda_1}(M_1)$ for any $w_1\in D_1$ and any nonnegative integer $\sigma$. Denote that 
	$$\tilde g_{1,\sigma}(w_1):=\int_{M_1}\left(\frac{\partial}{\partial w_1}\right)^{\sigma} g^*(w_1,\cdot)\overline{f_2}\lambda_1 d\mu_1$$ on $D_1$ for any nonnegative integer $\sigma$.
	Note that $g^*(\cdot,\hat w_1)\in \mathcal{O}(D_1)$ for any $\hat w_1\in M_1$, then it follows from equality \eqref{eq:0804a} that  
	\begin{equation}
		\label{eq:0804b}
		\begin{split}
			&\sup_{|w_1-z_1|<\frac{r}{2}}\left|\left(\frac{\partial}{\partial w_1}\right)^{\sigma}g^*(w_1,\hat w_1)\right|\\
			\le& C_{1,\sigma}\sup_{|w_1-z_1|<r}|g^*(w_1,\hat w_1)|\\
			\le& \frac{C_{2,\sigma}}{2\pi}\int_{\partial D_1}|g(\cdot,\hat w_1)|^2\rho_1|dw_1|,
		\end{split}
			\end{equation}
	where  $r>0$ such that $\{z:|w_1-z_1|<r\}\Subset D_1$, and $C_{i,\sigma}$ is a constant independent of $\hat w_1 \in M_1$ for $i=1,2$. 
	By inequality \eqref{eq:0804b}, we have  
\begin{equation}
	\label{eq:0804c}\begin{split}
		&|\tilde g_{1,\sigma}(w_1)-\tilde g_{1,\sigma}(\tilde w_1)-(w_1-\tilde w_1)\tilde g_{1,\sigma+1}(w_1)|\\
		=&\bigg|\int_{M_1}\left(\left(\frac{\partial}{\partial w_1}\right)^{\sigma} g^*(w_1,\cdot)-\left(\frac{\partial}{\partial w_1}\right)^{\sigma} g^*(\tilde w_1,\cdot)-(w_1-\tilde w_1)\left(\frac{\partial}{\partial w_1}\right)^{\sigma+1} g^*(w_1,\cdot)\right)\\
		&\times\overline{f_2}\lambda_1d \mu_1\bigg|\\
		\le&|w_1-\tilde w_1|^2\cdot\left(\int_{M_1}\sup_{|z-z_1|<\frac{r}{2}}\left|\left(\frac{\partial}{\partial z}\right)^{\sigma+2}g^*(z,\cdot)\right|\lambda_1d\mu_1\right)^{\frac{1}{2}}\cdot\left(\int_{M_1}|f_2|^2\lambda_1d\mu_1 \right)^{\frac{1}{2}}\\
		\le&C_{2,\sigma}|w_1-\tilde w_1|^2\cdot\|g\|_{\partial D_1\times M_1,\rho}\cdot\left(\int_{M_1}|f_2|^2\lambda_1d\mu_1 \right)^{\frac{1}{2}},
	\end{split}
\end{equation}	
	where $|w_1-z_1|<\frac{r}{2}$ and $|\tilde w_1-z_1|<\frac{r}{2}$. Inequality \eqref{eq:0804c} shows that 
	$$\frac{\partial}{\partial w_1}\tilde g_{1,\sigma}=\tilde g_{1,\sigma+1}$$
	holds for any nonnegative integer $\sigma$.  It follows from $(g^*,z_0)\in\mathcal{I}(\psi)_{z_0}$ and Lemma \ref{l:b9-0} that $((\frac{\partial}{\partial w_1})^{\sigma} g^*(z_1,\cdot),\hat z_1)\in\mathcal{I}(\tilde\psi_{\gamma})_{\hat z_1}$ for any $\sigma\le \gamma$, which shows that 
	$$\tilde g_{1,\sigma}(z_1)=\int_{M_1}(\frac{\partial}{\partial w_1})^{\sigma} g^*(z_1,\cdot)\overline{f_2}\lambda_1d\mu_1=0$$
holds for any $\sigma\le \gamma$, i.e. $ord_{z_1}(\tilde g_1)\ge\gamma +1$. As $g_1^*=\tilde g_1$ and $g_1\in H^2_{\rho_1}(D_1,\partial D_1)$, then we have 
$$\frac{1}{2\pi}\int_{\partial D_1}g_1\overline{f_1}\rho_1|dw_1|=0.$$
	Thus, we have $\ll g,f_1f_2\gg_{\partial D_1\times M_1,\rho}=\frac{1}{2\pi}\int_{\partial D_1}\left(\int_{M_1}g\overline{f_2}\lambda_1d\mu_1 \right)\overline{f_1}\rho_1|dw_1|=0$.
\end{proof}

\section{Hardy space over $S$}\label{sec:S}

Let $D_j$ be a planar regular region with finite boundary components which are analytic Jordan curves  for any $1\le j\le n$. Let $M=\prod_{1\le j\le n}D_j$ be a bounded domain in $\mathbb{C}^n$.  

Denote that $S:=\prod_{1\le j\le n}\partial D_j$.
Denote 
\begin{displaymath}
	\left\{f\in L^2(S,d\sigma):\exists\{f_m\}_{m\in\mathbb{Z}_{\ge0}}\subset\mathcal{O}(M)\cap C(\overline M)\text{ s.t. } \lim_{m\rightarrow+\infty}\|f_m-f\|_{S}^2=0 \right\}
\end{displaymath} 
by $H^2(M,S)$, where $d\sigma:=\frac{1}{(2\pi)^n}|dw_1|\ldots|dw_n|$ and $\|f\|_S^2:=\frac{1}{(2\pi)^n}\int_S|f|^2|dw_1|\ldots|dw_n|$. Denote that 
$$\ll f,g\gg_S=\frac{1}{(2\pi)^n}\int_S f\overline g |dw_1|\ldots|dw_n|,$$
then $H^2(M,S)$ is a Hilbert space equipped with the inner product $\ll \cdot,\cdot\gg_S$.

\begin{Lemma}[see \cite{demailly-book}]
	\label{l:0}$f(z)=\frac{1}{2\pi\sqrt{-1}}\int_{\partial D_1}\frac{f(w)}{w-z}dw$ holds for $z\in D_1$ and any $f\in \mathcal{O}(D_1)\cap C(\overline{D_1})$.
\end{Lemma}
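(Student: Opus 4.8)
The plan is to establish the Cauchy integral formula on $D_1$ by exhausting $D_1$ from inside by domains with analytic boundary on which $f$ is holomorphic up to the boundary, applying the classical Cauchy formula on each, and passing to a limit. Fix $z\in D_1$ and fix an auxiliary point $z_0\in D_1$, and recall the sublevel sets $D_{1,r}:=\{w\in D_1:G_{D_1}(w,z_0)<\log r\}$ of the Green function introduced in the proof of Lemma \ref{l:b3-p}, which satisfy $\overline{D_{1,r}}\Subset D_1$ and $\bigcup_{r<1}D_{1,r}=D_1$. For $r<1$ close enough to $1$ we have $z\in D_{1,r}$, and since $f\in\mathcal{O}(D_1)$ is in particular holomorphic on a neighborhood of $\overline{D_{1,r}}$, the classical Cauchy integral formula gives
\begin{equation}
\label{eq:cauchy-r}
f(z)=\frac{1}{2\pi\sqrt{-1}}\int_{\partial D_{1,r}}\frac{f(w)}{w-z}\,dw .
\end{equation}
It then suffices to let $r\to1-0$ in \eqref{eq:cauchy-r} and show that the right-hand side converges to the integral over $\partial D_1$.

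To control the contours I would use the analyticity of $\partial D_1$. As recalled in the proof of Lemma \ref{l:b3-p}, $G_{D_1}(\cdot,z_0)$ extends real-analytically across $\partial D_1$, and there exist $r_0\in(0,1)$ and $C_1>0$ with $\frac{1}{C_1}\le|\nabla G_{D_1}(\cdot,z_0)|\le C_1$ on $\{w\in D_1:G_{D_1}(w,z_0)>\log r_0\}$; by Lemma \ref{l:v} the same bounds hold for $\partial G_{D_1}(\cdot,z_0)/\partial v_w$. Hence, for $r\in(r_0,1)$, the curves $\partial D_{1,r}=\{G_{D_1}(\cdot,z_0)=\log r\}$ form a smooth (indeed real-analytic) foliation of a collar neighborhood of $\partial D_1$. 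Flowing along the normalized gradient field $\nabla G_{D_1}(\cdot,z_0)/|\nabla G_{D_1}(\cdot,z_0)|^2$, whose derivative of $G_{D_1}$ along the flow is $1$, produces diffeomorphisms $\Phi_r:\partial D_1\to\partial D_{1,r}$ depending smoothly on $r$ with $\Phi_r\to\mathrm{id}$ in $C^1$ as $r\to1-0$. Reparametrizing \eqref{eq:cauchy-r} through $\Phi_r$ turns the integral over $\partial D_{1,r}$ into an integral over the fixed contour $\partial D_1$ of an integrand $\dfrac{f(\Phi_r(\zeta))}{\Phi_r(\zeta)-z}\,\Phi_r'(\zeta)$.

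Finally I would pass to the limit termwise. Since $f\in C(\overline{D_1})$ is uniformly continuous and $\Phi_r\to\mathrm{id}$ uniformly, $f\circ\Phi_r\to f$ uniformly on $\partial D_1$; since $z$ is an interior point and, for $r$ close to $1$, $\partial D_{1,r}$ lies in a fixed collar bounded away from $z$, the factor $1/(\Phi_r(\zeta)-z)$ stays uniformly bounded and converges uniformly to $1/(\zeta-z)$; and $\Phi_r'\to\mathrm{id}'$ uniformly. Therefore
\[
\lim_{r\to1-0}\int_{\partial D_{1,r}}\frac{f(w)}{w-z}\,dw=\int_{\partial D_1}\frac{f(w)}{w-z}\,dw,
\]
and combining this with \eqref{eq:cauchy-r} yields the desired formula. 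The main obstacle is precisely this middle step: the convergence of the contour integrals cannot be obtained by deforming contours (because $f$ need not be holomorphic across $\partial D_1$), so it must rest on the $C^1$-convergence $\partial D_{1,r}\to\partial D_1$, which in turn relies on the analyticity of the boundary and the nonvanishing of $\nabla G_{D_1}(\cdot,z_0)$ near $\partial D_1$.
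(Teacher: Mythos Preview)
Your argument is correct. The paper does not prove this lemma at all: it is stated with the attribution ``see \cite{demailly-book}'' and used as a black box, so there is no approach to compare against. Your proof is a clean self-contained justification, and the exhaustion by Green sublevel sets $D_{1,r}$ together with the $C^1$-convergence of $\partial D_{1,r}$ to $\partial D_1$ (granted by the analyticity of the boundary and the nonvanishing of $\nabla G_{D_1}$ near $\partial D_1$) is exactly the right mechanism to pass from the classical Cauchy formula on compactly contained subdomains to the stated formula on $D_1$. One small point worth making explicit: $D_1$ may be multiply connected, so $\partial D_{1,r}$ and $\partial D_1$ can each have several components; the Cauchy formula \eqref{eq:cauchy-r} then requires the standard orientation convention on all components, but this is routine and does not affect your limit argument.
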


\begin{Lemma}
	\label{l:a1}For any compact subset $K$ of $M$, there exists a positive constant $C_K$ such that 
	$$|f(z)|\le C_K\|f\|_S$$
	holds for any $z\in K$ and $f\in\mathcal{O}(M)\cap C(\overline M)$.
\end{Lemma}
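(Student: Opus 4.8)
The plan is to reduce the multivariable statement to $n$ successive applications of the one-variable Cauchy integral formula (Lemma \ref{l:0}), and then to bound the resulting iterated integral over $S$ by the Cauchy--Schwarz inequality.

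First I would iterate Lemma \ref{l:0} to establish, for every $z=(z_1,\dots,z_n)\in M$ and every $f\in\mathcal{O}(M)\cap C(\overline M)$, the reproducing formula
$$f(z)=\frac{1}{(2\pi\sqrt{-1})^n}\int_S\frac{f(w)}{\prod_{j=1}^n(w_j-z_j)}\,dw_1\cdots dw_n.$$
To carry this out I fix $z_2,\dots,z_n$ in the interior and apply Lemma \ref{l:0} in the first variable to $w_1\mapsto f(w_1,z_2,\dots,z_n)$, which is holomorphic on $D_1$ and continuous on $\overline{D_1}$; then I apply it in the second variable, and so on, interchanging the order of integration by Fubini's theorem at each step, which is legitimate since all integrands are continuous on the compact sets $\partial D_j$.

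The technical point I expect to be the main obstacle is justifying that at the $(k{+}1)$-th step the relevant partial function is holomorphic even though the already-integrated variables now lie on the boundary: for $w_1\in\partial D_1,\dots,w_k\in\partial D_k$ and $z_{k+2},\dots,z_n$ fixed in the interior, I must know that $w_{k+1}\mapsto f(w_1,\dots,w_k,w_{k+1},z_{k+2},\dots,z_n)$ is holomorphic on $D_{k+1}$ (and continuous on $\overline{D_{k+1}}$) before Lemma \ref{l:0} applies. I would obtain this by approximation: choosing interior points $w_i^{(m)}\to w_i$ for $1\le i\le k$, the functions $w_{k+1}\mapsto f(w_1^{(m)},\dots,w_k^{(m)},w_{k+1},z_{k+2},\dots,z_n)$ are holomorphic on $D_{k+1}$, and since $f$ is uniformly continuous on the compact set $\overline M$ they converge uniformly on $\overline{D_{k+1}}$; hence the limit is continuous on $\overline{D_{k+1}}$ and holomorphic on $D_{k+1}$ by Weierstrass's theorem.

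Finally I would estimate. Since $K$ is a compact subset of $M=\prod_j D_j$, its projection $K_j$ onto the $j$-th factor is a compact subset of $D_j$, so $\delta_j:=\mathrm{dist}(K_j,\partial D_j)>0$ and $|w_j-z_j|\ge\delta_j$ for all $z\in K$ and $w\in S$. The reproducing formula then yields
$$|f(z)|\le\frac{1}{(2\pi)^n\prod_{j=1}^n\delta_j}\int_S|f|\,|dw_1|\cdots|dw_n|,$$
and Cauchy--Schwarz together with the finite total length $L:=\int_S|dw_1|\cdots|dw_n|=\prod_{j}|\partial D_j|$ (finite because each $\partial D_j$ is a finite union of analytic Jordan curves) gives $\int_S|f|\,|dw_1|\cdots|dw_n|\le L^{1/2}(2\pi)^{n/2}\|f\|_S$. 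Combining these two inequalities produces $|f(z)|\le C_K\|f\|_S$ with $C_K=\frac{L^{1/2}}{(2\pi)^{n/2}\prod_j\delta_j}$, a constant depending only on $K$, which is the desired estimate.
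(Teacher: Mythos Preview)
Your proposal is correct and follows essentially the same route as the paper: iterate the one-variable Cauchy formula (Lemma \ref{l:0}) across the factors and then estimate using the distance from $K$ to the boundary. The paper differs only cosmetically---it squares and estimates at each step of the iteration rather than first writing down the full reproducing formula and applying Cauchy--Schwarz once at the end---and it states without argument the point you single out (holomorphicity in $w_{k+1}$ when the earlier variables lie on $\partial D_1\times\cdots\times\partial D_k$), for which your uniform-continuity-plus-Weierstrass justification is a welcome elaboration.
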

\begin{proof}
	For any $f\in\mathcal{O}(M)\cap C(\overline M)$ and any $j_0\in\{1,\ldots,n-1\}$, fixed $p\in\prod_{1\le j\le j_0}\partial D_j$, it is clear that $f(p,\cdot)\in \mathcal{O}(\prod_{j_0+1\le j\le n}D_j)$. For any $z=(z_1\ldots,z_n)\in K$, it follows from Lemma \ref{l:0} that  
	\begin{displaymath}
		\begin{split}
			|f(z)|^2&=|f(z_1,\ldots,z_n)|^2\\
			&\le\frac{1}{2\pi d(K,\partial M)} \int_{\partial D_1}|f(w_1,z_2,\ldots,z_n)|^2|dw_1|\\
			&\le \left(\frac{1}{2\pi d(K,\partial M)} \right)^2\int_{\partial D_1\times\partial D_2}|f(w_1,w_2,\ldots,z_n)|^2|dw_1||dw_2|\\
			&\le \left(\frac{1}{2\pi d(K,\partial M)} \right)^n\int_{S}|f(w_1,\ldots,w_n)|^2|dw_1|\ldots|dw_n|,
		\end{split}
	\end{displaymath}
	where $d(K,\partial M)=\inf\{|x-y|:x\in K\,\&\,y\in\partial M\}$.
	Thus, Lemma \ref{l:a1} holds.
\end{proof}

\begin{Lemma}
	\label{l:a2}There exists a unique linear map $P_S:H^2(M,S)\rightarrow\mathcal{O}(M)$ satisfying the following two conditions: 
	
$(1)$	$P_S(f)=f$ for any $f\in\mathcal{O}(M)\cap C(\overline M)$;

$(2)$ If sequence $\{f_m\}_{m\in\mathbb{Z}_{\ge0}}\subset H^2(M,S)$ satisfies $\lim_{m\rightarrow+\infty}\|f_m-f_0\|_S=0$, then $P_S(f_m)$ uniformly converges to $P_S(f_0)$ on any compact subset of $M$.
\end{Lemma}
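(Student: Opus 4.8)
The plan is to define $P_S(f)$ as the uniform-on-compacts limit of any approximating sequence from $\mathcal{O}(M)\cap C(\overline M)$, using Lemma \ref{l:a1} as the sole analytic input. First I would take $f\in H^2(M,S)$ and, by the definition of $H^2(M,S)$, choose a sequence $\{f_m\}_{m\in\mathbb{Z}_{\ge0}}\subset \mathcal{O}(M)\cap C(\overline M)$ with $\lim_{m\to+\infty}\|f_m-f\|_S=0$; in particular $\{f_m\}$ is a Cauchy sequence in $L^2(S,d\sigma)$. Applying Lemma \ref{l:a1} to the differences $f_m-f_{m'}\in\mathcal{O}(M)\cap C(\overline M)$ gives, for every compact $K\subset M$, the bound $\sup_{z\in K}|f_m(z)-f_{m'}(z)|\le C_K\|f_m-f_{m'}\|_S$, so $\{f_m\}$ is uniformly Cauchy on every compact subset of $M$. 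Hence $\{f_m\}$ converges uniformly on compact subsets to a holomorphic function on $M$, which I would take as the definition of $P_S(f)$.

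Next I would verify that this is independent of the chosen sequence: if $\{f_m\}$ and $\{\tilde f_m\}$ both approximate $f$ in $\|\cdot\|_S$, then $f_m-\tilde f_m\to 0$ in $L^2(S)$, so by Lemma \ref{l:a1} the two uniform limits coincide. Linearity of $P_S$ is then immediate, and condition $(1)$ follows by taking the constant sequence $f_m=f$ for $f\in\mathcal{O}(M)\cap C(\overline M)$.

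The crucial step toward condition $(2)$ is to extend the estimate of Lemma \ref{l:a1} from $\mathcal{O}(M)\cap C(\overline M)$ to all of $H^2(M,S)$: for $f\in H^2(M,S)$ and $K\subset M$ compact, passing to the limit in $|f_m(z)|\le C_K\|f_m\|_S$ (using that $f_m\to P_S(f)$ pointwise and $\|f_m\|_S\to\|f\|_S$) yields $\sup_{z\in K}|P_S(f)(z)|\le C_K\|f\|_S$. With this, condition $(2)$ is a direct consequence of linearity: for $\{f_m\}\subset H^2(M,S)$ with $\|f_m-f_0\|_S\to 0$, one has $\sup_{z\in K}|P_S(f_m)(z)-P_S(f_0)(z)|=\sup_{z\in K}|P_S(f_m-f_0)(z)|\le C_K\|f_m-f_0\|_S\to 0$. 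Uniqueness follows since any two maps satisfying $(1)$ and $(2)$ agree on the dense subspace $\mathcal{O}(M)\cap C(\overline M)$ and hence, by $(2)$, on all of $H^2(M,S)$.

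There is no serious obstacle here: the entire argument rests on Lemma \ref{l:a1}, which converts $L^2(S)$-control into locally uniform control and thereby makes the construction, its well-definedness, and the continuity property $(2)$ all routine. The only point requiring a little care is the passage to the limit in the estimate to obtain the pointwise bound on $P_S(f)$ for general $f\in H^2(M,S)$, but this too is standard.
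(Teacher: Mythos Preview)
Your proposal is correct and follows essentially the same construction as the paper: define $P_S(f)$ as the locally uniform limit of an approximating sequence from $\mathcal{O}(M)\cap C(\overline M)$, use Lemma~\ref{l:a1} on differences for well-definedness, and argue uniqueness by density. The one organizational difference is in condition~(2): the paper proves it via a diagonal argument (approximating each $f_m$ by some $\hat g_m\in\mathcal{O}(M)\cap C(\overline M)$ that is close both in $\|\cdot\|_S$ and in $\sup_{M_m}$), whereas you first extend the estimate of Lemma~\ref{l:a1} to all of $H^2(M,S)$---this is exactly the content of the paper's subsequent Lemma~\ref{l:a3}---and then deduce~(2) immediately from linearity. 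Your route is slightly more direct and has the side benefit of yielding Lemma~\ref{l:a3} along the way.
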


\begin{proof}As $\mathcal{O}(M)\cap C(\overline M)$ is dense in $H^2(M,S)$ under the topology induced by the norm $\|\cdot\|_S$, the two conditions shows the uniqueness of $P_S$. Thus, it suffices to prove the existence of $P_S$.

	For any $f\in H^2(H,S)$, it follows from the definition of $H^2(M,S)$ that there exists $\{f_m\}_{m\in\mathbb{Z}_{>0}}\subset \mathcal{O}(M)\cap C(\overline M)$ such that $\lim_{m\rightarrow+\infty}\|f_m-f\|_S=0$. Using Lemma \ref{l:a1}, we know that $f_m$ uniformly converges to a holomorphic function $f^*$ on $M$ on any compact subset of $M$. If there exists another sequence $\{\hat f_m\}_{m\in\mathbb{Z}_{>0}}\subset \mathcal{O}(M)\cap C(\overline M)$ such that $\lim_{m\rightarrow+\infty}\|\hat f_m-f\|_S=0$, then $\lim_{m\rightarrow+\infty}\|f_m-\hat f_m\|_S=0$, which deduces that $\hat f_m$ uniformly converges to $f^*$  on any compact subset of $M$ by  Lemma \ref{l:a1}. Take 
	$$P_S(f)=f^*,$$
	   and $P_S$ is well defined. It is clear that $P_S:H^2(M,S)\rightarrow\mathcal{O}(M)$ is a linear map  and satisfies condition $(1)$.
	   
	   Now, we prove that $P_S$ satisfies condition $(2)$. Let sequence $\{g_m\}_{m\in\mathbb{Z}_{\ge0}}\subset H^2(M,S)$ satisfy $\lim_{m\rightarrow+\infty}\|g_m-g_0\|_S=0$. Choosing a sequence of open subsets $\{M_l\}_{l\in\mathbb{Z}_{>0}}$ satisfying $M_l\Subset M$  and $M_l\Subset M_{l+1}$ for any $l$, there exists $\hat g_m\in \mathcal{O}(M)\cap C(\overline M)$ such that 
	   $$\|\hat g_m-g_m\|_S<\frac{1}{m}\text{ and }\sup_{M_m}|\hat g_m-P_S(g_m)|<\frac{1}{m}$$
	   for any $m\in\mathbb{Z}_{>0}$.
	 Then  $\lim_{m\rightarrow+\infty}\|g_m-g_0\|_S=0$ implies that $\lim_{m\rightarrow+\infty}\|\hat g_m-g_0\|_S=0$, which shows that $\hat g_m$ uniformly converges to $P_S(g_0)$ on any compact subset of $M$ by the definition of $P_S$.
	 Note that $\sup_{M_m}|\hat g_m-P_S(g_m)|<\frac{1}{m}$, then we have $P_S(g_m)$ uniformly converges to $P_S(g_0)$ on any compact subset of $M$, which shows that condition $(2)$ holds.
\end{proof}

\begin{Remark}
	It follows from Lemma \ref{l:0}, Lemma \ref{l:a2} and the definition of $H^2(M,S)$ that 
	$$P_S(f)(z)=\left(\frac{1}{2\pi\sqrt{-1}}\right)^n\int_{\partial D_n}\cdot\cdot\cdot\int_{\partial D_1}\frac{f(w_1,\ldots,w_n)}{\prod_{j=1}^n(w_j-z_j)}dw_1\ldots dw_n$$
	holds for any $z\in M$ and any $f\in H^2(M,S)$.
\end{Remark}

\begin{Lemma}
	\label{l:a2-injective}The map $P_S:H^2(M,S)\rightarrow\mathcal{O}(M)$ is injective.
\end{Lemma}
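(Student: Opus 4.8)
The plan is to prove injectivity by induction on the number of factors $n$, writing $M=D_1\times M_1$ with $M_1=\prod_{2\le j\le n}D_j$ and $S=\partial D_1\times S_1$ with $S_1=\prod_{2\le j\le n}\partial D_j$. Suppose $f\in H^2(M,S)$ with $P_S(f)=0$, and choose $f_m\in\mathcal{O}(M)\cap C(\overline M)$ with $\|f_m-f\|_S\to0$; by Lemma \ref{l:a2} the sequence $f_m=P_S(f_m)$ converges to $P_S(f)=0$ locally uniformly on $M$. The first thing I would point out is the obstacle that makes the naive approach fail: one cannot simply restrict to a slice $\hat w\in S_1$ and invoke the one-variable uniqueness Lemma \ref{l:0-3}, because $\hat w$ lies on the boundary of $M_1$, so local uniform convergence of $f_m$ on the open set $M$ carries no information about the boundary slices $f_m(\cdot,\hat w)$.

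To get around this I would pass to the first-variable Cauchy transform. For a.e.\ $\hat w\in S_1$ one has $f(\cdot,\hat w)\in L^2(\partial D_1)$ and, after passing to a subsequence, $f_m(\cdot,\hat w)\to f(\cdot,\hat w)$ in $L^2(\partial D_1)$ by Fubini. Each slice $f_m(\cdot,\hat w)\in\mathcal{O}(D_1)\cap C(\overline{D_1})\subset\gamma(H^2(D_1))$ (holomorphy on $D_1$ follows by approximating $\hat w$ by interior points of $M_1$ and using uniform continuity of $f_m$ on the compact set $\overline M$), so the closedness of $\gamma(H^2(D_1))$ in $L^2(\partial D_1)$ (Lemma \ref{l:0-2}) gives $f(\cdot,\hat w)\in\gamma(H^2(D_1))$. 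Denote by $G(\cdot,\hat w):=\gamma_1^{-1}(f(\cdot,\hat w))\in H^2(D_1)$ its Hardy extension; by the Cauchy representation (Lemma \ref{l:0}, Lemma \ref{l:0-1}) one has $G(z_1,\hat w)=\frac{1}{2\pi\sqrt{-1}}\int_{\partial D_1}\frac{f(w_1,\hat w)}{w_1-z_1}\,dw_1$, while the same formula applied to $f_m$ yields $\frac{1}{2\pi\sqrt{-1}}\int_{\partial D_1}\frac{f_m(w_1,\hat w)}{w_1-z_1}\,dw_1=f_m(z_1,\hat w)$.

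Now fix $z_1\in D_1$. A Cauchy--Schwarz estimate on the kernel $1/(w_1-z_1)$, uniform in $\hat w$, together with $\|f_m-f\|_S\to0$, shows $f_m(z_1,\cdot)\to G(z_1,\cdot)$ in $L^2(S_1)$; since $f_m(z_1,\cdot)\in\mathcal{O}(M_1)\cap C(\overline{M_1})$, this proves $G(z_1,\cdot)\in H^2(M_1,S_1)$, and by Lemma \ref{l:a2} its extension is $P_{S_1}(G(z_1,\cdot))=\lim_m f_m(z_1,\cdot)=0$ locally uniformly on $M_1$ (here I finally use that $f_m\to0$ locally uniformly on $M$, evaluated at the interior point $z_1$). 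By the induction hypothesis $P_{S_1}$ is injective, so $G(z_1,\cdot)=0$ in $L^2(S_1)$ for every $z_1\in D_1$. Choosing a countable dense subset of $D_1$, for a.e.\ $\hat w$ the holomorphic function $G(\cdot,\hat w)\in H^2(D_1)$ vanishes on a dense set, hence $G(\cdot,\hat w)\equiv0$, and therefore $f(\cdot,\hat w)=\gamma_1(G(\cdot,\hat w))=0$; thus $f=0$ a.e.\ on $S$. The base case $n=1$ is immediate from the same closedness argument: $f\in\gamma(H^2(D_1))$ and $P_S(f)=\gamma_1^{-1}(f)=0$ force $f=0$.

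I expect the genuine conceptual difficulty to be exactly the one flagged above — replacing the unavailable boundary-slice local uniform convergence by the interior values $f_m(z_1,\cdot)$ that feed the induction — while the remaining points are essentially measure-theoretic bookkeeping: the Fubini/subsequence extraction giving good slices, the uniformity in $\hat w$ of the kernel estimate, and the passage from ``$G(z_1,\cdot)=0$ for each $z_1$'' to ``$G(\cdot,\hat w)\equiv0$ for a.e.\ $\hat w$'' via a countable dense set together with holomorphy in $z_1$.
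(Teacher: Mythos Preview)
Your proposal is correct and follows essentially the same inductive strategy as the paper: induction on $n$, the $n=1$ case via the injectivity of $\gamma_1$, and the reduction to interior slices where the local-uniform convergence $f_m\to 0$ is usable. The only difference is organizational --- the paper fixes a boundary point $w_1\in\partial D_1$ and applies the induction hypothesis to $f(w_1,\cdot)\in H^2(M_1,S_1)$ (establishing $P_{S_1}(f(w_1,\cdot))=0$ by evaluating at interior $\tilde w_2\in M_1$ via Lemma~\ref{l:0-3}), whereas you fix an interior point $z_1\in D_1$, apply the induction hypothesis to the Cauchy transform $G(z_1,\cdot)\in H^2(M_1,S_1)$, and only afterwards return to the boundary slice $f(\cdot,\hat w)$ via the countable-dense-set argument; the two are dual arrangements of the same content.
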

\begin{proof}
	We prove Lemma \ref{l:a2-injective} by induction on $n$.

	Firstly, we consider the case $n=1$. Let $f\in H^2(M,S)$ satisfy $P_S(f)=0$. By definition of $P_S$,  there exists $\{f_m\}_{m\in\mathbb{Z}_{>0}}\subset \mathcal{O}(M)\cap C(\overline M)$ such that $\lim_{m\rightarrow+\infty}\|f_m-f\|_S=0$ and $f_m$ uniformly converges to $0$ on any compact subset of $M$. Note that $\gamma(f_m)=f_m$ and 
	$$\lim_{m\rightarrow+\infty}\|f_m-f\|_{\partial D,\rho}=0,$$
	where $\gamma$ and $\rho$ are as in Section \ref{sec:2.1}, 	
 it follows from Lemma \ref{l:0-1} and Lemma \ref{l:0-2} that there exists $f_0\in H^2(D_1)$ such that 
	$$\gamma (f_0)=f.$$ Using Lemma \ref{l:0-1b}, we have $f_m-f_0$ uniformly converges to $0$ on any compact subset of $M$, i.e. $f_0=0$, which implies $f=0$ a.e. on $\partial D_1$. Thus, Lemma \ref{l:a2-injective} holds for $n=1$.

	 We assume that Lemma \ref{l:a2-injective} holds for $n=k-1$, where $k\ge2$ is an integer.
	Now, we prove the case $n=k$. 
	
	Denote that $M_1:=\prod_{2\le j\le k}D_j$,  $S_1:=\prod_{2\le j\le k}\partial D_j$, and $P_{S_1}:H^2(M_1,S_1)\rightarrow \mathcal{O}(M_1)$ is the linear map defined in Lemma \ref{l:a2}. By assumption, we know $P_{S_1}$ is injective. Let $f\in H^2(M,S)$ satisfy $P_S(f)=0$. By definition of $P_S$,  there exists $\{f_m\}_{m\in\mathbb{Z}_{>0}}\subset \mathcal{O}(M)\cap C(\overline M)$ such that $\lim_{m\rightarrow+\infty}\|f_m-f\|_S=0$ and $f_m$ uniformly converges to $0$ on any compact subset of $M$. It suffices to prove that $f=0$ a.e. on $S$.
	Note that 
	$$\lim_{m\rightarrow+\infty}\int_{\partial D_1}\|f_m(w_1,\cdot)-f(w_1,\cdot)\|_{S_1}^2|dw_1| =\lim_{m\rightarrow+\infty}\|f_m-f\|^2_S=0$$
	and 
	$$\int_{\partial D_1}\|f(w_1,\cdot)\|_{S_1}^2|dw_1| =\|f\|^2_S<+\infty,$$
then there exists a subsequence of $\{f_m\}_{m\in\mathbb{Z}_{>0}}$ denoted also by $\{f_m\}_{m\in\mathbb{Z}_{>0}}$, which satisfies that 
$$\lim_{m\rightarrow+\infty}\|f_m(w_1,\cdot)-f(w_1,\cdot)\|_{S_1}^2=0\text{ and } \|f(w_1,\cdot)\|_{S_1}^2<+\infty$$
for any $w_1\in E_1$, where $\mu_1$ is the Lebesgue measure on $\partial D_1$ and $E_1\subset\partial D_1$ such that $\mu_1(E_1)=\mu_1(\partial D_1)$.
As $f_m\in\mathcal{O}(M)\cap C(\overline M)$, we have $f_m(w_1,\cdot)\in\mathcal{O}(M_1)\cap C(\overline{M_1})$ for any $w_1\in \overline{D_1}$. Hence we have $f(w_1,\cdot)\in H^2(M_1,S_1)$ for any $w_1\in E_1$. It follows from Lemma \ref{l:a2} that $P_{S_1}(f_m(w_1,\cdot))=f_m(w_1,\cdot)$ uniformly converges to $P_{S_1}(f(w_1,\cdot))$ on any compact subset of $M_1$.

For any $\tilde w_2\in M_1$, it follows from Lemma \ref{l:a1} that  there exists a positive constant $C_{\tilde w_2}$ such that 
\begin{displaymath}
	\begin{split}
		&\frac{1}{2\pi}\int_{\partial D_1}|f_m(w_1,\tilde w_2)-f_l(w_1,\tilde w_2)|^2|dw_1|\\
		\le& C_{\tilde w_2}\frac{1}{2\pi}\int_{\partial D_1}\|f_m(w_1,\cdot)-f_l(w_1,\cdot)\|_{S_1}^2|dw_1|\\
		=&C_{\tilde w_2}\|f_m(w_1,\cdot)-f_l(w_1,\cdot)\|_{S}^2,
	\end{split}
\end{displaymath}
$$$$
which implies that $\{f_m(\cdot,\tilde w_2)\}_{m\in \mathbb{Z}_{>0}}$ is a Cauchy sequence in $L^2(\partial D_1)$ for any $\tilde w_2\in M_1$. As $f_m(\cdot,\tilde w_2)$ uniformly converges to $0$ on any compact subset of $D_1$, it follows from Lemma \ref{l:0-3} that $f_m(\cdot,\tilde w_2)$ converges to $0$ in $L^2(\partial D_1)$. As  $f_m(w_1,\cdot)$ uniformly converges to $P_{S_1}(f(w_1,\cdot))$ on any compact subset of $M_1$ for any $w_1\in E_1\subset\partial D_1$, we get that for any $\tilde w_2\in M_1$, $f_m(\cdot,\tilde w_2)$ converges a.e. on $\partial D_1$. By the uniqueness of limit, we have that 
$$\lim_{m\rightarrow+\infty}f_m(\cdot,\tilde w_2)=0$$
a.e. on $\partial D_1$ for any $\tilde w_2\in M_1$.
Note that  $f_m(w_1,\cdot)$ uniformly converges to $P_{S_1}(f(w_1,\cdot))$ on any compact subset of $M_1$ for any $w_1\in E_1\subset\partial D_1$, then we have 
$$P_{S_1}(f(w_1,\cdot))=0$$
for any $w_1\in E_2\subset E_1\subset\partial D_1$, where $E_2$ satisfies $\mu_1(E_2)=\mu_1(E_1)=\mu_1(\partial D_1)$. By the assumption that Lemma \ref{l:a2-injective} holds for the case $n=k-1$, we have $f(w_1,\cdot)=0$ a.e. on $S_1$ for any $w_1\in E_2$, then $f=0$ a.e. on $S$. Hence, Lemma \ref{l:a2-injective} holds for $n=k$.

Lemma \ref{l:a2-injective} has been proved.
\end{proof}

When $n=1$, it follows from Lemma \ref{l:0-1} and Lemma \ref{l:0-2} that  $P_S$ is a linear bijection from $H^2(D_1,\partial D_1)$ to $H^2(D_1)$ and $P_S=\gamma_1^{-1}$, where $\gamma_1(f)$ denotes the nontangential boundary value of $f$ a.e. on $\partial D_1$ for any $f\in H^2(D_1)$.
Denote that $f^*=P_S(f)$ for any $f\in H^2(M,S)$.

\begin{Lemma}
	\label{l:a3}For any compact subset $K$ of $M$, there exists a positive constant $C_K$ such that 
	$$|f^*(z)|\le C_K\|f\|_S$$
	holds for any $z\in K$ and $f\in H^2(M,S)$.
\end{Lemma}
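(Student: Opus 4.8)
The plan is to approximate $f^*$ by functions to which Lemma \ref{l:a1} directly applies, and then pass to the limit. The crucial feature that makes this work is that the constant $C_K$ appearing in Lemma \ref{l:a1} depends only on the compact set $K$ and not on the individual function, so it survives the limiting process and serves as the constant for the whole of $H^2(M,S)$.

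First I would take an arbitrary $f\in H^2(M,S)$ and, by the very definition of $H^2(M,S)$, choose a sequence $\{f_m\}_{m\in\mathbb{Z}_{\ge0}}\subset\mathcal{O}(M)\cap C(\overline M)$ with $\lim_{m\to+\infty}\|f_m-f\|_S=0$. Applying Lemma \ref{l:a1} to each $f_m$ gives $|f_m(z)|\le C_K\|f_m\|_S$ for every $z\in K$, with one and the same $C_K$ for all $m$.

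Next I would identify the two sides of this inequality in the limit. On the one hand, condition $(1)$ of Lemma \ref{l:a2} gives $P_S(f_m)=f_m$, and then condition $(2)$ of Lemma \ref{l:a2}, applied to the convergent sequence $\{f_m\}$, shows that $f_m=P_S(f_m)$ converges to $f^*=P_S(f)$ uniformly on every compact subset of $M$; in particular $|f_m(z)|\to|f^*(z)|$ for each fixed $z\in K$. On the other hand, $\|f_m\|_S\to\|f\|_S$ because $\|\cdot\|_S$ is a norm and $f_m\to f$ in $\|\cdot\|_S$. Letting $m\to+\infty$ in $|f_m(z)|\le C_K\|f_m\|_S$ then yields $|f^*(z)|\le C_K\|f\|_S$ for all $z\in K$, which is exactly the assertion.

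There is essentially no serious obstacle here: the one point deserving care is precisely the uniformity of $C_K$ over all functions in $\mathcal{O}(M)\cap C(\overline M)$, which is what allows a single constant to bound the entire approximating sequence and hence its limit. Every other ingredient — the existence of the approximating sequence, the uniform-on-compacts convergence of $P_S(f_m)$, and the continuity of the norm — is already supplied by Lemma \ref{l:a1} and Lemma \ref{l:a2}.
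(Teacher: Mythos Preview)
Your proposal is correct and follows essentially the same route as the paper: approximate $f$ by elements of $\mathcal{O}(M)\cap C(\overline M)$, apply Lemma~\ref{l:a1} to each approximant with the uniform constant $C_K$, and pass to the limit using the locally uniform convergence of $P_S(f_m)$ to $f^*$ together with $\|f_m\|_S\to\|f\|_S$. The only cosmetic difference is that you invoke Lemma~\ref{l:a2} explicitly for the convergence $f_m\to f^*$, whereas the paper bundles this into the definition of $P_S$.
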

\begin{proof}
	It follows from the definition of $H^2(M,S)$ and Lemma \ref{l:a1} that there exists $\{f_m\}_{m\in\mathbb{Z}_{>0}}\subset \mathcal{O}(M)\cap C(\overline M)$ such that $\lim_{m\rightarrow+\infty}\|f_m-f\|_S=0$ and $f_m$ uniformly converges to $f^*$ on any compact subset of $M$. Lemma \ref{l:a1} shows that there exists a positive constant $C_K$ such that 
	$$|f_m^*(z)|\le C_K\|f_m\|_S$$
	holds for any $z\in K$. Letting $m\rightarrow+\infty$, we have $$|f^*(z)|\le C_K\|f\|_S$$
	holds for any $z\in K$.  Thus, Lemma \ref{l:a3} holds.
\end{proof}

Let $\lambda$ be a Lebesgue measurable function on $S$ satisfying $\inf_{S}\lambda>0$. Denote that
$$\ll f,g\gg_{S,\lambda}:=\frac{1}{(2\pi)^n} \int_Sf\overline g\lambda|dw_1|\ldots|dw_n|$$
for any $f,g\in L^2(S,\lambda d\sigma)\subset L^2(S, d\sigma)$. We recall Definition \ref{def2} as follows: 

\emph{For any $f\in L^2(S,\lambda d\sigma)$, we call $f\in H^2_{\lambda}(M,S)$ if there exists $\{f_m\}_{m\in\mathbb{Z}_{\ge0}}\subset\mathcal{O}(M)\cap C(\overline M)\cap L^2(S,\lambda d\sigma)$ such that $\lim_{m\rightarrow+\infty}\|f_m-f\|_{S,\lambda}^2=0$.}

Assume that $H^2_{\lambda}(M,S)\not=\{0\}$. Since $H^2(M,S)$ is a Hilbert space and $\inf_{S}\lambda>0$,
 it is clear that $H^2_{\lambda}(M,S)$ is a Hilbert space. Let $\{e_m\}_{m\in\mathbb{Z}_{>0}}$ be a complete orthonormal basis for $H^2_{\lambda}(M,S)$.

\begin{Lemma}
	\label{l:a4}$\sup_{z\in K}\sum_{m=1}^{+\infty}|e^*_m(z)|^2<+\infty$ holds for any compact subset $K$ of $M$.
\end{Lemma}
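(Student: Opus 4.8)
The plan is to mimic the proof of Lemma \ref{l:b4} essentially verbatim, replacing the weighted boundary norm $\|\cdot\|_{\partial M,\rho}$ by $\|\cdot\|_{S,\lambda}$ and the pointwise estimate of Lemma \ref{l:b1-p} by its analogue Lemma \ref{l:a3}. The mechanism is the standard boundedness-of-the-reproducing-functional trick.

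First I would fix a compact subset $K\Subset M$ and a point $z_0\in K$, and for each $l\in\mathbb{Z}_{>0}$ set
$$f_{l,z_0}:=\sum_{m=1}^{l}\overline{e_m^*(z_0)}\,e_m\in H^2_{\lambda}(M,S).$$
Since $\{e_m\}_{m\in\mathbb{Z}_{>0}}$ is a complete orthonormal basis, orthonormality gives $\|f_{l,z_0}\|_{S,\lambda}^2=\sum_{m=1}^{l}|e_m^*(z_0)|^2$. Because $P_S$ is linear (Lemma \ref{l:a2}), the finite sum satisfies $f_{l,z_0}^*=\sum_{m=1}^{l}\overline{e_m^*(z_0)}\,e_m^*$, so evaluating at $z_0$ yields
$$f_{l,z_0}^*(z_0)=\sum_{m=1}^{l}|e_m^*(z_0)|^2.$$

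Next I would invoke the pointwise bound. By Lemma \ref{l:a3} there is a constant $C_K>0$ depending only on $K$ with $|f_{l,z_0}^*(z_0)|\le C_K\|f_{l,z_0}\|_S$, and since $\inf_S\lambda>0$ we have $\|f_{l,z_0}\|_S\le (\inf_S\lambda)^{-1/2}\|f_{l,z_0}\|_{S,\lambda}$. Combining these with the two identities above gives
$$\sum_{m=1}^{l}|e_m^*(z_0)|^2=|f_{l,z_0}^*(z_0)|\le \frac{C_K}{\sqrt{\inf_S\lambda}}\Big(\sum_{m=1}^{l}|e_m^*(z_0)|^2\Big)^{1/2},$$
whence $\big(\sum_{m=1}^{l}|e_m^*(z_0)|^2\big)^{1/2}\le C_K(\inf_S\lambda)^{-1/2}$, a bound independent of both $l$ and $z_0\in K$. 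Letting $l\to+\infty$ and taking the supremum over $z_0\in K$ proves the claim.

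I do not expect a genuine obstacle: the argument is the exact analogue of Lemma \ref{l:b4}. The only points needing a moment's care are that $P_S$ respects finite linear combinations (so that $f_{l,z_0}^*=\sum_{m}\overline{e_m^*(z_0)}e_m^*$), which is immediate from the linearity asserted in Lemma \ref{l:a2}, and the passage between $\|\cdot\|_S$ and $\|\cdot\|_{S,\lambda}$, which uses $\inf_S\lambda>0$. Everything else is the same reproducing-kernel boundedness estimate already carried out for $K_{\partial M,\rho}$.
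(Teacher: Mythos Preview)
Your proposal is correct and follows essentially the same approach as the paper's proof: both define the finite sums $f_{l,z_0}=\sum_{m=1}^{l}\overline{e_m^*(z_0)}\,e_m$, compute $\|f_{l,z_0}\|_{S,\lambda}^2$ and $f_{l,z_0}^*(z_0)$, and then apply Lemma~\ref{l:a3} together with $\inf_S\lambda>0$ to bound $\sum_{m=1}^{l}|e_m^*(z_0)|^2$ by its own square root times a constant depending only on $K$. The only cosmetic difference is that the paper absorbs the factor $(\inf_S\lambda)^{-1/2}$ into the constant $C_K$, whereas you display it explicitly.
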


\begin{proof}
	For any $z_0\in K$, let $f_{l,z_0}=\sum_{m=1}^{l}\overline{e_m^*(z_0)}e_m\in H^2_{\lambda}(M,S)$. As $\{e_m\}_{m\in\mathbb{Z}_{>0}}$ is a complete orthonormal basis for $H^2_{\lambda}(M,S)$, we have 
	$$\|f_{l,z_0}\|_{S,\lambda}^2=\sum_{m=1}^{l}|e_m^*(z_0)|^2.$$ It is clear that $f_{l,z_0}^*=\sum_{m=1}^{l}\overline{e_m^*(z_0)}e_m^*$, hence 
	$$f_{l,z_0}^*(z_0)=\sum_{m=1}^{l}|e_m^*(z_0)|^2.$$ It follows from Lemma \ref{l:a3} and $\inf_{S}|\lambda|>0$ that there exists a positive constant $C_K$ such that
	\begin{displaymath}
		\begin{split}
			\sum_{m=1}^{l}|e_m^*(z_0)|^2=|f_{l,z_0}^*(z_0)|\le C_K\|f_{l,z_0}\|_{S,\lambda}=C_K\left(	\sum_{m=1}^{l}|e_m^*(z_0)|^2\right)^{\frac{1}{2}},
		\end{split}
	\end{displaymath}
	which implies that $\sup_{z\in K}\sum_{m=1}^{+\infty}|e^*_m(z)|^2<+\infty.$
\end{proof}

Denote that 
$$K_{S,\lambda}(z,\overline w):=\sum_{m=1}^{+\infty}e_m^*(z)\overline{e_m^*(w)}$$
for $(z,w)\in M\times M\subset \mathbb{C}^{2n}$. It follows from Lemma \ref{l:a4} and Lemma \ref{l:hartogs} that $K_{S,\lambda}$ is a holomorphic function on $M\times M$. Denote that 
$$K_{S,\lambda}(z):=K_{S,\lambda}(z,\overline z)$$
for $z\in M$.
\begin{Lemma}
	\label{l:a5}$\ll f,\sum_{m=1}^{+\infty}e_m\overline{e_m^*(w)}\gg_{S,\lambda}=f^*(w)$ holds for any $w\in M$ and any $f\in H_{\lambda}^2(M,S)$. 
\end{Lemma}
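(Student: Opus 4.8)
The plan is to mirror the proof of Lemma \ref{l:b5}, replacing the ingredients for $H^2_{\rho}(M,\partial M)$ by their counterparts for $H^2_{\lambda}(M,S)$. First I would expand $f$ in the given complete orthonormal basis, writing $f=\sum_{m=1}^{+\infty}a_m e_m$ with $a_m=\ll f,e_m\gg_{S,\lambda}$, where the series converges in the norm $\|\cdot\|_{S,\lambda}$. The goal is then to identify $f^*(w)$ with $\sum_{m=1}^{+\infty}a_m e_m^*(w)$ and to verify that the inner product on the left collapses to this sum.

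The key step is to pass from norm convergence of the partial sums $S_N=\sum_{m=1}^{N}a_m e_m$ to pointwise (indeed locally uniform) convergence of $P_S(S_N)=\sum_{m=1}^{N}a_m e_m^*$ towards $f^*=P_S(f)$. Since $\inf_{S}\lambda>0$ gives $\|g\|_S^2\le(\inf_{S}\lambda)^{-1}\|g\|_{S,\lambda}^2$, convergence of $S_N$ to $f$ in $\|\cdot\|_{S,\lambda}$ implies convergence in $\|\cdot\|_S$; condition $(2)$ of Lemma \ref{l:a2} then yields that $P_S(S_N)$ converges to $f^*$ uniformly on every compact subset of $M$. By linearity of $P_S$ this is exactly the assertion that $f^*(w)=\sum_{m=1}^{+\infty}a_m e_m^*(w)$ for each $w\in M$. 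This plays the role that Lemma \ref{l:b2} played in the proof of Lemma \ref{l:b5}, and it is the main point to get right.

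Finally I would justify manipulating the series inside the inner product. For fixed $w$, Lemma \ref{l:a4} gives $\sum_{m}|e_m^*(w)|^2<+\infty$, so the partial sums of $\sum_{m}\overline{e_m^*(w)}e_m$ are Cauchy in the Hilbert space $H^2_{\lambda}(M,S)$ (the norm-squared increments being partial tails of $\sum_{m}|e_m^*(w)|^2$), and hence the series converges there. Continuity of the inner product together with orthonormality of $\{e_m\}$ then gives
\begin{equation}
\nonumber
\ll f,\sum_{m=1}^{+\infty}e_m\overline{e_m^*(w)}\gg_{S,\lambda}=\sum_{m=1}^{+\infty}e_m^*(w)\ll f,e_m\gg_{S,\lambda}=\sum_{m=1}^{+\infty}a_m e_m^*(w)=f^*(w),
\end{equation}
which is the claimed reproducing identity.

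I do not expect any genuinely new difficulty beyond Lemma \ref{l:a2}. The only subtlety, compared with the $\partial M$ setting, is that $P_S$ is defined through approximation by elements of $\mathcal{O}(M)\cap C(\overline M)$ rather than through an explicit inverse of a boundary-value map, so the locally uniform convergence of $P_S(S_N)$ must be routed through Lemma \ref{l:a2}$(2)$ (combined with $\inf_S\lambda>0$) instead of an explicit integral formula.
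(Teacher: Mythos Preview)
Your proposal is correct and follows essentially the same approach as the paper's proof: expand $f$ in the orthonormal basis, use Lemma \ref{l:a2}(2) (together with $\inf_S\lambda>0$) to upgrade norm convergence of the partial sums to locally uniform convergence of the $P_S$-images, and then compute the inner product term by term. You are in fact more explicit than the paper in justifying the convergence of $\sum_m\overline{e_m^*(w)}e_m$ in $H^2_\lambda(M,S)$ via Lemma \ref{l:a4} and in spelling out the passage from $\|\cdot\|_{S,\lambda}$- to $\|\cdot\|_S$-convergence.
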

\begin{proof}
	As $\{e_m\}_{m\in\mathbb{Z}_{>0}}$ is a complete orthonormal basis for $H^2_{\lambda}(M,S)$, we have $f=\sum_{m=1}^{+\infty}a_me_m$ (convergence under the norm $\|\cdot\|_{S,\lambda}$), where $a_m$ is a constant for any $m$. Lemma \ref{l:a2} shows that $f^*=\sum_{m=1}^{+\infty}a_me^*_m$ (uniform convergence on any compact subset of $M$). Then we have 
	$$\ll f,\sum_{m=1}^{+\infty}e_m\overline{e_m^*(w)}\gg_{S,\lambda}=\sum_{m=1}^{+\infty}a_me_m^*(w)=f^*(w)$$
	for any $w\in M$.
\end{proof}

Lemma \ref{l:a5} implies that the definition of $K_{S,\lambda}(z,\overline w)$ is independent of the choices of the orthonormal basis. 
\begin{Lemma}
	\label{l:sup}$K_{S,\lambda}(z_0)=\sup_{f\in H^2_{\lambda}(M,S)}\frac{|f^*(z_0)|^2}{\|f\|_{S,\lambda}^2}$ holds for any $z_0\in M$.
\end{Lemma}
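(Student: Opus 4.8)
The plan is to mirror exactly the argument already used for the kernel over $\partial M$ in Lemma \ref{l:sup-b}, since the two situations are formally identical: the reproducing property stated in Lemma \ref{l:a5} plays here the role that Lemma \ref{l:b5} played there. The core idea is that the evaluation functional $f\mapsto f^*(z_0)$ is bounded on the Hilbert space $H^2_{\lambda}(M,S)$, that its Riesz representative is the explicit element $g_{z_0}:=\sum_{m=1}^{+\infty}e_m\overline{e_m^*(z_0)}$, and that the supremum in the statement is precisely the squared operator norm of this functional, which in turn equals $\|g_{z_0}\|_{S,\lambda}^2=K_{S,\lambda}(z_0)$.

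First I would note that $g_{z_0}$ genuinely defines an element of $H^2_{\lambda}(M,S)$: since $\{e_m\}$ is a complete orthonormal basis, $\|g_{z_0}\|_{S,\lambda}^2=\sum_{m}|e_m^*(z_0)|^2$, and this series converges by Lemma \ref{l:a4}. Next, Lemma \ref{l:a5} gives the reproducing identity $\ll f,g_{z_0}\gg_{S,\lambda}=f^*(z_0)$ for every $f\in H^2_{\lambda}(M,S)$; this shows $f\mapsto f^*(z_0)$ is a bounded linear functional with Riesz representative $g_{z_0}$, so by the standard Cauchy--Schwarz computation its operator norm is
\begin{equation}
\nonumber \sup_{f\in H^2_{\lambda}(M,S)}\frac{|f^*(z_0)|}{\|f\|_{S,\lambda}}=\|g_{z_0}\|_{S,\lambda},
\end{equation}
the supremum being attained at $f=g_{z_0}$. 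Finally I would identify $\|g_{z_0}\|_{S,\lambda}^2=\ll g_{z_0},g_{z_0}\gg_{S,\lambda}=\sum_{m}|e_m^*(z_0)|^2=K_{S,\lambda}(z_0)$ by the definition of $K_{S,\lambda}(z,\overline w)$ on the diagonal, and squaring the displayed identity yields the claimed formula.

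There is no real obstacle here; the only point requiring a word of care is the convergence and membership of the representing element $g_{z_0}$ in $H^2_{\lambda}(M,S)$, together with the legitimacy of interchanging the infinite sum with the inner product when computing $\ll f,g_{z_0}\gg_{S,\lambda}$. Both are already guaranteed: the summability of $\sum_m|e_m^*(z_0)|^2$ comes from Lemma \ref{l:a4}, and the interchange is exactly what Lemma \ref{l:a5} encodes. Thus the proof is a short, essentially verbatim repetition of the proof of Lemma \ref{l:sup-b}, with $\ll\cdot,\cdot\gg_{\partial M,\rho}$ replaced by $\ll\cdot,\cdot\gg_{S,\lambda}$ throughout.
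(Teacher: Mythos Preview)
Your proposal is correct and follows essentially the same argument as the paper: both use Lemma \ref{l:a5} to identify $g_{z_0}=\sum_m e_m\overline{e_m^*(z_0)}$ as the Riesz representative of the evaluation functional, compute its norm via Cauchy--Schwarz, and identify $\|g_{z_0}\|_{S,\lambda}^2$ with $K_{S,\lambda}(z_0)$. As you anticipated, the paper's proof of Lemma \ref{l:sup} is a near-verbatim copy of its proof of Lemma \ref{l:sup-b}.
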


\begin{proof}
	It follows from Lemma \ref{l:a5} that $f\mapsto f^*(z_0)$ is a bounded linear map from $H^2_{\lambda}(M,S)$ and the norm of the map equals 
	$$\sup_{f\in H^2_{\lambda}(M,S)}\frac{|f^*(z_0)|}{\|f\|_{S,\lambda}}=\|\sum_{m=1}^{+\infty}e_m\overline{e_m^*(z_0)}\|_{S,\lambda}.$$ Thus, we have 
	\begin{displaymath}
		\begin{split}
			K_{S,\lambda}(z_0)&=\sum_{m=1}^{+\infty}e_m^*(z_0)\overline{e_m^*(z_0)}\\
			&=\ll \sum_{m=1}^{+\infty}e_m\overline{e_m^*(z_0)},\sum_{m=1}^{+\infty}e_m\overline{e_m^*(z_0)}\gg_{S,\lambda}\\
			&
			=\sup_{f\in H^2_{\lambda}(M,S)}\frac{|f^*(z_0)|^2}{\|f\|_{S,\lambda}^2}.
		\end{split}
	\end{displaymath}
\end{proof}

Let $h_0$ be a holomorphic function on a neighborbood $V_0$ of $z_0$, and let $I$ be a proper ideal of $\mathcal{O}_{z_0}$.  Denote that 
$$K_{S,\lambda}^{I,h_0}(z_0):=\frac{1}{\inf\left\{\|f\|_{S,\lambda}^2:f\in H^2_{\lambda}(M,S)\,\&\,(f^*-h_0,z_0)\in I\right\}}.$$ 
\begin{Lemma}
	\label{l:a K<+infty}Assume that $(h_0,z_0)\not\in I$ and $K_{S,\lambda}^{I,h_0}(z_0)>0$. Then $K_{S,\lambda}^{I,h_0}(z_0)<+\infty$ and there is a unique holomorphic function $f\in H^2_{\lambda}(M,S)$ such that $(f^*-h_0,z_0)\in I$ and $K_{S,\lambda}^{I,h_0}(z_0)=\frac{1}{\|f\|_{M,\lambda}^2}$. Furthermore,
for any   $\hat{f}\in H^2_{\lambda}(M,S)$ such that $(\hat f^*-h_0,z_0)\in I$,
we have the following equality
\begin{equation}\label{eq:220807d}
\begin{split}
\|\hat f\|_{S,\lambda}^2=\|f\|_{S,\lambda}^2+\|\hat f-f\|_{S,\lambda}^2.
\end{split}
\end{equation}
\end{Lemma}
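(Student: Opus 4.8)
The plan is to follow the same three-step scheme already used for the Bergman kernel in Lemma \ref{l:bergman<+infty} and for the boundary kernel in Lemma \ref{l:b7}, substituting the pointwise bound of Lemma \ref{l:a3} (together with $\inf_S\lambda>0$) for that of Lemma \ref{l:b1-p}, and exploiting that $H^2_\lambda(M,S)$ is a Hilbert space. Concretely, I would first prove finiteness, then produce the extremal function by a weak-compactness argument, and finally derive uniqueness and the orthogonality relation \eqref{eq:220807d} from convexity.

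First I would establish $K_{S,\lambda}^{I,h_0}(z_0)<+\infty$ by contradiction. If the infimum defining it vanished, there would be $\{f_j\}\subset H^2_\lambda(M,S)$ with $\|f_j\|_{S,\lambda}\to0$ and $(f_j^*-h_0,z_0)\in I$ for all $j$. By Lemma \ref{l:a3} and $\inf_S\lambda>0$, the $f_j^*$ converge to $0$ uniformly on every compact subset of $M$; hence $f_j^*-h_0$ converges uniformly to $-h_0$ on a neighbourhood of $z_0$. Since each germ $(f_j^*-h_0,z_0)$ lies in $I$, the closedness of submodules in Lemma \ref{l:closedness} forces $(h_0,z_0)\in I$, contradicting the hypothesis $(h_0,z_0)\notin I$.

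Next I would construct the extremal $f$. Choose a minimizing sequence $\{f_j\}$ with $(f_j^*-h_0,z_0)\in I$ and $\|f_j\|_{S,\lambda}^2\to 1/K_{S,\lambda}^{I,h_0}(z_0)<+\infty$. As $\{f_j\}$ is bounded in the Hilbert space $H^2_\lambda(M,S)$, after passing to a subsequence it converges weakly to some $f$, and weak lower semicontinuity of the norm gives $\|f\|_{S,\lambda}^2\le\liminf_{j}\|f_j\|_{S,\lambda}^2=1/K_{S,\lambda}^{I,h_0}(z_0)$. By Lemma \ref{l:a3} the family $\{f_j^*\}$ is locally uniformly bounded, so a further subsequence has $f_j^*$ converging locally uniformly to a holomorphic function $g_0$; meanwhile Lemma \ref{l:a5} shows that $f\mapsto f^*(z)$ is continuous for the weak topology, so $f_j^*(z)\to f^*(z)$ pointwise and hence $g_0=f^*$ with $f_j^*\to f^*$ locally uniformly. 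Lemma \ref{l:closedness} then yields $(f^*-h_0,z_0)\in I$, and the definition of the kernel forces $\|f\|_{S,\lambda}^2=1/K_{S,\lambda}^{I,h_0}(z_0)$.

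Finally, uniqueness and \eqref{eq:220807d} follow by convexity exactly as in Lemma \ref{l:b7}. If two minimizers $g_1,g_2$ existed, their average would again satisfy $(\tfrac{g_1^*+g_2^*}{2}-h_0,z_0)\in I$ while, by the parallelogram law, having norm strictly below $1/K_{S,\lambda}^{I,h_0}(z_0)$ unless $g_1=g_2$, a contradiction. For \eqref{eq:220807d}, given admissible $\hat f$ the perturbation $f+\alpha(\hat f-f)$ is admissible for every $\alpha\in\mathbb{C}$, so minimality of $\|f\|_{S,\lambda}$ gives $\ll f,\hat f-f\gg_{S,\lambda}=0$, which is precisely \eqref{eq:220807d}. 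The only genuinely nontrivial point, as in the model lemmas, is upgrading weak convergence of $\{f_j\}$ to locally uniform convergence of $\{f_j^*\}$ so that the closedness lemma applies; this is exactly what the reproducing-type estimate of Lemma \ref{l:a3} delivers.
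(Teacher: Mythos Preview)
Your proposal is correct and follows essentially the same approach as the paper, which explicitly states that the proof is obtained from that of Lemma \ref{l:b7} with small modifications and then carries out exactly the three-step scheme you describe. The only cosmetic difference is that the paper cites Lemma \ref{l:a1} for the pointwise bound while you cite Lemma \ref{l:a3}; your choice is in fact the more precise one, since the functions $f_j$ lie in $H^2_\lambda(M,S)$ rather than $\mathcal{O}(M)\cap C(\overline M)$.
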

\begin{proof} The proof follows from the the proof of Lemma \ref{l:b7} with small modifications. 

	We prove $K_{S,\lambda}^{I,h_0}(z_0)<+\infty$ by contradiction: if not, there is $\{{f}_{j}\}_{j\in\mathbb{Z}_{>0}}\subset  H^2_{\rho}(M,S)$
such that 
$$\lim_{j\to+\infty}\|{f}_{j}\|_{S,\lambda}=0$$ and
$(f^*_{j}-h_0,z_0)\in I$ for any $j$. 
It follows from Lemma \ref{l:a1} and $\inf_{S}\rho>0$ that $f^*_j$ uniformly converges to $0$ on any compact subset of $M$.
By Lemma \ref{l:closedness}, we have
 $h_0\in I$, which contradicts to the assumption $h_0\not\in I$.
Thus, we have $K_{S,\lambda}^{I,h_0}(z_0)<+\infty.$

Firstly, we prove the existence of $f$.
As $K_{S,\lambda}^{I,h_0}(z_0)>0,$
then there is $\{f_{j}\}_{j\in\mathbb{Z}_{>0}}\subset H^2_{\lambda}(M,S)$  such that
$$\lim_{j\rightarrow+\infty}\|f_j\|^2_{S,\lambda}=\frac{1}{K_{S,\lambda}^{I,h_0}(z_0)}<+\infty$$ 
and $(f^*_{j}-h_0,z_0)\in I$ for any $j$. 
Then there is a subsequence of $\{f_j\}_{j\in\mathbb{Z}_{>0}}$ denoted also by $\{f_j\}_{j\in\mathbb{Z}_{>0}}$, which weakly converges to an element $f\in H_{\lambda}^2(M,S)$, i.e.,
	\begin{equation}\label{eq:220807e}
		\lim_{j\rightarrow+\infty}\ll f_j,g\gg_{S,\lambda}=\ll f,g\gg_{S,\lambda}
	\end{equation}
holds for any $g\in H^2_{\lambda}(M,S)$. Hence we have 
\begin{equation}
	\label{eq:220807f}\|f\|_{S,\lambda}^2\le \lim_{j\rightarrow+\infty}\|f_j\|^2_{S,\lambda}=\frac{1}{K_{S,\lambda}^{I,h_0}(z_0)}.
\end{equation}
It follows from Lemma \ref{l:a1} that there is a subsequence of $\{f_j\}_{j\in\mathbb{Z}_{>0}}$ denoted also by $\{f_j\}_{j\in\mathbb{Z}_{>0}}$, which satisfies that $f_j^*$ uniformly converges to a holomorphic function $g_0$ on $M$ on any compact subset of $M$.
By Lemma \ref{l:a5} and equality \eqref{eq:220807e}, we get that 
$$\lim_{j\rightarrow+\infty}f_j^*(z)=f^*(z)$$
for any $z\in M$, hence we know that $f^*=g_0$ and $f_j^*$ uniformly converges to $f^*$ on any compact subset of $M$. Following from Lemma \ref{l:closedness} and $(f^*_{j}-h_0,z_0)\in I$ for any $j$, we get 
$$(f^*-h_0,z_0)\in I.$$
By definition of $K_{S,\lambda}^{I,h_0}(z_0)$ and inequality \eqref{eq:220807f}, we have 
$$\|f\|_{S,\lambda}^2=\frac{1}{K_{S,\lambda}^{I,h_0}(z_0)}.$$
Thus,  we obtain the existence of $f$.

Secondly, we prove the uniqueness of $f$ by contradiction:
if not, there exist two different $g_{1}\in H^2_{\lambda}(M,S)$ and $g_{2}\in H^2_{\lambda}(M,S)$ satisfying that $\|g_1\|_{S,\lambda}^2=\|g_1\|_{S,\lambda}^2=\frac{1}{K_{S,\lambda}^{I,h_0}(z_0)}$, 
$(g_{1}^*-h_0,z_0)\in I$ and $(g_{2}^*-h_0,z_0)\in I$. It is clear that 
$$(\frac{g^*_{1}+g^*_{2}}{2}-h_0,z_0)\in I.$$
Note that
\begin{equation}\nonumber
\begin{split}
\|\frac{g_1+g_2}{2}\|_{S,\lambda}^2+\|\frac{g_1-g_2}{2}\|_{S,\lambda}^2=
\frac{\|g_1\|_{S,\lambda}^2+\|g_2\|_{S,\lambda}^2}{2}=\frac{1}{K_{S,\lambda}^{I,h_0}(z_0)},
\end{split}
\end{equation}
then we obtain that
$$\|\frac{g_1+g_2}{2}\|_{S,\lambda}^2<\frac{1}{K_{S,\lambda}^{I,h_0}(z_0)},$$
 which contradicts the definition of $K_{S,\lambda}^{I,h_0}(z_0)$.

Finally, we prove equality \eqref{eq:220807d}.
It is clear that
for any complex number $\alpha$,
$f+\alpha (\hat f-f)\in H^2_{\lambda}(M,S)$ and $(f^*+\alpha (\hat f^*-f^*),z_0)\in I$,
and 
$$\|f\|^2_{S,\lambda}\le \|f+\alpha (\hat f-f)\|_{S,\lambda}<+\infty.$$
Thus we have 
$$\ll f,\hat f-f\gg_{S,\lambda}=0,$$
which implies that 
$$\|\hat f\|_{S,\lambda}^2=\|f\|_{S,\lambda}^2+\|\hat f-f\|_{S,\lambda}^2.$$

Thus, Lemma \ref{l:a K<+infty} has been proved.
\end{proof}

Let $M_a=\prod_{1\le j\le n_a}D_j$ be a bounded domain in $\mathbb{C}^{n_a}$, where $D_j$ is planar regular region with finite boundary components which are analytic Jordan curves for any $1\le j\le n_a$. Denote that $S_a:=\prod_{1\le j\le n_a}\partial D_j$. Let $M_b=\prod_{1\le j\le n_b}\tilde D_j$ be a bounded domain in $\mathbb{C}^{n_b}$, where $\tilde D_j$ is planar regular region with finite boundary components which are analytic Jordan curves for any $1\le j\le n_b$. Denote that $S_b:=\prod_{1\le j\le n_b}\partial \tilde D_j$. Denote that $M:=M_a\times M_b\subset \mathbb{C}^{n_a+n_b}$ and $S:=S_a\times S_b$.

Let $\lambda_a$ be a Lebesgue measurable function on $S_a$ such that $\inf_{S_a}\lambda_a>0$, and let $\lambda_a$ be a Lebesgue measurable function on $S_b$ such that $\inf_{S_b}\lambda_b>0$. Denote that $\lambda:=\lambda_a\lambda_b$ on $S$.   

\begin{Proposition}
	\label{p:7}Assume that $H^2_{\lambda}(M,S)\not=\{0\}$. Then we have $H^2_{\lambda_a}(M_a,S_a)\not=\{0\}$ and $H^2_{\lambda_b}(M_b,S_b)\not=\{0\}$. Furthermore,
	$\{e_m(z)\tilde e_l(w)\}_{m,l\in\mathbb{Z}_{>0}}$ is a complete orthonormal basis for $H^2_{\lambda}(M,S)$, where $\{e_m\}_{m\in\mathbb{Z}_{>0}}$ is a complete orthonormal basis for $H^2_{\lambda_a}(M_a,S_a)$, and $\{\tilde e_m\}_{m\in\mathbb{Z}_{>0}}$ is a complete orthonormal basis for $H^2_{\lambda_b}(M_b,S_b)$.
\end{Proposition}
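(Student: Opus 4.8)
The plan is to use the product factorizations $d\sigma=d\sigma_a\,d\sigma_b$ and $\lambda=\lambda_a\lambda_b$, so that by Fubini $\ll f,g\gg_{S,\lambda}=\int_{S_a}\left(\int_{S_b}f\overline g\,\lambda_b\,d\sigma_b\right)\lambda_a\,d\sigma_a$ whenever the integrand is absolutely integrable; all the norm estimates below factor through this identity.

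For the nonvanishing of the factors I would argue by slicing. Given $f\in H^2_\lambda(M,S)$ with $f\neq0$, choose $f_k\in\mathcal O(M)\cap C(\overline M)\cap L^2(S,\lambda)$ with $\|f_k-f\|_{S,\lambda}\to0$. Since $\int_{S_b}\|f_k(\cdot,w)-f(\cdot,w)\|_{S_a,\lambda_a}^2\,\lambda_b\,d\sigma_b=\|f_k-f\|_{S,\lambda}^2\to0$, passing to a subsequence gives $\|f_k(\cdot,w)-f(\cdot,w)\|_{S_a,\lambda_a}\to0$ for a.e. $w\in S_b$; as each $f_k(\cdot,w)\in\mathcal O(M_a)\cap C(\overline{M_a})$ lies in $L^2(S_a,\lambda_a)$ for a.e. $w$, the slice $f(\cdot,w)\in H^2_{\lambda_a}(M_a,S_a)$ for a.e. $w$, and because $\int_{S_b}\|f(\cdot,w)\|_{S_a,\lambda_a}^2\lambda_b\,d\sigma_b=\|f\|_{S,\lambda}^2>0$ it is nonzero on a set of positive measure, so $H^2_{\lambda_a}(M_a,S_a)\neq\{0\}$; the statement for $M_b$ is symmetric. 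Orthonormality and membership of the products are then routine: choosing $g_k\to e_m$ and $h_k\to\tilde e_l$ from the defining dense classes, the product $g_kh_k$ lies in $\mathcal O(M)\cap C(\overline M)\cap L^2(S,\lambda)$ and $\|g_kh_k-e_m\tilde e_l\|_{S,\lambda}\le\|g_k\|_{S_a,\lambda_a}\|h_k-\tilde e_l\|_{S_b,\lambda_b}+\|g_k-e_m\|_{S_a,\lambda_a}\|\tilde e_l\|_{S_b,\lambda_b}\to0$, whence $e_m\tilde e_l\in H^2_\lambda(M,S)$, while $\ll e_m\tilde e_l,e_{m'}\tilde e_{l'}\gg_{S,\lambda}=\delta_{mm'}\delta_{ll'}$ follows from Fubini.

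The substance is completeness, for which it suffices to show that $f\in H^2_\lambda(M,S)$ with $\ll f,e_m\tilde e_l\gg_{S,\lambda}=0$ for all $m,l$ must vanish. I set $F_l(z):=\ll f(z,\cdot),\tilde e_l\gg_{S_b,\lambda_b}$, which is defined for a.e. $z\in S_a$ and lies in $L^2(S_a,\lambda_a)$ since $|F_l(z)|\le\|f(z,\cdot)\|_{S_b,\lambda_b}\|\tilde e_l\|_{S_b,\lambda_b}$ and $\int_{S_a}\|f(z,\cdot)\|_{S_b,\lambda_b}^2\lambda_a\,d\sigma_a=\|f\|_{S,\lambda}^2<\infty$; by Fubini the hypothesis reads $\ll F_l,e_m\gg_{S_a,\lambda_a}=0$ for all $m$. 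If $F_l\in H^2_{\lambda_a}(M_a,S_a)$, then completeness of $\{e_m\}$ forces $F_l=0$ for every $l$, whence $f(z,\cdot)\perp\tilde e_l$ in $L^2(S_b,\lambda_b)$ for a.e. $z$ and all $l$; since $f(z,\cdot)\in H^2_{\lambda_b}(M_b,S_b)$ for a.e. $z$ by the slicing above, completeness of $\{\tilde e_l\}$ gives $f(z,\cdot)=0$ for a.e. $z$, i.e. $f=0$.

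The main obstacle is thus the single claim $F_l\in H^2_{\lambda_a}(M_a,S_a)$, i.e. that integrating out the $M_b$–variable against a basis element of the second factor preserves membership in the boundary Hardy space of the first. I would reach it by approximation and closedness: the factorized estimate gives $\|F_{k,l}-F_l\|_{S_a,\lambda_a}\le\|\tilde e_l\|_{S_b,\lambda_b}\|f_k-f\|_{S,\lambda}\to0$ for $F_{k,l}(z):=\ll f_k(z,\cdot),\tilde e_l\gg_{S_b,\lambda_b}$, and for fixed $k$ one further approximates $F_{k,l}$ by $G_{k,j}(z):=\ll f_k(z,\cdot),h_j\gg_{S_b,\lambda_b}$ with $h_j\to\tilde e_l$ from the dense class. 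Each $G_{k,j}$ should be identified as the boundary trace on $S_a$ of the holomorphic function $z\mapsto\int_{S_b}f_k(z,\cdot)\overline{h_j}\,\lambda_b\,d\sigma_b$ on $M_a$, whose interior holomorphy follows from the Cauchy-type bound of Lemma \ref{l:a1} together with Morera's theorem; granting $G_{k,j}\in H^2_{\lambda_a}(M_a,S_a)$, closedness of this Hilbert space yields first $F_{k,l}$ and then $F_l$ in $H^2_{\lambda_a}(M_a,S_a)$. The delicate point within this step is the boundary identification and control up to $\overline{M_a}$, where I expect to follow the template of Lemma \ref{l:prod-d1xm1}: establish the product analogue of the annihilation characterization in Lemma \ref{l:0-1}(b) for $H^2_{\lambda_a}(M_a,S_a)$ (proved by induction on the number of planar factors, in the spirit of Lemma \ref{l:a2-injective}) and verify the moment conditions $\int_{S_a}F_l\,\phi\,dw_1\cdots dw_{n_a}=0$ for product-holomorphic $\phi$ by Fubini, using that each slice $f(\cdot,w)$ lies in $H^2_{\lambda_a}(M_a,S_a)$.
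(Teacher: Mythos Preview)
Your overall architecture---slicing for the nonvanishing of the factors, Fubini for orthonormality and membership of the products, then reducing completeness to the claim $F_l\in H^2_{\lambda_a}(M_a,S_a)$ so that one can invoke completeness of $\{e_m\}$ and then of $\{\tilde e_l\}$---is exactly the paper's proof. The only divergence is in how you treat the step you call ``delicate,'' and there you are making it harder than it is.

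The paper does not approximate $\tilde e_l$ by a further sequence $h_j$, and it does not appeal to any product analogue of the moment characterization of Lemma~\ref{l:0-1}(b). The observation you are missing is that your $F_{k,l}(z)=\int_{S_b} f_k(z,\cdot)\,\overline{\tilde e_l}\,\lambda_b\,d\sigma_b$ is already in $\mathcal O(M_a)\cap C(\overline{M_a})$. Continuity on $\overline{M_a}$ is immediate: since $f_k\in C(\overline M)$ and $\overline M$ is compact, $z\mapsto f_k(z,\cdot)$ is continuous from $\overline{M_a}$ into $C(S_b)$, and with $\tilde e_l\in L^2(S_b,\lambda_b)$ dominated convergence gives continuity of the integral. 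Interior holomorphy is obtained, after reducing to $n_a=1$ via Lemma~\ref{l:hartogs}, by differentiating under the integral sign, using the Cauchy-type bound of Lemma~\ref{l:a1} to control $\bigl(\partial/\partial z\bigr)^2 f_k(z,w)$ uniformly in $w\in S_b$. Since $\mathcal O(M_a)\cap C(\overline{M_a})\cap L^2(S_a,\lambda_a)$ is precisely the defining dense class, this gives $F_{k,l}\in H^2_{\lambda_a}(M_a,S_a)$ directly; then $\|F_{k,l}-F_l\|_{S_a,\lambda_a}\le \|f_k-f\|_{S,\lambda}\to 0$ and closedness finish the job. Your proposed route via a product annihilation characterization would require proving a new description of $H^2_{\lambda_a}(M_a,S_a)$ that the paper neither states nor needs.
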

\begin{proof}By definition of $H^2_{\lambda}(M,S)$, there is $f\in\mathcal{O}(M)\cap C(\overline{M})$ such that $f|_{S}\in H^2_{\lambda}(M,S)$. Thus, for any $z_{b}\in S_b$, we have $f(\cdot,z_b)\in\mathcal{O}(M_a)\cap C(\overline{M_a})$. Note that 
$$\int_{S_b}\int_{S_a}|f|^2\lambda_a\lambda_bd\sigma<+\infty,$$
which shows that $\|f(\cdot,z_b)\|_{S_a,\lambda_a}<+\infty$ for a.e. $z_b\in S_b$. Thus, we have $H^2_{\lambda_a}(M_a,S_a)\not=\{0\}$. Similarly, we have $H^2_{\lambda_b}(M_b,S_b)\not=\{0\}$.

Denote that $e_{m,l}:=e_m\tilde e_l$, and it is clear that $e_{m,l}\in H^2_{\lambda}(M,S)$.
	It follows from Fubini's theorem that 
	$$\ll e_{m,l},e_{m',l'} \gg_{S,\lambda}=0$$
	for any $(m,l)\not=(m',l')$, and $\|e_{m,l}\|_{S,\lambda}=1$ for any $m,l\in \mathbb{Z}_{>0}$. Thus, it suffices to prove that, if $f\in H^2_{\lambda}(M,S)$ and $\ll f,e_{m,l}\gg_{S,\lambda}=0$ for any $m,l\in \mathbb{Z}_{>0}$ then $f=0$.
	
	Let $f\in H^2_{\lambda}(M,S)$ satisfy $\ll f,e_{m,l}\gg_{S,\lambda}=0$ for any $m,l\in \mathbb{Z}_{>0}$. As $f\in H^2_{\lambda}(M,S)\subset L^2(S,\lambda)$, we have $f(z,\cdot)\in L^2(S_b,\lambda_b)$ for a.e. $z\in S_a$. 
	Denote that
	$$\tilde f_l(z):=\ll f(z,\cdot),\tilde e_l\gg_{S_b,\lambda_b},$$
	and following from Fubini's theorem, we have $\tilde f_l\in L^2(S_a,\lambda_a)$.
	
	By the definition of $H^2_{\lambda}(M,S)$, there is $\{f_j\}_{j\in\mathbb{Z}_{>0}}\subset\mathcal{O}(M)\cap C(\overline{M})$ such that $\lim_{j\rightarrow+\infty}\|f_j-f\|_{S,\lambda}=0$. Thus, we get that $\{\|f_j(z,\cdot)-f(z,\cdot)\|_{S_b,\lambda_b}\}_j\subset L^2(S_a,\lambda_a)$  converges to $0$ in $L^2(S_a,\lambda_a)$, which implies that there exists a subsequence of $\{\|f_j(z,\cdot)-f(z,\cdot)\|_{S_b,\lambda_b}\}_j$ converges to $0$ a.e. on $S_a$. By definition, we have 
	$$f(z,\cdot)\in H^2_{\lambda_b}(M_b,S_b)$$ for a.e. $z\in S_a$. Denote that 
	$$\tilde f_{j,l}(z):=\ll f_j(z,\cdot),\tilde e_l\gg_{S_b,\lambda_b}.$$
	By Fubini's theorem, we know $\tilde f_{j,l}(z)\in L^2(S_a,\lambda_a)$.
	Now, we prove $\tilde f_{j,l}\in \mathcal{O}(M_a)\cap C(\overline{M_a})$ for any $j,l\in\mathbb{Z}_{>0}$. $ \tilde f_{j,l}\in  C(\overline{M_a})$ follows from $f_l\in C(\overline{M})$, then it suffices to prove $\tilde f_{j,l}\in \mathcal{O}(M_a).$ Following from Lemma \ref{l:hartogs}, we can assume that $n_a=1$ in the proof of $\tilde f_{j,l}\in \mathcal{O}(M_a)$ without loss of generality. Note that $f_j(\cdot,w)\in \mathcal{O}(M_a)\cap C(\overline{M_a})$ for any $w\in \overline{M_b}$, then it follows from Lemma \ref{l:a1} that  
	\begin{equation}
		\label{eq:0730a}\sup_{|z-z_0|<\frac{r}{2}}\left|\left(\frac{\partial}{\partial z}\right)^2f_j(z,w)\right|\le C_1\sup_{|z-z_0|<r}|f_j(z,w)|\le C_2\| f_j(\cdot,w) \|_{S_a,\lambda_a},
	\end{equation}
	where $z_0\in M_a$, $r>0$ such that $\{z:|z-z_0|<r\}\Subset M_a$, and $C_i$ is a constant independent of $w\in \overline{M_b}$ for $i=1,2$. Denote that 
	$$\tilde g_{j,l}(z):=\ll \frac{\partial}{\partial z} f_j(z,\cdot),\tilde e_l\gg_{S_b,\lambda_b}.$$ 
	It follows from inequality \eqref{eq:0730a} that 
\begin{equation}
	\label{eq:0730b}\begin{split}
		&|\tilde f_{j,l}(z)-\tilde f_{j,l}(\tilde z)-(z-\tilde z)\tilde g_{j,l}(z)|\\
		=&\left|\ll f_j(z,\cdot)-f_j(\tilde z,\cdot)-(z-\tilde z) \frac{\partial}{\partial z} f_j(z,\cdot),\tilde e_l\gg_{S_b,\lambda_b}\right|\\
		\le&\|f_j(z,\cdot)-f_j(\tilde z,\cdot)-(z-\tilde z) \frac{\partial}{\partial z} f_j(z,\cdot)\|_{S_b,\lambda_b}\\
		\le&|z-\tilde z|^2\cdot\left\|\sup_{|z-z_0|<\frac{r}{2}}\left|\left(\frac{\partial}{\partial z}\right)^2f_j(z,\cdot)\right|\right\|_{S_b,\lambda_b}\\
		\le&C_2|z-\tilde z|^2\cdot\|f_j\|_{S,\lambda},
	\end{split}
\end{equation}	
	where $|z-z_0|<\frac{r}{2}$ and $|\tilde z-z_0|<\frac{r}{2}$. Inequality \eqref{eq:0730b} shows that $\tilde f_{j,l}$ is holomorphic on $M_a$. Thus, we have proved that $\tilde f_{j,l}\in \mathcal{O}(M_a)\cap C(\overline{M_a})$, which implies that $\tilde f_{j,l}\in H^2_{\lambda_a}(M,S)$. It follows from $\lim_{j\rightarrow+\infty}\|f_j-f\|_{S,\lambda}=0$ and $\|\tilde e_l\|_{S_b,\lambda_b}=1$ that 
	 \begin{equation}
	\label{eq:0730c}
	 	\begin{split}
	 		&\lim_{j\rightarrow+\infty}\|\tilde f_{j,l}-\tilde f_l\|_{S_a,\lambda_a}\\
	 		&=\lim_{j\rightarrow+\infty}\|\ll f_j(z,\cdot)-f(z,\cdot),\tilde e_l\gg_{S_b,\lambda_b}\|_{S_a,\lambda_a}\\
	 		&\leq \lim_{j\rightarrow+\infty}\|f_j-f\|_{S,\lambda}\\
	 		&=0.
	 	\end{split}
	 \end{equation}
As $\tilde f_{j,l}\in H^2_{\lambda_a}(M,S)$ and $\tilde f_l\in L^2(S_a,\lambda_a)$,	inequality \eqref{eq:0730c} shows that $\tilde f_l\in H^2_{\lambda_a}(M,S)$.

	Note that $\ll f,e_{m,l}\gg_{S,\lambda}=0$ for any $m,l\in \mathbb{Z}_{>0}$ and $\tilde f_l(z)=\ll f(z,\cdot),\tilde e_l\gg_{S_b,\lambda_b}$, then  it follows from Fubini's theorem that $\ll\tilde f_l,e_m\gg_{S_a,\lambda_a}=0$ for any $m,l\in \mathbb{Z}_{>0}$. As $\{e_m\}_{m\in\mathbb{Z}_{>0}}$ is a complete orthonormal basis for $H^2_{\lambda_a}(M_a,S_a)$, then we have $\tilde f_l=0$ a.e. on $S_a$ for any $l$. As $\{\tilde e_m\}_{m\in\mathbb{Z}_{>0}}$ is a complete orthonormal basis for $H^2_{\lambda_b}(M_b,S_b)$ and $f(z,\cdot)\in H^2_{\lambda_b}(M_b,S_b)$ for a.e. $z\in S_a$, then we have $f=0$ a.e. on $S$.
	
	Thus, Proposition \ref{p:7} has been proved. 
\end{proof}

In the following lemma, we consider the case $n=1$.
	Denote that 
	$$L_m:=\{f\in H^2_{\lambda}(M,S):ord_{z_0}(f^*)=m\}$$ for $m\in\mathbb{Z}_{\ge0}$. Assume that $H^2_{\lambda}(M,S)\not=\{0\}$, then $L_m\not=\emptyset$ for any $m\ge0$ (as $M\Subset \mathbb{C}$).

\begin{Lemma}\label{l:a6}
There is a complete orthonormal basis $\{e_m\}_{m\in \mathbb{Z}_{\ge0}}$ for $H^2_{\lambda}(M,S)$ satisfying that $ord_{z_0}(e_m^*)=m$ for any $m\in \mathbb{Z}_{\ge0}$. 
\end{Lemma}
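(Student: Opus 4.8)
The plan is to build the basis by orthogonalizing $H^2_\lambda(M,S)$ with respect to its filtration by order of vanishing at $z_0$. For each $m\in\mathbb{Z}_{\ge0}$ I would set
$$V_m:=\{f\in H^2_\lambda(M,S):ord_{z_0}(f^*)\ge m\}.$$
First I would check that each $V_m$ is a closed subspace. By Lemma \ref{l:a3} and the Cauchy estimates on a small compact set $K\ni z_0$, every functional $f\mapsto (f^*)^{(k)}(z_0)$ is bounded on $H^2_\lambda(M,S)$. Since $V_m=\bigcap_{0\le k<m}\ker\big(f\mapsto(f^*)^{(k)}(z_0)\big)$ is a finite intersection of kernels of bounded functionals, it is closed, and $V_0=H^2_\lambda(M,S)\supseteq V_1\supseteq V_2\supseteq\cdots$.

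Next I would pin down the codimensions. The functional $\ell_m\colon V_m\to\mathbb{C}$, $\ell_m(f)=(f^*)^{(m)}(z_0)/m!$, has kernel exactly $V_{m+1}$. Since $L_m\ne\emptyset$ there is $f\in V_m$ with $ord_{z_0}(f^*)=m$, so $\ell_m\ne0$ is surjective and $V_{m+1}$ has codimension one in $V_m$. As $V_{m+1}$ is closed in the Hilbert space $V_m$, the orthogonal complement $V_m\ominus V_{m+1}$ is one-dimensional; I would let $e_m$ be a unit vector spanning it. Then $\|e_m\|_{S,\lambda}=1$, and for $l>m$ we have $e_l\in V_l\subseteq V_{m+1}$ while $e_m\perp V_{m+1}$, so $\ll e_m,e_l\gg_{S,\lambda}=0$; thus $\{e_m\}$ is orthonormal. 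Moreover $e_m\in V_m$ gives $ord_{z_0}(e_m^*)\ge m$, while $e_m\perp V_{m+1}$ together with $e_m\ne0$ forces $e_m\notin V_{m+1}$, i.e. $ord_{z_0}(e_m^*)<m+1$; hence $ord_{z_0}(e_m^*)=m$.

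It then remains to prove completeness. Iterating the orthogonal splitting $V_j=\mathbb{C}e_j\oplus V_{j+1}$ yields, for every $N$, the orthogonal decomposition
$$H^2_\lambda(M,S)=V_0=\Big(\bigoplus_{m=0}^{N-1}\mathbb{C}e_m\Big)\oplus V_N.$$
Hence if $f\in H^2_\lambda(M,S)$ satisfies $\ll f,e_m\gg_{S,\lambda}=0$ for all $m$, its projection onto $\bigoplus_{m=0}^{N-1}\mathbb{C}e_m$ vanishes and $f\in V_N$ for every $N$; that is, $f^*$ vanishes to infinite order at $z_0$, so $f^*\equiv0$ near $z_0$ and, $M=D_1$ being connected, $f^*\equiv0$ on $M$ by the identity theorem. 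The injectivity of $P_S$ (Lemma \ref{l:a2-injective}) then gives $f=0$, which proves that $\{e_m\}_{m\in\mathbb{Z}_{\ge0}}$ is a complete orthonormal basis.

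The main obstacle is this completeness step: it hinges on identifying $\bigcap_m V_m$ with $\{0\}$, which uses that the order of vanishing is read off through the injective map $P_S$ combined with the identity theorem on the connected domain $M$. The technical heart supporting everything is the clean orthogonal decomposition $V_0=\bigoplus_{m<N}\mathbb{C}e_m\oplus V_N$ (equivalently, that each $V_{m+1}$ is a closed codimension-one subspace of $V_m$), which relies on the boundedness of the Taylor-coefficient functionals furnished by Lemma \ref{l:a3}.
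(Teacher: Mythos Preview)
Your argument is correct but follows a different route from the paper. The paper builds each $e_m$ variationally: it sets $l_m=\inf\{\|f\|_{S,\lambda}:f\in L_m,\ (f^*)^{(m)}(z_0)=1\}$, extracts a minimizer $\hat e_m$ via weak compactness in $H^2_\lambda(M,S)$ together with Lemma~\ref{l:a5} to identify the pointwise limit of $f_l^*$, and then normalizes. Orthogonality and completeness are both deduced from the minimization property $\|e_{m}+c\,g\|_{S,\lambda}\ge\|e_{m}\|_{S,\lambda}$ for any $g$ of higher vanishing order. Your approach instead uses abstract Hilbert space structure: closedness of the filtration $V_m$ via bounded Taylor functionals (Lemma~\ref{l:a3}), one-dimensionality of $V_m\ominus V_{m+1}$, and completeness from $\bigcap_m V_m=\{0\}$ via the identity theorem and the injectivity of $P_S$ (Lemma~\ref{l:a2-injective}). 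The two constructions produce the same basis up to unimodular constants. Your route is shorter and more structural, bypassing the weak-limit extraction; the paper's variational description has the advantage of pinning down $e_m$ as an explicit minimizer, which aligns with the minimal-$L^2$-integral framework used throughout the paper.
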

\begin{proof}
	As $M$ is a bounded domain in $\mathbb{C}$ and $H^2_{\lambda}(M,S)\not=\{0\}$,  then $L_m\not=\emptyset$ for any $m\in\mathbb{Z}_{\ge0}$.
	For any $m\in\mathbb{Z}_{\ge0}$, denote that 
	$$l_m:=\inf\{\|f\|_{S,\lambda}:f\in L_m\,\&\,(f^*)^{(m)}(z_0)=1\}<+\infty,$$
	then there exists $\{f_l\}_{l\in\mathbb{Z}_{>0}}\subset L_m$ such that $(f_l^*)^{(m)}(z_0)=1$ for any $l$ and 
	\begin{equation}
		\label{eq:0729a}\lim_{l\rightarrow+\infty}\|f_l\|_{S,\lambda}=l_m.
	\end{equation}
	Note that $\inf_{S}\lambda>0$, it follows from Lemma \ref{l:a3} that there is a subsequence of $\{f_l\}_{l\in\mathbb{Z}_{>0}}$ denoted also by $\{f_l\}_{l\in\mathbb{Z}_{>0}}$ such that $f_l^*$ uniformly converges to a holomorphic function $h$  on $M$ on any compact subset of $M$. As $H_{\lambda}^2(M,S)$ is a Hilbert space, following from inequality \eqref{eq:0729a}, we get that there is a subsequence of $\{f_l\}_{l\in\mathbb{Z}_{>0}}$ denoted also by $\{f_l\}_{l\in\mathbb{Z}_{>0}}$, which weakly converges to an element $\hat{e}_m\in H_{\lambda}^2(M,S)$, i.e.,
	\begin{equation}
		\label{eq:0729b}\lim_{l\rightarrow+\infty}\ll f_l,g\gg_{S,\lambda}=\ll \hat{e}_m,g\gg_{S,\lambda}
	\end{equation}
	holds for any $g\in H_{\lambda}^2(M,S)$. Then, by equality \eqref{eq:0729a}, we have 
	\begin{equation}
		\label{eq:0729c}\|\hat{e}_m\|_{S,\lambda}\le \lim_{l\rightarrow+\infty}\|f_l\|_{S,\lambda}=l_m.
	\end{equation}
	 It follows from Lemma \ref{l:a5} and equality \eqref{eq:0729b} that $f_l^*$ pointwise converges to $\hat{e}_m^*$, which implies that $\hat{e}_m^*=h$. As $f_l^*$ uniformly converges to  $h$   on any compact subset of $M$ and $\{f_l\}_{l\in\mathbb{Z}_{>0}}\subset L_m$ such that $(f_l^*)^{(m)}(z_0)=1$ for any $l$, we obtain that $\hat{e}_m\in L_m$ and $(\hat{e}_m^*)^{(m)}(z_0)=1$. Following from the definition of $l_m$ and inequality \eqref{eq:0729c}, we have $\|\hat{e}_m\|_{S,\lambda}=l_m$. Thus, for any $m\ge0$, there exists $\hat{e}_m\in L_m$ such that $(\hat{e}_m^*)^{(m)}(z_0)=1$ and 
	 \begin{equation}
	 	\label{eq:0729d}\|\hat{e}_m\|_{S,\lambda}=l_m=\inf\{\|f\|_{S,\lambda}:f\in L_m\,\&\,(f^*)^{(m)}(z_0)=1\}.
	 \end{equation}
Denote that 
$$e_m:=\frac{\hat e_m}{\|\hat{e}_m\|_{S,\lambda}},$$
then $\|{e}_m\|_{S,\lambda}=1$ and $ord_{z_0}(e_m^*)=m$ for any $m\in\mathbb{Z}_{\ge0}$.

In the following, we prove that $\{e_m\}_{m\in\mathbb{Z}_{\ge0}}$ is a complete orthonormal basis for $H^2_{\lambda}(M,S)$.

For any $m_2>m_1\ge0$, note that $ord_{z_0}(e_{m_2}^*)=m_2>m_1=ord_{z_0}(e_{m_1}^*)$, then it follows  from equality \eqref{eq:0729d} that 
\begin{displaymath}
	\begin{split}
		\|e_{m_1}+ce_{m_2}\|_{S,\lambda}&=\frac{1}{\|\hat e_{m_1}\|_{S,\lambda}}\|\hat e_{m_1}+c\|\hat e_{m_1}\|_{S,\lambda}e_{m_2}\|_{S,\lambda} \\
		&\ge \frac{1}{\|\hat e_{m_1}\|_{S,\lambda}}\|\hat e_{m_1}\|_{S,\lambda}\\
		&=\|e_{m_1}\|_{S,\lambda}
	\end{split}
\end{displaymath}
holds for any complex number $c$,
which implies that 
$$\ll e_{m_1},e_{m_2}\gg_{S,\lambda}=0.$$
Let $f\in H_{\lambda}^2(M,S)$ satisfy $\ll f,e_m\gg_{S,\lambda}=0$ for any $m\in\mathbb{Z}_{\ge0}$. We prove that $f=0$ by contradiction: if not, then it follows from Lemma \ref{l:a2-injective} that $f^*\not=0$. Denote that $k:=ord_{z_0}(f^*)<+\infty$, then there is a constant $c_1\not=0$ such that $ord_{z_0}(f^*-c_1e^*_k)>k$. It follows from equality \eqref{eq:0729d} that 
$$\|e_k+c(f-c_1e_k)\|_{S,\lambda}\ge \|e_k\|_{S,\lambda}$$
holds for any complex number $c$,
which implies that 
$$\ll e_k,f-c_1e_k\gg_{S,\lambda}=0.$$
As $\ll f,e_k\gg_{S,\lambda}=0$, we have $c_1\|e_k\|_{S,\lambda}=0$, which contradicts to $c\not=0$ and $\|e_k\|_{S,\lambda}=1$. Then we have $f=0$.

Thus, $\{e_m\}_{m\in\mathbb{Z}_{\ge0}}$ is a complete orthonormal basis for $H^2_{\lambda}(M,S)$, and Lemma \ref{l:a6} has been proved.
\end{proof}

Let $\lambda_j$ be a positive continuous function on $\partial D_j$ for any $1\le j\le n$, and let $\lambda=\prod_{1\le j\le n}\lambda_j$ on $S=\prod_{1\le j\le n}\partial D_j$. Assume that $H^2_{\lambda}(M,S)\not=\{0\}$.

We prove the following product property of $K_{S,\lambda}(z_0)$ by using Lemma \ref{l:a6} and Proposition \ref{p:7}. 

\begin{Proposition}
	\label{p:8}$K_{S,\lambda}(z_0)=\prod_{1\le j\le n} K_{\partial D_j,\lambda_j}(z_j)$ holds for any  $z_0=(z_1,\ldots,z_n)\in M$.
\end{Proposition}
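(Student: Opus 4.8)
The plan is to argue by induction on $n$, peeling off the first factor $D_1$ and exploiting the product structure of orthonormal bases established in Proposition \ref{p:7}. The case $n=1$ holds by definition, since for $M=D_1$ and $S=\partial D_1$ the kernel $K_{S,\lambda}$ is exactly $K_{\partial D_1,\lambda_1}$. For the inductive step I would write $M=D_1\times M'$ and $S=\partial D_1\times S'$, where $M'=\prod_{2\le j\le n}D_j$, $S'=\prod_{2\le j\le n}\partial D_j$, and $\lambda=\lambda_1\cdot\lambda'$ with $\lambda'=\prod_{2\le j\le n}\lambda_j$. Since $H^2_{\lambda}(M,S)\ne\{0\}$, Proposition \ref{p:7} gives $H^2_{\lambda_1}(D_1,\partial D_1)\ne\{0\}$ and $H^2_{\lambda'}(M',S')\ne\{0\}$, and shows that $\{e_m\tilde e_l\}_{m,l}$ is a complete orthonormal basis of $H^2_{\lambda}(M,S)$ whenever $\{e_m\}$ and $\{\tilde e_l\}$ are complete orthonormal bases of $H^2_{\lambda_1}(D_1,\partial D_1)$ and $H^2_{\lambda'}(M',S')$ respectively.

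The key step is to show that, writing $e_m^*:=P_{\partial D_1}(e_m)$ and $\tilde e_l^*:=P_{S'}(\tilde e_l)$, one has $(e_m\tilde e_l)^*=e_m^*\tilde e_l^*$ on $M$. To prove it I would choose $\{f_k\}\subset\mathcal{O}(D_1)\cap C(\overline{D_1})$ and $\{g_k\}\subset\mathcal{O}(M')\cap C(\overline{M'})$ with $\|f_k-e_m\|_{\partial D_1,\lambda_1}\to0$ and $\|g_k-\tilde e_l\|_{S',\lambda'}\to0$. Then $f_kg_k\in\mathcal{O}(M)\cap C(\overline M)$ by Lemma \ref{l:hartogs}, and because $f_k$ depends only on the $D_1$-variable and $g_k$ only on the $M'$-variables while $\lambda=\lambda_1\lambda'$, Fubini's theorem yields the exact factorisations $\|f_k(g_k-\tilde e_l)\|_{S,\lambda}=\|f_k\|_{\partial D_1,\lambda_1}\|g_k-\tilde e_l\|_{S',\lambda'}$ and $\|(f_k-e_m)\tilde e_l\|_{S,\lambda}=\|f_k-e_m\|_{\partial D_1,\lambda_1}\|\tilde e_l\|_{S',\lambda'}$. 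As $\|f_k\|_{\partial D_1,\lambda_1}$ stays bounded, these force $\|f_kg_k-e_m\tilde e_l\|_{S,\lambda}\to0$. By the continuity property in Lemma \ref{l:a2}(2) (applicable since $\inf_S\lambda>0$ makes $\lambda$-convergence imply unweighted $L^2$-convergence), $P_S(f_kg_k)=f_kg_k$ converges uniformly on compact subsets of $M$ to $P_S(e_m\tilde e_l)$. On the other hand $f_k\to e_m^*$ and $g_k\to\tilde e_l^*$ uniformly on compact subsets of $D_1$ and $M'$ (again by Lemma \ref{l:a2} applied to each factor), so the bounded products $f_kg_k$ converge uniformly on compacts of $M$ to $e_m^*\tilde e_l^*$. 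Comparing the two limits gives $(e_m\tilde e_l)^*=e_m^*\tilde e_l^*$; in particular $(e_m\tilde e_l)^*(z_0)=e_m^*(z_1)\tilde e_l^*(z')$ for $z_0=(z_1,z')$.

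With this factorisation, the kernel computes directly from the product basis:
\begin{equation}
\nonumber
K_{S,\lambda}(z_0)=\sum_{m,l}|(e_m\tilde e_l)^*(z_0)|^2=\Big(\sum_m|e_m^*(z_1)|^2\Big)\Big(\sum_l|\tilde e_l^*(z')|^2\Big)=K_{\partial D_1,\lambda_1}(z_1)\,K_{S',\lambda'}(z'),
\end{equation}
where splitting the double sum is legitimate because all terms are nonnegative and each factor sum is finite by Lemma \ref{l:a4}. Applying the induction hypothesis, $K_{S',\lambda'}(z')=\prod_{2\le j\le n}K_{\partial D_j,\lambda_j}(z_j)$, which completes the proof. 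The main obstacle is precisely the identity $(e_m\tilde e_l)^*=e_m^*\tilde e_l^*$: it is not immediate from the definition of $P_S$ as an $L^2$-limit, and its proof rests on reconciling two different modes of convergence—$L^2$ on $S$ (secured by the Fubini product identity for boundary norms) and uniform on compacts—through the well-definedness and continuity of $P_S$ in Lemma \ref{l:a2}.
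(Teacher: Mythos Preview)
Your proof is correct, but it follows a genuinely different route from the paper's. The paper does not use induction; instead it invokes Lemma \ref{l:a6} to choose, for each factor, the special orthonormal basis $\{e_{j,m}\}$ with $\mathrm{ord}_{z_j}(e_{j,m}^*)=m$, so that among all product basis elements only $\prod_j e_{j,0}$ is nonvanishing at $z_0$. It then uses the extremal characterization (Lemma \ref{l:sup}) to identify both $K_{\partial D_j,\lambda_j}(z_j)=|e_{j,0}^*(z_j)|^2$ and $K_{S,\lambda}(z_0)=|\prod_j e_{j,0}^*(z_j)|^2$, obtaining the product formula in one stroke. Your approach, by contrast, works with an arbitrary orthonormal basis, proves the factorization $(e_m\tilde e_l)^*=e_m^*\tilde e_l^*$ directly, and then reads off the product formula from the series definition of the kernel; induction does the rest. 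Your route avoids the construction of the ordered basis in Lemma \ref{l:a6} and is arguably more transparent, since it isolates the only nontrivial analytic point---compatibility of $P_S$ with products---and proves it cleanly via approximation and Lemma \ref{l:a2}. The paper's argument is shorter on the page but tacitly uses the same compatibility (to know that $(\prod_j e_{j,0})^*(z_0)=\prod_j e_{j,0}^*(z_j)$), and the ordered-basis machinery pays off later in Proposition \ref{p:9}, where higher-order jet conditions are handled.
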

\begin{proof} Using Lemma \ref{l:a6}, there is 
	a complete orthonormal basis $\{e_{j,m}\}_{m\in\mathbb{Z}_{\ge0}}$   for $H^2_{\lambda_j}(D_j,\partial D_j)$ satisfying that $ord_{z_j}(e_{j,m}^*)=m$ for any $m\in\mathbb{Z}_{\ge0}$ and any $1\le j\le n$. It follows from Proposition \ref{p:7} that $\{\prod_{1\le j\le n}e_{j,\sigma_j}\}_{\sigma\in \mathbb{Z}_{\ge0}^n}$ is a complete orthonormal basis for $H^2_{\lambda}(M,S)$.
	
For any $j$, as $\ll e_{j,0},f\gg_{\partial D_j,\lambda_j}=0$ for any $f\in H_{\lambda_j}^2(D_j,\partial D_j)$ satisfying $f^*(z_j)=0$, then we have  that 
\begin{equation}
	\label{eq:0730d}\|e_{j,0}\|_{\partial D_j,\lambda_j}=\inf\{\|f\|_{\partial D_j,\lambda_j}:f\in H^2_{\lambda_j}(D_j,\partial D_j)\,\&\,f^*(z_j)=e_{j,0}^*(z_j)\}=1.
\end{equation}
Proposition \ref{p:7} shows that $\{\prod_{1\le j\le n}e_{j,\alpha_j}\}_{\alpha\in\mathbb{Z}_{\ge0}^n}$ is a complete orthonormal basis for $H^2_{\lambda}(M,S)$, then
we have $\ll \prod_{1\le j\le n}e_{j,0},f\gg_{S,\lambda}=0$ for any $f\in H_{\lambda}^2(M,S)$ satisfying $f^*(z_0)=0$, which implies  that 
\begin{equation}
	\label{eq:0730e}\|\prod_{1\le j\le n}e_{j,0}\|_{S,\lambda}=\inf\{\|f\|_{M,\lambda}:f\in H^2_{\lambda}(M,S)\,\&\,f^*(z_0)=\prod_{1\le j\le n}e_{j,0}^*(z_j)\}=1.
\end{equation}
Following from Lemma \ref{l:sup}, equality \eqref{eq:0730d} and equality \eqref{eq:0730e} that 	
	\begin{displaymath}
		\begin{split}
			 K_{\partial D_j,\lambda_j}(z_j)&=\sup_{f\in H^2_{\lambda_j}(D_j,\partial D_j)}\frac{|f^*(z_0)|^2}{\|f\|_{\partial D_j,\lambda_j}^2}\\
			 &=\frac{1}{\inf\{\|f\|^2_{\partial D_j,\lambda_j}:f\in H^2_{\lambda_j}(D_j,\partial D_j)\,\&\,f^*(z_j)=1\}}\\
			 &=|e^*_{j,0}(z_j)|^2
		\end{split}
	\end{displaymath}
	and
		\begin{displaymath}
		\begin{split}
			 K_{S,\lambda}(z_0)&=\sup_{f\in H^2_{\lambda}(M,S)}\frac{|f^*(z_0)|^2}{\|f\|_{S,\lambda}^2}\\
			 &=\inf\{\|f\|_{M,\lambda}:f\in H^2_{\lambda}(M,S)\,\&\,f^*(z_0)=1\}\\
			 &=|\prod_{1\le j\le n}e_{j,0}^*(z_j)|^2.
		\end{split}
	\end{displaymath}
	Thus, we have $K_{S,\lambda}(z_0)=\prod_{1\le j\le n} K_{\partial D_j,\lambda_j}(z_j)$.
\end{proof}

Let $z_0=(z_1,\ldots,z_n)\in M$. Let $h_j$ be a holomorphic function on a neighborhood of  $z_j$, and let $$h_0=\prod_{1\le j\le n}h_j.$$
Denote that $\beta_j:=ord_{z_j}(h_j)$ and $\beta:=(\beta_1,\cdots,\beta_n)$. Let $\tilde\beta=(\tilde\beta_1,\ldots,\tilde\beta_n)\in\mathbb{Z}_{\ge0}^n$ satisfy $\tilde\beta_j\ge\beta_j$ for any $1\le j\le n$.
Let 
$$I=\left\{(g,z_0)\in\mathcal{O}_{z_0}:g=\sum_{\alpha\in\mathbb{Z}_{\ge0}^n}b_{\alpha}(w-z_0)^{\alpha}\text{ near  }z_0\text{ s.t. $b_{\alpha}=0$ for $\alpha\in L_{\tilde\beta}$}\right\},$$
where $L_{\tilde\beta}=\{\alpha\in\mathbb{Z}_{\ge0}^n:\alpha_j\le\tilde\beta_j$ for any $1\le j\le n\}$.
It is clear that $(h_0,z_0)\not\in I$.

\begin{Proposition}
	\label{p:9}$K_{S,\lambda}^{I,h_0}(z_0)=\prod_{1\le j\le n} K_{\partial D_j,\lambda_j}^{I_{\tilde\beta_j,z_j},h_j}(z_j)$ holds, where $I_{\tilde\beta_j,z_j}=((w_j-z_j)^{\tilde\beta_j+1})$ is an ideal of $\mathcal{O}_{D_j,z_j}$.
\end{Proposition}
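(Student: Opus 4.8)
The plan is to mirror the proof of Lemma \ref{l:Bergman-prod2}, replacing the weighted Bergman spaces by the weighted Hardy spaces over $S$ and invoking the tools developed in this section: Proposition \ref{p:7} (product orthonormal bases), Lemma \ref{l:a6} (a basis with strictly increasing vanishing orders), and Lemma \ref{l:a K<+infty} (existence of minimizers together with the orthogonal decomposition \eqref{eq:220807d}). Since $K^{I,h_0}_{S,\lambda}(z_0)$ and each $K^{I_{\tilde\beta_j,z_j},h_j}_{\partial D_j,\lambda_j}(z_j)$ are reciprocals of infima of squared norms, it suffices to show that the infimum defining $1/K^{I,h_0}_{S,\lambda}(z_0)$ equals the product of the one-variable infima defining the $1/K^{I_{\tilde\beta_j,z_j},h_j}_{\partial D_j,\lambda_j}(z_j)$. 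I would argue by induction on $n$; the case $n=1$ is immediate, since then $S=\partial D_1$, $h_0=h_1$, and $I=I_{\tilde\beta_1,z_1}$.

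For the inequality $\inf_S\le\prod_j\inf_{D_j}$, take any $f_j\in H^2_{\lambda_j}(D_j,\partial D_j)$ with $(f_j^*-h_j,z_j)\in I_{\tilde\beta_j,z_j}$ and form $\prod_{1\le j\le n}f_j$. Iterating Proposition \ref{p:7} shows $\prod_j f_j\in H^2_\lambda(M,S)$ with $P_S(\prod_j f_j)=\prod_j f_j^*$. Writing $f_j^*=h_j+g_j$, where $g_j$ vanishes at $z_j$ to order $\ge\tilde\beta_j+1$, every term of $\prod_j f_j^*-h_0$ carries at least one factor $g_{j_0}$, so its Taylor coefficient $b_\alpha$ vanishes whenever $\alpha_{j_0}\le\tilde\beta_{j_0}$; hence $b_\alpha=0$ for all $\alpha\in L_{\tilde\beta}$, i.e. $(\prod_j f_j^*-h_0,z_0)\in I$. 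As $\|\prod_j f_j\|_{S,\lambda}^2=\prod_j\|f_j\|_{\partial D_j,\lambda_j}^2$ by Fubini, taking infima yields this direction.

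The harder direction, $\inf_S\ge\prod_j\inf_{D_j}$, I would prove via the inductive claim that every $f\in H^2_\lambda(M,S)$ with $(f^*-h_0,z_0)\in I$ admits factors $f_j$ obeying the one-variable constraints and $\|f\|_{S,\lambda}^2\ge\prod_j\|f_j\|_{\partial D_j,\lambda_j}^2$. Splitting $M=D_1\times M_1$, $S=\partial D_1\times S_1$, $\lambda=\lambda_1\hat\lambda_1$, I expand $f=\sum_{m\ge0}e_{1,m}f_{1,m}$ using the basis $\{e_{1,m}\}$ of Lemma \ref{l:a6} (so $\mathrm{ord}_{z_1}(e_{1,m}^*)=m$) and Proposition \ref{p:7}, whence $\|f\|_{S,\lambda}^2=\sum_m\|f_{1,m}\|_{S_1,\hat\lambda_1}^2$ and $f^*=\sum_m e_{1,m}^*f_{1,m}^*$. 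Matching the Taylor coefficients of $f^*$ against those of $h_0=h_1\prod_{2\le j}h_j$ inside the box $L_{\tilde\beta}$, the strictly increasing orders make the dependence of the $w_1$-coefficient functions of $f^*$ on the slices $f_{1,m}^*$ triangular; solving this recursion produces constants $c_0,\dots,c_{\tilde\beta_1}$ with $(f_{1,m}^*-c_m\prod_{2\le j}h_j,\hat z_1)\in I'$, where $I'$ is the analogue of $I$ on $M_1$ for $(\tilde\beta_2,\dots,\tilde\beta_n)$, and arranged so that $f_1:=\sum_{0\le m\le\tilde\beta_1}c_me_{1,m}$ satisfies $(f_1^*-h_1,z_1)\in I_{\tilde\beta_1,z_1}$. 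Letting $\tilde f_1$ be the minimizer furnished by Lemma \ref{l:a K<+infty} for the constraint $(\cdot^*-\prod_{2\le j}h_j,\hat z_1)\in I'$, the decomposition \eqref{eq:220807d} gives $\|f_{1,m}\|_{S_1,\hat\lambda_1}^2\ge|c_m|^2\|\tilde f_1\|_{S_1,\hat\lambda_1}^2$; summing over $m\le\tilde\beta_1$ and using $\|f_1\|_{\partial D_1,\lambda_1}^2=\sum_m|c_m|^2$ gives $\|f\|_{S,\lambda}^2\ge\|f_1\|_{\partial D_1,\lambda_1}^2\|\tilde f_1\|_{S_1,\hat\lambda_1}^2$. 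Applying the inductive hypothesis to $\tilde f_1$ on $M_1$ supplies $f_2,\dots,f_n$ and closes the induction, and combining the two directions gives the claimed product formula.

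The main obstacle is precisely this triangular extraction of the slices $f_{1,m}$: one must check that the recursively chosen constants $c_m$ simultaneously reproduce the jet of $h_1$ (so that $f_1$ is admissible on $D_1$) and force each $f_{1,m}^*$ into the shifted ideal $I'$ on $M_1$ (so that $\tilde f_1$ is comparable to $f_{1,m}$). Once the orthogonality encoded in \eqref{eq:220807d} is available, the resulting norm estimate and the passage to infima are routine, and the degenerate cases are handled by noting that $L_m\ne\emptyset$ for all $m$ (Lemma \ref{l:a6}) keeps every relevant infimum finite, hence every relevant kernel positive.
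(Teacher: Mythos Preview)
Your proposal is correct, and your handling of the triangular extraction is sound: the constraints $\sum_{m\le k}d_{m,k}f_{1,m}^*\equiv a_k\prod_{j\ge2}h_j\pmod{I'}$ for $0\le k\le\tilde\beta_1$ form an invertible lower-triangular system, and because $\prod_{j\ge2}h_j\notin I'$ (its coefficient at $(\beta_2,\dots,\beta_n)$ is nonzero) one can cancel it to obtain the scalar identities $\sum_{m\le k}d_{m,k}c_m=a_k$, which simultaneously force $(f_{1,m}^*-c_m\prod_{j\ge2}h_j,\hat z_1)\in I'$ and $(f_1^*-h_1,z_1)\in I_{\tilde\beta_1,z_1}$. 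The positivity needed for Lemma~\ref{l:a K<+infty} is available because some $c_m\ne0$.

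However, the paper takes a shorter and more symmetric route than your induction. Using Lemma~\ref{l:a6} it fixes, for \emph{each} $j$, a basis $\{e_{j,m}\}$ of $H^2_{\lambda_j}(D_j,\partial D_j)$ with $\mathrm{ord}_{z_j}(e_{j,m}^*)=m$, and by Proposition~\ref{p:7} the products $\{\prod_j e_{j,\sigma_j}\}$ form a basis of $H^2_\lambda(M,S)$. For each $j$ one writes $h_j\equiv\sum_{\beta_j\le m\le\tilde\beta_j}c_{j,m}e_{j,m}^*\pmod{I_{\tilde\beta_j,z_j}}$; since $e_{j,m}$ with $m>\tilde\beta_j$ span the subspace $\{g:(g^*,z_j)\in I_{\tilde\beta_j,z_j}\}$, the finite sum $\sum_m c_{j,m}e_{j,m}$ is orthogonal to that subspace and is therefore the one-variable minimizer, giving $1/K_{\partial D_j,\lambda_j}^{I_{\tilde\beta_j,z_j},h_j}(z_j)=\sum_m|c_{j,m}|^2$. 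The same orthogonality in the product basis shows that $\prod_j\sum_m c_{j,m}e_{j,m}$ is the minimizer for the $S$-problem, and its squared norm factors. Thus both inequalities are obtained at once, with no induction and no separate ``hard direction''. Your approach mirrors the Bergman-space argument of Lemma~\ref{l:Bergman-prod2} and would be the natural choice had Lemma~\ref{l:a6} not guaranteed that \emph{every} order is hit; given that it does, the paper's direct computation is cleaner.
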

\begin{proof} Using Lemma \ref{l:a6}, there is 
	a complete orthonormal basis $\{e_{j,m}\}_{m\in\mathbb{Z}_{\ge0}}$   for $H^2_{\lambda_j}(D_j,\partial D_j)$ satisfying that $ord_{z_j}(e_{j,m}^*)=m$ for any $m\in\mathbb{Z}_{\ge0}$ and any $1\le j\le n$. It follows from Proposition \ref{p:7} that $\{\prod_{1\le j\le n}e_{j,\sigma_j}\}_{\sigma\in \mathbb{Z}_{\ge0}^n}$ is a complete orthonormal basis for $H^2_{\lambda}(M,S)$.
	
For any $1\le j\le n$,	there is a constant $c_{j,m}$  for $\beta_j\le m\le \tilde\beta_j$, which satisfies that $c_{j,\beta_j}\not=0$ and  $(h_j-\sum_{\beta_j\le m\le\tilde\beta_j}c_{j,m}e_{j,m},z_j)\in I_{\tilde\beta_j,z_j}$. It is clear that 
 $\ll e_{j,0},f\gg_{\partial D_j,\lambda_j}=0$ for any $f\in H_{\lambda_j}^2(D_j,\partial D_j)$ satisfying $(f^*,z_j)\in I_{\tilde\beta_j,z_j}$, then we have  that 
	\begin{equation}
		\label{eq:0811a}
		\begin{split}
	\sum_{\beta_j\le m\le\tilde\beta_j}|c_{j,m}|^2&=\|\sum_{\beta_j\le m\le\tilde\beta_j}c_{j,m}e_{j,m}\|^2_{\partial D_j,\lambda_j}\\
	&=\inf\{\|f\|^2_{\partial D_j,\lambda_j}:f\in H^2_{\lambda_j}(D_j,\partial D_j)\,\&\,(f^*-h_j,z_j)\in I_{\tilde\beta_j,z_j}\}\\
	&=\frac{1}{K_{\partial D_j,\lambda_j}^{I_{\tilde\beta_j,z_j},h_j}}.
		\end{split}
	\end{equation}
Note that $(h_0-\prod_{1\le j\le n}\sum_{\beta_j\le m\le\tilde\beta_j}c_{j,m}e_{j,m},z_0)\in I$.
Proposition \ref{p:7} shows that $\{\prod_{1\le j\le n}e_{j,\alpha_j}\}_{\alpha\in\mathbb{Z}_{\ge0}^n}$ is a complete orthonormal basis for $H^2_{\lambda}(M,S)$, then
we have $\ll \prod_{1\le j\le n}\sum_{\beta_j\le m\le\tilde\beta_j}c_{j,m}e_{j,m},f\gg_{S,\lambda}=0$ for any $f\in H_{\lambda}^2(M,S)$ satisfying $(f^*,z_0)\in I$, which implies  that 
	\begin{equation}
		\label{eq:0811b}
		\begin{split}
		\prod_{1\le j\le n}\sum_{\beta_j\le m\le\tilde\beta_j}|c_{j,m}|^2&=\|\prod_{1\le j\le n}\sum_{\beta_j\le m\le\tilde\beta_j}c_{j,m}e_{j,m}\|^2_{\partial D_j,\lambda_j}\\
		&=\inf\{\|f\|_{S,\lambda}^2:f\in H^2_{\lambda}(M,S)\,\&\,(f^*-h_0,z_0)\in I\}\\
		&=\frac{1}{K_{S,\lambda}^{I,h_0}(z_0)}.
	\end{split}
	\end{equation}
	Following from  equality \eqref{eq:0811a} and equality \eqref{eq:0811b}, we have  $K_{S,\lambda}^{I,h_0}(z_0)=\prod_{1\le j\le n} K_{\partial D_j,\lambda_j}^{I_{\tilde\beta_j,z_j},h_j}(z_j)$.
\end{proof}

\section{Useful propositions}\label{sec:p}

In this section, we give some useful propositions, which will be used in the proofs of the main results.

We recall the following coarea formula.
\begin{Lemma}[see \cite{federer}]
	\label{l:coarea}Suppose that $\Omega$ is an open set in $\mathbb{R}^n$ and $u\in C^1(\Omega)$. Then for any $g\in L^1(\Omega)$, 
	$$\int_{\Omega}g(x)|\bigtriangledown u(x)|dx=\int_{\mathbb{R}}\left(\int_{u^{-1}(t)}g(x)dH_{n-1}(x)\right)dt,$$
	where $H_{n-1}$ is the $(n-1)$-dimensional Hausdorff measure.
\end{Lemma}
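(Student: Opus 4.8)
The plan is to prove the identity by a sequence of reductions, followed by a separate treatment of the regular and critical parts of $u$. First I would reduce to indicator functions: writing $g=g^{+}-g^{-}$ and using that both sides of the asserted identity are linear in $g$, it suffices to treat $g\ge 0$; then approximating $g$ from below by simple functions $\sum_k c_k\chi_{A_k}$ and invoking monotone convergence for the left-hand integral together with monotone convergence in $t$ on the right, the claim reduces to proving
$$\int_{A}|\nabla u(x)|\,dx=\int_{\mathbb{R}}H_{n-1}\big(A\cap u^{-1}(t)\big)\,dt$$
for an arbitrary bounded measurable set $A\subseteq\Omega$. (The measurability of $t\mapsto H_{n-1}(A\cap u^{-1}(t))$ that is tacitly needed here I would obtain a posteriori from the chart representation on the regular set and the nullity of the critical contribution established below.)

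Next I would split $A$ according to the critical set $C:=\{x\in\Omega:\nabla u(x)=0\}$. On the regular part $U:=A\setminus C$ the level sets are locally $C^{1}$ hypersurfaces: near each point the implicit function theorem gives, after permuting coordinates so that $\partial u/\partial x_n\neq 0$, a graph representation $u^{-1}(t)=\{x_n=\phi(x',t)\}$. Over such a chart I would perform the change of variables $(x',t)\mapsto(x',x_n)$ with $t=u(x',x_n)$, whose Jacobian is $\phi_t=1/(\partial u/\partial x_n)$; since differentiating $u(x',\phi)=t$ in $x'$ yields $\nabla_{x'}\phi=-\nabla_{x'}u/(\partial u/\partial x_n)$, the graph surface element satisfies $\big(1+|\nabla_{x'}\phi|^{2}\big)^{1/2}=|\nabla u|/|\partial u/\partial x_n|$, and this factor multiplied against $dx'\,dt=|\partial u/\partial x_n|\,dx$ collapses to exactly $|\nabla u|\,dx$. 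A partition of unity subordinate to a countable cover of $U$ by such charts then yields the identity with $A$ replaced by $U$.

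It remains to show the critical part contributes nothing, i.e.\ that $\int_{\mathbb{R}}H_{n-1}\big(C\cap A\cap u^{-1}(t)\big)\,dt=0$, consistent with the left-hand integrand $|\nabla u|$ vanishing on $C$. For this I would establish the one-sided \emph{coarea inequality}
$$\int^{*}_{\mathbb{R}}H_{n-1}\big(B\cap u^{-1}(t)\big)\,dt\le c_{n}\int_{B}|\nabla u(x)|\,dx\qquad(B\subseteq\Omega\text{ measurable})$$
by a Vitali-type covering argument: cover $B$ by small balls $B_i=B(x_i,r_i)$, note that $u(B_i)$ is an interval of length $\le 2r_i\sup_{B_i}|\nabla u|$ and that $B_i$ covers each slice $B_i\cap u^{-1}(t)$ by one set of diameter $\le 2r_i$, so that $\int^{*}H^{\delta}_{n-1}(B\cap u^{-1}(t))\,dt\lesssim\sum_i r_i^{\,n}\sup_{B_i}|\nabla u|\approx\sum_i|B_i|\sup_{B_i}|\nabla u|$, and then optimizing the cover to approximate $\int_B|\nabla u|\,dx$. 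Applying this with $B=C\cap A$ and using $|\nabla u|\equiv 0$ on $C$ forces the right-hand side to be $0$, which both kills the critical contribution and supplies the measurability invoked in the reduction step.

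The main obstacle I anticipate is twofold: matching the graph surface measure against the change-of-variables Jacobian in the regular-part computation so that precisely $|\nabla u|$ emerges, and carrying out the covering estimate for the coarea inequality with enough care that the dimensional constant and the slicing bound are legitimate; by contrast, the reductions in the first paragraph are routine once measurability is in hand. An alternative to the coarea inequality for disposing of $C$ would be Sard's theorem, but the covering estimate is cleaner because it handles measurability and the critical set simultaneously.
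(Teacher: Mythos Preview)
The paper does not prove this lemma; it is quoted from the literature with the attribution ``see \cite{federer}'' and used as a black box in the proof of Proposition~\ref{p:b6}. So there is no proof in the paper to compare your proposal against.

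That said, your sketch follows the standard route one finds in Federer or in Evans--Gariepy: reduce to indicators, treat the regular set via the implicit function theorem and a graph-parametrization computation, and kill the critical set via a covering-type coarea inequality. The Jacobian computation you outline is correct---the surface element $\sqrt{1+|\nabla_{x'}\phi|^2}=|\nabla u|/|\partial_{x_n}u|$ combines with $|\partial_{x_n}u|\,dx'\,dt=dx$ to produce exactly $|\nabla u|\,dx$. One cautionary remark on your closing comment: Sard's theorem for $C^1$ maps $\mathbb{R}^n\to\mathbb{R}$ is \emph{not} available when $n\ge 2$ (the Morse--Sard hypothesis requires $C^n$ regularity, and Whitney's examples show this is sharp), so the covering/coarea-inequality argument you chose as primary is in fact necessary here, not merely cleaner.
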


Let $z_0=(z_1,\ldots,z_n)\in M,$ and let  
$$\psi(w_1,\ldots,w_n)=\max_{1\le j\le n}\{2p_jG_{D_j}(w_j,z_j)\}$$
 be a plurisubharmonic function on $M$, where $G_{D_j}(\cdot,z_j)$ is the Green function on $D_j$ and $p_j>0$ for any $1\le j\le n$.

Let $\hat\rho$ be a Lebesgue measurable function on $\overline M$, which satisfies that $\inf_{\overline{M}}\hat\rho>0$ and $\hat\rho(w_j,\hat w_j)\leq \liminf_{w\rightarrow w_j}\hat\rho(w,\hat w_j)$ for any $(w_j,\hat w_j)\in \partial D_j\times M_j\subset \partial M$ and any $1\le j\le n$, where $M_j=\prod_{l\not=j}D_l$. Let $\rho$ be a Lebesgue measurable function on $\partial M$ such that
$$\rho(w_1,\ldots,w_n):=\frac{1}{p_j}\left(\frac{\partial G_{D_j}(w_j,z_j)}{\partial v_{w_j}}\right)^{-1}\hat\rho$$
on $\partial D_j\times {M_j}$ for any $1\le j\le n$, thus we have $\inf_{\partial M}\rho>0$.

\begin{Proposition}
	\label{p:b6}Let $g$ be a holomorphic function on $M$. Assume that 
	$$\liminf_{r\rightarrow1-0}\frac{\int_{\{z\in M:\psi(z)\ge\log r\}}|g(z)|^2\hat\rho}{1-r}<+\infty,$$
	then there is $f\in H^2_{\rho}(M,\partial M)$ such that $f^*=g$ and 
	$$\|f\|_{\partial M,\rho}^2\le \frac{1}{\pi} \liminf_{r\rightarrow1-0}\frac{\int_{\{z\in M:\psi(z)\ge\log r\}}|g(z)|^2\hat\rho}{1-r}.$$
\end{Proposition}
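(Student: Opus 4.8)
The plan is to produce the boundary function $f$ as the family of nontangential boundary values of $g$ along the faces $\partial D_j\times M_j$, to verify that $g$ itself witnesses $f\in H^2_\rho(M,\partial M)$ in the sense of Definition \ref{def:1}, and to control the norm face by face. For $1\le j\le n$ set $A_j^r = \{w\in M: G_{D_j}(w_j,z_j)\ge \frac{\log r}{2p_j}\}$, a collar of $\partial D_j\times M_j$ of width $\sim\frac{1-r}{2p_j}$ in the $w_j$-direction. Since $\psi = \max_j 2p_j G_{D_j}(\cdot,z_j)$, one has $\{\psi\ge\log r\} = \bigcup_j A_j^r$.

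First I would establish the one-variable heart of the matter: if $h\in\mathcal O(D)$ on a planar regular region $D$ with $G = G_D(\cdot,z_0)$, and $\eta$ is measurable on $\overline D$ with $\inf\eta>0$ and $\eta(z)\le\liminf_{w\to z}\eta(w)$ on $\partial D$, and if $\liminf_{s\to0+}\frac1s\int_{\{G\ge -s\}}|h|^2\eta\,dA<+\infty$, then $h\in H^2(D)$ and $\int_{\partial D}\frac{|h_*|^2\eta}{\partial G/\partial v}|dz|\le\liminf_{s\to 0+}\frac1s\int_{\{G\ge-s\}}|h|^2\eta\,dA$. For this I apply the coarea formula (Lemma \ref{l:coarea}) with $u=G$ to write $\int_{\{G\ge-s\}}|h|^2\eta\,dA = \int_{-s}^0\phi(t)\,dt$ with $\phi(t)=\int_{\{G=t\}}\frac{|h|^2\eta}{|\nabla G|}\,dH_1$. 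Membership follows because $\inf\eta>0$ and the two-sided bound $\frac1C\le|\nabla G|\le C$ near $\partial D$ (by analyticity of $\partial D$, cf. the proof of Lemma \ref{l:b3-p}) make $\phi(t)$ bounded below by a fixed multiple of the value $U_{-t}(z_0)$ of the least harmonic majorant of $|h|^2$ on $\{G<t\}$; since $U_s(z_0)$ increases as $s\downarrow0$, finiteness of $\liminf_s\frac1s\int_{-s}^0\phi$ forces $\sup_s U_s(z_0)<+\infty$, i.e. $|h|^2$ admits a harmonic majorant. The sharp inequality then comes from the lower semicontinuity of $\eta$ at $\partial D$ together with the a.e. nontangential convergence $h\to h_*$ along the analytic level curves $\{G=t\}$ and $|\nabla G|\to\partial G/\partial v$ (Lemma \ref{l:v}): a Fatou argument gives $\liminf_{t\to0}\phi(t)\ge\int_{\partial D}\frac{|h_*|^2\eta}{\partial G/\partial v}|dz|$, and averaging over $[-s,0]$ yields the claim.

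Next I would fiber each face. Fix $j$; since $A_j^r\subset\{\psi\ge\log r\}$, Fubini gives $\int_{A_j^r}|g|^2\hat\rho = \int_{M_j}I_r^{(j)}(\hat w_j)\,d\mu_j$, where $I_r^{(j)}(\hat w_j)=\int_{\{G_{D_j}(\cdot,z_j)\ge\frac{\log r}{2p_j}\}}|g(\cdot,\hat w_j)|^2\hat\rho(\cdot,\hat w_j)\,dA$. By Fatou, $\liminf_r\frac{I_r^{(j)}(\hat w_j)}{1-r}<+\infty$ for a.e. $\hat w_j$, so the one-variable result with $D=D_j$, $\eta=\hat\rho(\cdot,\hat w_j)$ shows $g(\cdot,\hat w_j)\in H^2(D_j)$ for a.e. $\hat w_j$; Lemma \ref{l:b3-p} upgrades this to every $\hat w_j$. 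Changing variables from $s$ to $r$ (using $\frac{1-r}{-\log r}\to1$) and recalling $\rho = \frac1{p_j}(\partial G_{D_j}/\partial v)^{-1}\hat\rho$ on $\partial D_j\times M_j$, the one-variable inequality reads $\frac1{2\pi}\int_{\partial D_j}|\gamma_j(g)|^2\rho\,|dw_j|\le\frac1\pi\liminf_r\frac{I_r^{(j)}(\hat w_j)}{1-r}$; integrating over $M_j$ and using Fatou once more bounds the $j$-th face contribution to $\|f\|^2_{\partial M,\rho}$ by $\frac1\pi L_j$, where $L_j:=\liminf_r\frac1{1-r}\int_{A_j^r}|g|^2\hat\rho$. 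Taking $f=\gamma_j(g)$ on $\partial D_j\times M_j$ realizes $g$ as $f^*$ in Definition \ref{def:1}.

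Finally I would combine the faces. Summing the face bounds gives $\|f\|^2_{\partial M,\rho}\le\frac1\pi\sum_j L_j$, so it remains to prove $\sum_j L_j\le C:=\liminf_r\frac1{1-r}\int_{\{\psi\ge\log r\}}|g|^2\hat\rho$. This is the one genuinely delicate point and the \emph{main obstacle}: from $\{\psi\ge\log r\}=\bigcup_j A_j^r$ one only gets $\int_{\{\psi\ge\log r\}}\le\sum_j\int_{A_j^r}$, so I must show the over-counting is lower order, namely $\frac1{1-r}\int_{A_j^r\cap A_k^r}|g|^2\hat\rho\to0$ for $j\ne k$. Since $A_j^r\cap A_k^r$ is a product of two $w$-collars of widths $O(1-r)$ with the remaining factors, fibering and applying the one-variable collar estimate in both the $w_j$- and $w_k$-directions gives $\int_{A_j^r\cap A_k^r}|g|^2\hat\rho = O((1-r)^2)$; this step requires the uniform fiber bounds of Lemmas \ref{l:b1-p} and \ref{l:b3-p}. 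Granting it, $\sum_j\int_{A_j^r} = \int_{\{\psi\ge\log r\}}+O((1-r)^2)$, whence $\sum_j L_j\le\liminf_r\frac1{1-r}\sum_j\int_{A_j^r}=C$. This yields $\|f\|^2_{\partial M,\rho}\le\frac C\pi$ and completes the proof.
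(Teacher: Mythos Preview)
Your overall architecture—establish a sharp one-variable collar-to-boundary inequality, fiber it over $M_j$ to handle each face, then add the faces—is exactly the strategy of the paper, and Steps~1 and~2 are fine. The gap is in Step~3, the overlap estimate.

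\medskip

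\textbf{The claim $\int_{A_j^r\cap A_k^r}|g|^2\hat\rho=O((1-r)^2)$ is too strong.} Take $n=2$, $D_1=D_2=\Delta$, $z_0=0$, $p_1=p_2=1$, $\hat\rho\equiv1$, and $g(w_1,w_2)=\sum_{m\ge0}\frac{1}{\sqrt{m+1}}w_1^mw_2^m$. One checks directly that $g\in H^2_\rho(M,\partial M)$ (the face norms are finite since $\sum\frac{1}{(m+1)^2}<\infty$), yet the overlap integral is of order $(1-r)^2\log\frac{1}{1-r}$, not $O((1-r)^2)$. So the stated bound fails. What you actually need, and what suffices for $\sum_jL_j\le C$, is the weaker $o(1-r)$; but your appeal to Lemmas~\ref{l:b1-p} and~\ref{l:b3-p} does not give this either, because those lemmas control fibers over \emph{compact} subsets of $M_j$, whereas points of $A_j^r\cap A_k^r$ have their $k$-th coordinate near $\partial D_k$, precisely outside any fixed compact set.

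\medskip

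\textbf{The paper's fix.} The paper avoids the overlap analysis altogether. Instead of integrating the one-variable inequality over all of $M_j$, it integrates over an arbitrary compact $K_j\Subset M_j$ and defines $\Omega_{j,r}=\{G_{D_j}(\cdot,z_j)\ge\tfrac{\log r}{2p_j}\}\times K_j$. For $r$ close enough to $1$ the sets $\Omega_{1,r},\ldots,\Omega_{n,r}$ are \emph{pairwise disjoint}: if a point lay in $\Omega_{j,r}\cap\Omega_{k,r}$ its $j$-th coordinate would be near $\partial D_j$ yet also in the (compact) projection of $K_k$ to $D_j$, which is impossible once the collar is thin. Disjointness gives immediately
\[
\sum_{j=1}^n\int_{K_j}\!\!\int_{\partial D_j}|\gamma_j(g)|^2\rho\,|dw_j|\,d\mu_j
\;\le\;2\liminf_{r\to1-0}\frac{1}{1-r}\int_{\{\psi\ge\log r\}}|g|^2\hat\rho,
\]
and letting $K_j\uparrow M_j$ (monotone convergence) yields the desired norm bound without any overlap computation. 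This is both simpler and more robust (it works for general $\hat\rho$ with $\inf\hat\rho>0$, with no upper bound needed). You can graft this directly onto your Steps~1--2 in place of your Step~3.
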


\begin{proof}
	Denote that 
	$$D_{1,r}:=\{z\in D_1:G_{D_1}(z,z_1)<\log r\},$$
	where $G_{D_1}(\cdot,z_1)$ is the Green function on $D_1$ and $r\in(0,1)$. It is well-known that $G_{D_1}(\cdot,z_1)-\log r$ is the Green function on $D_{1,r}$.
	By the analyticity of the boundary of $D_1$, we have $G_{D_1}(z,w)$ has an analytic extension on $U\times V\backslash\{z=w\}$ and $\frac{\partial G_{D_1}(z,z_1)}{\partial v_z}$ is positive and smooth on $\partial D_1$, where  $\partial/\partial v_z$ denotes the derivative along the outer normal unit vector $v_z$, $U$ is a neighborhood of $\overline{D_1}$ and $V\Subset D_1$. Then there exist $r_0\in(0,1)$ and $C_1>0$ such that  $\frac{1}{C_1}\le|\bigtriangledown G_{D_1}(\cdot,z_1)|\le C_1$ on $\{z\in D_1:G_D(z,z_1)>\log r_0\}$, which implies 
	\begin{equation}
		\label{eq:0731a}\frac{1}{C_1}\le\frac{\partial G_{D_1}(z,z_1)}{\partial v_z}\le C_1
	\end{equation}
	holds on $\{z\in D:G_{D_1}(z,z_1)>\log r_0\}$ (by using Lemma \ref{l:v}).
Denote that 
$$U_{r,\hat w_1}(w_1):=\frac{1}{2\pi}\int_{\partial D_{1,r}}|g(z,\hat w_1)|^2\frac{\partial G_{D_{1,r}}(z,w_1)}{\partial v_z} |dz|$$
for any $\hat w_1\in M_1,$	
	where $r\in (r_0,1)$ and $G_{D_{1,r}}(\cdot,\cdot)$ is the Green function on $D_{1,r}$. Then we know that $U_{r,\hat w_1}(w_1)\in C(\overline{D_{1,r}})$, $U_{r,\hat w_1}(w_1)|_{\partial D_{1,r}}=|g(z,\hat w_1)|^2$ and $U_{r,\hat w_1}(w_1)$ is harmonic on $D_{1,r}$ for any $\hat w_1\in M_1$.
	As $|g(z,\hat w_1)|^2$ is subharmonic on $D_1$ for any $\hat w_1\in M_1$, then $U_{r,\hat w_1}$ is increasing with respect to $r$. 
	
	As $g(w_1,\cdot)$ is a holomorphic function on $M_1$ for any $w_1\in D_1$, for any compact subset $K$ of $M_1$, there is a positive constant $C_K$ such that 
	\begin{equation}
		\label{eq:0731b}\sup_{\hat w_1\in K}\int_{\partial D_{1,r}}|g(w_1,\hat w_1)|^2|dw_1|\le C_K\int_{M_1}\left(\int_{\partial D_{1,r}}|g(w_1,\hat w_1)|^2|dw_1|\right)d\mu_1(\hat w_1).
	\end{equation}
Note that $G_{D_{1,r}}(\cdot,z_1)=G_{D_1}(\cdot,z_1)-\log r$ on $D_{1,r}$, then	it follows from  inequality \eqref{eq:0731a} and inequality \eqref{eq:0731b} that
	\begin{equation}\label{eq:0731e}
	\begin{split}
				U_{r,\hat w_1}(z_1)&=\frac{1}{2\pi}\int_{\partial D_{1,r}}|g(z,\hat w_1)|^2\frac{\partial G_{D_{1}}(z,z_1)}{\partial v_z} |dz|\\
				&\le \frac{C_1C_K}{2\pi}\int_{M_1}\left(\int_{\partial D_{1,r}}|g(z,\hat w_1)|^2|dz|\right)d\mu_1(\hat w_1)\\
				&\le \frac{C_1^2C_K}{2\pi}\int_{M_1}\left(\int_{\partial D_{1,r}}|g(z,\hat w_1)|^2\frac{\partial G_{D_{1}}(z,z_1)}{\partial v_z}|dz|\right)d\mu_1(\hat w_1)\\
				&=\frac{C_1^2C_K}{2\pi}\int_{M_1}U_{r,\hat w_1}(z_1)d\mu_1(\hat w_1).
	\end{split}
	\end{equation}
	for any $r\in (r_0,1)$ and any $\hat w_1\in K$.
	It follows from Lemma \ref{l:coarea} that  
	\begin{equation}
		\label{eq:0731c}
		\begin{split}
					&\int_{\{z=(w_1,\hat w_1)\in M:G_{D_1}(w_1,z_1)\ge\frac{\log r}{2p_j}\}}|g(z)|^2\hat\rho(z)\\
					=&\int_{r^{\frac{1}{2p_j}}}^1\left(\int_{M_1}\left(\int_{\partial D_{1,s} }\frac{|g(w_1,\hat w_1)|^2\hat\rho(w_1,\hat w_1)}{|\bigtriangledown e^{G_{D_1}(w_1,z_1)}|}|dw_1|\right)d\mu_1(\hat w_1)\right) ds
		\end{split}
	\end{equation}
	for any $r\in (r_0^{2p_j},1)$. Denote that $$C_2=\inf_{(w_1,\hat w_1)\in\{G_{D_1}(w_1,z_1)\ge\log r_0\}\times M_1 }\frac{\hat\rho(w_1,\hat w_1)}{|\bigtriangledown e^{G_{D_1}(w_1,z_1)}|}>0.$$
Note that $U_{r,\hat w_1}(z_1)$ is increasing with respect $r$ for any $\hat w_1\in M_1$, then it follows from Lemma \ref{l:v} and inequality \eqref{eq:0731e} that
\begin{equation}
	\label{eq:0731d}
	\begin{split}
			U_{r,\hat w_1}(z_1)\le &\frac{C_1^2C_K}{2\pi}\int_{M_1}U_{r,\hat w_1}(z_1)d\mu_1(\hat w_1)\\
			\le &\frac{C_1^2C_K}{2\pi}\int_{M_1}U_{\tilde r,\hat w_1}(z_1)d\mu_1(\hat w_1)\\
			\le&\frac{C_1^3C_K}{2\pi}\int_{M_1}\left(\int_{\partial D_{1,\tilde r}}|g(z,\hat w_1)|^2|dz|\right)d\mu_1(\hat w_1)\\
			\le &\frac{C_1^3C_K}{2\pi C_2}\int_{M_1}\left(\int_{\partial D_{1,\tilde r}}|g(z,\hat w_1)|^2\frac{\hat\rho(z,\hat w_1)}{|\bigtriangledown e^{G_{D_1}(z,z_1)}|}|dz|\right)d\mu_1(\hat w_1)
	\end{split}
\end{equation}
holds for any $r_0<r<\tilde r<1$. 
	As $$\cup_{j=1}^n\left\{(w_j,\hat w_j)\in D_j\times M_j=M:G_{D_j}(w_j,z_j)\ge\frac{\log r}{2p_j} \right\}=\{z\in M:\psi(z)\ge\log r\},$$ following from $\liminf_{r\rightarrow1-0}\frac{\int_{\{z\in M:\psi(z)\ge\log r\}}|g(z)|^2\hat\rho}{1-r}<+\infty,$ equality \eqref{eq:0731c} and inequality \eqref{eq:0731d}, we obtain that
	\begin{equation}\nonumber
		\begin{split}
			&U_{r,\hat w_1}(z_1)\\
			\le &\frac{C_1^3C_K}{2\pi C_2}\liminf_{\tilde r\rightarrow1-0}\frac{\int_{\tilde r^{\frac{1}{2p_j}}}^1\left(\int_{M_1}\left(\int_{\partial D_{1,s} }\frac{|g(w_1,\hat w_1)|^2\hat\rho(w_1,\hat w_1)}{|\bigtriangledown e^{G_{D_1}(w_1,z_1)}|}|dw_1|\right)d\mu_1(\hat w_1)\right)}{1-\tilde r^{\frac{1}{2p_j}}}\\
			=&\frac{C_1^3C_K}{2\pi C_2}\liminf_{\tilde r\rightarrow1-0}\frac{\int_{\left\{z=(w_1,\hat w_1)\in M:G_{D_1}(w_1,z_1)\ge\frac{\log \tilde r}{2p_j}\right\}}|g(z)|^2\hat\rho(z)}{1-\tilde r^{\frac{1}{2p_j}}}\\
			\le&\frac{C_1^3C_K}{2\pi C_2}\liminf_{\tilde r\rightarrow1-0}\frac{\int_{\{z\in M:\psi(z)\ge\log \tilde r\}}|g(z)|^2\hat\rho}{1-\tilde r^{\frac{1}{2p_j}}}\\
			\le&C_3,
		\end{split}
	\end{equation} 
	where $C_3$ is a constant, which is independent of $r$, which shows that 
	$$\lim_{r\rightarrow1-0}U_{r,\hat w_1}(z_1)<+\infty.$$ Note that $U_{r,\hat w_1}$ is increasing with respect to $r$. By Harnack's principle (see \cite{ahlfors}), the sequence $U_{r,\hat w_1}$ converges to a harmonic function $U_{\hat w_1}$ on $D_1$ when $r\rightarrow1-0$ for any $\hat w_1\in M_1$, which satisfies that $|g(\cdot,\hat w_1)|^2\le U_{\hat w_1}$. Thus, we have 
	$$g(\cdot,\hat w_1)\in H^2(D_1)$$
	 for any $\hat w_1\in M_1$. By similar discussion, we know that $g(\cdot,\hat w_j)\in H^2(D_j)$ holds for any $\hat w_j\in M_j$ and any $1\le j\le n$.

In the following part, we will prove that 
$$\sum_{1\le j\le n}\int_{M_j}\int_{\partial D_j}|\gamma_j(g(\cdot,\hat w_j))|^2\rho|dw_j|d\mu_j(\hat w_j)\le 2\liminf_{r\rightarrow1-0}\frac{\int_{\{z\in M:\psi(z)\ge\log r\}}|g(z)|^2\hat\rho}{1-r},$$
which implies that
there is $f\in H^2_{\rho}(M,\partial M)$ such that $f^*=g$ and 
	$$\|f\|_{\partial M,\rho}^2\le \frac{1}{\pi}
	\liminf_{r\rightarrow1-0}\frac{\int_{\{z\in M:\psi(z)\ge\log r\}}|g(z)|^2\hat\rho}{1-r},$$
	where $\rho(w_1,\ldots,w_n)=\frac{1}{p_j}\left(\frac{\partial G_{D_j}(w_j,z_j)}{\partial v_{w_j}}\right)^{-1}\hat\rho$
on $\partial D_j\times \overline{M_j}$.	
	
Choose any compact subset $K_j$ of $M_j$ for any $1\le j\le n$, and denote that
$$\Omega_{j,r}:=\{z\in D_j:2p_jG_{D_j}(z,z_j)\geq \log r\}\times K_j\subset \{z\in M:\psi(z)\ge \log r\}$$
for any $1\le j\le n$. There exists $r_1\in(\max_{1\le j\le n}\{r_0^{2p_j}\},1)$, such that $\Omega_{j,r_1}\cap\Omega_{j',r_1}=\emptyset$ for any $j\not=j'$.
Note that $\gamma_1(g(\cdot,\hat w_1))$ denotes the nontangential boundary value of $g(\cdot,\hat w_1)$ a.e. on $\partial D_1$ for any $\hat w_1\in M_1$. As
 $$\hat\rho(w_1,\hat w_1)\leq \liminf_{w\rightarrow w_1}\hat\rho(w,\hat w_1)$$
  for any $(w_1,\hat w_1)\in \partial D_1\times M_1$,  
it follows from Fatou's lemma, Lemma \ref{l:coarea} and Lemma \ref{l:v} that
\begin{equation}
\nonumber \begin{split}
	&\int_{K_1}\int_{\partial D_1}|\gamma_1(g(\cdot,\hat w_1))|^2\rho|dw_1|d\mu_1(\hat w_1)\\
	=&\int_{K_1}\left(\frac{1}{ p_1}\int_{\partial D_1}\frac{|\gamma_1(g(\cdot,\hat w_1))|^2}{\frac{\partial G_{D_1}(w_1,z_1)}{\partial v_{w_1}}}\hat\rho|dw_1|\right)d\mu_1(\hat w_1)\\
	\le& \liminf_{r\rightarrow1-0}\frac{\int_{\frac{\log r}{2p_1}}^0\left(\int_{K_1}\left(\frac{1}{p_1}\int_{\{G_{D_1}(\cdot,z_1)=s\}}\frac{|g|^2\hat\rho}{|\bigtriangledown G_{D_1}(\cdot,z_1)|}|dw_1|\right)d\mu_1(\hat w_1)\right)ds}{-\frac{\log r}{2p_1}}\\
	=&2\liminf_{r\rightarrow1-0}\frac{\int_{\Omega_{1,r}}|g|^2\hat\rho}{\log r}.
\end{split}
\end{equation}	
By similar discussion, we have 
\begin{equation}
	\label{eq:220731a}
	\int_{K_j}\int_{\partial D_j}|\gamma_j(g(\cdot,\hat w_j))|^2\rho|dw_j|d\mu_j(\hat w_j)
	\le2\liminf_{r\rightarrow1-0}\frac{\int_{\Omega_{j,r}}|g|^2\hat\rho}{\log r}
\end{equation} 
for any $1\le j\le n$. As $\Omega_{j,r}\cap\Omega_{j',r}=\emptyset$ for any $j\not=j'$ and $r\in(r_1,1)$, following from the arbitrariness of $K_j$ and inequality \eqref{eq:220731a} that 
\begin{equation}\nonumber
	\begin{split}
		&\sum_{1\le j\le n}	\int_{M_j}\int_{\partial D_j}|\gamma_j(g(\cdot,\hat w_j))|^2\rho|dw_j|d\mu_j(\hat w_j)\\
		\le&2\liminf_{r\rightarrow1-0}\frac{\int_{\{z\in M:\psi(z)\ge\log r\}}|g|^2\hat\rho}{\log r}\\
		=&2\liminf_{r\rightarrow1-0}\frac{\int_{\{z\in M:\psi(z)\ge\log r\}}|g|^2\hat\rho}{1-r}\\
		<&+\infty.
	\end{split}
\end{equation}

Thus, Proposition \ref{p:b6} holds.
\end{proof}

Let $h_0$ be a holomorphic function on a neighborbood $V_0$ of $z_0$, and let $I$ be an ideal of $\mathcal{O}_{z_0}$ such that $\mathcal{I}(\psi)_{z_0}\subset I\not= \mathcal{O}_{z_0}$. Let $c$ be a positive function on $[0,+\infty)$,  which satisfies that $c(t)e^{-t}$ is decreasing on $[0,+\infty)$, $\lim_{t\rightarrow0+0}c(t)=c(0)=1$ and $\int_{0}^{+\infty}c(t)e^{-t}dt<+\infty$. Denote that $$\tilde\rho=c(-\psi)\hat\rho.$$

\begin{Proposition}\label{p:inequality}
	Assume that $-\log\hat\rho$ is plurisubharmonic on $M$. We have  
	$$K_{\partial M,\rho}^{I,h_0}(z_0)\geq\left(\int_{0}^{+\infty}c(t)e^{-t}dt\right)\pi B_{\tilde \rho}^{I,h_0}(z_0).$$
\end{Proposition}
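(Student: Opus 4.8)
The plan is to unwind both kernels via their definitions and reduce the claim to an extension estimate. Writing $a:=\int_0^{+\infty}c(t)e^{-t}\,dt$ and taking reciprocals, the inequality is equivalent to
$$\inf\left\{\|f\|_{\partial M,\rho}^2:f\in H^2_{\rho}(M,\partial M),\,(P_{\partial M}(f)-h_0,z_0)\in I\right\}\le\frac{1}{\pi a}\cdot\frac{1}{B_{\tilde\rho}^{I,h_0}(z_0)}.$$
We may assume $B_{\tilde\rho}^{I,h_0}(z_0)\in(0,+\infty)$, since for $B_{\tilde\rho}^{I,h_0}(z_0)=0$ the right-hand side of the Proposition vanishes while $K_{\partial M,\rho}^{I,h_0}(z_0)\ge0$. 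By Lemma \ref{l:bergman<+infty} there is a unique $F\in\mathcal O(M)$ with $(F-h_0,z_0)\in I$ and $\int_M|F|^2\tilde\rho=1/B_{\tilde\rho}^{I,h_0}(z_0)$. It therefore suffices to produce $f\in H^2_{\rho}(M,\partial M)$ with $P_{\partial M}(f)=F$ and $\|f\|_{\partial M,\rho}^2\le(\pi a)^{-1}\int_M|F|^2\tilde\rho$, for then $f$ competes in the infimum above.

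To build such an $f$ I would apply Proposition \ref{p:b6} to $g=F$; this reduces everything to the boundary-shell estimate
$$\liminf_{r\to1-0}\frac{\int_{\{\psi\ge\log r\}}|F|^2\hat\rho}{1-r}\le\frac{1}{a}\int_M|F|^2\tilde\rho,$$
since Proposition \ref{p:b6} then yields $f$ with $f^*=F$ and $\|f\|_{\partial M,\rho}^2\le\pi^{-1}$ times the left-hand $\liminf$. To prove this estimate I pass to the concavity framework of Section \ref{sec:pre}: on the Stein manifold $M$, with $K_M$ trivialized by $dw_1\wedge\cdots\wedge dw_n$, put $\varphi=-\log\hat\rho$ (so $\varphi+\psi$ is plurisubharmonic by the hypothesis on $-\log\hat\rho$), take $\mathcal F=I$ (legitimate since $\inf_{\overline M}\hat\rho>0$ forces $\mathcal I(\varphi+\psi)_{z_0}\subseteq\mathcal I(\psi)_{z_0}\subseteq I$) and the local datum $h_0\,dw_1\wedge\cdots\wedge dw_n$, and let $G(t)$ be the associated minimal integral. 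Because $\{\psi<0\}=M$ and $F$ is extremal, $G(0)=\int_M|F\,dw_1\wedge\cdots\wedge dw_n|^2\hat\rho\,c(-\psi)$; moreover the fixed-$F$ integral $\tilde G(t):=\int_{\{\psi<-t\}}|F\,dw_1\wedge\cdots\wedge dw_n|^2\hat\rho\,c(-\psi)$ satisfies $\tilde G(0)=G(0)$ and $\tilde G(t)\ge G(t)$ for all $t\ge0$, as $F$ is a competitor for $G(t)$ on $\{\psi<-t\}$.

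The decisive step is a one-sided derivative bound coming from Theorem \ref{thm:general_concave}: $G(h^{-1}(r))$ is concave on $(0,a)$ with value $0$ at $r=0^+$ and $G(0)$ at $r=a^-$, where $h(t)=\int_t^{+\infty}c(l)e^{-l}\,dl$, so its left derivative at $r=a$ is at most the chord slope $G(0)/a$. Unwinding $r=h(t)$ and using $a-h(t)=\int_0^tc(l)e^{-l}\,dl\sim c(0)\,t=t$ gives $\lim_{t\to0+0}\frac{G(0)-G(t)}{t}\le\frac{G(0)}{a}$, and combining with $\tilde G(0)-\tilde G(t)=G(0)-\tilde G(t)\le G(0)-G(t)$ yields $\liminf_{t\to0+0}\frac{\tilde G(0)-\tilde G(t)}{t}\le\frac{G(0)}{a}$. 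Finally, since $c$ is continuous with $c(0)=1$ one has $c(-\psi)\to1$ uniformly on the shells $\{-t\le\psi<0\}$ as $t\to0+0$, and with $t=-\log r$, $1-r=1-e^{-t}\sim t$, so replacing $\hat\rho$ by $\hat\rho\,c(-\psi)$ changes neither the $\liminf$-quotient nor, after cancelling the scale factor $2^n$ from $|dw_1\wedge\cdots\wedge dw_n|^2$ in the identity $G(0)=2^n\int_M|F|^2\tilde\rho$, the right-hand side; this is precisely the boundary-shell estimate. The main obstacle is exactly this chain — comparing the fixed-extremal shell integral $\tilde G$ with the re-minimized $G$ and controlling the one-sided derivative of $G$ at the endpoint $t=0$ by concavity; once it is established, Proposition \ref{p:b6} and the reciprocal reformulation complete the proof.
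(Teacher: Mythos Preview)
Your proof is correct and follows essentially the same route as the paper's: take the Bergman extremal $F$, feed it into Proposition~\ref{p:b6} via a boundary-shell estimate, and derive that estimate from the concavity of $G(h^{-1}(r))$ (Theorem~\ref{thm:general_concave}) together with $\tilde G(t)\ge G(t)$. One small imprecision: you invoke ``$c$ is continuous with $c(0)=1$'', but the standing hypothesis only gives $\lim_{t\to0+0}c(t)=c(0)=1$ and $c(t)e^{-t}$ decreasing; this is exactly what is needed (it yields $e^{-t}c(t)\le c(-\psi)\le e^{-\psi}\le e^{t}$ on the shell $\{-t\le\psi<0\}$, so the ratio of the $\hat\rho$- and $\tilde\rho$-shell integrals tends to $1$), but you should not assume global continuity of $c$.
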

\begin{proof}
	It suffices to prove the case $ B_{\tilde \rho}^{I,h_0}(z_0)>0$.  Denote \begin{equation*}
\begin{split}
\inf\left\{\int_{\{\psi<-t\}}|\tilde{f}|^{2}\tilde\rho:(\tilde{f}-h_0,z_0)\in I\,\&{\,}\tilde{f}\in \mathcal{O} (\{\psi<-t\})\right\},
\end{split}
\end{equation*}
by $G(t)$ for any $t\ge0$, then we have 
$$G(0)=\frac{1}{B^{I,h_0}_{\tilde\rho}(z_0)}.$$ 
Since $\mathcal{I}(\psi)_{z_0}\subset I$, it follows from Theorem \ref{thm:general_concave} that $G(h^{-1}(r))$ is concave with respect to $r\in(0,\int_0^{+\infty}c(t)e^{-t}dt)$, where $h(t)=\int_t^{+\infty}c(l)e^{-l}dl$.

 Lemma \ref{l:unique} tells us that there exists a holomorphic function $F_0$ on $M$ such that $(F_0-h_0,z_0)\in I$ and 
 $$G(0)=\int_{M}|F_0|^2\tilde\rho<+\infty.$$  
 As 
 $G(t)\le \int_{\{\psi<-t\}}|F_0|^2\tilde\rho$
 for any $t\ge 0$, $\hat \rho=\frac{\tilde \rho}{c(-\psi)}$, $c(t)e^{-t}$ is decreasing and $\lim_{t\rightarrow0+0}c(t)=c(0)=1$, then we have 
\begin{equation}
	\label{eq:0806b}\begin{split}
		\limsup_{r\rightarrow1-0}\frac{\int_{\{z\in M:\psi(z)\ge\log r\}}|F_0(z)|^2\hat\rho}{1-r}&=\limsup_{r\rightarrow1-0}\frac{\int_{\{z\in M:\psi(z)\ge\log r\}}|F_0(z)|^2\tilde\rho}{\int_0^{-\log r}c(l)e^{-l}dl}\\
		&\le \limsup_{r\rightarrow1-0}\frac{G(0)-G(-\log r)}{\int_0^{+\infty}c(l)e^{-l}dl-\int_{-\log r}^{+\infty}c(l)e^{-l}dl}\\
		&=\limsup_{t \rightarrow 0+0}\frac{G(0)-G(t)}{\int_0^{+\infty}c(l)e^{-l}dl-\int_t^{+\infty}c(l)e^{-l}dl}.
	\end{split}
\end{equation}
Proposition \ref{p:b6} shows that there is $f\in H^2_{\rho}(M,\partial M)$ such that $f^*=F_0$ and 
 \begin{equation}
 	\label{eq:0801c2}\|f\|^2_{\partial M,\rho}\le \frac{1}{\pi}\limsup_{r\rightarrow1-0}\frac{\int_{\{z\in M:\psi(z)\ge\log r\}}|F_0(z)|^2\hat\rho}{1-r}.
 \end{equation}
Note that $(f^*-h,z_0)\in I$, then combining the definition of $K^{I,h_0}_{\partial M,\rho}(z_0)$, concavity of $G(h^{-1}(r))$, inequality \eqref{eq:0806b} and inequality \eqref{eq:0801c2}, we have
\begin{equation}
	\label{eq:0801d2}
	\begin{split}
		\left(\int_0^{+\infty}c(t)e^{-t}dt\right)B^{I,h_0}_{\tilde\rho}(z_0)&=\frac{\int_0^{+\infty}c(t)e^{-t}dt}{G(0)}\\
		&\le\liminf_{t\rightarrow0+0}\frac{\int_0^{+\infty}c(l)e^{-l}dl-\int_t^{+\infty}c(l)e^{-l}dl}{G(0)-G(t)} \\
		&\leq\liminf_{r\rightarrow 1-0}\frac{1-r}{\int_{\{z\in M:\psi(z)\ge\log r\}}|F_0|^2\hat\rho}\\
		&\le \frac{1}{\pi\|f\|^2_{\partial M,\rho}} \\
		&\le \frac{1}{\pi} K^{I,h_0}_{\partial M,\rho}(z_0).
	\end{split}
\end{equation}
Thus, we have proved the Proposition \ref{p:inequality}.
\end{proof}

Let $\hat\rho=\prod_{1\le j\le n}\rho_j$, where $\rho_j$ is a Lebesgue measurable function on $\overline{D_j}$ such that $\inf_{\overline{D_j}}\rho_j>0$ and $\rho_j(p)\le\liminf_{z\rightarrow p}\rho_j(z)$ for any $p\in \partial D_j$.
 Let 
$$\rho(w_1,\ldots,w_n)=\frac{1}{p_j}\left(\frac{\partial G_{D_j}(w_j,z_j)}{\partial v_{w_j}}\right)^{-1}\hat\rho$$
on $\partial D_j\times {M_j}$, and let
$$\lambda (w_1,\ldots,w_n)=\hat\rho\prod_{1\le j\le n}\left(\frac{\partial G_{D_j}(w_j,z_j)}{\partial v_{w_j}}\right)^{-1}$$
on $S=\prod_{1\le j\le n}\partial D_j$. 
\begin{Proposition}\label{p:inequality2}Assume that $-\log\rho_j$ is subharmonic on $D_j$ for any $1\le j\le n$, then we have
		$$K_{S,\lambda}(z_0)\geq\left(\sum_{1\le j\le n}\frac{1}{p_j}\right)\pi^{n-1} K_{\partial M,\rho}(z_0).$$
\end{Proposition}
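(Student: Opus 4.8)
The plan is to combine the supremum (quotient) characterizations of the two kernels with the product structure of $\lambda$. Since both kernels are nonnegative, I may assume $K_{\partial M,\rho}(z_0)>0$, the inequality being trivial otherwise. Set $\lambda_j:=\rho_j\left(\frac{\partial G_{D_j}(\cdot,z_j)}{\partial v}\right)^{-1}$ on $\partial D_j$; then $\lambda=\prod_{1\le j\le n}\lambda_j$ and each $\lambda_j$ is measurable with $\inf_{\partial D_j}\lambda_j>0$. First I would record, via Proposition \ref{p:8} (its proof uses only $\inf_{\partial D_j}\lambda_j>0$, through Proposition \ref{p:7} and Lemma \ref{l:a6}), the product formula $K_{S,\lambda}(z_0)=\prod_{1\le l\le n}K_{\partial D_l,\lambda_l}(z_l)$. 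It then suffices to prove the upper bound
\[ K_{\partial M,\rho}(z_0)\le \frac{\prod_{1\le l\le n}K_{\partial D_l,\lambda_l}(z_l)}{\left(\sum_{1\le j\le n}\frac1{p_j}\right)\pi^{\,n-1}}. \]

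By Lemma \ref{l:sup-b} the bound reduces to showing, for every $f\in H^2_\rho(M,\partial M)$, that $\|f\|_{\partial M,\rho}^2$ is at least $\left(\sum_j\frac1{p_j}\right)\pi^{n-1}|f^*(z_0)|^2/\prod_l K_{\partial D_l,\lambda_l}(z_l)$. I would split $\|f\|_{\partial M,\rho}^2$ into its $n$ boundary faces and estimate the $j$-th one, which (using $f=\gamma_j(f^*)$ and $\hat\rho=\prod_l\rho_l$) equals $\frac1{p_j}\int_{M_j}\prod_{l\neq j}\rho_l\cdot\left(\frac1{2\pi}\int_{\partial D_j}|f^*(\cdot,\hat w_j)|^2\lambda_j|dw_j|\right)d\mu_j$. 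For fixed $\hat w_j$ the inner integral is the squared $H^2_{\lambda_j}(D_j,\partial D_j)$-norm of $f^*(\cdot,\hat w_j)\in H^2(D_j)$ (Lemma \ref{l:b0}), so the $n=1$ case of Lemma \ref{l:sup-b} gives $\frac1{2\pi}\int_{\partial D_j}|f^*(\cdot,\hat w_j)|^2\lambda_j\ge |f^*(z_j,\hat w_j)|^2/K_{\partial D_j,\lambda_j}(z_j)$. Integrating in $\hat w_j$ and then applying the extremal characterization of the weighted Bergman kernel (Section \ref{sec:B}) to $f^*(z_j,\cdot)\in\mathcal{O}(M_j)$, together with $B_{M_j,\prod_{l\neq j}\rho_l}(\hat z_j)=\prod_{l\neq j}B_{D_l,\rho_l}(z_l)$ (Lemma \ref{basis of product}; each $\rho_l$ is admissible by Lemma \ref{l:admissible} as $-\log\rho_l$ is subharmonic), yields
\[ (\text{$j$-th face})\ \ge\ \frac{|f^*(z_0)|^2}{p_j\,K_{\partial D_j,\lambda_j}(z_j)\prod_{l\neq j}B_{D_l,\rho_l}(z_l)}. \]

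To convert the remaining Bergman kernels into Szeg\"o kernels I would invoke the one-dimensional weighted Saitoh inequality $\pi B_{D_l,\rho_l}(z_l)\le K_{\partial D_l,\lambda_l}(z_l)$, i.e.\ the case $n=1$, $c\equiv1$ of Theorem \ref{thm:saitoh-1d} (take $\varphi_l=-\log\rho_l$, subharmonic, the logarithmic pole of the Green function furnishing Lelong number $\ge2$). This gives $\prod_{l\neq j}B_{D_l,\rho_l}(z_l)\le \pi^{-(n-1)}\prod_{l\neq j}K_{\partial D_l,\lambda_l}(z_l)$, so the $j$-th face is at least $\pi^{n-1}|f^*(z_0)|^2/\left(p_j\prod_l K_{\partial D_l,\lambda_l}(z_l)\right)$. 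Summing over $j$ gives $\|f\|_{\partial M,\rho}^2\ge \pi^{n-1}\left(\sum_j\frac1{p_j}\right)|f^*(z_0)|^2/\prod_l K_{\partial D_l,\lambda_l}(z_l)$; taking the supremum over $f$ produces the displayed upper bound for $K_{\partial M,\rho}(z_0)$, which combined with the product formula proves the proposition. When $K_{\partial M,\rho}(z_0)>0$ one may restrict to $f$ with $f^*(z_0)\neq0$, for which all the relevant Bergman kernels are automatically positive, so no degeneracy arises.

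The hard part will be the one-dimensional input $\pi B_{D_l,\rho_l}(z_l)\le K_{\partial D_l,\lambda_l}(z_l)$ under the \emph{weak} boundary hypotheses assumed here: $\rho_l$ is merely bounded below with $-\log\rho_l$ subharmonic and $\rho_l(p)\le\liminf_{z\to p}\rho_l(z)$, rather than continuous at $\partial D_l$ as in Theorem \ref{thm:saitoh-1d}. Establishing it in this generality --- either by approximating $\rho_l$ monotonically from below by continuous admissible weights, or directly through Proposition \ref{p:b6} and the concavity of minimal $L^2$ integrals --- is the delicate step. A secondary technical point is to justify the product formula for $K_{S,\lambda}$ and the Fubini rearrangements for weights that are only measurable.
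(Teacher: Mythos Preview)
Your approach is essentially the same as the paper's: split $\|f\|_{\partial M,\rho}^2$ into the $n$ boundary faces, bound the $j$-th face from below by $|f^*(z_0)|^2\big/\big(p_j\,K_{\partial D_j,\lambda_j}(z_j)\,B_{M_j,\hat\rho_j}(\hat z_j)\big)$ via the one-variable Szeg\"o bound and the Bergman extremal property on $M_j$, then convert the $(n-1)$ Bergman factors to Szeg\"o factors by the one-dimensional inequality $\pi B_{D_l,\rho_l}(z_l)\le K_{\partial D_l,\lambda_l}(z_l)$, and finally sum over $j$ and combine with the product formula $K_{S,\lambda}(z_0)=\prod_l K_{\partial D_l,\lambda_l}(z_l)$.

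Two remarks on the points you flagged. First, the ``hard part'' you identified is not a gap: the paper does \emph{not} invoke Theorem~\ref{thm:saitoh-1d} here but rather the $n=1$ case of Proposition~\ref{p:inequality} (with $c\equiv1$, $I$ the maximal ideal, $h_0\equiv1$), whose hypotheses are exactly that $\hat\rho$ is measurable with $\inf\hat\rho>0$, $-\log\hat\rho$ plurisubharmonic, and the boundary lower-semicontinuity $\hat\rho(p)\le\liminf_{z\to p}\hat\rho(z)$. These are precisely the assumptions on $\rho_l$ in the present proposition, so the one-dimensional input is already available. Second, your observation that Proposition~\ref{p:8} and the Bergman product formula go through for merely measurable weights with positive infimum (via Proposition~\ref{p:7}, Lemma~\ref{l:a6}, and Lemma~\ref{basis of product}/\ref{l:admissible}) is correct.

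The only cosmetic difference from the paper is that the paper fixes the minimizer $f$ with $f^*(z_0)=1$ and compares it on each face $\partial D_j\times M_j$ to the product $g_1g_2$ of the one-dimensional Szeg\"o minimizer and the Bergman minimizer on $M_j$, using the orthogonality Lemma~\ref{l:b9} to get $\|g_1g_2\|^2_{\partial D_j\times M_j,p_j\rho}\le\|f\|^2_{\partial D_j\times M_j,p_j\rho}$; you instead apply the extremal characterizations of the two kernels directly to an arbitrary $f$. Your route is slightly shorter since it bypasses Lemma~\ref{l:b9}, but the underlying inequality on each face is identical.
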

	\begin{proof}
	It follows from Proposition \ref{p:8} that 
	\begin{equation}
		\label{eq:0805a}K_{S,\lambda}(z_0)=\prod_{1\le j\le n}K_{\partial D_j,\lambda_j}(z_j)=K_{\partial D_1,\lambda_1}(z_1)\times K_{S_1,\hat \lambda_1}(\hat z_1),
	\end{equation}
	where $\lambda_j(w_j)=\left(\frac{\partial G_{D_j}(w_j,z_j)}{\partial v_{w_j}}\right)^{-1}\rho_j$ on $\partial D_j$, $S_1=\prod_{2\le j\le n}\partial D_j$, $\hat\lambda_1=\prod_{2\le j\le n}\lambda_j$ and $\hat z_1=(z_2,\ldots,z_n)$. It follows from Lemma \ref{l:Bergman-prod} that 
	\begin{equation}
		\label{eq:0805b}B_{M_1,\hat\rho_1}(\hat z_1)=\prod_{2\le j\le n}B_{D_j, \rho_j}(z_j),
	\end{equation}
	where $M_1=\prod_{2\le j\le n}D_j$ and $\hat\rho_1=\prod_{2\le j\le n}\rho_j$.
	Proposition \ref{p:inequality} shows that $\pi B_{\rho_j}(z_j)\le K_{\partial D_j,\lambda_j}(z_j)$ for any $1\le j\le n$. 
	By equality \eqref{eq:0805a} and \eqref{eq:0805b}, we know that 
	\begin{equation}
		\label{eq:1017a}K_{S,\lambda}(z_0)\geq\pi^{n-1} K_{\partial D_1,\lambda_1}(z_1)\times B_{M_1,\hat\rho_1}(\hat z_1).
	\end{equation}	
Assume that $K_{\partial M,\rho}(z_0)>0$ without loss of generality, then Lemma \ref{l:b7} shows that there is $f\in H^2_{\rho}(M,\partial M)$ such that $f^*(z_0)=1$ and 
	$$\|f\|^2_{\partial M,\rho}=\frac{1}{K_{\partial M,\rho}(z_0)}<+\infty.$$ 
	Since $f\in H^2_{p_1\rho}(\partial D_1\times M_1)$ and $f^*(z_0)=1$, it follows from Lemma \ref{l:prod-d1xm1} that $\{g\in H^2_{\lambda_1}(D_1,\partial D_1):g^*(z_1)=1\}\not=\emptyset$ and $\{g\in A^2(M_1,\hat\rho_1):g(\hat z_1)=1\}\not=\emptyset$. Thus, there are $g_1\in H^2_{\lambda_1}(D_1,\partial D_1)$ and $g_2\in \{g\in A^2(M_1,\hat\rho_1):g(\hat z_1)=1\}$ such that $g_1^*(z_1)=1$, $g_2(\hat z_1)=1$, 
	$$\|g_1\|_{\partial D_1,\lambda_1}^2=\frac{1}{K_{\partial D_1,\lambda_1}(z_1)}$$
	and 
	$$\|g_2\|^2_{M_1,\hat\rho_1}=\frac{1}{ B_{M_1,\hat\rho_1}(\hat z_1)}.$$
	Following from Lemma \ref{l:b9}, we obtain that 
	$$\ll g_1g_2,g_1g_2-f \gg_{\partial D_1,\times M_1,p_1\rho}=0,$$
	which implies that 
\begin{equation}
	\label{eq:0810a}\frac{1}{K_{\partial D_1,\lambda_1}(z_1) B_{M_1,\hat\rho_1}(\hat z_1)}=\|g_1g_2\|^2_{\partial D_1\times M_1,p_1\rho}\le \|f\|^2_{\partial D_1\times M_1,p_1\rho}.
\end{equation}
By inequality \eqref{eq:1017a} and inequality \eqref{eq:0810a}, we have
\begin{displaymath}
	\frac{1}{K_{S,\lambda}(z_0)}\le\frac{1}{\pi^{n-1}K_{\partial D_1,\lambda_1}(z_1) B_{M_1,\hat\rho_1}(\hat z_1)}\le \frac{1}{\pi^{n-1}}\|f\|^2_{\partial D_1\times M_1,p_1\rho}.
\end{displaymath}
By similar discussion, we obtain that 
\begin{equation}
	\label{eq:0810b}	\frac{1}{K_{S,\lambda}(z_0)}\le \frac{1}{\pi^{n-1}K_{\partial D_j,\lambda_j}(z_j) B_{M_j,\hat\rho_j}(\hat z_j)}\le  \frac{1}{\pi^{n-1}}\|f\|^2_{\partial D_j\times M_j,p_j\rho}
\end{equation}
holds for any $1\le j\le n$.
By the definition of $\|\cdot \|_{\partial M,\rho}$ and inequality \eqref{eq:0810b}, we have 
\begin{equation}
	\label{eq:0810c}\left(\sum_{1\le j\le n}\frac{1}{p_j}\right)\frac{1}{K_{S,\lambda}(z_0)}\le \frac{1}{\pi^{n-1}}\|f\|^2_{\partial M,\rho}=\frac{1}{\pi^{n-1}K_{\partial M,\rho}(z_0)}.
\end{equation}

Thus, Proposition \ref{p:inequality2} holds.
	\end{proof}

Let $h_j$ be a holomorphic function on a neighborhood of  $z_j$, and let $$h_0=\prod_{1\le j\le n}h_j.$$
Denote that $\beta_j:=ord_{z_j}(h_j)$ and $\beta:=(\beta_1,\cdots,\beta_n)$. Let $\tilde\beta=(\tilde\beta_1,\ldots,\tilde\beta_n)\in\mathbb{Z}_{\ge0}^n$ satisfy $\tilde\beta_j\ge\beta_j$ for any $1\le j\le n$.
Let 
$$I=\left\{(g,z_0)\in\mathcal{O}_{z_0}:g=\sum_{\alpha\in\mathbb{Z}_{\ge0}^n}b_{\alpha}(w-z_0)^{\alpha}\text{ near  }z_0\text{ s.t. $b_{\alpha}=0$ for $\alpha\in L_{\tilde\beta}$}\right\},$$
where $L_{\tilde\beta}=\{\alpha\in\mathbb{Z}_{\ge0}^n:\alpha_j\le\tilde\beta_j$ for any $1\le j\le n\}$.
It is clear that $(h_0,z_0)\not\in I$.
Assume that $\sum_{1\le j\le n}\frac{\tilde\beta_j+1}{p_j}\le1$.

\begin{Proposition}
	\label{p:inequality3}Assume that $-\log\rho_j$ is subharmonic on $D_j$ for any $1\le j\le n$, then we have
		$$K_{S,\lambda}^{I,h_0}(z_0)\prod_{1\le j\le n}(\tilde\beta_j+1)\geq\left(\sum_{1\le j\le n}\frac{\tilde\beta_j+1}{p_j}\right)\pi^{n-1} K^{I,h_0}_{\partial M,\rho}(z_0).$$
\end{Proposition}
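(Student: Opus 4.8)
The plan is to run the proof of Proposition \ref{p:inequality2} one jet-order higher, replacing every ``value at $z_0$'' comparison by the corresponding comparison modulo the box ideals. We may assume $K_{\partial M,\rho}^{I,h_0}(z_0)>0$, since otherwise the right-hand side vanishes. Abbreviate $K_j:=K_{\partial D_j,\lambda_j}^{I_{\tilde\beta_j,z_j},h_j}(z_j)$ and $B_j:=B_{D_j,\rho_j}^{I_{\tilde\beta_j,z_j},h_j}(z_j)$, where $\lambda_j=\left(\frac{\partial G_{D_j}(w_j,z_j)}{\partial v_{w_j}}\right)^{-1}\rho_j$ on $\partial D_j$; for each $j$ put $M_j=\prod_{l\ne j}D_l$, $\hat\rho_j=\prod_{l\ne j}\rho_l$, $\hat h_j=\prod_{l\ne j}h_l$, and let $\hat I_j$ be the box ideal on $M_j$ cut out by $\{(\alpha_l)_{l\ne j}:\alpha_l\le\tilde\beta_l\}$.

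The first ingredient is a one-variable jet comparison. Applying Proposition \ref{p:inequality} with $n=1$, $M=D_j$, parameter $p_1=\tilde\beta_j+1$, and $c\equiv1$, so that $\psi=2(\tilde\beta_j+1)G_{D_j}(\cdot,z_j)$ and, by Lemma \ref{l:psi}, $\mathcal{I}(\psi)_{z_j}=I_{\tilde\beta_j,z_j}$, while $\tilde\rho=\rho_j$ and the boundary weight there equals $\tfrac1{\tilde\beta_j+1}\lambda_j$, yields
\begin{equation}\nonumber
(\tilde\beta_j+1)K_j\ge\pi B_j,\qquad 1\le j\le n,
\end{equation}
the factor $(\tilde\beta_j+1)$ arising because scaling the boundary weight by $(\tilde\beta_j+1)^{-1}$ scales the kernel by $(\tilde\beta_j+1)$. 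The second ingredient is the two factorizations $K_{S,\lambda}^{I,h_0}(z_0)=\prod_{j}K_j$ (Proposition \ref{p:9}) and $B_{M_j,\hat\rho_j}^{\hat I_j,\hat h_j}(\hat z_j)=\prod_{l\ne j}B_l$ (Lemma \ref{l:Bergman-prod2}).

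The core step is a restricted comparison on each face $\partial D_j\times M_j$. Let $f\in H^2_\rho(M,\partial M)$ be the minimizer from Lemma \ref{l:b7}, so $(f^*-h_0,z_0)\in I$ and $\|f\|_{\partial M,\rho}^2=1/K_{\partial M,\rho}^{I,h_0}(z_0)$. Since $\rho|_{\partial D_j\times M_j}=\tfrac1{p_j}\lambda_j\times\hat\rho_j$ has product form, Lemma \ref{l:prod-d1xm1} lets us choose the jet-minimizers $g_1\in H^2_{\lambda_j}(D_j,\partial D_j)$ with $(g_1^*-h_j,z_j)\in I_{\tilde\beta_j,z_j}$ and $\|g_1\|_{\partial D_j,\lambda_j}^2=1/K_j$, together with $g_2\in A^2(M_j,\hat\rho_j)$ with $(g_2-\hat h_j,\hat z_j)\in\hat I_j$ and $\|g_2\|_{M_j,\hat\rho_j}^2=1/B_{M_j,\hat\rho_j}^{\hat I_j,\hat h_j}(\hat z_j)$ (Lemma \ref{l:bergman<+infty}); the box structure gives $(g_1^*g_2-h_0,z_0)\in I$. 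The minimality of $g_1$ makes it orthogonal to all Hardy functions vanishing to order $\ge\tilde\beta_j+1$ at $z_j$, and that of $g_2$ makes it orthogonal to $\{(g,\hat z_j)\in\hat I_j\}$; the hypothesis $\sum_k\tfrac{\tilde\beta_k+1}{p_k}\le1$ is precisely what keeps the auxiliary weight $\tilde\psi_{\tilde\beta_j}$ (coefficients $2p_l\!\left(1-\tfrac{\tilde\beta_j+1}{p_j}\right)$) plurisubharmonic and forces $\mathcal{I}(\tilde\psi_{\tilde\beta_j})_{\hat z_j}\subseteq\hat I_j$. Feeding these into Lemma \ref{l:b9} (with $\gamma=\tilde\beta_j$) gives $\ll g_1g_2,\,g_1g_2-f\gg_{\partial D_j\times M_j,p_j\rho}=0$, so by Cauchy--Schwarz
\begin{equation}\nonumber
\frac{1}{K_j\,B_{M_j,\hat\rho_j}^{\hat I_j,\hat h_j}(\hat z_j)}=\|g_1g_2\|_{\partial D_j\times M_j,p_j\rho}^2\le\|f\|_{\partial D_j\times M_j,p_j\rho}^2 .
\end{equation}

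Finally I assemble. Substituting $B_{M_j,\hat\rho_j}^{\hat I_j,\hat h_j}(\hat z_j)=\prod_{l\ne j}B_l\le\pi^{-(n-1)}\prod_{l\ne j}(\tilde\beta_l+1)K_l$ and $\prod_{l\ne j}K_l=K_{S,\lambda}^{I,h_0}(z_0)/K_j$ turns the displayed inequality into
\begin{equation}\nonumber
\frac{\pi^{n-1}}{\prod_{l\ne j}(\tilde\beta_l+1)\,K_{S,\lambda}^{I,h_0}(z_0)}\le\|f\|_{\partial D_j\times M_j,p_j\rho}^2 .
\end{equation}
Multiplying by $1/p_j$, summing over $j$, and using $\|f\|_{\partial M,\rho}^2=\sum_j\tfrac1{p_j}\|f\|_{\partial D_j\times M_j,p_j\rho}^2$ together with $\tfrac1{\prod_{l\ne j}(\tilde\beta_l+1)}=\tfrac{\tilde\beta_j+1}{\prod_k(\tilde\beta_k+1)}$ collapses the left side to $\tfrac{\pi^{n-1}}{\prod_k(\tilde\beta_k+1)K_{S,\lambda}^{I,h_0}(z_0)}\sum_j\tfrac{\tilde\beta_j+1}{p_j}$ and the right side to $1/K_{\partial M,\rho}^{I,h_0}(z_0)$, which rearranges to the assertion. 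The main obstacle is the core step: showing the product $g_1g_2$ of one-variable jet-minimizers is orthogonal, in the face norm, to $f-g_1g_2$, i.e. that $g_1g_2$ is the $I$-constrained minimizer on $\partial D_j\times M_j$. This is the content of Lemma \ref{l:b9}, driven by the box observation that $(f^*-g_1^*g_2,z_0)\in I$ forces the $w_j$-derivatives $\left(\tfrac{\partial}{\partial w_j}\right)^{\sigma}(f^*-g_1^*g_2)(z_j,\cdot)$ to lie in $\hat I_j$ for $\sigma\le\tilde\beta_j$; it is here that $\sum_j\tfrac{\tilde\beta_j+1}{p_j}\le1$ is indispensable, and tracking the $(\tilde\beta_j+1)$ factors through the factorizations is the remaining bookkeeping.
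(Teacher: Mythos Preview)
Your overall architecture is right, but the core step has a genuine gap in how you invoke Lemma \ref{l:b9}. That lemma's \emph{conclusion} is that $g_1g_2$ is orthogonal (in the face norm) to every $g$ with $(g^*,z_0)\in\mathcal{I}(\psi)_{z_0}$. You then apply this to $g=f-g_1g_2$, but you only know $(f^*-g_1^*g_2,z_0)\in I$, and under the standing hypothesis $\sum_j\frac{\tilde\beta_j+1}{p_j}\le1$ one has $\mathcal{I}(\psi)_{z_0}\subseteq I$, not the reverse. For instance with $n=2$, $\tilde\beta=(0,0)$, $p_1=p_2=3$, the germ $(w_1-z_1)$ lies in $I$ but not in $\mathcal{I}(\psi)_{z_0}$. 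So Lemma \ref{l:b9} as stated does not deliver the orthogonality you claim.

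Your final paragraph actually contains the correct repair: if $(g^*,z_0)\in I$ (the box ideal) and $0\le\sigma\le\tilde\beta_j$, then $\bigl(\tfrac{\partial}{\partial w_j}\bigr)^{\sigma}g^*(z_j,\cdot)\in\hat I_j$; this is an elementary Taylor-coefficient observation, \emph{not} Lemma \ref{l:b9-0}. Running the proof of Lemma \ref{l:b9} with this box-ideal replacement for Lemma \ref{l:b9-0} yields the box-ideal analogue you need: if $g_1$ is orthogonal to $\{ord_{z_j}\ge\tilde\beta_j+1\}$ and $g_2$ is orthogonal to $\hat I_j$, then $g_1g_2$ is orthogonal to every face element with germ in $I$. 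You should state this as a separate lemma rather than citing Lemma \ref{l:b9}. Note also that this box argument does \emph{not} use $\sum_j\frac{\tilde\beta_j+1}{p_j}\le1$ at all, contrary to your claim that it is ``indispensable'' here; the inclusion $\mathcal{I}(\tilde\psi_{\tilde\beta_j})_{\hat z_j}\subseteq\hat I_j$ you verify is correct but irrelevant once you use the box version.

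For comparison, the paper avoids Lemma \ref{l:b9} entirely: it expands $f|_{\partial D_j\times M_j}$ in the tensor basis from Lemma \ref{l:prod-d1xm1} using the ordered basis $\{e_m\}$ of Lemma \ref{l:a6}, writes $f=\sum_m e_m f_m$ with $f_m\in A^2(M_j,\hat\rho_j)$, observes that $(f_m-c_m\hat h_j,\hat z_j)\in\hat I_j$ for $m\le\tilde\beta_j$, and then uses Parseval plus the Bergman minimization directly to obtain $\|f\|^2_{\partial D_j\times M_j,p_j\rho}\ge 1/\bigl(K_j\cdot B_{M_j,\hat\rho_j}^{\hat I_j,\hat h_j}(\hat z_j)\bigr)$. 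Your orthogonality route, once patched with the box-ideal lemma, gives the same inequality and is arguably cleaner; the paper's basis expansion is more hands-on but needs no new lemma.
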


	\begin{proof}
Denote that 
 \begin{displaymath}
	\begin{split}
		I'=\Bigg\{(g,\hat z_1)\in\mathcal{O}_{M_1, \hat z_1}:g=\sum_{\alpha=(\alpha_2,\ldots,\alpha_n)\in\mathbb{Z}_{\ge0}^{n-1}}b_{\alpha}&\prod_{2\le j\le n}(w_j-z_j)^{\alpha_j}\text{ near  }\hat z_1\\
		&\text{s.t. $b_{\alpha}=0$ for any $\alpha\in L_1$}\Bigg\},
	\end{split}
\end{displaymath}
where $L_1=\{\alpha=(\alpha_2,\ldots,\alpha_n)\in\mathbb{Z}_{\ge0}^{n-1}:\alpha_j\le\tilde\beta_j$ for any $2\le j\le n\}$,
then it follows from Proposition \ref{p:9} that 
	\begin{equation}
		\label{eq:0812a}K_{S,\lambda}^{I,h_0}(z_0)=\prod_{1\le j\le n}K^{I_{\tilde\beta_j,z_j},h_j}_{\partial D_j,\lambda_j}(z_j)=K^{I_{\tilde\beta_1,z_1},h_1}_{\partial D_1,\lambda_1}(z_1)\times K^{I',\prod_{2\le j\le n}h_j}_{S_1,\hat \lambda_1}(\hat z_1),
	\end{equation}
	where $\lambda_j(w_j)=\left(\frac{\partial G_{D_j}(w_j,z_j)}{\partial v_{w_j}}\right)^{-1}\rho_j$ on $\partial D_j$, $S_1=\prod_{2\le j\le n}\partial D_j$, $\hat\lambda_1=\prod_{2\le j\le n}\lambda_j$ and $\hat z_1=(z_2,\ldots,z_n)$. 
	 It follows from Lemma \ref{l:Bergman-prod2} that 
	\begin{equation}
		\label{eq:0812b}B^{I',\prod_{2\le j\le n}h_j}_{M_1,\hat\rho_1}(\hat z_1)=\prod_{2\le j\le n}B^{I_{\tilde\beta_j,z_j},h_j}_{D_j, \rho_j}(z_j),
	\end{equation}
	where $M_1=\prod_{2\le j\le n}D_j$ and $\hat\rho_1=\prod_{2\le j\le n}\rho_j$.
	Proposition \ref{p:inequality} shows that 
	$$\pi B_{\rho_j}^{I_{\tilde\beta_j,z_j},h_j}(z_j)\le(\tilde \beta_j+1) K^{I_{\tilde\beta_j,z_j},h_j}_{\partial D_j,\lambda_j}(z_j)$$
	 for any $1\le j\le n$. 
	By equality \eqref{eq:0812a} and \eqref{eq:0812b}, we know that 
	\begin{equation}
		\label{eq:0812c}K_{S,\lambda}^{I,h_0}(z_0)\geq\frac{\pi^{n-1}}{\prod_{2\le j\le n}(\tilde\beta_j+1)} K^{I_{\tilde\beta_1,z_1},h_1}_{\partial D_1,\lambda_1}(z_1)\times B^{I',\prod_{2\le j\le n}h_j}_{M_1,\hat\rho_1}(\hat z_1).
	\end{equation}

	Assume that $K^{I,h_0}_{\partial M,\rho}(z_0)>0$ without loss of generality, then Lemma \ref{l:b7} shows that there is $f\in H^2_{\rho}(M,\partial M)$ such that $(f^*-h_0,z_0)\in I$ and 
	$$\|f\|^2_{\partial M,\rho}=\frac{1}{K^{I,h_0}_{\partial M,\rho}(z_0)}<+\infty.$$ 
	Let $\{e_m\}_{m\in\mathbb{Z}_{\ge0}}$ be a complete orthonormal basis of $H^2(\partial D_1,\lambda_1)$ such that $ord_{z_1}(e_m)=m$ for any $m\ge0$. 
	Since $f\in H^2_{p_1\rho}(\partial D_1\times M_1)$, it follows from Lemma \ref{l:prod-d1xm1} that  $$f=\sum_{m\ge0}e_{m}f_{m}\text{ (convergence under the norm $\|\cdot\|_{\partial M,\rho}$)}$$
	and 
	$$f^*=\sum_{m\ge0}e_{m}^*f_{m} \text{ (uniform convergence on any  compact subset of $M$)},$$ 
	where $f_{m}\in A^2(M_1,\hat\rho_1)$. Note that $(f^*-\prod_{1\le j\le n}h_j,z_0)\in I$, then there is a constant $c_m$ such that 
	$$(f_{m}-c_m\prod_{2\le j\le n}h_j,\hat z_1)\in I'$$
	for $0\le m\le \tilde\beta_1$.
	Note that $\sum_{0\le m\le \tilde\beta_1}|c_m|>0$, then there is $\tilde f_1\in A^2(M_1,\hat\rho_1)$ such that $(\tilde f_{1}-\prod_{2\le j\le n}h_j,\hat z_1)\in I'$ and 
	\begin{displaymath}
\begin{split}
\|\tilde f_{1}\|^2_{M_1,\hat\rho_1}&=\inf\left\{\|f\|^2_{M_1,\hat\rho_1}:f\in A^2(M_1,\hat\rho_1)\,\&\,(f-\prod_{2\le j\le n}h_j,\hat z_1)\in I'\right\}\\
&=\frac{1}{B_{M_1,\hat\rho_1}^{I',\prod_{2\le j\le n}h_j}(\hat z_1)}.
\end{split}
	\end{displaymath}
	Thus, we have $(f-\sum_{0\le m\le \tilde\beta_1}c_me_{m}^*\tilde f_1,z_0)\in I$, $(\sum_{0\le m\le \tilde\beta_1}c_me_{m}^*-h_1,z_1)\in I_{\tilde\beta_1,z_1}$ and 
	\begin{equation}
		\label{eq:0812d}
		\begin{split}
			\|f\|^2_{\partial D_1\times M_1,p_1\rho}&\ge\|\sum_{0\le m\le \tilde\beta_1}e_{m}f_{m}\|^2_{\partial D_1,\times M_1,p_1\rho}\\
			&=\sum_{0\le m\le \tilde\beta_1}\|f_{m}\|_{M_1,\hat\rho_1}^2\\
			&\ge\sum_{0\le m\le \tilde\beta_1}|c_m|^2\|\tilde f_{1}\|_{M_1,\hat\rho_1}^2\\
			&\ge\frac{1}{K_{\partial D_1,\lambda_1}^{I_{\tilde\beta_1,z_1},h_1}B_{M_1,\hat\rho_1}^{I',\prod_{2\le j\le n}h_j}(\hat z_1)}.
		\end{split}
	\end{equation}

	By inequality \eqref{eq:0812c} and inequality \eqref{eq:0812d}, we have
	\begin{displaymath}
		\begin{split}
		\frac{1}{K_{S,\lambda}^{I,h_0}(z_0)}&\le\frac{\prod_{2\le j\le n}(\tilde\beta_j+1)}{\pi^{n-1} K^{I_{\tilde\beta_1,z_1},h_1}_{\partial D_1,\lambda_1}(z_1)\times B^{I',\prod_{2\le j\le n}h_j}_{M_1,\hat\rho_1}(\hat z_1)}\\
		&\le \frac{\prod_{2\le j\le n}(\tilde\beta_j+1)}{\pi^{n-1}}\|f\|^2_{\partial D_1\times M_1,p_1\rho}
		\end{split}
	\end{displaymath}
	By similar discussion, we obtain that 
	\begin{equation}
		\label{eq:0812e}	
			\begin{split}
			\frac{1}{K_{S,\lambda}^{I,h_0}(z_0)}\le \frac{\prod_{1\le l\le n}(\tilde\beta_l+1)}{\pi^{n-1}(\tilde\beta_j+1)}\|f\|^2_{\partial D_j\times M_j,p_j\rho}
		\end{split}
	\end{equation}
	holds for any $1\le j\le n$.
	By the definition of $\|\cdot \|_{\partial M,\rho}$ and inequality \eqref{eq:0812e}, we have 
	\begin{equation}
		\label{eq:0812f}
		\begin{split}
		\left(\sum_{1\le j\le n}\frac{\tilde\beta_j+1}{p_j}\right)\frac{1}{K^{I,h_0}_{S,\lambda}(z_0)}&\le \frac{\prod_{1\le j\le n}(\tilde\beta_j+1)}{\pi^{n-1}}\|f\|^2_{\partial M,\rho}\\
		&=\frac{\prod_{1\le j\le n}(\tilde\beta_j+1)}{\pi^{n-1}}\frac{1}{K_{\partial M,\rho}^{I,h_0}(z_0)}.
		\end{split}
	\end{equation}

	Thus, Proposition \ref{p:inequality3} holds.
\end{proof}

\section{Proofs of Theorem \ref{thm:main1-1} and Remark \ref{r:product 1-1}}\label{sec:proof1}

In this section, we prove Theorem \ref{thm:main1-1} and Remark \ref{r:product 1-1}.

\subsection{Proof of Theorem \ref{thm:main1-1}}
\

Let $\hat\rho=\prod_{1\le j\le n}e^{-\varphi_j}$, $h_0=1$ and $I$ be the maximal ideal of $\mathcal{O}_{M,z_0}$, then the inequality part of Theorem \ref{thm:main1-1} follows from Proposition \ref{p:inequality}.

We prove the characterization in Theorem \ref{thm:main1-1} in two steps: Firstly,  we prove the necessity of the characterization; Secondly, we prove the sufficiency of the characterization.

\

\emph{Step 1:} 
 In this step, we will prove the necessity of the characterization.  
Assume that the equality
\begin{equation}
	\label{eq:20802a}K_{\partial M,\rho}(z_0)=\left(\int_0^{+\infty}c(t)e^{-t}dt\right)\pi B_{\tilde\rho}(z_0)
\end{equation}
holds. 
 
Denote \begin{equation*}
\begin{split}
\inf\left\{\int_{\{\psi<-t\}}|\tilde{f}|^{2}\tilde\rho:\tilde{f}(z_0)=1\,\&{\,}\tilde{f}\in \mathcal{O} (\{\psi<-t\})\right\},
\end{split}
\end{equation*}
by $G(t)$ for any $t\ge0$, then we have 
$$G(0)=\frac{1}{B_{\tilde\rho}(z_0)}.$$ 
Note that $M$ is a bounded domain in $\mathbb{C}^n$, then it follows from Lemma \ref{l:G equal to 0} that $G(t)\in(0,\infty)$ for any $t\ge0$.
By Lemma \ref{l:psi} and $\sum_{1\le j\le n}\frac{1}{p_j}\le 1$, we know that $\mathcal{I}(\psi)_{z_0}\subset I_{z_0}$, where $I_{z_0}$ is the maximal ideal of $\mathcal{O}_{z_0}$. Then it follows from Theorem \ref{thm:general_concave} that $G(h^{-1}(r))$ is concave with respect to $r\in(0,\int_0^{+\infty}c(t)e^{-t}dt)$, where $h(t)=\int_t^{+\infty}c(l)e^{-l}dl$.  Let $\hat\rho=\prod_{1\le j\le n}e^{-\varphi_j}$, $h_0=1$ and $I=I_{z_0}$, then  inequality \eqref{eq:0801d2} shows that 
\begin{equation}
	\label{eq:0801d}
	\begin{split}
		\left(\int_0^{+\infty}c(t)e^{-t}dt\right)B_{\tilde\rho}(z_0)&=\frac{\int_0^{+\infty}c(t)e^{-t}dt}{G(0)}\\
		&\le\liminf_{t\rightarrow0+0}\frac{\int_0^{+\infty}c(l)e^{-l}dl-\int_t^{+\infty}c(l)e^{-l}dl}{G(0)-G(t)}\\
		&\le \frac{|f^*(z_0)|^2}{\pi\|f\|^2_{\partial M,\rho}} \\
		&\le \frac{1}{\pi} K_{\partial M,\rho}(z_0),
	\end{split}
\end{equation}
where $f\in H^2_{\rho}(M,\partial M)$ such that $f^*=F_0$, and $F_0$ is a holomorphic function on $M$ such that $F(z_0)=1$ and $\int_M|F_0|^2\tilde\rho=G(0)$.
It follows from equality \eqref{eq:20802a} that inequality \eqref{eq:0801d} becomes an equality, which implies  that 
 $$\frac{\int_0^{+\infty}c(t)e^{-t}dt}{G(0)}=\liminf_{t\rightarrow0+0}\frac{\int_0^{+\infty}c(l)e^{-l}dl-\int_t^{+\infty}c(l)e^{-l}dl}{G(0)-G(t)}.$$
Following from the concavity of $G(h^{-1}(r))$, we obtain that $G(h^{-1}(r))$ is linear with respect to $r\in(0,\int_0^{+\infty}c(t)e^{-t}dt)$. It follows from Corollary \ref{c:linear} that 
\begin{equation}
	\label{eq:0801f}G(t)=\int_{\{\psi<-t\}}|F_0|^2\tilde\rho
\end{equation}
holds for any $t\ge0$.

 Denote 
\begin{equation*}
\begin{split}
\inf\left\{\int_{\{\psi<-t\}}|\tilde{f}|^{2}\tilde\rho:(\tilde{f}-F_0,z_0)\in\mathcal{I}(\psi)_{z_0}\,\&{\,}\tilde{f}\in \mathcal{O} (\{\psi<-t\})\right\},
\end{split}
\end{equation*}
by $G_1(t)$ for any $t\ge0$. By equality \eqref{eq:0801f}, we know $G_1(t)\le G(t)$ for any $t\ge0$. As $F_0(z_0)=1$ and $\mathcal{I}(\psi)_{z_0}\subset I_{z_0}$, where $I_{z_0}$ is the maximal ideal of $\mathcal{O}_{z_0}$, then we get that $(\tilde{f}-F_0,z_0)\in\mathcal{I}(\psi)_{z_0}$ deduces $\tilde f(z_0)=1$, which shows $G_1(t)\ge G(t)$ for any $t\geq0$. Thus, we have 
$$G(t)=G_1(t)$$
 for any $t\ge0$ and $G_1(h^{-1}(r))$ is linear with respect to $r\in (0,\int_0^{+\infty}c(t)e^{-t}dt)$.
Theorem \ref{thm:linear-2d} and Remark \ref{r:var=-infty} show that the following the following statements hold:
	
	$(1)$ $F_0=\sum_{\alpha\in E}d_{\alpha}(w-z_0)^{\alpha}+g_0$ on a neighborhood of $z_0$, where $g_0$ is a holomorphic function on a neighborhood of $z_0$ such that $(g_0,z_0)\in\mathcal{I}(\psi)_{z_0}$, $E=\left\{(\alpha_1,\ldots,\alpha_n)\in\mathbb{Z}_{\ge0}:\sum_{1\le j\le n}\frac{\alpha_j+1}{p_j}=1\right\}\not=\emptyset$ and $d_{\alpha}\in\mathbb{C}$ such that $\sum_{\alpha\in E}|d_{\alpha}|\not=0$.
	
	$(2)$ $\varphi_j=2\log|\tilde g_j|+2\tilde u_j$ on $D_j$ for any $1\le j\le n$, where $\tilde u_j$ is a harmonic function on $D_j$ and $\tilde g_j$ is a holomorphic function on $D_j$ such that $g_j(z_j)\not=0$;

    $(3)$ $\chi_{j,z_j}^{\alpha_j+1}=\chi_{j,-\tilde u_j}$ for any $1\le j\le n$ and $\alpha\in E$ satisfying $d_{\alpha}\not=0$, $\chi_{j,z_j}$ and $\chi_{j,-\tilde u_j}$ are the characters associated to functions $G_{D_j}(\cdot,z_j)$ and $-\tilde u_j$ respectively.

As $F_0(z_0)=1$, statement $(1)$ implies that 
$$\sum_{1\le j\le n}\frac{1}{p_j}=1$$ and statement $(3)$ implies that 
 $$\chi_{j,z_j}=\chi_{j,-\tilde u_j}$$
for any $1\le j\le n$.

 As $\varphi_j$ is continuous at $p$ for any $p\in \partial D_j$ and $D_j\Subset \mathbb{C}$ , then $\tilde g_j$ has at most a finite number of zeros on $D_j$.
Then there is $ g_j\in\mathcal{O}(\mathbb{C})$, which satisfies that $\frac{\tilde g_j}{ g_j}\in \mathcal{O}(D_j)$, $\frac{\tilde g_j}{ g_j}\in \mathcal{O}(D_j)$ has no zero point on $D_j$ and $ g_j$ has no zero point on $\partial D_j$. It is clear that $\chi_{j,\log|g_j|-\log|\tilde g_j|}=1$. Thus, we have 
$$\varphi_j=2\log|g_j|+2u_j\text{ and }\chi_{j,z_j}=\chi_{j,-u_j}$$
for any $1\le j\le n$, where $u_j=\tilde u_j+\log|\tilde g_j|-\log|g_j|$ is harmonic on $D_j$.

\

\emph{Step 2:} In this step, we prove the sufficiency of the characterization. Assume that the three statements $(1)-(3)$ hold.

Without loss of generality, assume that $g_j(z_j)=1$. Note that, for any $h\in\mathcal{O}(M)$ satisfying $\int_{M}\frac{|h|^2}{\prod_{1\le j\le n}|g_j|^2}c(-\psi)\prod_{1\le j\le n}e^{-2u_j}<+\infty$, we have
$\frac{h}{\prod_{1\le j\le n}g_j}\in\mathcal{O}(M)$.
 It is clear that 
\begin{displaymath}
	\begin{split}
		B_{\tilde\rho}(z_0)&=\sup_{f\in\mathcal{O}(M)}\frac{|f(z_0)|^2}{\int_{M}|f|^2\tilde\rho}\\
		&=\sup_{f\in\mathcal{O}(M)}\frac{|f(z_0)|^2}{\int_{M}\frac{|f|^2}{\prod_{1\le j\le n}|g_j|^2}c(-\psi)\prod_{1\le j\le n}e^{-2u_j}}\\
		&=\sup_{f\in\mathcal{O}(M)}\frac{|\tilde f(z_0)|^2}{\int_{M}|\tilde f|^2c(-\psi)\prod_{1\le j\le n}e^{-2u_j}}\\
		&=B_{\tilde\rho\prod_{1\le j\le n}|g_j|^2}(z_0).
	\end{split}
\end{displaymath}
It follows from Lemma \ref{l:divide g}, Lemma \ref{l:sup-b} and $g_j(z_j)=1$ that 
\begin{displaymath}
	\begin{split}
			K_{\partial M,\rho}(z_0)&=\sup_{f\in H^2_{\rho}(M,\partial M)}\frac{|f^*(z_0)|^2}{\|f\|^2_{\partial M,\rho}}\\
			&=\sup_{\tilde f\in H^2_{\rho\prod_{1\le j\le n}|g_j|^2}(M,\partial M)}\frac{|\tilde f^*(z_0)|^2}{\|\tilde f\|^2_{\partial M,\rho\prod_{1\le j\le n}|g_j|^2}}\\
			&=	K_{\partial M,\rho\prod_{1\le j\le n}|g_j|^2}(z_0).
				\end{split}
\end{displaymath}
Thus, it suffices to prove the case that $g_j\equiv1$ for any $1\le j\le n$. 

In the following part, we assume that $g_j\equiv1$ for any $1\le j\le n$.

 Denote \begin{equation*}
\begin{split}
\inf\left\{\int_{\{\psi<-t\}}|\tilde{f}|^{2}\tilde\rho:\tilde{f}(z_0)=1\,\&{\,}\tilde{f}\in \mathcal{O} (\{\psi<-t\})\right\}
\end{split}
\end{equation*}
by $G(t)$ for any $t\ge0$. Theorem \ref{thm:linear-2d} tells us  that $G(h^{-1}(r))$ is linear with respect to $r\in(0,\int_0^{+\infty}c(t)e^{-t}dt)$. Following from Corollary \ref{c:linear} and Remark \ref{r:1.1}, we get that 
\begin{equation}
	\label{eq:220801a}
	G(t)=\int_{\{\psi<-t\}}|F_0|^2\tilde\rho
	\end{equation}
holds for any $t\geq0$ and 
$$F_0=c_0\prod_{1\le j\le n} ((P_j)_*(f_{z_j}))'(P_j)_*(f_{u_j}),$$
 where $c_0$ is a constant such that $F_0(z_0)=1$, $P_j$ is the universal covering from unit disc $\Delta$ to $ D_j$, $f_{u_j}$ is a holomorphic function on $\Delta$ such that $|f_{u_j}|=P_j^*(e^{u_j})$, and $f_{z_j}$ is a holomorphic function on $\Delta$ such that $|f_{z_j}|=P_j^*(e^{G_{D_j}(\cdot,z_j)})$ for any $1\le j\le n$. It follows from $\lim_{t\rightarrow0+0}c(t)=1$ and equality \eqref{eq:220801a} that 
 \begin{equation}
 	\label{eq:220801b}
 	\begin{split}
 			\frac{\int_0^{+\infty}c(t)e^{-t}dt}{G(0)}&=\limsup_{r\rightarrow 1-0}\frac{\int_0^{-\log r}c(t)e^{-t}dt}{\int_{\{z\in M:\psi(z)\ge\log r\}}|F_0|^2\tilde\rho}\\
 			&=\limsup_{r\rightarrow 1-0}\frac{1-r}{\int_{\{z\in M:\psi(z)\ge\log r\}}|F_0|^2\hat\rho},
 	\end{split}
 \end{equation}
 where $\hat\rho=\prod_{1\le j\le n}e^{-\varphi_j}$.

Note that $u_j\in C(\overline{D_j})$ and $G_{D_j}(\cdot,z_j)$ can be extended to a harmonic function on a $U\backslash\{z_j\}$  for any $1\le j\le n$, where $U$ is a neighborhood of $\overline {D_j}$, then  we have 
$$|F_0|\in C(\overline M).$$
Denote that 
$$\Omega_{j,r}:=\{z\in D_j:2p_jG_{D_j}(z,z_j)\geq \log r\}\times M_j\subset \{z\in M:\psi(z)\ge \log r\}$$
for any $1\le j\le n$. It follows from Proposition \ref{p:b6} that
 there is $f_0\in H^2_{\rho}(M,\partial M)$ such that $F_0=f_0^*$
 and 
 \begin{equation}
 	\label{eq:0806c}\pi\|f_0\|^2_{\partial M,\rho}\le\limsup_{r\rightarrow 1-0}\frac{\int_{\{z\in M:\psi(z)\ge\log r\}}|F_0|^2\hat\rho}{1-r}.
 \end{equation}
Note that $F_0(\cdot,\hat w_j)$ has nontangential boundary value $f_0(\cdot,\hat w_j)$ almost everywhere on $\partial D_j$ for any $\hat w_j\in M_j$ and any $1\le j\le n$. Following from   $|F_0|\in C(\overline M)$, the dominated convergence theorem, Lemma \ref{l:coarea} and Lemma \ref{l:v}, we obtain that
\begin{equation}\nonumber
	\begin{split}
		&\limsup_{r\rightarrow 1-0}\frac{\int_{\Omega_{1,r}}|F_0|^2\hat\rho}{1-r}\\
		=&\limsup_{r\rightarrow1-0}\frac{\int_{\frac{\log r}{2p_1}}^0\left(\int_{M_1}\left(\int_{\{G_{D_1}(\cdot,z_1)=s\}}\frac{|F_0|^2e^{-\varphi_1}}{|\bigtriangledown G_{D_1}(\cdot,z_1)|}|dw_1|\right)\prod_{2\le l\le n}e^{-\varphi_l}d\mu_1(\hat w_1)\right)ds}{-\log r}\\
		=&\frac{1}{2p_1}\int_{M_1}\left(\int_{\partial D_1}\frac{|f_0(\cdot,\hat w_1) |^2e^{-\varphi_1}}{\frac{\partial G_{D_1}(w_1,z_1)}{\partial v_{w_1}}}|dw_1|\right)\prod_{2\le l\le n}e^{-\varphi_l}d\mu_1(\hat w_1).
	\end{split}
\end{equation}
By the similar discussions, we have
\begin{displaymath}
	\begin{split}
		&\limsup_{r\rightarrow 1-0}\frac{\int_{\Omega_{j,r}}|F_0|^2\hat\rho}{1-r}\\
		=&\frac{1}{2p_j}\int_{M_j}\left(\int_{\partial D_j}|f_0(\cdot,\hat w_j) |^2\rho|dw_j|\right)d\mu_j(\hat w_j),
	\end{split}
\end{displaymath}
which implies that 
\begin{displaymath}
	\begin{split}
		&\limsup_{r\rightarrow 1-0}\frac{\int_{\{z\in M:\psi(z)\ge\log r\}}|F_0|^2\hat\rho}{1-r}\\
		\le& \sum_{1\le j\le n}\limsup_{r\rightarrow 1-0}\frac{\int_{\Omega_{j,r}}|F_0|^2\hat\rho}{1-r}\\
		=&\sum_{1\le j\le n}\frac{1}{2p_j}\int_{M_j}\left(\int_{\partial D_j}|f_0(\cdot,\hat w_j) |^2\rho|dw_j|\right)d\mu_j(\hat w_j)\\
		=&\pi\|f_0\|^2_{\partial M,\rho}.
	\end{split}
\end{displaymath}
Combining inequality \eqref{eq:0806c}, we have
\begin{equation}
		\label{eq:220801c}
\limsup_{r\rightarrow 1-0}\frac{\int_{\{z\in M:\psi(z)\ge\log r\}}|F_0|^2\hat\rho}{1-r}=\pi\|f_0\|^2_{\partial M,\rho}.
\end{equation}

Choosing any $\tilde f\in H^2_{\rho}(M,\partial M)$ satisfying $\tilde f^*(z_0)=1$, in the following, we will prove that $\| f_0\|_{\partial M,\rho}\le \| \tilde f\|_{\partial M,\rho}$.

As $\tilde f^*(\cdot,\hat w_1)\in H^2(D_1)$, $\tilde f(\cdot,\hat w_1)$ denotes the nontangential boundary value of $\tilde f^*(\cdot,\hat w_1)$ a.e. on $\partial D_1$ and
$$\hat K_{D_1,e^{-\varphi_1}}(w_1)=\sup_{h\in H^2(D_1)}\frac{|h(w_1)|^2}{\frac{1}{2\pi}\int_{\partial D_1}|h(z)|^2e^{-\varphi_1}\left(\frac{\partial G_{D_1}(z,z_1)}{\partial v_z} \right)^{-1}|dz|}$$
for any $w_1\in D_1$, then we have 
\begin{equation}
	\label{eq:0802a}\begin{split}
		&\int_{M_1}\left(\int_{\partial D_1}|\tilde f(\cdot,\hat w_1) |^2\rho |dw_1|\right)d\mu_1(\hat w_1)\\
		\ge&\frac{1}{\hat K_{D_1,e^{-\varphi_1}}(z_1)} \int_{M_1}|\tilde f^*(z_1,\hat w_1)|\prod_{2\le l\le n}e^{-\varphi_l}d\mu_1(\hat w_1),
	\end{split}
\end{equation}
where $\rho=\left(\frac{\partial G_{D_1}(z,z_1)}{\partial v_z} \right)^{-1}\prod_{1\le l\le n}e^{-\varphi_l}$ on $\partial D_1\times M_1$.
Lemma \ref{l:Bergman-prod} shows that 
$$B_{M_1,\lambda_1}(z_2,\ldots,z_n)=\prod_{2\le l\le n}B_{D_l,e^{-\varphi_l}}(z_l),$$
where  $B_{M_1,\lambda_1}$ is the Bergman kernel on $M_1$ with the weight $\lambda_1:=\prod_{2\le l\le n}e^{-\varphi_l}$ and $B_{D_l,e^{-\varphi_l}}$ is the Bergman kernel on $D_l$ with the weight $e^{-\varphi_l}$. Since $\tilde f^*(z_1,\ldots,z_n)=1$, inequality \eqref{eq:0802a} shows that 
\begin{equation}
	\label{eq:0802b}\begin{split}
		&\int_{M_1}\left(\int_{\partial D_1}|\tilde f(\cdot,\hat w_1) |^2\rho |dw_1|\right)d\mu_1(\hat w_1)\\
		\ge&\frac{1}{\hat K_{D_1,e^{-\varphi_1}}(z_1)\cdot B_{M_1,\lambda_1}(z_2,\ldots,z_n)}\\
		=&\frac{1}{\hat K_{D_1,e^{-\varphi_1}}(z_1)\prod_{2\le l\le n}B_{D_l,e^{-\varphi_l}}(z_l)}.
		\end{split}
\end{equation}
Since $\varphi_j=2 u_j$ is harmonic on $D_j$ and $\chi_{j,z_j}=\chi_{j,-u_j}$ for any $1\le j\le n$,  
it follows from Theorem \ref{thm:saitoh-1d} and Remark \ref{rem:function} that 
\begin{equation}
	\label{eq:0802c}
	\begin{split}
			\frac{1}{\hat K_{D_j,e^{-\varphi_j}}(z_j)}&=\frac{1}{2\pi}\int_{\partial D_j}\left|\frac{K_{D_j,e^{-\varphi_j}}(z,z_j)}{K_{D_j,e^{-\varphi_j}}(z_j)}\right|^2e^{-\varphi_j}\left(\frac{\partial G_{D_j}(z,z_j)}{\partial v_z} \right)^{-1}|dz|
			\\
			&=\frac{1}{2\pi}\int_{\partial D_j}|c_j((P_j)_*(f_{z_j}))'(P_j)_*(f_{u_j})|^2e^{-\varphi_j}\left(\frac{\partial G_{D_j}(z,z_j)}{\partial v_z} \right)^{-1}|dz|
	\end{split}
\end{equation}
and 
\begin{equation}
	\label{eq:0802d}
	\begin{split}
			\frac{1}{B_{D_j,e^{-\varphi_j}}(z_j)}&=\int_{D_j}\left|\frac{B_{D_j,e^{-\varphi_j}}(\cdot,z_j)}{B_{D_j,e^{-\varphi_j}}(z_j)}\right|^2e^{-\varphi_j}\\&=\int_{D_j}|c_j((P_j)_*(f_{z_j}))'(P_j)_*(f_{u_j})|^2e^{-\varphi_j},
	\end{split}
\end{equation}
where $c_j$ is constant such that $\left(c_j((P_j)_*(f_{z_j}))'(P_j)_*(f_{u_j})\right)(z_j)=1$.
Note that  $F_0=c_0\prod_{1\le j\le n} ((P_j)_*(f_{z_j}))'(P_j)_*(f_{u_j})$ and $F_0(z_0)=0$. Following from $F_0=f^*$, inequality \eqref{eq:0802b}, equality \eqref{eq:0802c} and equality \eqref{eq:0802d}, we have
\begin{displaymath}
\begin{split}
		\int_{M_1}\left(\int_{\partial D_1}|\tilde f(\cdot,\hat w_1) |^2\rho |dw_1|\right)d\mu_1(\hat w_1)&\ge \int_{M_1}\left(\int_{\partial D_1}|F_0|^2\rho |dw_1|\right)d\mu_1(\hat w_1)\\
		&=\int_{M_1}\left(\int_{\partial D_1}|f_0|^2\rho |dw_1|\right)d\mu_1(\hat w_1).
\end{split}
\end{displaymath}
By similar discussion, we have 
\begin{displaymath}
	\int_{M_j}\left(\int_{\partial D_j}|\tilde f(\cdot,\hat w_j) |^2\rho |dw_j|\right)d\mu_j(\hat w_j)\ge\int_{M_j}\left(\int_{\partial D_j}|f_0|^2\rho |dw_j|\right)d\mu_j(\hat w_j)
\end{displaymath}
for any $1\le j\le n$,
which shows that
\begin{equation}
	\|\tilde f\|_{\partial M,\rho}\ge \| f_0\|_{\partial M,\rho}.
\end{equation}
By Lemma \ref{l:sup-b}, we know that
\begin{equation}
	\label{eq:0802e}K_{\partial M,\rho}=\frac{|f_0^*(z_0)|^2}{ \| f_0\|_{\partial M,\rho}^2}.
\end{equation}
 Combining inequality \eqref{eq:0801d}, equality \eqref{eq:220801b}, equality \eqref{eq:220801c} and equality \eqref{eq:0802e}, we have 
$$\left(\int_0^{+\infty}c(t)e^{-t}dt\right)B_{\tilde\rho}(z_0)= \frac{1}{\pi} K_{\partial M,\rho}(z_0).$$

Then the sufficiency of the characterization in Theorem \ref{thm:main1-1} has been proved.

\subsection{Proof of Remark \ref{r:product 1-1}}
\

Assume that the three statements in Theorem \ref{thm:main1-1} hold.

Note that, for any $h\in\mathcal{O}(M)$ satisfying 
$\int_M|h|^2\tilde\rho=\int_{M}\frac{|h|^2}{\prod_{1\le j\le n}|g_j|^2}c(-\psi)\prod_{1\le j\le n}e^{-2u_j}<+\infty,$ we have
$$\frac{h}{\prod_{1\le j\le n}g_j}\in\mathcal{O}(M).$$ Then we have $$B_{\tilde\rho}(\cdot,\overline{z_0})=\left(\prod_{1\le j\le n}\overline{g_j(z_j)}\right)B_{\tilde\rho\prod_{1\le j\le n}g_j}(\cdot,\overline{z_0})\prod_{1\le j\le n}g_j.$$
Following from Lemma \ref{l:divide g} and the definition of $K_{\partial M,\rho}$, we have 
$$K_{\partial M,\rho}(\cdot,\overline{z_0})=\left(\prod_{1\le j\le n}\overline{g_j(z_j)}\right)K_{\partial M,\rho\prod_{1\le j\le n}g_j}(\cdot,\overline{z_0})\prod_{1\le j\le n}g_j.$$ 
Thus, it suffices to consider the case $g_j\equiv1$ for any $j$.

Following the discussions in Step 2 in the proof of Theorem \ref{thm:main1-1}, we obtain that
\begin{equation}
	\label{eq:0823a}G(0)=\int_{M}|F_0|^2\tilde\rho \end{equation}
	\begin{equation}
		\label{eq:0823b}\|\tilde f\|_{\partial M,\rho}\ge\|f_0\|_{\partial M,\rho}	\end{equation}
		for any $\tilde f\in H^2_{\rho}(M,\partial M)$ satisfying $\tilde f^*(z_0)=1$,
		where 
		$$F_0=c_0\prod_{1\le j\le n} ((P_j)_*(f_{z_j}))'(P_j)_*(f_{u_j}),$$
		$c_0$ is a constant such that $F_0(z_0)=1$ and $f_0\in H^2_{\rho}(M,\partial M)$ such that $f_0^*=F_0$.
As 	
$$\int_M\left|\frac{B_{\tilde\rho}(\cdot,\overline{z_0})}{B_{\tilde\rho}(z_0,\overline{z_0})}\right|^2\tilde\rho=\frac{1}{B_{\tilde\rho}(z_0,\overline{z_0})}=G(0),$$ it follows from Lemma \ref{l:unique} and equality \eqref{eq:0823a} that 
\begin{equation}
	\label{eq:0823d}\frac{B_{\tilde\rho}(\cdot,\overline{z_0})}{B_{\tilde\rho}(z_0,\overline{z_0})}=F_0.\end{equation}
By definition, we know that there exists $f(=\sum_{m=1}^{+\infty}e_m\overline{e^*_m(z_0)} )\in H^2_{\rho}(M, \partial M)$ such that $f^*=K_{\partial M,\rho}(\cdot,\overline{z_0})$, where $\{e_m\}_{m\in\mathbb{Z}_{\ge1}}$ is a complete orthonormal basis of $H^2_{\rho}(M,\partial M)$. It follows from Lemma \ref{l:b5} that
\begin{equation}
	\label{eq:0823c}\|\frac{f}{K_{\partial M,\rho}(z_0,\overline{z_0})}\|_{\partial M,\rho}^2=\frac{\ll f,f\gg_{\partial M,\rho}}{|K_{\partial M,\rho}(z_0,\overline{z_0})|^2}=\frac{1}{K_{\partial M,\rho}(z_0,\overline{z_0})}.\end{equation}
Following from Lemma \ref{l:b7}, inequality \eqref{eq:0823b} and equality \eqref{eq:0823c}, we have 
$$\frac{f}{K_{\partial M,\rho}(z_0,\overline{z_0})}=f_0,$$
	which shows that 
	\begin{equation}
		\label{eq:0823e}\frac{K_{\partial M,\rho}(\cdot,\overline{z_0})}{K_{\partial M,\rho}(z_0,\overline{z_0})}=\frac{f^*}{K_{\partial M,\rho}(z_0,\overline{z_0})}=F_0.	\end{equation}
Theorem \ref{thm:main1-1} shows that 
	$K_{\partial M,\rho}(z_0,\overline{z_0})=\left(\int_0^{+\infty}c(t)e^{-t}dt\right)\pi B_{\tilde\rho}(z_0,\overline{z_0}),$
	thus combining equality \eqref{eq:0823d} and equality \eqref{eq:0823e}, we have
	$$K_{\partial M,\rho}(\cdot,\overline{z_0})=\left(\int_0^{+\infty}c(t)e^{-t}dt\right)\pi B_{\tilde\rho}(\cdot,\overline{z_0})=c_1\prod_{1\le j\le n} ((P_j)_*(f_{z_j}))'(P_j)_*(f_{u_j}),$$
	where $c_1$ is a constant.

\section{Proofs of Theorem \ref{thm:main1-2} and Remark \ref{product 1-2}}\label{sec:proof2}

In this section, we prove Theorem \ref{thm:main1-2} and Remark \ref{product 1-2}.

\subsection{Proof of Theorem \ref{thm:main1-2}}
\

The inequality part of Theorem \ref{thm:main1-2} follows from Proposition \ref{p:inequality}.

We prove the characterization in Theorem \ref{thm:main1-2} in two steps: Firstly,  we prove the necessity of the characterization; Secondly, we prove the sufficiency of the characterization.

\

\emph{Step 1:} In this step, we will prove the necessity of the characterization.

Assume that the equality
\begin{equation}
	\label{eq:220802a}K_{\partial M,\rho}^{I,h_0}(z_0)=\left(\int_0^{+\infty}c(t)e^{-t}dt\right)\pi B_{\tilde\rho}^{I,h_0}(z_0)
\end{equation}
holds.

Denote \begin{equation*}
\begin{split}
\inf\left\{\int_{\{\psi<-t\}}|\tilde{f}|^{2}\tilde\rho:(\tilde{f}-h_0,z_0)\in I\,\&{\,}\tilde{f}\in \mathcal{O} (\{\psi<-t\})\right\}
\end{split}
\end{equation*}
by $G(t)$ for any $t\ge0$, then we have 
$$G(0)=\frac{1}{B^{I,h_0}_{\tilde\rho}(z_0)}.$$ 
Since $\mathcal{I}(\psi)_{z_0}\subset I$, it follows from Theorem \ref{thm:general_concave} that $G(h^{-1}(r))$ is concave with respect to $r\in(0,\int_0^{+\infty}c(t)e^{-t}dt)$, where $h(t)=\int_t^{+\infty}c(l)e^{-l}dl$. 
Inequality \eqref{eq:0801d2} shows that 
\begin{equation}
	\label{eq:0806e}
	\begin{split}
		\left(\int_0^{+\infty}c(t)e^{-t}dt\right)B^{I,h_0}_{\tilde\rho}(z_0)&=\frac{\int_0^{+\infty}c(t)e^{-t}dt}{G(0)}\\
		&\le\liminf_{t\rightarrow 0+0}\frac{\int_0^{+\infty}c(l)e^{-l}dl-\int_t^{+\infty}c(l)e^{-l}dl}{G(0)-G(t)} \\
		&\leq\liminf_{r\rightarrow 1-0}\frac{1-r}{\int_{\{z\in M:\psi(z)\ge\log r\}}|F_0|^2\hat\rho}\\
		&\le \frac{1}{\pi\|f\|^2_{\partial M,\rho}} \\
		&\le \frac{1}{\pi} K^{I,h_0}_{\partial M,\rho}(z_0),
	\end{split}
\end{equation}
where $\hat\rho=\prod_{1\le j\le n}e^{-\varphi_j}$, $f\in H^2_{\rho}(M,\partial M)$ such that $f^*=F_0$, and $F_0$ is a holomorphic function on $M$ such that $(F_0-h_0,z_0)\in I$ and $\int_M|F_0|^2\tilde\rho=G(0)$.
It follows from equality \eqref{eq:220802a} that inequality \eqref{eq:0806e} becomes an equality, which implies  that 
 $$\frac{\int_0^{+\infty}c(t)e^{-t}dt}{G(0)}=\liminf_{t\rightarrow0+0}\frac{\int_0^{+\infty}c(l)e^{-l}dl-\int_t^{+\infty}c(l)e^{-l}dl}{G(0)-G(t)}.$$
Following from the concavity of $G(h^{-1}(r))$, we obtain that $G(h^{-1}(r))$ is linear with respect to $r\in(0,\int_0^{+\infty}c(t)e^{-t}dt)$.  It follows from Corollary \ref{c:linear} that 
\begin{equation}
	\label{eq:0801f2}G(t)=\int_{\{\psi<-t\}}|F_0|^2\tilde\rho
\end{equation}
holds for any $t\ge0$.

 Denote 
\begin{equation*}
\begin{split}
\inf\left\{\int_{\{\psi<-t\}}|\tilde{f}|^{2}\tilde\rho:(\tilde{f}-F_0,z_0)\in\mathcal{I}(\psi)_{z_0}\,\&{\,}\tilde{f}\in \mathcal{O} (\{\psi<-t\})\right\},
\end{split}
\end{equation*}
by $G_1(t)$ for any $t\ge0$. By equality \eqref{eq:0801f2}, we know $G_1(t)\le G(t)$ for any $t\ge0$. As $\mathcal{I}(\psi)_{z_0}\subset I$, then we get that $G_1(t)\ge G(t)$ for any $t\geq0$. Thus, we have $G(t)=G_1(t)$ for any $t\ge0$ and $G_1(h^{-1}(r))$ is linear with respect to $r\in (0,\int_0^{+\infty}c(t)e^{-t}dt)$.
Theorem \ref{thm:linear-2d} and Remark \ref{r:var=-infty} show that the following the following statements hold:
	
	$(1)$ $F_0=\sum_{\alpha\in E}d_{\alpha}(w-z_0)^{\alpha}+g_0$ on a neighborhood of $z_0$, where $g_0$ is a holomorphic function on $V_0$ such that $(g_0,z_0)\in\mathcal{I}(\psi)_{z_0}$ and $d_{\alpha}\in\mathbb{C}$ such that $\sum_{\alpha\in E}|d_{\alpha}|\not=0$;
	
		$(2)$ $\varphi_j=2\log|\tilde g_j|+2\tilde u_j$ on $D_j$ for any $1\le j\le n$, where $\tilde u_j$ is a harmonic function on $D_j$ and $\tilde g_j$ is a holomorphic function on $D_j$ such that $g_j(z_j)\not=0$;

    $(3)$ $\chi_{j,z_j}^{\alpha_j+1}=\chi_{j,-\tilde u_j}$ for any $1\le j\le n$ and $\alpha\in E$ satisfying $d_{\alpha}\not=0$, $\chi_{j,z_j}$ and $\chi_{j,-\tilde u_j}$ are the characters associated to functions $G_{D_j}(\cdot,z_j)$ and $-\tilde u_j$ respectively.
    
     As $\varphi_j$ is continuous at $p$ for any $p\in \partial D_j$ and $D_j\Subset \mathbb{C}$ , then $\tilde g_j$ has at most a finite number of zeros on $D_j$.
Then there is $ g_j\in\mathcal{O}(\mathbb{C})$, which satisfies that $\frac{\tilde g_j}{ g_j}\in\mathcal{O}(D_j)$, $\frac{\tilde g_j}{ g_j}$ has no zero point on $D_j$ and $g_j$ has no zero point on $\partial D_j$. It is clear that $\chi_{j,\log|g_j|-\log|\tilde g_j|}=1$. Thus, we have 
$$\varphi_j=2\log|g_j|+2u_j\text{ and }\chi_{j,z_j}^{\alpha_j+1}=\chi_{j,-u_j}$$
for any $1\le j\le n$ and $\alpha\in E$, where $u_j=\tilde u_j+\log|\tilde g_j|-\log|g_j|$ is harmonic on $D_j$.    
    By equality \eqref{eq:220802a} and inequality \eqref{eq:0806e}, we know that
\begin{displaymath}
	\begin{split}
		\|f\|^2_{\partial M,\rho}=\frac{1}{K_{\partial M,\rho}^{I,h_0}(z_0)}.
	\end{split}
\end{displaymath}
Note that $(f^*-h_0,z_0)=(F_0-h_0,z_0)\in I$, it follows from Lemma \ref{l:b7} that $f=f_0$. Then necessity of the characterization in case $n>1$ has been proved.

Now, we prove that $\varphi_1$ is harmonic on $D_1$ when $n=1$.  By the analyticity of $\partial D_1$, there exists $\tilde\varphi_1 \in C(\overline D_1)$  such that $\tilde\varphi_1 |_{\partial D_1}=\varphi_1 |_{\partial D_1}$ and $\tilde\varphi_1$ is harmonic on $D_1$. Denote that 
 $$\tilde\rho_{1}:=c(-\psi)e^{-\tilde\varphi_1}.$$
  As $\varphi_1$ is subharmonic on $D_1$, we have 
 $$\tilde\rho_1\le \tilde\rho$$
  on $M$. It follows from Proposition \ref{p:inequality} that 
 \begin{displaymath}
 	K_{\partial M,\rho}^{I,h_0}(z_0)\ge\left(\int_0^{+\infty}c(t)e^{-t}dt\right)\pi B^{I,h_0}_{\tilde\rho_1}(z_0)\ge \left(\int_0^{+\infty}c(t)e^{-t}dt\right)B_{\tilde\rho}^{I,h_0}(z_0).
 \end{displaymath}
As $\left(\int_0^{+\infty}c(t)e^{-t}dt\right)B_{\tilde\rho}^{I,h_0}(z_0)=K_{\partial M,\rho}^{I,h_0}(z_0)>0$, we have
$$B^{I,h_0}_{\tilde\rho_1}(z_0)=B_{\tilde\rho}^{I,h_0}(z_0)>0.$$ Thus, we have $\tilde\rho=\tilde\rho_1$, a.e., $\tilde\varphi_1=\varphi_1$, which implies that $\varphi_1$ is harmonic on $D_j$. 

Thus the necessity of the characterization has been proved.

\

\emph{Step 2:} In this step, we prove the sufficiency of the characterization.  
  Assume that the three statements $(1)-(3)$ hold.

    Denote 
  \begin{equation*}
\begin{split}
\inf\left\{\int_{\{\psi<-t\}}|\tilde{f}|^{2}\tilde\rho:(\tilde{f}-f^*_0,z_0)\in\mathcal{I}(\psi)_{z_0}\,\&{\,}\tilde{f}\in \mathcal{O} (\{\psi<-t\})\right\},
\end{split}
\end{equation*}
by $G_2(t)$.  Theorem \ref{thm:linear-2d} tells us  that $G_2(h^{-1}(r))$ is linear with respect to $r\in(0,\int_0^{+\infty}c(t)e^{-t}dt)$. Denote that $B_{\tilde\rho}^{\mathcal{I}(\psi)_{z_0},f_0^*}=\frac{1}{G_2(0)}$. Since $\mathcal{I}(\psi)_{z_0}\subset I$ and $(f_0^*-h_0,z_0)\in I$, we have $G_2(t)\ge G(t)$ for any $t\ge0$ and 
$$B_{\tilde\rho}^{\mathcal{I}(\psi)_{z_0},f_0^*}(z_0)\le B_{\tilde\rho}^{I,h_0}(z_0).$$
Since $\mathcal{I}(\psi)_{z_0}\subset I$ and $(f_0^*-h_0,z_0)\in I$, we know that $K_{\partial M,\rho}^{\mathcal{I}(\psi)_{z_0},f_0^*}(z_0)\le K_{\partial M,\rho}^{I,h_0}(z_0)$. 
As $K_{\partial M,\rho}^{I,h_0}(z_0)=\frac{1}{\|f_0\|^2_{\partial M,\rho}}$, we have $K_{\partial M,\rho}^{\mathcal{I}(\psi)_{z_0},f_0^*}(z_0)\geq K_{\partial M,\rho}^{I,h_0}(z_0)$, which shows that 
$$K_{\partial M,\rho}^{\mathcal{I}(\psi)_{z_0},f_0^*}(z_0)= K_{\partial M,\rho}^{I,h_0}(z_0).$$
Following Proposition \ref{p:inequality}, we obtain that
\begin{equation}
	\label{eq:0824a}\begin{split}
		K_{\partial M,\rho}^{\mathcal{I}(\psi)_{z_0},f_0^*}(z_0)&= K_{\partial M,\rho}^{I,h_0}(z_0)\\
		&\ge\left(\int_0^{+\infty}c(t)e^{-t}dt\right)\pi B_{\tilde\rho}^{I,h_0}(z_0)\\
		&\ge \left(\int_0^{+\infty}c(t)e^{-t}dt\right)\pi B_{\tilde\rho}^{\mathcal{I}(\psi)_{z_0},f_0^*}(z_0).
	\end{split}
	\end{equation}
Thus, it suffices to prove that 
\begin{equation}
	\label{eq:0824b}K_{\partial M,\rho}^{\mathcal{I}(\psi)_{z_0},f_0^*}(z_0)=\left(\int_0^{+\infty}c(t)e^{-t}dt\right)\pi B_{\tilde\rho}^{\mathcal{I}(\psi)_{z_0},f_0^*}(z_0).
	\end{equation}
 As $M\Subset \mathbb{C}^n$, it is clear that $B_{\tilde\rho}^{\mathcal{I}(\psi)_{z_0},f_0^*}(z_0)>0$.

For the case $n>1$,  without loss of generality, assume that $g_j(z_j)=1$. Note that, for any $h\in\mathcal{O}(M)$ satisfying $\int_M|h|^2\tilde\rho= \int_{M}\frac{|h|^2}{\prod_{1\le j\le n}|g_j|^2}c(-\psi)\prod_{1\le j\le n}e^{-2u_j}<+\infty$, we have
$\frac{h}{\prod_{1\le j\le n}g_j}\in\mathcal{O}(M)$. As $f_0^*=\sum_{\alpha\in E}d_{\alpha}w^{\alpha}+g_0$ near $z_0$ for any $t\ge0$, where $E=\{\alpha\in \mathbb{Z}_{\ge0}^n:\sum_{1\le j\le n}\frac{\alpha_j+1}{p_j}=1\}$ and  $(g_0,z_0)\in\mathcal{I}(\psi)_{z_0}$, it follows from Lemma \ref{l:psi} and $g_j(z_j)=1$ that for any holomorphic function $(h,z_0)\in\mathcal{O}_{M,z_0}$, $(h-f_0^*,z_0)\in\mathcal{I}(\psi)_{z_0}$ if and only if $(h\prod_{1\le j\le n}g_j-f_0^*,z_0)\in\mathcal{I}(\psi)_{z_0}$.
 It is clear that 
 \begin{equation}
 	\label{eq:0824e}\begin{split}
		&B_{\tilde \rho}^{\mathcal{I}(\psi)_{z_0},f_0^*}(z_0)\\
		=&\frac{1}{\inf\left\{\int_{M}|f|^2\tilde \rho:f\in\mathcal{O}(M) \,\&\,(f-f^*_0,z_0)\in {\mathcal{I}(\psi)_{z_0}}\right\}}\\
		=&\frac{1}{\inf\left\{\int_{M}|\tilde f|^2\prod_{1\le j\le n}|g_j|^2\tilde \rho:f\in\mathcal{O}(M) \,\&\,(\tilde f\prod_{1\le j\le n}g_j-f^*_0,z_0)\in {\mathcal{I}(\psi)_{z_0}}\right\}}\\
		=&\frac{1}{\inf\left\{\int_{M}|\tilde f|^2\prod_{1\le j\le n}|g_j|^2\tilde \rho:f\in\mathcal{O}(M) \,\&\,(\tilde f-f^*_0,z_0)\in {\mathcal{I}(\psi)_{z_0}}\right\}}\\
		=&B_{\tilde\rho\prod_{1\le j\le n}|g_j|^2}^{\mathcal{I}(\psi)_{z_0},f^*_0}(z_0).
	\end{split}
 \end{equation}
It follows from Lemma \ref{l:divide g} that 
\begin{equation}
	\label{eq:0824f}\begin{split}
			&K_{\partial M,\rho}^{\mathcal{I}(\psi)_{z_0},f_0^*}(z_0)\\
			=&\frac{1}{\inf\left\{\|f\|_{\partial M,\rho}^2:f\in H^2_{\rho}(M,\partial M)\,\&\,(f^*-f^*_0,z_0)\in\mathcal{I}(\psi)_{z_0} \right\}}\\
			=&\frac{1}{\inf\left\{\|\tilde f\|_{\partial M,\rho\prod_{1\le j\le n}g_j}^2:f\in H^2_{\rho}(M,\partial M)\,\&\,(\tilde f^*\prod_{1\le j\le n}g_j-f^*_0,z_0)\in\mathcal{I}(\psi)_{z_0} \right\}}\\
			=&\frac{1}{\inf\left\{\|\tilde f\|_{\partial M,\rho\prod_{1\le j\le n}g_j}^2:f\in H^2_{\rho}(M,\partial M)\,\&\,(\tilde f^*-f^*_0,z_0)\in\mathcal{I}(\psi)_{z_0} \right\}}\\
			=&K_{\partial M,\rho\prod_{1\le j\le n}|g_j|^2}^{\mathcal{I}(\psi)_{z_0},f_0^*}(z_0).
				\end{split}
\end{equation}
Thus, when $n>1$, it suffices to prove the case that $g_j\equiv1$ for any $1\le j\le n$.

In the following part, $n$ is any positive integer, and we assume that $\varphi_j$ is harmonic on $D_j$ for any $1\le j\le n$.

 Following from Corollary \ref{c:linear} and Remark \ref{r:1.1}, we get that there is a holomorphic function $F$ on $M$, such that $(F-f^*_0,z_0)\in\mathcal{I}(\psi)_{z_0},$
\begin{equation}
	\label{eq:220802b}
G_2(t)=\int_{\{\psi<-t\}}|F|^2\tilde\rho
	\end{equation}
holds for any $t\geq0$ and 
$$F=\sum_{\alpha\in E}\tilde d_{\alpha}\prod_{1\le j\le n} ((P_j)_*(f_{z_j}))'(P_j)_*(f_{\frac{\varphi_j}{2}}f_{z_j}^{\alpha_j}),$$
 where $\tilde d_{\alpha}=\lim_{z\rightarrow z_0}\frac{\sum_{\alpha\in E}d_{\alpha}(w-z_0)^{\alpha}}{\prod_{1\le j\le n} ((P_j)_*(f_{z_j}))'(P_j)_*(f_{\frac{\varphi_j}{2}}f_{z_j}^{\alpha_j})}$ for any $\alpha\in E$, $P_j$ is the universal covering from unit disc $\Delta$ to $ D_j$, $f_{\frac{\varphi_j}{2}}$ is a holomorphic function on $\Delta$ such that $|f_{\frac{\varphi_j}{2}}|=P_j^*(e^{\frac{\varphi_j}{2}})$, and $f_{z_j}$ is a holomorphic function on $\Delta$ such that $|f_{z_j}|=P_j^*(e^{G_{D_j}(\cdot,z_j)})$ for any $1\le j\le n$. Equality \eqref{eq:220802b} implies 
 \begin{equation}
 	\label{eq:220802c}
 	\begin{split}
 			\left(\int_0^{+\infty}c(t)e^{-t}dt\right)B_{\tilde\rho}^{\mathcal{I}(\psi)_{z_0},f_0^*}(z_0)&=\frac{\int_0^{+\infty}c(t)e^{-t}dt}{G_2(0)}\\&=\limsup_{r\rightarrow 1-0}\frac{\int_0^{-\log r}c(t)e^{-t}dt}{\int_{\{z\in M:\psi(z)\ge\log r\}}|F|^2\tilde\rho}\\
 			&=\limsup_{r\rightarrow 1-0}\frac{1-r}{\int_{\{z\in M:\psi(z)\ge\log r\}}|F|^2\hat\rho},
 	\end{split}
 \end{equation}
 where $\hat\rho=\prod_{1\le j\le n}e^{-\varphi_j}$.

Note that $\varphi_j\in C(\overline{D_j})$ and $G_{D_j}(\cdot,z_j)$ can be extended to a harmonic function on a $U\backslash\{z_j\}$  for any $1\le j\le n$, where $U$ is a neighborhood of $\overline {D_j}$, then  we have 
$$|F|\in C(\overline M).$$
Denote that 
$$\Omega_{j,r}:=\{z\in D_j:2p_jG_{D_j}(z,z_j)\geq \log r\}\times M_j\subset \{z\in M:\psi(z)\ge \log r\}$$
for any $1\le j\le n$. 
Proposition \ref{p:b6} shows that there is $f\in H^2_{\rho}(M,\partial M)$, such that $F=f^*$
and
\begin{equation}
	\label{eq:220803a}\pi\|f\|^2_{\partial M,\rho}\le \limsup_{r\rightarrow 1-0}\frac{\int_{\{z\in M:\psi(z)\ge\log r\}}|F|^2\hat\rho}{1-r}.
\end{equation}
Note that $F(\cdot,\hat w_j)$ has nontangential boundary value $f(\cdot,\hat w_j)$ almost everywhere on $\partial D_j$ for any $\hat w_j\in M_j$ and any $1\le j\le n$. Following from   $|F|\in C(\overline M)$, the dominated convergence theorem, Lemma \ref{l:coarea} and Lemma \ref{l:v}, we obtain that
\begin{equation}\nonumber
	\begin{split}
		&\limsup_{r\rightarrow 1-0}\frac{\int_{\Omega_{1,r}}|F|^2\hat\rho}{1-r}\\
		=&\limsup_{r\rightarrow1-0}\frac{\int_{\frac{\log r}{2p_1}}^0\left(\int_{M_1}\left(\int_{\{G_{D_1}(\cdot,z_1)=s\}}\frac{|F|^2e^{-\varphi_1}}{|\bigtriangledown G_{D_1}(\cdot,z_1)|}|dw_1|\right)\prod_{2\le l\le n}e^{-\varphi_l}d\mu_1(\hat w_1)\right)ds}{-\log r}\\
		=&\frac{1}{2p_1}\int_{M_1}\left(\int_{\partial D_1}\frac{|f(\cdot,\hat w_1) |^2e^{-\varphi_1}}{\frac{\partial G_{D_1}(w_1,z_1)}{\partial v_{w_1}}}|dw_1|\right)\prod_{2\le l\le n}e^{-\varphi_l}d\mu_1(\hat w_1).
	\end{split}
\end{equation}
By the similar discussions, we have
\begin{displaymath}
	\begin{split}
		&\limsup_{r\rightarrow 1-0}\frac{\int_{\Omega_{j,r}}|F|^2\hat\rho}{1-r}\\
		=&\frac{1}{2p_j}\int_{M_j}\left(\int_{\partial D_j}|f(\cdot,\hat w_j) |^2\rho|dw_j|\right)d\mu_j(\hat w_j),
	\end{split}
\end{displaymath}
which implies that 
\begin{displaymath}
	\begin{split}
		&\limsup_{r\rightarrow 1-0}\frac{\int_{\{z\in M:\psi(z)\ge\log r\}}|F|^2\hat\rho}{1-r}\\
		\le& \sum_{1\le j\le n}\limsup_{r\rightarrow 1-0}\frac{\int_{\Omega_{j,r}}|F|^2\hat\rho}{1-r}\\
		=&\sum_{1\le j\le n}\frac{1}{2p_j}\int_{M_j}\left(\int_{\partial D_j}|f(\cdot,\hat w_j) |^2\rho|dw_j|\right)d\mu_j(\hat w_j)\\
		=&\pi\|f\|^2_{\partial M,\rho}.
	\end{split}
\end{displaymath}
Combining inequality \eqref{eq:220803a}, we have
\begin{equation}
		\label{eq:220802d}
\limsup_{r\rightarrow 1-0}\frac{\int_{\{z\in M:\psi(z)\ge\log r\}}|F|^2\hat\rho}{1-r}=\pi\|f\|^2_{\partial M,\rho}.
\end{equation}

Let  $\alpha\in E$ satisfing $d_{\alpha}\not=0$. Denote that 
$$\lambda_j:=\frac{1}{\alpha_j+1}e^{-\varphi_j}\left(\frac{\partial G_{D_j}(\cdot,z_j)}{\partial v_z}\right)^{-1},\ \rho_j:=e^{-\varphi_j}\text{ and }\hat\rho_j:=\prod_{1\le l\le n,\,l\not=j}e^{-\varphi_l}.$$
 It follows from Corollary \ref{c:saitoh-higher jet} and Remark \ref{rem:saitoh2} that 
 \begin{equation}
 	\label{eq:0804d}\frac{1}{K_{\partial D_j,\lambda_j}^{I_{\alpha_j},(w_j-z_j)^{\alpha_j}}(z_j)}=\frac{1}{2\pi}\int_{\partial D_j}|c_{j,\alpha_j}((P_j)_*(f_{z_j}))'(P_j)_*(f_{\frac{\varphi_1}{2}}f_{z_j}^{\alpha_j})|^2\lambda_j |dw_j|
 \end{equation}
and
\begin{equation}
	\label{eq:0804e}\frac{1}{B_{ D_j,\rho_j}^{I_{\alpha_j},(w_j-z_j)^{\alpha_j}}(z_j)}=\int_{D_j}|c_{j,\alpha_j}((P_j)_*(f_{z_j}))'(P_j)_*(f_{\frac{\varphi_1}{2}}f_{z_j}^{\alpha_j})|^2\rho_j,
\end{equation}
where $c_{j,\alpha_j}=\lim_{w_j\rightarrow z_j}\frac{(w_j-z_j)^{\alpha_j}}{((P_j)_*(f_{z_j}))'(P_j)_*(f_{\frac{\varphi_1}{2}}f_{z_j}^{\alpha_j})}\not=0$ is a constant  and $I_{\alpha_j}=((w_j-z_j)^{\alpha_j+1})$ is an ideal of $\mathcal{O}_{D_j,z_j}$. Denote that
$$\tilde\psi_{\alpha_1}:=\max_{2\le j\le n}\left\{2p_j(1-\frac{\alpha_1+1}{p_1})G_{D_j}(\cdot,z_j)\right\}$$
on $M_1$. 
Denote that 
$$f_1=c_{1,\alpha_1}((P_1)_*(f_{z_1}))'(P_1)_*(f_{\frac{\varphi_1}{2}}f_{z_1}^{\alpha_1})$$ and 
$$\hat f_1=\prod_{2\le j\le n}c_{j,\alpha_j}((P_j)_*(f_{z_j}))'(P_j)_*(f_{\frac{\varphi_1}{2}}f_{z_j}^{\alpha_j}).$$
As $\alpha \in E$, it is clear that $\sum_{2\le j\le n}\frac{\alpha_j+1}{2p_j(1-\frac{\alpha_1+1}{p_1})}=1$. Using Lemma \ref{l:psi}, we know that $(\hat f_1-\prod_{2\le j\le n}(w_j-w_j)^{\alpha_j},\hat z_1)\in\mathcal{I}(\tilde \psi_{\alpha_1})_{\hat z_1}$,  where $\hat z_1=(z_2,\ldots,z_n)\in M_1$.  By Lemma \ref{l:Bergman-prod} and equality \eqref{eq:0804e}, we obtain that 
 \begin{equation}
\nonumber
 	\begin{split}
 		 	\frac{1}{B_{M_1,\hat \rho_1}^{\mathcal{I}(\tilde\psi_{\alpha_1})_{\hat z_1},\prod_{2\le j\le n}(w_j-w_j)^{\alpha_j}}}&=\prod_{2\le j\le n}\frac{1}{B_{ D_j,\rho_j}^{I_{\alpha_j},(w_j-z_j)^{\alpha_j}}(z_j)}\\
 		 	&=\int_{M_1}|\hat f_1|^2\hat\rho_1.
 	\end{split}
 \end{equation}
Thus, we get that
\begin{equation}
	\label{eq:0804f}\int_{M_1}\hat f_1\overline{g}\hat\rho_1=0
\end{equation}
holds for any $g\in A^2_{\hat\rho_1}(M_1)$ satisfying $(g,\hat z_1)\in\mathcal{I}(\tilde \psi_{\alpha_1})_{\hat z_1}$. Following from equality \eqref{eq:0804d}, we know that 
\begin{equation}
	\label{eq:0804g}\frac{1}{2\pi}f_1\overline{g}\lambda_1|dw_1|=0
\end{equation}
holds for any $g\in H^2_{\lambda_1}(D_1,\partial D_1)$ satisfying $(g^*,z_0)\in I_{\alpha_j}$. Note that $\rho|_{\partial D_1\times M_1}=\frac{\alpha_1+1}{p_1} \lambda_1\times\hat\rho_1$. Using equality \eqref{eq:0804f}, equality \eqref{eq:0804g} and Lemma \ref{l:b9}, we obtain that
\begin{equation}
	\nonumber\frac{1}{2\pi}\int_{\partial D_1}\int_{M_1}f_1\hat f_1\overline{g}\rho d\mu_1|dw_1|=0
\end{equation}
holds for any $g\in H^2_{\rho}(M,\partial M)$ satisfying $(g^*,z_0)\in\mathcal{I}(\psi)_{z_0}$. By similar discussion, we obtain that
\begin{equation}
	\nonumber\frac{1}{2\pi}\int_{\partial D_l}\int_{M_l} \left(\prod_{1\le j\le n}c_{j,\alpha_j}((P_j)_*(f_{z_j}))'(P_j)_*(f_{\frac{\varphi_1}{2}}f_{z_j}^{\alpha_j})\right) \overline{g}\rho d\mu_l|dw_l|=0
\end{equation}
holds for any $1\le l\le n$ and any $g\in H^2_{\rho}(M,\partial M)$ satisfying $(g^*,z_0)\in\mathcal{I}(\psi)_{z_0}$, which implies that 
\begin{equation}
	\nonumber \sum_{1\le j\le n}\frac{1}{2\pi}\int_{\partial D_l}\int_{M_l} f\overline{g}\rho d\mu_l|dw_l|=0
\end{equation}
holds  any $g\in H^2_{\rho}(M,\partial M)$ satisfying $(g^*,z_0)\in\mathcal{I}(\psi)_{z_0}$, where $f\in H^2_{\rho}(M,\partial M)$ and $f^*=F=\sum_{\alpha\in E}d_{\alpha}\prod_{1\le j\le n}((P_j)_*(f_{z_j}))'(P_j)_*(f_{\frac{\varphi_1}{2}}f_{z_j}^{\alpha_j})$. It follows that 
\begin{equation}
	\label{eq:0804h}\frac{1}{K_{\partial M,\rho}^{\mathcal{I}(\psi)_{z_0},f_0^*}(z_0)}=\|f\|^2_{\partial M,\rho}.
\end{equation}
Combining equality \eqref{eq:220802c}, equality \eqref{eq:220802d} and equality \eqref{eq:0804h}, we have 
\begin{displaymath}
	\begin{split}
		\left(\int_0^{+\infty}c(t)e^{-t}dt\right)\pi B_{\tilde\rho}^{\mathcal{I}(\psi)_{z_0},f_0^*}(z_0)&=\pi\limsup_{r\rightarrow 1-0}\frac{1-r}{\int_{\{z\in M:\psi(z)\ge\log r\}}|F|^2\hat\rho}\\
		&=\frac{1}{\|f\|^2_{\partial M,\rho}}\\
		&=K_{\partial M,\rho}^{\mathcal{I}(\psi)_{z_0},f_0^*}(z_0).
	\end{split}
\end{displaymath}
 
Thus, the sufficiency of the characterization has been proved.

\subsection{Proof of Remark \ref{product 1-2}}
\

Assume that the three statements in Theorem \ref{thm:main1-2} hold.

We follow the discussions and notations in Step 2 in the proof of Theorem \ref{thm:main1-2}. Inequality \eqref{eq:0824a} and equality \eqref{eq:0824b} imply that 
\begin{equation}
	\label{eq:0824c}K_{\partial M,\rho}^{I,h_0}(z_0)=K_{\partial M,\rho}^{\mathcal{I}(\psi)_{z_0},f_0^*}(z_0)
\end{equation}
and 
\begin{equation}
	\label{eq:0824d}B_{\tilde\rho}^{I,h_0}(z_0)= B_{\tilde\rho}^{\mathcal{I}(\psi)_{z_0},f_0^*}(z_0).
\end{equation}
For the case $n>1$,  without loss of generality, assume that $g_j(z_j)=1$. It follows from equality \eqref{eq:0824e} and equality \eqref{eq:0824f} that 
 it suffices to prove the case that $g_j\equiv1$ for any $1\le j\le n$.

In the following, $n$ is any positive integer, and we assume that $\varphi_j=2u_j$ is harmonic on $D_j$ for any $1\le j\le n$.
By equality \eqref{eq:220802b} and equality \eqref{eq:0824b}, we have 
$$B_{\tilde\rho}^{I,h_0}(z_0)= B_{\tilde\rho}^{\mathcal{I}(\psi)_{z_0},f_0^*}(z_0)=\frac{1}{G_2(0)}=\frac{1}{\|F\|^2_{M,\tilde\rho}},$$
where $F=\sum_{\alpha\in E}\tilde d_{\alpha}\prod_{1\le j\le n}((P_j)_*(f_{z_j}))'(P_j)_*(f_{u_j}f_{z_j}^{\alpha_j}).$ By equality \eqref{eq:0804h} and equality \eqref{eq:0824a}, we have 
$$K_{\partial M,\rho}^{I,h_0}(z_0)=K_{\partial M,\rho}^{\mathcal{I}(\psi)_{z_0},f_0^*}(z_0)=\frac{1}{\|f\|^2_{\partial M,\rho}},$$
where $f\in H^2_{\rho}(M,\partial M)$ such that $f^*=F$.

Thus, Remark \ref{product 1-2} holds.

\section{Proofs of Theorem \ref{thm:main2-1} and Theorem \ref{thm:main2-3}}\label{sec:proof3}

In this section, we prove Theorem \ref{thm:main2-1} and Theorem \ref{thm:main2-3}.

\subsection{Proof of Theorem \ref{thm:main2-1}}

The inequality part of Theorem \ref{thm:main2-1} follows from Proposition \ref{p:inequality2}. In the following, we prove the characterization for holding of the equality.

It follows from inequality \eqref{eq:0810b} and inequality \eqref{eq:0810c} in the proof of Proposition \ref{p:inequality2} that 
\begin{equation}\label{eq:0810d}
	\begin{split}
			\left(\sum_{1\le j\le n}\frac{1}{p_j}\right)\frac{1}{K_{S,\lambda}(z_0)}
			&\le\sum_{1\le j\le n}\frac{1}{\pi^{n-1}p_jK_{\partial D_j,\lambda_j}(z_j) B_{M_j,\hat\rho_j}(\hat z_j)}\\
			&\le\sum_{1\le j\le n}\frac{1}{\pi^{n-1}p_j}\|f\|^2_{\partial D_j\times M_j,p_j\rho}\\
			&=\frac{1}{\pi^{n-1}}\|f\|^2_{\partial M,\rho}\\
			&=\frac{1}{\pi^{n-1}K_{\partial M,\rho}(z_0)},
	\end{split}
\end{equation}
where $f\in H^2_{\rho}(\partial M,\rho)$ satisfying $f^*(z_0)=1$, $\hat\rho_j=\prod_{1\le l\le n,\,j\not=j}e^{-\varphi_l}$, $\hat z_j=(z_1,\ldots,z_{j-1},z_{j+1},\cdots,z_n)$ and  $\lambda_j=\left(\frac{\partial G_{D_j}(w_j,z_j)}{\partial v_{w_j}}\right)^{-1}e^{-\varphi_j}$ on $\partial D_j$ for any $1\le j\le n$.
It follows from Proposition \ref{p:8}, Lemma \ref{l:Bergman-prod} and Proposition \ref{p:inequality} that 	\begin{equation}
	\label{eq:0810e}K_{S,\lambda}(z_0)=\prod_{1\le j\le n}K_{\partial D_j,\lambda_j}(z_j),
\end{equation}
\begin{equation}
	\label{eq:0810g}K_{\partial D_j,\lambda_j}(z_j)\ge\pi B_{D_j,\rho_j}(z_j)
\end{equation}
for any $1\le j\le n$, and
\begin{equation}
	\label{eq:0810f}B_{M_1,\hat\rho_j}(\hat z_j)=\prod_{1\le l\le n,\,l\not=j}B_{D_l, \rho_l}(z_l),
\end{equation}
where $\rho_l=e^{-\varphi_l}$ for any $1\le l\le n$.

Firstly, we prove the necessity of the characterization.
Assume that $K_{S,\lambda}(z_0)=\left(\sum_{1\le j\le n}\frac{1}{p_j}\right)\pi^{n-1}K_{\partial M,\rho}(z_0)$. Hence inequality \eqref{eq:0810d} becomes an equality, and it follows from equality \eqref{eq:0810e}, inequality \eqref{eq:0810g} and equality \eqref{eq:0810f} that 
\begin{equation}\nonumber
	K_{\partial D_j,\lambda_j}(z_j)=\pi B_{D_j,\rho_j}(z_j)
\end{equation}
holds for any $1\le j\le n$. Using Theorem \ref{thm:saitoh-1d}, we get that  $\varphi_j$ is harmonic  on $D_j$ for any $1\le j\le n$ and $\chi_{j,z_j}=\chi_{j,-\frac{\varphi_j}{2}}$.

Now, we prove the sufficiency of the characterization. Assume that $\varphi_j$ is harmonic  on $D_j$ for any $1\le j\le n$ and $\chi_{j,z_j}=\chi_{j,-\frac{\varphi_j}{2}}$. By Theorem \ref{thm:saitoh-1d}, we know that  
\begin{equation}\nonumber
	K_{\partial D_j,\lambda_j}(z_j)=\pi B_{D_j,\rho_j}(z_j)
\end{equation}
holds for any $1\le j\le n$,
which shows that 
\begin{equation}
	\label{eq:0809b}\left(\sum_{1\le j\le n}\frac{1}{p_j}\right)\frac{1}{K_{S,\lambda}(z_0)}
=\sum_{1\le j\le n}\frac{1}{\pi^{n-1}p_jK_{\partial D_j,\lambda_j}(z_j) B_{M_j,\hat\rho_j}(\hat z_j)}.
\end{equation}
 Remark \ref{r:product 1-1} shows that 
and 
$$f^*=c_0\prod_{1\le j\le n} ((P_j)_*(f_{z_j}))'(P_j)_*(f_{u_j}),$$
where $c_0$ is a constant, $P_j$ is the universal covering from unit disc $\Delta$ to $ D_j$, $f_{\frac{\varphi_j}{2}}$ is a holomorphic function on $\Delta$ such that $|f_{\frac{\varphi_j}{2}}|=P_j^*(e^{\frac{\varphi_j}{2}})$, and $f_{z_j}$ is a holomorphic function on $\Delta$ such that $|f_{z_j}|=P_j^*(e^{G_{D_j}(\cdot,z_j)})$ for any $1\le j\le n$. It follows from Theorem \ref{thm:saitoh-1d} and Remark \ref{rem:function} that 
\begin{equation}
\nonumber
	\begin{split}
	\frac{1}{K_{\partial D_j,\lambda_j}(z_j)}=&\frac{1}{2\pi}\int_{\partial D_j}\left|\frac{K_{\partial D_j,\lambda_j}(z,z_j)}{K_{\partial D_j,\lambda_j}(z_j)}\right|^2e^{-\varphi_j}\left(\frac{\partial G_{D_j}(z,z_j)}{\partial v_z} \right)^{-1}|dz|
		\\
		=&\frac{1}{2\pi}\int_{\partial D_j}|c_j((P_j)_*(f_{z_j}))'(P_j)_*(f_{\frac{\varphi_j}{2}})|^2e^{-\varphi_j}\left(\frac{\partial G_{D_j}(z,z_j)}{\partial v_z} \right)^{-1}|dz|
	\end{split}
\end{equation}
and 
\begin{equation}
\nonumber
	\begin{split}
		\frac{1}{B_{D_j,\rho_j}(z_j)}&=\int_{D_j}\left|\frac{B_{D_j,\rho_j}(\cdot,z_j)}{B_{D_j,\rho_j}(z_j)}\right|^2e^{-\varphi_j}\\&=\int_{D_j}|c_j((P_j)_*(f_{z_j}))'(P_j)_*(f_{\frac{\varphi_j}{2}})|^2e^{-\varphi_j}
	\end{split}
\end{equation}
for any $1\le j\le n$,
where $c_j$ is constant such that $\left(c_j((P_j)_*(f_{z_j}))'(P_j)_*(f_{\frac{\varphi_j}{2}})\right)(z_j)=1$.
Thus, we have 
\begin{equation}
	\label{eq:0809a}\frac{1}{K_{\partial D_j,\lambda_j}(z_j)B_{M_j,\hat\rho_j}}=\frac{1}{K_{\partial D_j,\lambda_j}(z_j)\prod_{1\le l\le n,\,l\not=j}B_{D_l, \rho_l}(z_l)}=\|f\|^2_{\partial D_j\times M_j,p_j\rho}.
\end{equation}
Following from inequality \eqref{eq:0810d}, equality \eqref{eq:0809b} and equality \eqref{eq:0809a}, we obtain that 
\begin{equation}\nonumber
	\begin{split}
		\left(\sum_{1\le j\le n}\frac{1}{p_j}\right)\frac{1}{K_{S,\lambda}(z_0)}
		&=\sum_{1\le j\le n}\frac{1}{\pi^{n-1}p_jK_{\partial D_j,\lambda_j}(z_j) B_{M_j,\hat\rho_j}(\hat z_j)}\\
		&=\sum_{1\le j\le n}\frac{1}{\pi^{n-1}p_j}\|f\|^2_{\partial D_j\times M_j,p_j\rho}\\
		&=\frac{1}{\pi^{n-1}}\|f\|^2_{\partial M,\rho}\\
		&=\frac{1}{\pi^{n-1}K_{\partial M,\rho}(z_0)}.
	\end{split}
\end{equation}

Thus, Theorem \ref{thm:main2-1} holds.

\subsection{Proof of Theorem \ref{thm:main2-3}}

The inequality part of Theorem \ref{thm:main2-3} follows from Proposition \ref{p:inequality3}. In the following, we prove the characterization for holding of the equality.

Following the notations in the proof of Proposition \ref{p:inequality3}.
It follows from inequality \eqref{eq:0812c}, inequality \eqref{eq:0812d} and inequality \eqref{eq:0812f} in the proof of Proposition \ref{p:inequality2} that 
\begin{equation}\label{eq:0813a}
	\begin{split}
		\left(\sum_{1\le j\le n}\frac{\tilde\beta_j+1}{p_j}\right)\frac{1}{K^{I,h_0}_{S,\lambda}(z_0)}
		&\le
		\sum_{1\le j\le n}\frac{\prod_{1\le l\le n}(\tilde\beta_l+1)}{\pi^{n-1}p_jK_{\partial D_j,\lambda_j}^{I_{\tilde\beta_j,z_j},h_j}(z_j) B_{M_j,\hat\rho_j}^{I'_j,\hat h_j}(\hat z_j)}\\
		&\le\sum_{1\le j\le n}\frac{\prod_{1\le l\le n}(\tilde\beta_l+1)}{\pi^{n-1}p_j}\|f\|^2_{\partial D_j\times M_j,p_j\rho}\\
		&=\frac{\prod_{1\le j\le n}(\tilde\beta_j+1)}{\pi^{n-1}}\|f\|^2_{\partial M,\rho}\\
		&=\frac{\prod_{1\le j\le n}(\tilde\beta_j+1)}{\pi^{n-1}K_{\partial M,\rho}^{I,h_0}(z_0)},
	\end{split}
\end{equation}
where $f\in H^2_{\rho}(\partial M,\rho)$ satisfying $(f^*-h_0,z_0)\in I$, $\hat\rho_j:=\prod_{1\le l\le n,\,l\not=j}\rho_l=\prod_{1\le l\le n,\,l\not=j}e^{-\varphi_l}$, $\hat h_j=\prod_{1\le l\le n,\,l\not=j}h_l$, $M_j=\prod_{1\le l\le n,\,l\not=j}D_l$, $\hat z_j:=(z_1,\ldots,z_{j-1},z_{j+1},\ldots,z_n)$ and 
 \begin{displaymath}
	\begin{split}
		I'_j=\Bigg\{(g,\hat z_1)\in\mathcal{O}_{M_1, \hat z_1}:g=\sum_{\alpha=(\alpha_1,\ldots,\alpha_{j-1},\alpha_j\ldots\alpha_n)\in\mathbb{Z}_{\ge0}^{n-1}}b_{\alpha}&\prod_{1\le l\le n,\,l\not=j}(w_j-z_j)^{\alpha_j}\text{ near  }\hat z_1\\
		&\text{s.t. $b_{\alpha}=0$ for any $\alpha\in L_j$}\Bigg\},
	\end{split}
\end{displaymath}
where $L_j=\{\alpha=(\alpha_1,\ldots,\alpha_{j-1},\alpha_j\ldots\alpha_n)\in\mathbb{Z}_{\ge0}^{n-1}:\alpha_l\le\tilde\beta_l$ for any $l\not=j\}$.

It follows from Proposition \ref{p:9}, Lemma \ref{l:Bergman-prod2} and Proposition \ref{p:inequality} that 	\begin{equation}
	\label{eq:0813b}K^{I,h_0}_{S,\lambda}(z_0)=\prod_{1\le j\le n}K_{\partial D_j,\lambda_j}^{I_{\tilde\beta_j,z_j},h_j}(z_j),
\end{equation}
\begin{equation}
	\label{eq:0813c}(\tilde\beta_j+1)K^{I_{\tilde\beta_j,z_j},h_j}_{\partial D_j,\lambda_j}(z_j)\ge \pi B^{I_{\tilde\beta_j,z_j},h_j}_{D_j,\rho_j}(z_j)
\end{equation}
for any $1\le j\le n$, and
\begin{equation}
	\label{eq:0813d}B_{M_j,\hat\rho_j}^{I'_j,\hat h_j}(\hat z_j)=\prod_{1\le l\le n,\,l\not=j}B^{I_{\tilde\beta_l,z_l},h_l}_{D_l,\rho_l}(z_l),
\end{equation}
where $\rho_j=e^{-\varphi_j}$ and $\lambda_j=\left(\frac{\partial G_{D_j}(z,z_j)}{\partial v_z}\right)^{-1}\rho_j$ for any $1\le j\le n$.

Firstly, we prove the necessity of the characterization.
Assume that 
$$\left(\prod_{1\le j\le n}(\tilde\beta_j+1)\right)K_{S,\lambda}^{I,h_0}(z_0)=\left(\sum_{1\le j\le n}\frac{\tilde\beta_j+1}{p_j}\right)\pi^{n-1} K^{I,h_0}_{\partial M,\rho}(z_0)>0.$$ 
Hence inequality \eqref{eq:0813a} becomes an equality, and it follows from equality \eqref{eq:0813b}, inequality \eqref{eq:0813c} and equality \eqref{eq:0813d} that 
\begin{equation}\nonumber
	(\tilde\beta_j+1)K^{I_{\tilde\beta_j,z_j},h_j}_{\partial D_j,\lambda_j}(z_j)=\pi B^{I_{\tilde\beta_j,z_j},h_j}_{D_j,\rho_j}(z_j)
\end{equation}
holds for any $1\le j\le n$. Using the case $n=1$ of Theorem \ref{thm:main1-2} (replacing $I$ and $\psi$ by $I_{\tilde\beta_j,z_j}$ and $2(\tilde\beta_j+1)G_{D_1}(\cdot,z_j)$, respectively), we get that $\beta_j=\tilde\beta_j$, $\varphi_j$ is harmonic  on $D_j$ and $\chi_{j,z_j}^{\beta_j+1}=\chi_{j,-\frac{\varphi_j}{2}}$  for any $1\le j\le n$.

Now, we prove the sufficiency of the characterization. Assume that $\beta_j=\tilde\beta_j$, $\varphi_j$ is harmonic  on $D_j$ and $\chi_{j,z_j}^{\beta_j+1}=\chi_{j,-\frac{\varphi_j}{2}}$  for any $1\le j\le n$. By Theorem \ref{thm:main1-2}, we know that  
\begin{equation}\nonumber
	(\tilde\beta_j+1)K^{I_{\tilde\beta_j,z_j},h_j}_{\partial D_j,\lambda_j}(z_j)=\pi B^{I_{\tilde\beta_j,z_j},h_j}_{D_j,\rho_j}(z_j)
\end{equation}
holds for any $1\le j\le n$,
which shows that 
\begin{equation}
	\label{eq:0813e}	\left(\sum_{1\le j\le n}\frac{\tilde\beta_j+1}{p_j}\right)\frac{1}{K^{I,h_0}_{S,\lambda}(z_0)}
	=
	\sum_{1\le j\le n}\frac{\prod_{1\le l\le n}(\tilde\beta_l+1)}{\pi^{n-1}p_jK_{\partial D_j,\lambda_j}^{I_{\tilde\beta_j,z_j},h_j}(z_j) B_{M_j,\hat\rho_j}^{I'_j,\hat h_j}(\hat z_j)}
\end{equation}
by using equality \eqref{eq:0813b} and equality \eqref{eq:0813d}.
Denote that 
$$g_{j}:=c_j((P_j)_*(f_{z_j}))'(P_j)_*(f_{\frac{\varphi_j}{2}}f_{z_j}^{\beta_j})$$
on $D_j$,
where $c_j$ is constant such that $(g_j-h_j,z_j)\in I_{\beta_j,z_j}$, $P_j$ is the universal covering from unit disc $\Delta$ to $ D_j$, $f_{\frac{\varphi_j}{2}}$ is a holomorphic function on $\Delta$ such that $|f_{\frac{\varphi_j}{2}}|=P_j^*(e^{\frac{\varphi_j}{2}})$, and $f_{z_j}$ is a holomorphic function on $\Delta$ such that $|f_{z_j}|=P_j^*(e^{G_{D_j}(\cdot,z_j)})$ for any $1\le j\le n$. 
 It follows from the case $n=1$ of Theorem \ref{thm:main1-2} and Remark \ref{product 1-2} that there is $\tilde g_j\in H^2_{\lambda_j}(D_j,\partial D_j)$ such that $\tilde g_j^*=g_j$ and
\begin{equation}
	\nonumber
	\begin{split}
	\frac{1}{2\pi}\int_{\partial D_j}|\tilde g_j|^2e^{-\varphi_j}\left(\frac{\partial G_{D_j}(z,z_j)}{\partial v_z} \right)^{-1}|dz|&=\frac{1}{K_{\partial D_j,\lambda_j}^{I_{\beta_j,z_j},h_j}(z_j)}\\
	&=\frac{\tilde\beta_j+1}{B_{D_j,\rho_j}^{I_{\beta_j,z_j},h_j}(z_j)}\\
	&=(\tilde\beta_j+1)\int_{D_j}|g_j|^2e^{-\varphi_j}
	\end{split}
\end{equation}
for any $1\le j\le n$.
Thus, there is $\tilde f\in H^2_{\rho}(M,\partial M)$ such that $\tilde f^*=\prod_{1\le j\le n}g_j$ and 
\begin{equation}
	\nonumber
	\begin{split}
\frac{1}{K_{\partial D_j,\lambda_j}^{I_{\tilde\beta_j,z_j},h_j}(z_j) B_{M_j,\hat\rho_j}^{I'_j,\hat h_j}(\hat z_j)}
&=\frac{1}{K_{\partial D_j,\lambda_j}^{I_{\tilde\beta_j,z_j},h_j}(z_j) \prod_{1\le l\le n,\,l\not=j}B_{D_l,\rho_l}^{I_{\beta_l,z_l},h_l}(z_l)}
\\&=\|\tilde f\|^2_{\partial D_j\times M_j,p_j\rho},
	\end{split}
\end{equation}
which shows that 
\begin{equation}
	\label{eq:0813f}
	\begin{split}
	\sum_{1\le j\le n}\frac{1}{p_jK_{\partial D_j,\lambda_j}^{I_{\tilde\beta_j,z_j},h_j}(z_j) B_{M_j,\hat\rho_j}^{I'_j,\hat h_j}(\hat z_j)}
&=\sum_{1\le j\le n}\frac{1}{p_j}\|\tilde f\|^2_{\partial D_j\times M_j,p_j\rho}\\
&=\|\tilde f\|^2_{\partial M,\rho}\\
&\ge\frac{1}{K_{\partial M,\rho}^{I,h_0}(z_0)}.
	\end{split}
\end{equation}
Following from inequality \eqref{eq:0813a}, equality \eqref{eq:0813e} and equality \eqref{eq:0813f}, we obtain that 
\begin{equation}\nonumber
	\begin{split}
			\left(\sum_{1\le j\le n}\frac{\tilde\beta_j+1}{p_j}\right)\frac{1}{K^{I,h_0}_{S,\lambda}(z_0)}
		&=
		\sum_{1\le j\le n}\frac{\prod_{1\le l\le n}(\tilde\beta_l+1)}{\pi^{n-1}p_jK_{\partial D_j,\lambda_j}^{I_{\tilde\beta_j,z_j},h_j}(z_j) B_{M_j,\hat\rho_j}^{I'_j,\hat h_j}(\hat z_j)}\\
		&=\frac{\prod_{1\le j\le n}(\tilde\beta_j+1)}{\pi^{n-1}K_{\partial M,\rho}^{I,h_0}(z_0)}.
	\end{split}
\end{equation}

Thus, Theorem \ref{thm:main2-3} holds.

\section{appendix}\label{sec:app}

In this section, we do further discussion on the relations between $K_{S,\lambda}^{I,h_0}(z_0)$, $K_{\partial M,\rho}^{I,h_0}(z_0)$ and $B_{\tilde\rho}^{I,h_0}(z_0)$.

Let $z_0=(z_1,\ldots,z_n)\in M,$ and let  
$$\psi(w_1,\ldots,w_n)=\max_{1\le j\le n}\{2p_jG_{D_j}(w_j,z_j)\}$$ be a plurisubharmonic function on $M$, where $G_{D_j}(\cdot,z_j)$ is the Green function on $D_j$ and $p_j>0$ is a constant for any $1\le j\le n$.

Let $\varphi_j$ be a subharmonic function on $D_j$, which satisfies that  $\varphi_j$ is continuous at $z_j$ for any $z_j\in \partial D_j$. 
 Let $\rho$ be a Lebesgue measurable function on $\partial M$ such that
$$\rho(w_1,\ldots,w_n):=\frac{1}{p_j}\left(\frac{\partial G_{D_j}(w_j,z_j)}{\partial v_{w_j}}\right)^{-1}\times\prod_{1\le l\le n}e^{-\varphi_l(w_l)}$$
on $\partial D_j\times {M_j}$, thus we have $\inf_{\partial M}\rho>0$.
 Let 
 $$\lambda_j(w_j):=\left(\frac{\partial G_{D_j}(w_j,z_j)}{\partial v_{w_j}}\right)^{-1}e^{-\varphi_j(w_j)}$$
 on $\partial D_j$ for any $1\le j\le n$, and denote that
$$\lambda:=\prod_{1\le j\le n}\lambda_j$$
on $S=\prod_{1\le j\le n}\partial D_j$, thus $\lambda$ is continuous on $S$. 

Denote that $E_1:=\{\alpha\in\mathbb{Z}_{\ge0}^n:\sum_{1\le j\le n}\frac{\alpha_j+1}{p_j}\leq 1\}$. 
We call $\alpha>\beta$ for any $\alpha,\beta\in \mathbb{Z}_{\ge0}^n$ if $\alpha_j\ge \beta_j$ for any $1\le j\le n$ but $\alpha\not=\beta$.
Let $L\not=\emptyset$ be a subset of $E_1$ satisfying that if $\alpha\in L$ then $\beta\not\in E_1$ for any $\beta >\alpha$.
Let 
$$h_0(w)=\sum_{\alpha\in L}d_{\alpha}(w-z_0)^{\alpha}$$ be a holomorphic function on $M$, where $d_{\alpha}\not=0$ is a constant for any $\alpha\in L$.

\begin{Proposition}
	\label{p:app1}Assume that $H^2_{\lambda}(M,S)\not=\emptyset$, then we have $$\frac{1}{K^{\mathcal{I}(\psi)_{z_0},h_0}_{S,\lambda}(z_0)}=\sum_{\alpha\in L}\frac{|d_{\alpha}|^2}{K^{\mathcal{I}(\psi)_{z_0},(w-z_0)^{\alpha}}_{S,\lambda}(z_0)}.$$
	 Furthermore, assume that $K^{\mathcal{I}(\psi)_{z_0},h_0}_{\partial M,\rho}(z_0)>0$, then inequality 
	 \begin{equation}\label{eq:0814g}
	 \sum_{\alpha\in L}\frac{|d_{\alpha}|^2c_{\alpha}}{K^{\mathcal{I}(\psi)_{z_0},(w-z_0)^{\alpha}}_{S,\lambda}(z_0)}\le \frac{1}{\pi^{n-1}K^{\mathcal{I}(\psi)_{z_0},h_0}_{\partial M,\rho}(z_0)}
	 \end{equation}
	holds, where $c_{\alpha}=\frac{\sum_{1\le j\le n}\frac{\alpha_j+1}{p_j}}{\prod_{1\le j\le n}(\alpha_j+1)}$, and equality holds if and only if the following statements hold:
	
		$(1)$ $\varphi_j$ is harmonic  on $D_j$ for any $1\le j\le n$;
	
	$(2)$ $\chi_{j,z_j}^{\alpha_j+1}=\chi_{j,-\frac{\varphi_j}{2}}$ for any $\alpha=(\alpha_1,\ldots,\alpha_n)\in L$ and $1\le j\le n$, where $\chi_{j,-\frac{\varphi_j}{2}}$ and $\chi_{j,z_j}$ are the  characters associated to the functions $-\frac{\varphi_j}{2}$ and $G_{D_j}(\cdot,z_j)$ respectively.
\end{Proposition}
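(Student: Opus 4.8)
The plan is to prove the two assertions separately: first the identity for $K_{S,\lambda}$, then the weighted inequality \eqref{eq:0814g}. Write $f_\alpha^{S}$ and $f_\alpha^{\partial}$ for the unique minimizers attached to the single target $(w-z_0)^\alpha$ and the ideal $\mathcal{I}(\psi)_{z_0}$ in $H^2_\lambda(M,S)$ and $H^2_\rho(M,\partial M)$ respectively (existence and uniqueness from Lemma \ref{l:a K<+infty} and Lemma \ref{l:b7}). The structural fact driving everything is that $L$ is an antichain of maximal elements of $E_1$: if $\alpha\in L$ then no $\beta\in E_1$ satisfies $\beta>\alpha$, so distinct elements of $L$ are incomparable, and by Lemma \ref{l:psi} a germ vanishing at $z_0$ to multi-order $\geq\alpha$ whose only nonzero $E_1$-coefficient sits at $\alpha$ represents $(w-z_0)^\alpha$ modulo $\mathcal{I}(\psi)_{z_0}$.

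For the identity I would repeat the argument of Lemma \ref{l:Bergman-prod} with the Bergman space replaced by $H^2_\lambda(M,S)$. Using Lemma \ref{l:a6} choose, for each $j$, a complete orthonormal basis $\{e_{j,m}\}$ of $H^2_{\lambda_j}(D_j,\partial D_j)$ with $\mathrm{ord}_{z_j}(e_{j,m}^*)=m$; by Proposition \ref{p:7} the products $\{\prod_j e_{j,\sigma_j}\}_\sigma$ form a complete orthonormal basis of $H^2_\lambda(M,S)$, and each $\prod_j e_{j,\sigma_j}^*$ has leading multi-order $\sigma$ at $z_0$. Solving the resulting triangular system for the coefficients of the minimizer exactly as in Lemma \ref{l:Bergman-prod}, the maximality of the elements of $L$ forces $f_\alpha^{S}$ to be a single basis vector (a constant multiple of $\prod_j e_{j,\alpha_j}$); distinct $\alpha\in L$ then give orthogonal basis vectors, and the norm of the minimizer for $h_0=\sum_{\alpha\in L}d_\alpha(w-z_0)^\alpha$ splits as $\sum_{\alpha\in L}|d_\alpha|^2\|f_\alpha^{S}\|^2$. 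Finally Proposition \ref{p:9} identifies $\|f_\alpha^{S}\|^{-2}$ with $\prod_j K_{\partial D_j,\lambda_j}^{I_{\alpha_j,z_j},(w_j-z_j)^{\alpha_j}}(z_j)$, which equals $K_{S,\lambda}^{\mathcal{I}(\psi)_{z_0},(w-z_0)^\alpha}(z_0)$ because the (product) box-ideal minimizer already lies in the $\mathcal{I}(\psi)_{z_0}$-feasible set; collecting terms gives the asserted identity.

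For the inequality the engine is the single-monomial estimate $c_\alpha\,\pi^{n-1}K_{\partial M,\rho}^{\mathcal{I}(\psi)_{z_0},(w-z_0)^\alpha}(z_0)\leq K_{S,\lambda}^{\mathcal{I}(\psi)_{z_0},(w-z_0)^\alpha}(z_0)$ for each $\alpha\in L$. I obtain it by applying Theorem \ref{thm:main2-3} with $h_j=(w_j-z_j)^{\alpha_j}$ and $\tilde\beta=\beta=\alpha$, so that the ideal is the box ideal $I_{\mathrm{box},\alpha}=\{(g,z_0):b_\beta=0\text{ for }\beta\leq\alpha\}$ — this yields $c_\alpha\pi^{n-1}K_{\partial M,\rho}^{\mathrm{box}}\leq K_{S,\lambda}^{\mathrm{box}}$ — together with the trivial bound $K_{\partial M,\rho}^{\mathcal{I}(\psi)_{z_0},(w-z_0)^\alpha}\leq K_{\partial M,\rho}^{\mathrm{box}}$ (the $\mathcal{I}(\psi)_{z_0}$-constraint is the stronger one, since $\{\beta\leq\alpha\}\subseteq E_1$) and the equality $K_{S,\lambda}^{\mathcal{I}(\psi)_{z_0},(w-z_0)^\alpha}=K_{S,\lambda}^{\mathrm{box}}$ from the previous paragraph. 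In parallel I claim the exact decomposition $\bigl(K_{\partial M,\rho}^{\mathcal{I}(\psi)_{z_0},h_0}(z_0)\bigr)^{-1}=\sum_{\alpha\in L}|d_\alpha|^2\bigl(K_{\partial M,\rho}^{\mathcal{I}(\psi)_{z_0},(w-z_0)^\alpha}(z_0)\bigr)^{-1}$: by Lemma \ref{l:b8} the minimizer for $h_0$ is $\sum_{\alpha\in L}d_\alpha f_\alpha^{\partial}$, so this is equivalent to the orthogonality $\ll f_\alpha^{\partial},f_{\alpha'}^{\partial}\gg_{\partial M,\rho}=0$ for distinct $\alpha,\alpha'\in L$. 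Granting the decomposition, multiplying the single-monomial estimate by $|d_\alpha|^2$ and summing bounds the left side of \eqref{eq:0814g} by $\pi^{-(n-1)}\sum_\alpha|d_\alpha|^2\bigl(K_{\partial M,\rho}^{\mathcal{I}(\psi)_{z_0},(w-z_0)^\alpha}\bigr)^{-1}=\pi^{-(n-1)}\bigl(K_{\partial M,\rho}^{\mathcal{I}(\psi)_{z_0},h_0}\bigr)^{-1}$, which is \eqref{eq:0814g}. The equality clause then reads off from the single-monomial estimate: since the decomposition is an identity, the only genuine inequality is the application of Theorem \ref{thm:main2-3}, so equality in \eqref{eq:0814g} is equivalent to equality there for every $\alpha\in L$, whose conditions (the order condition $\beta=\tilde\beta$ being automatic here, $\varphi_j$ harmonic, and $\chi_{j,z_j}^{\alpha_j+1}=\chi_{j,-\varphi_j/2}$) are exactly statements (1) and (2).

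The main obstacle is the orthogonality $\ll f_\alpha^{\partial},f_{\alpha'}^{\partial}\gg_{\partial M,\rho}=0$, which cannot be read off from a single global product basis because $H^2_\rho(M,\partial M)$ is only an intersection of the face spaces $H^2_\rho(M,\partial D_j\times M_j)$ and the extremals are not products away from the equality case. I would prove it face by face: since $\ll\cdot,\cdot\gg_{\partial M,\rho}=\sum_j\ll\cdot,\cdot\gg_{\partial D_j\times M_j,\rho}$, I use the product decomposition of Lemma \ref{l:prod-d1xm1} on each face $\partial D_j\times M_j$ (Hardy in the boundary variable tensored with a Bergman space in the remaining variables) together with Lemma \ref{l:b9}, which manufactures orthogonality to the whole space $\{g:(g^*,z_0)\in\mathcal{I}(\psi)_{z_0}\}$ out of a Hardy order condition in one variable and a Bergman multiplier-ideal condition in the others. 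The incomparability of $\alpha$ and $\alpha'$ furnishes, on each face, a coordinate in which their orders are mismatched, and this is what makes the face pairing vanish once one uses that the $E_1$-coefficients of $f_\alpha^{\partial}$ are concentrated at $\alpha$ (its defining property). Summing over the faces yields the global orthogonality; this is the step that genuinely uses the maximality hypothesis on $L$, and once it is in place the two parts assemble into the proposition.
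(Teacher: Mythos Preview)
Your treatment of the identity for $K_{S,\lambda}$ is essentially the paper's argument: build the product basis from Lemma~\ref{l:a6} and Proposition~\ref{p:7}, observe that for $\alpha\in L$ the product $\prod_j e_{j,\alpha_j}$ is already orthogonal to everything whose germ lies in $\mathcal I(\psi)_{z_0}$, and read off the splitting. That part is fine.

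The gap is in your proof of \eqref{eq:0814g}. Your whole strategy rests on the decomposition
\[
\frac{1}{K_{\partial M,\rho}^{\mathcal I(\psi)_{z_0},h_0}(z_0)}\;=\;\sum_{\alpha\in L}|d_\alpha|^2\,\frac{1}{K_{\partial M,\rho}^{\mathcal I(\psi)_{z_0},(w-z_0)^\alpha}(z_0)},
\]
which is equivalent to the pairwise orthogonality $\ll f_\alpha^{\partial},f_{\alpha'}^{\partial}\gg_{\partial M,\rho}=0$ for distinct $\alpha,\alpha'\in L$. This is not established. Lemma~\ref{l:b8} gives you that the minimizer for $h_0$ is $\sum_\alpha d_\alpha f_\alpha^{\partial}$, but nothing about cross terms. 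Lemma~\ref{l:b7} tells you each $f_\alpha^{\partial}$ is orthogonal to $\{g:(g^*,z_0)\in\mathcal I(\psi)_{z_0}\}$, but $f_{\alpha'}^{\partial}$ does \emph{not} lie in that set, so this gives nothing. Your face-by-face argument via Lemma~\ref{l:b9} cannot work as stated: Lemma~\ref{l:b9} applies only to functions that are genuine products $f_1f_2$ with the factors satisfying separate Hardy and Bergman orthogonality conditions, and you yourself note that $f_\alpha^{\partial}$ is not a product away from the equality case. Expanding $f_\alpha^{\partial}$ on a face produces $\sum_m e_m g_m$, but the Bergman pieces $g_m$ have no reason to be orthogonal to the relevant $\mathcal I(\tilde\psi_\gamma)$ classes; all you know about $f_\alpha^{\partial}$ is its $E_1$-coefficient pattern, which does not control the Bergman components term by term. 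In short, the $\partial M$-minimizers live in a finite-dimensional complement $V^\perp$ and correspond to standard basis vectors under the $E_1$-coefficient map, but the induced inner product on $V^\perp$ need not be diagonal in that basis. Without the orthogonality you cannot pass from the single-monomial bound to the sum, and the equality discussion (which you base on ``the only genuine inequality is Theorem~\ref{thm:main2-3}'') collapses for the same reason.

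The paper avoids this entirely: it never decomposes on the $\partial M$ side. It takes the single minimizer $f$ for $h_0$, expands $f$ on \emph{each face} $\partial D_j\times M_j$ in the product basis of Lemma~\ref{l:prod-d1xm1}, and bounds $\|f\|^2_{\partial D_j\times M_j,p_j\rho}$ from below by $\sum_{\alpha\in L}|d_\alpha|^2\big/\big(K_{\partial D_j,\lambda_j}^{I_{\alpha_j,z_j},(w_j-z_j)^{\alpha_j}}(z_j)\cdot\prod_{l\neq j}B_{D_l,\rho_l}^{I_{\alpha_l,z_l},(w_l-z_l)^{\alpha_l}}(z_l)\big)$ using the maximality of $L$ and Lemma~\ref{l:Bergman-prod}; then it converts each Bergman factor to a Hardy factor via the one-dimensional Theorem~\ref{thm:main1-2}, sums over $j$, and uses the product identity~\eqref{eq:0814f}. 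This chain records exactly where the losses occur, so the equality characterization drops out from the one-variable case of Theorem~\ref{thm:main1-2}, and for sufficiency the explicit product extremals (which \emph{are} orthogonal) are constructed directly. You should replace your $\partial M$-decomposition by this face-wise lower bound on the single minimizer.
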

\begin{proof}
 For any $\alpha\in L$ and any $1\le j\le n$, it follows from Proposition \ref{p:7} and Lemma \ref{l:a6} that there is $f_{j,\alpha_j}\in H^2_{\lambda_j}(D_j,\partial D_j)$ such that $(f_{j,\alpha_j}-(w_j-z_j)^{\alpha_j},z_j)\in I_{\alpha_j,z_j}$ and 
 $$\ll f_{j,\alpha_j},g\gg_{\partial D_j,\lambda_j}=0$$
  for any $g\in H^2_{\lambda_j}(D_j,\partial D_j)$ satisfying $ord_{z_j}(g)>\alpha_j$, where $I_{\alpha_j,z_j}=((w_j-z_j)^{\alpha_j+1})$ is an ideal of $\mathcal{O}_{D_j,z_j}$. Thus, for any $\alpha\in L$,  we have 
 \begin{equation}
 	\label{eq:0814a}
 	\ll\prod_{1\le j\le n}f_{j,\alpha_j},g\gg_{S,\lambda}=0
 \end{equation}
 for any $g\in H^2_{\lambda}(M,S)$ satisfying $(g,z_0)\in \mathcal{I}(\psi)_{z_0}$ by using Proposition \ref{p:7}, which shows that 
 \begin{equation}
 	\label{eq:0814b}
 	\ll\sum_{\alpha\in L}d_{\alpha}\prod_{1\le j\le n}f_{j,\alpha_j},g\gg_{S,\lambda}=0
 \end{equation}
  for any $g\in H^2_{\lambda}(M,S)$ satisfying $(g,z_0)\in \mathcal{I}(\psi)_{z_0}$. By equality \eqref{eq:0814a}  equality \eqref{eq:0814b}, we have
  \begin{equation}
  	\label{eq:0814f}\frac{1}{K^{\mathcal{I}(\psi)_{z_0},(w-z_0)^{\alpha}}_{S,\lambda}(z_0)}=\|\prod_{1\le j\le n}f_{j,\alpha_j}\|^2_{M,S}=\prod_{1\le j\le n}\frac{1}{K_{\partial D_j,\lambda_j}^{I_{\alpha_j,z_j},(w_j-z_j)^{\alpha_j}}(z_j)}.
  \end{equation}
   and 
  \begin{equation}\nonumber
  	\begin{split}
  	\frac{1}{K^{\mathcal{I}(\psi)_{z_0},h_0}_{S,\lambda}(z_0)}&=\|\sum_{\alpha\in L}d_{\alpha}\prod_{1\le j\le n}f_{j,\alpha_j}\|^2_{S,\lambda}\\
  	&=\sum_{\alpha\in L}|d_{\alpha}|^2\|\prod_{1\le j\le n}f_{j,\alpha_j}\|^2_{S,\lambda}\\
  	&=\sum_{\alpha\in L}\frac{|d_{\alpha}|^2}{K^{\mathcal{I}(\psi)_{z_0},(w-z_0)^{\alpha}}_{S,\lambda}(z_0)}.
  	\end{split}
  \end{equation}
 
 It follows from Lemma \ref{l:b7} that there is $f\in H^2_{\rho}(M,\partial M)$ such that $(f-h_0,z_0)\in \mathcal{I}(\psi)_{z_0}$ and 
 \begin{equation}
 	\label{eq:0814c}
 	\frac{1}{K^{\mathcal{I}(\psi)_{z_0},h_0}_{\partial M,\rho}(z_0)}=\|f\|^2_{\partial M,\rho}.
 \end{equation}
 Denote that 
 $$\tilde\psi_{\gamma,j}:=\max_{1\le l\le n,\,l\not=j}\{2p_l(1-\frac{\gamma+1}{p_j})\log|w_l-z_l|\}$$
is a plurisubharmonic function on $M_j$, where $M_j=\prod_{1\le l\le n,\,l\not=j}D_l$.

Denote that $\hat\rho_j:=\prod_{1\le l\le n,\,l\not=j}\rho_l$ and $\hat h_{j,\alpha}:=\prod_{1\le l\le n,\,l\not=j}(w_l-z_l)^{\alpha_l}$ for any $1\le j\le n$, where $\rho_j=e^{-\varphi_j}$ on $D_j$. It follows from Lemma \ref{l:prod-d1xm1} and Lemma \ref{l:a6} that 
\begin{equation}
	\label{eq:0814d}f=\sum_{m=0}^{+\infty}f_{1,m}g_{m},
\end{equation}
where $g_m\in A^2(M_1,\hat\rho_1)$ and $f_{1,m}\in H^2_{\lambda_1}(D_1,\partial D_1)$ such that $ord_{z_1}(f_{1,m}-(w_1-z_1)^m)>m$ and $\ll f_{1,m},g\gg_{\partial D_1,\lambda_1}=0$ for any $g\in H^2_{\lambda_1}(D_1,\partial D_1) $ satisfying $ord_{z_1}(g)>m$. Note that $\{(w-z_0)^{\beta}:\exists \alpha\in L$ s.t. $\beta>\alpha\}\subset \mathcal{I}(\psi)_{z_0}$ and $(w-z_0)^{\alpha}\not\in \mathcal{I}(\psi)_{z_0}$ for any $\alpha\in L$. As $(f-h_0,z_0)\in \mathcal{I}(\psi)_{z_0}$, we know that 
$$(g_{\alpha_1}-d_{\alpha}\hat h_{1,\alpha},\hat z_1)\in\mathcal{I}(\tilde\psi_{\alpha_1,1})_{\hat z_1}$$ for any $\alpha\in L,$
where $\hat z_j=(z_1,\ldots,z_{l-1},z_{l+1},\ldots,z_n)$. By equality \eqref{eq:0814d}, we have 
\begin{displaymath}
	\begin{split}
		\|f\|^2_{\partial D_1\times M_1,p_1\rho}&\ge\sum_{\alpha\in L}\|f_{1,\alpha_1}\|^2_{\partial D_1,\lambda_1}\times\|g_{\alpha_1}\|^2_{M_1,\hat\rho_1}\\
		&\ge\sum_{\alpha\in L}\frac{|d_{\alpha}|^2}{K_{\partial D_1,\lambda_1}^{I_{\alpha_1,z_1},(w_1-z_1)^{\alpha_1}}(z_1)\cdot B_{M_1,\hat\rho_1}^{\mathcal{I}(\tilde\psi_{\alpha_1})_{\hat z_1},\hat h_{1,\alpha}}(\hat z_1)}.
	\end{split}
\end{displaymath}
Similarly, we have 
\begin{equation}\nonumber
	\|f\|^2_{\partial D_j\times M_j,p_j\rho}\ge\sum_{\alpha\in L}\frac{|d_{\alpha}|^2}{K_{\partial D_j,\lambda_j}^{I_{\alpha_j,z_j},(w_j-z_j)^{\alpha_j}}(z_j)\cdot B_{M_j,\hat\rho_j}^{\mathcal{I}(\tilde\psi_{\alpha_j})_{\hat z_j},\hat h_{j,\alpha}}(\hat z_j)}.
\end{equation}
By Lemma \ref{l:Bergman-prod}, Theorem \ref{thm:main1-2} and equality \eqref{eq:0814f}, we obtain that 
\begin{equation}
	\label{eq:0814e}
	\begin{split}
			\frac{1}{K^{\mathcal{I}(\psi)_{z_0},h_0}_{\partial M,\rho}(z_0)}=&\|f\|^2_{\partial M,\rho}
	\\	=&\sum_{1\le j\le n}\frac{1}{p_j}	\|f\|^2_{\partial D_j\times M_j,p_j\rho}\\
		\ge&\sum_{1\le j\le n}\frac{1}{p_j}	\sum_{\alpha\in L}\frac{|d_{\alpha}|^2}{K_{\partial D_j,\lambda_j}^{I_{\alpha_j,z_j},(w_j-z_j)^{\alpha_j}}(z_j)\cdot B_{M_j,\hat\rho_j}^{\mathcal{I}(\tilde\psi_{\alpha_j})_{\hat z_j},\hat h_{j,\alpha}}(\hat z_j)}\\
		=&\sum_{1\le j\le n}\frac{1}{p_j}	\sum_{\alpha\in L}\frac{|d_{\alpha}|^2}{K_{\partial D_j,\lambda_j}^{I_{\alpha_j,z_j},(w_j-z_j)^{\alpha_j}}(z_j)\cdot \prod_{1\le l\le n,\,l\not=j}B_{D_l,\rho_l}^{I_{\alpha_l,z_l},(w_l-z_l)^{\alpha_l}}(z_l)}\\
		\ge&\sum_{1\le j\le n}\sum_{\alpha\in L}\frac{|d_{\alpha}|^2\frac{\alpha_j+1}{p_j}\pi^{n-1}}{\prod_{1\le l\le n}(\alpha_l+1)K_{\partial D_l,\lambda_l}^{I_{\alpha_l,z_l},(w_l-z_l)^{\alpha_l}}(z_l)}\\
		=&\sum_{\alpha\in L}\frac{|d_{\alpha}|^2c_{\alpha}\pi^{n-1}}{K^{\mathcal{I}(\psi)_{z_0},(w-z_0)^{\alpha}}_{S,\lambda}(z_0)}.
	\end{split}
\end{equation}

In the following part, we prove the characterization of the holding of inequality becoming an equality.

Firstly, we prove the necessity. Assume that 
$$\frac{\pi^{n-1}}{K^{\mathcal{I}(\psi)_{z_0},h_0}_{\partial M,\rho}(z_0)}
=\sum_{\alpha\in L}\frac{|d_{\alpha}|^2c_{\alpha}}{K^{\mathcal{I}(\psi)_{z_0},(w-z_0)^{\alpha}}_{S,\lambda}(z_0)}$$ holds. Following from inequality \eqref{eq:0814e}, we get that 
$$K_{\partial D_l,\lambda_l}^{I_{\alpha_l,z_l},(w_l-z_l)^{\alpha_l}}(z_l)=\pi B_{D_l,\rho_l}^{I_{\alpha_l,z_l},(w_l-z_l)^{\alpha_l}}(z_l)$$
holds  for any $\alpha=(\alpha_1,\ldots,\alpha_n)\in L$ and $1\le j\le n$. Using Theorem \ref{thm:main1-2}, we obtain that $\varphi_j$ is harmonic  on $D_j$ for any $1\le j\le n$ and  $\chi_{l,z_l}^{\alpha_l+1}=\chi_{l,-\frac{\varphi_l}{2}}$ for any $\alpha=(\alpha_1,\ldots,\alpha_n)\in L$ and $1\le l\le n$.

Now, we prove the sufficiency. Assume that $\varphi_j$ is harmonic  on $D_j$ for any $1\le j\le n$ and $\chi_{j,z_j}^{\alpha_j+1}=\chi_{j,-\frac{\varphi_j}{2}}$ for any $\alpha=(\alpha_1,\ldots,\alpha_n)\in L$ and $1\le j\le n$. Using Theorem \ref{thm:main1-2}, we obtain that
$$K_{\partial D_l,\lambda_l}^{I_{\alpha_l,z_l},(w_l-z_l)^{\alpha_l}}(z_l)=\pi B_{D_l,\rho_l}^{I_{\alpha_l,z_l},(w_l-z_l)^{\alpha_l}}(z_l)$$
holds  for any $\alpha=(\alpha_1,\ldots,\alpha_n)\in L$ and $1\le j\le n$, which implies that
\begin{equation}
	\label{eq:220814a}
	\begin{split}
	&\sum_{1\le j\le n}\frac{1}{p_j}	\sum_{\alpha\in L}\frac{|d_{\alpha}|^2}{K_{\partial D_j,\lambda_j}^{I_{\alpha_j,z_j},(w_j-z_j)^{\alpha_j}}(z_j)\cdot \prod_{1\le l\le n,\,l\not=j}B_{D_l,\rho_l}^{I_{\alpha_l,z_l},(w_l-z_l)^{\alpha_l}}(z_l)}\\
	=&\sum_{1\le j\le n}\sum_{\alpha\in L}\frac{|d_{\alpha}|^2\frac{\alpha_j+1}{p_j}\pi^{n-1}}{\prod_{1\le l\le n}(\alpha_l+1)K_{\partial D_l,\lambda_l}^{I_{\alpha_l,z_l},(w_l-z_l)^{\alpha_l}}(z_l)}.
	\end{split}
\end{equation}
For any $\alpha\in L$ and any $1\le j\le n$, denote that 
$$g_{j,\alpha_j}:=c_{j,\alpha_j}((P_j)_*(f_{z_j}))'(P_j)_*(f_{\frac{\varphi_j}{2}}f_{z_j}^{\alpha_j})$$
on $D_j$,
where $c_{j,\alpha_j}$ is constant such that $(g_{j,\alpha_j}-(w_j-z_j)^{\alpha_j},z_j)\in I_{\alpha_j,z_j}$, $P_j$ is the universal covering from unit disc $\Delta$ to $ D_j$, $f_{\frac{\varphi_j}{2}}$ is a holomorphic function on $\Delta$ such that $|f_{\frac{\varphi_j}{2}}|=P_j^*(e^{\frac{\varphi_j}{2}})$, and $f_{z_j}$ is a holomorphic function on $\Delta$ such that $|f_{z_j}|=P_j^*(e^{G_{D_j}(\cdot,z_j)})$. 
It follows from the case $n=1$ of Theorem \ref{thm:main1-2} and Remark \ref{product 1-2} that there is $\tilde g_{j,\alpha_j}\in H^2_{\lambda_j}(D_j,\partial D_j)$ such that $\tilde g_{j,\alpha_j}^*=g_{j,\alpha_j}$ and
\begin{equation}
	\nonumber
	\begin{split}
		\frac{1}{2\pi}\int_{\partial D_j}|\tilde g_{j,\alpha_j}|^2e^{-\varphi_j}\left(\frac{\partial G_{D_j}(z,z_j)}{\partial v_z} \right)^{-1}|dz|&=\frac{1}{K_{\partial D_j,\lambda_j}^{I_{\alpha_j,z_j},(w_j-z_j)^{\alpha_j}}(z_j)}\\
		&=\frac{\alpha_j+1}{B_{D_j,\rho_j}^{I_{\alpha_j,z_j},(w_j-z_j)^{\alpha_j}}(z_j)}\\
		&=(\alpha_j+1)\int_{D_j}|g_{j,\alpha_j}|^2e^{-\varphi_j}
	\end{split}
\end{equation}
for any $\alpha\in L$ and any $1\le j\le n$.
Thus, for any $\alpha\in L$, there is $\tilde f_{\alpha}\in H^2_{\rho}(M,\partial M)$ such that $\tilde f_{\alpha}^*=\prod_{1\le j\le n}g_{j,\alpha_j}$, $\|\sum_{\alpha\in L}d_{\alpha}\tilde f_{\alpha}\|^2_{\partial D_j\times M_j,p_j\rho}=\sum_{\alpha\in L}|d_{\alpha}|^2\|\tilde f_{\alpha}\|^2_{\partial D_j\times M_j,p_j\rho}$ and 
\begin{equation}
	\nonumber
	\begin{split}
	&	\frac{1}{K_{\partial D_j,\lambda_j}^{I_{\alpha_j,z_j},(w_j-z_j)^{\alpha_j}}(z_j)\cdot B_{M_j,\hat\rho_j}^{\mathcal{I}(\tilde\psi_{\alpha_j})_{\hat z_j},\hat h_{j,\alpha}}(\hat z_j)}\\
		=&
\frac{1}{K_{\partial D_j,\lambda_j}^{I_{\alpha_j,z_j},(w_j-z_j)^{\alpha_j}}(z_j)\cdot \prod_{1\le l\le n,\,l\not=j}B_{D_l,\rho_l}^{I_{\alpha_l,z_l},(w_l-z_l)^{\alpha_l}}(z_l)}
		\\=&\|\tilde f_{\alpha}\|^2_{\partial D_j\times M_j,p_j\rho},
	\end{split}
\end{equation}
which shows that 
\begin{equation}
	\label{eq:0815a}
	\begin{split}
		&\sum_{1\le j\le n}\frac{1}{p_j}	\sum_{\alpha\in L}\frac{|d_{\alpha}|^2}{K_{\partial D_j,\lambda_j}^{I_{\alpha_j,z_j},(w_j-z_j)^{\alpha_j}}(z_j)\cdot B_{M_j,\hat\rho_j}^{\mathcal{I}(\tilde\psi_{\alpha_j})_{\hat z_j},\hat h_{j,\alpha}}(\hat z_j)}
		\\
		=&\sum_{1\le j\le n}\frac{1}{p_j}	\sum_{\alpha\in L}|d_{\alpha}|^2\|\tilde f_{\alpha}\|^2_{\partial D_j\times M_j,p_j\rho}\\
		=&\|\sum_{\alpha\in L}d_{\alpha}\tilde f_{\alpha}\|^2_{\partial M,\rho}\\
		\ge&\frac{1}{K_{\partial M,\rho}^{\mathcal{I}(\psi)_{z_0},h_0}(z_0)},
	\end{split}
\end{equation}
since $(\sum_{\alpha\in L}d_{\alpha}\tilde f^*_{\alpha}-h_0,z_0)\in \mathcal{I}(\psi)_{z_0}$. Following from inequality \eqref{eq:0814e}, equality \eqref{eq:220814a} and equality \eqref{eq:0815a}, we obtain that 
\begin{equation}\nonumber
\begin{split}
	\frac{1}{K^{\mathcal{I}(\psi)_{z_0},h_0}_{\partial M,\rho}(z_0)}=&\sum_{1\le j\le n}\frac{1}{p_j}	\sum_{\alpha\in L}\frac{|d_{\alpha}|^2}{K_{\partial D_j,\lambda_j}^{I_{\alpha_j,z_j},(w_j-z_j)^{\alpha_j}}(z_j)\cdot B_{M_j,\hat\rho_j}^{\mathcal{I}(\tilde\psi_{\alpha_j})_{\hat z_j},\hat h_{j,\alpha}}(\hat z_j)}\\
	=&\sum_{1\le j\le n}\frac{1}{p_j}	\sum_{\alpha\in L}\frac{|d_{\alpha}|^2}{K_{\partial D_j,\lambda_j}^{I_{\alpha_j,z_j},(w_j-z_j)^{\alpha_j}}(z_j)\cdot \prod_{1\le l\le n,\,l\not=j}B_{D_l,\rho_l}^{I_{\alpha_l,z_l},(w_l-z_l)^{\alpha_l}}(z_l)}\\
=&\sum_{1\le j\le n}\sum_{\alpha\in L}\frac{|d_{\alpha}|^2\frac{\alpha_j+1}{p_j}\pi^{n-1}}{\prod_{1\le l\le n}(\alpha_l+1)K_{\partial D_l,\lambda_l}^{I_{\alpha_l,z_l},(w_l-z_l)^{\alpha_l}}(z_l)}\\
	=&\sum_{\alpha\in L}\frac{|d_{\alpha}|^2c_{\alpha}\pi^{n-1}}{K^{\mathcal{I}(\psi)_{z_0},(w-z_0)^{\alpha}}_{S,\lambda}(z_0)}.
\end{split}
\end{equation}

Thus, Proposition \ref{p:app1} has been proved.
\end{proof}

Let $c$ be a positive function on $[0,+\infty)$, which satisfies that $c(t)e^{-t}$ is decreasing on $[0,+\infty)$, $\lim_{t\rightarrow0+0}c(t)=c(0)=1$ and $\int_{0}^{+\infty}c(t)e^{-t}dt<+\infty$.
Denote that 
$$ \tilde\rho:=c(-\psi)\prod_{1\le j\le n}e^{-\varphi_j}$$
on $M$.
Proposition \ref{p:app1} and Theorem \ref{thm:main1-2} imply the following corollary.

\begin{Corollary}\label{c:app2} Assume that $B_{\tilde\rho}^{\mathcal{I}(\psi)_{z_0},h_0}(z_0)>0$ and $\sum_{1\le j\le n}\frac{\alpha_j+1}{p_j}\le 1$ for any $\alpha\in L$. 
	Inequality 
	$$\left(\int_0^{+\infty}c(t)e^{-t}dt\right)\sum_{\alpha\in L}\frac{|d_{\alpha}|^2c_{\alpha}}{K^{\mathcal{I}(\psi)_{z_0},(w-z_0)^{\alpha}}_{S,\lambda}(z_0)}\le \frac{1}{\pi^nB_{\tilde\rho}^{\mathcal{I}(\psi)_{z_0},h_0}(z_0)}$$
	holds, where $c_{\alpha}=\frac{\sum_{1\le j\le n}\frac{\alpha_j+1}{p_j}}{\prod_{1\le j\le n}(\alpha_j+1)}$, and equality holds if and only if the following statements hold:
	
	$(1)$ $\sum_{1\le j\le n}\frac{\alpha_j+1}{p_j}= 1$ for any $\alpha\in L$;
	
	$(2)$ $\varphi_j$ is harmonic  on $D_j$ for any $1\le j\le n$;
	
	$(3)$ $\chi_{j,z_j}=\chi_{j,-\frac{\varphi_j}{2}}$, where $\chi_{j,-\frac{\varphi_j}{2}}$ and $\chi_{j,z_j}$ are the  characters associated to the functions $-\frac{\varphi_j}{2}$ and $G_{D_j}(\cdot,z_j)$ respectively.
\end{Corollary}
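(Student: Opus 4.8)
\textbf{Proof proposal for Corollary \ref{c:app2}.}

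The plan is to combine Proposition \ref{p:app1}, which relates $K^{\mathcal{I}(\psi)_{z_0},h_0}_{S,\lambda}(z_0)$ and $K^{\mathcal{I}(\psi)_{z_0},h_0}_{\partial M,\rho}(z_0)$, with Theorem \ref{thm:main1-2}, which relates $K^{\mathcal{I}(\psi)_{z_0},h_0}_{\partial M,\rho}(z_0)$ and $B^{\mathcal{I}(\psi)_{z_0},h_0}_{\tilde\rho}(z_0)$. First I would observe that since $B^{\mathcal{I}(\psi)_{z_0},h_0}_{\tilde\rho}(z_0)>0$ and $\mathcal{I}(\psi)_{z_0}\subset\mathcal{O}_{z_0}$ is the multiplier ideal at $z_0$ (with $(h_0,z_0)\notin\mathcal{I}(\psi)_{z_0}$ because each monomial $(w-z_0)^\alpha$ with $\alpha\in L\subset E_1$ fails the strict inequality in Lemma \ref{l:psi}), all three kernel quantities are finite and positive; this lets me invoke the two cited results without worrying about degenerate cases. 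Applying Theorem \ref{thm:main1-2} with the ideal $I=\mathcal{I}(\psi)_{z_0}$ gives
\begin{equation}
\nonumber K^{\mathcal{I}(\psi)_{z_0},h_0}_{\partial M,\rho}(z_0)\geq\left(\int_0^{+\infty}c(t)e^{-t}dt\right)\pi B^{\mathcal{I}(\psi)_{z_0},h_0}_{\tilde\rho}(z_0),
\end{equation}
equivalently $\frac{1}{\pi^{n-1}K^{\mathcal{I}(\psi)_{z_0},h_0}_{\partial M,\rho}(z_0)}\le\left(\int_0^{+\infty}c(t)e^{-t}dt\right)\frac{\pi}{\pi^n B^{\mathcal{I}(\psi)_{z_0},h_0}_{\tilde\rho}(z_0)}$.

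Next I would chain this with the inequality \eqref{eq:0814g} of Proposition \ref{p:app1}, namely $\sum_{\alpha\in L}\frac{|d_\alpha|^2 c_\alpha}{K^{\mathcal{I}(\psi)_{z_0},(w-z_0)^\alpha}_{S,\lambda}(z_0)}\le\frac{1}{\pi^{n-1}K^{\mathcal{I}(\psi)_{z_0},h_0}_{\partial M,\rho}(z_0)}$. Multiplying the first inequality through and substituting yields
\begin{equation}
\nonumber \left(\int_0^{+\infty}c(t)e^{-t}dt\right)\sum_{\alpha\in L}\frac{|d_\alpha|^2 c_\alpha}{K^{\mathcal{I}(\psi)_{z_0},(w-z_0)^\alpha}_{S,\lambda}(z_0)}\le\frac{1}{\pi^n B^{\mathcal{I}(\psi)_{z_0},h_0}_{\tilde\rho}(z_0)},
\end{equation}
which is exactly the desired inequality. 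The only subtlety here is making sure the hypotheses of Proposition \ref{p:app1} (requiring $K^{\mathcal{I}(\psi)_{z_0},h_0}_{\partial M,\rho}(z_0)>0$ and $H^2_\lambda(M,S)\ne\{0\}$) and of Theorem \ref{thm:main1-2} (requiring $B^{\mathcal{I}(\psi)_{z_0},h_0}_{\tilde\rho}(z_0)>0$) are all met; these follow from the positivity assumption in the statement and from $M\Subset\mathbb{C}^n$, which forces the relevant spaces to be nontrivial.

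For the equality characterization, the key point is that both chained inequalities must simultaneously be equalities. The hard part will be organizing the equality analysis cleanly: equality in \eqref{eq:0814g} forces, by the characterization in Proposition \ref{p:app1}, that each $\varphi_j$ is harmonic on $D_j$ and that $\chi_{j,z_j}^{\alpha_j+1}=\chi_{j,-\frac{\varphi_j}{2}}$ for every $\alpha=(\alpha_1,\ldots,\alpha_n)\in L$ and every $j$; while equality in Theorem \ref{thm:main1-2} forces its three conditions, in particular statement $(1)$ there, which (via the description of $E$ and $L\subset E_1$) yields $\sum_{1\le j\le n}\frac{\alpha_j+1}{p_j}=1$ for all $\alpha\in L$. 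I would then reconcile these: once $\sum_j\frac{\alpha_j+1}{p_j}=1$ holds for every $\alpha\in L$, the set $L$ coincides with a subset of $E$, the constants $c_\alpha$ simplify accordingly, and the character condition reduces to the single condition $\chi_{j,z_j}=\chi_{j,-\frac{\varphi_j}{2}}$ in statement $(3)$ of the corollary — this last reduction requires checking that the equality-case $\alpha_j$ are all forced to align so that $\chi_{j,z_j}^{\alpha_j+1}=\chi_{j,-\frac{\varphi_j}{2}}$ collapses to the stated form, which is the most delicate bookkeeping step. Conversely, assuming the three stated conditions, I would run both characterizations in the forward direction to recover equality throughout the chain, completing the proof.
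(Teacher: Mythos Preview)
Your approach is correct and is exactly the paper's: the corollary is stated immediately after Proposition~\ref{p:app1} with the one-line justification ``Proposition~\ref{p:app1} and Theorem~\ref{thm:main1-2} imply the following corollary,'' i.e.\ the same chaining of the two inequalities that you describe. The one point you flag as ``most delicate''---passing from $\chi_{j,z_j}^{\alpha_j+1}=\chi_{j,-\frac{\varphi_j}{2}}$ for all $\alpha\in L$ to the unexponentiated condition $\chi_{j,z_j}=\chi_{j,-\frac{\varphi_j}{2}}$---is not addressed in the paper either, and in fact does not follow in general; condition~(3) of the corollary should be read with the exponent $\alpha_j+1$ as in Proposition~\ref{p:app1} and Theorem~\ref{thm:main1-2}.
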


 Denote that 
$$I=\left\{(g,z_0)\in\mathcal{O}_{M,z_0}:g=\sum_{\alpha\in\mathbb{Z}_{\ge0}^n}b_{\alpha}(w-z_0)^{\alpha}\text{ near  }z_0\text{ s.t. $b_{\alpha}=0$ for $\alpha\in L_{\tilde\beta}$}\right\}$$
is a proper ideal of $\mathcal{O}_{M,z_0}$,
where $L_{\beta}=\{\alpha\in\mathbb{Z}_{\ge0}^n:\alpha_j\le \beta_j$ for any $1\le j\le n\}$. Denote that 
\begin{displaymath}
	\begin{split}
I'=\Bigg\{(g,\hat z_1)\in\mathcal{O}_{M_1,\hat z_1}:g=\sum_{\tilde\alpha=(\tilde \alpha_2,\ldots,\tilde\alpha_n)\in\mathbb{Z}_{\ge0}^{n-1}}&b_{\tilde \alpha}\prod_{2\le j\le n}(w_j-z_j)^{\tilde \alpha_j}\text{ near  }\hat z_1\\
&\text{ s.t. $b_{\tilde \alpha}=0$ for $\tilde\alpha\in L'_{\beta}$}\Bigg\}
	\end{split}
\end{displaymath}
 is a proper ideal of $\mathcal{O}_{M_1,\hat z_1}$,
 where $M_1=\prod_{2\le j\le n}D_j,$ $\hat z_1=(z_2,\ldots,z_n)$ and $L'_{\beta}=\{\tilde\alpha=(\tilde\alpha_2,\ldots,\tilde\alpha_n)\in\mathbb{Z}_{\ge0}^{n-1}:\tilde\alpha_j\le \beta_j$ for any $2\le j\le n\}$.
 
Let $h_0=\sum_{\alpha\in E_0}d_{\alpha}(w-z_0)^{\alpha}$ be a holomorphic function on a neighborhood of $z_0$ such that $(h_0,z_0)\not\in I$ , i.e. $h_0\not\equiv0$, where $E_0=\{\alpha\in\mathbb{Z}_{\ge0}^n:\alpha_j\le\beta_j$ for any $1\le j\le n\}$.
Let $p_j$ be a positive constant for any $1\le j\le n$, and
denote that 
$$\psi:=\max_{1\le j\le n}\{2p_j\log|w_j-z_j|\}.$$ 
Let $\varphi_j$, $\lambda_j$, $\lambda$ and $\rho$ be as in Proposition \ref{p:app1} and Corollary \ref{c:app2} .

\begin{Proposition}
	\label{p:app3} Assume that $n>1$ and $K_{\partial M,\rho}^{I,h_0}(z_0)>0$. Then 
	inequality
	\begin{equation}\nonumber
		\left(\prod_{1\le j\le n}(\beta_j+1)\right)K_{S,\lambda}^{I,h_0}(z_0)\ge \left(\sum_{1\le j\le n}\frac{\beta_j+1}{p_j}\right)\pi^{n-1}K_{\partial M,\rho}^{I,h_0}(z_0) \tag{A}
	\end{equation}
holds,  equality holds if and only if the following statements hold:

$(1)$ $h_0=d_{\beta}(w-z_0)^{\beta}$ near $z_0$;

$(2)$ $\varphi_j$ is harmonic on $D_j$ for any $1\le j\le n$;

$(3)$ $\chi_{j,z_j}^{\beta_j+1}=\chi_{j,-\frac{\varphi_j}{2}}$  for any $1\le j\le n$, where $\chi_{j,z_j}$ and $\chi_{j,-u_j}$ are the characters associated to functions $G_{\Omega_j}(\cdot,z_j)$ and $-u_j$ respectively. 
\end{Proposition}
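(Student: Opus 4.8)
The plan is to follow the scheme of the proof of Theorem \ref{thm:main2-3} (that is, Proposition \ref{p:inequality3}), the only genuinely new feature being that $h_0$ is now an arbitrary polynomial $\sum_{\alpha\in E_0}d_\alpha(w-z_0)^\alpha$ supported on the whole box $L_\beta$ rather than a single product $\prod_j h_j$. Consequently I cannot invoke the product splitting of Proposition \ref{p:9} for $K_{S,\lambda}^{I,h_0}(z_0)$, and must instead treat the jet-matching condition $(f^*-h_0,z_0)\in I$ as a coupled, but triangular, interpolation problem; this is the appendix analogue, for the box ideal $I=\sum_j((w_j-z_j)^{\beta_j+1})$, of the staircase statement Proposition \ref{p:app1}.

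First I would fix both extremal objects. By Lemma \ref{l:b7} there is $f\in H^2_\rho(M,\partial M)$ with $(f^*-h_0,z_0)\in I$ and $\|f\|_{\partial M,\rho}^2=1/K_{\partial M,\rho}^{I,h_0}(z_0)$, while Proposition \ref{p:7} together with Lemma \ref{l:a6} supplies a product orthonormal basis $\{\prod_j e_{j,\sigma_j}\}_\sigma$ of $H^2_\lambda(M,S)$ with $\mathrm{ord}_{z_j}(e_{j,m}^*)=m$, through which $K_{S,\lambda}^{I,h_0}(z_0)$ is described. The elementary but decisive observation is triangularity: since $e_{j,m}^*$ has order exactly $m$, the coefficient of $(w-z_0)^\alpha$ in any such expansion involves only basis indices $\le\alpha$, so the box constraints $\alpha\in L_\beta$ decouple from everything outside the box and both extremal problems reduce to finite-dimensional interpolation over $L_\beta$.

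For inequality (A) I would, for each fixed $j$, decompose the $\partial M$-minimizer along the $j$-th variable via Lemma \ref{l:prod-d1xm1}, $f=\sum_m e_{j,m}f_{j,m}$ with $f_{j,m}\in A^2(M_j,\hat\rho_j)$, so that $\|f\|_{\partial D_j\times M_j,p_j\rho}^2=\sum_m\|f_{j,m}\|_{M_j,\hat\rho_j}^2$. The box constraint forces the $L_\beta$-jets of $f_{j,0},\dots,f_{j,\beta_j}$ in $M_j$ triangularly, so each $\|f_{j,m}\|^2$ is bounded below by the corresponding generalized Bergman kernel on $M_j$; the product formula Lemma \ref{l:Bergman-prod2} together with the one-dimensional weighted Saitoh estimate Proposition \ref{p:inequality} (which introduces the factors $\beta_j+1$) converts these interior Bergman kernels into one-dimensional Hardy kernels. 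Summing $\frac1{p_j}\|f\|^2_{\partial D_j\times M_j,p_j\rho}$ over $j$ reproduces $\|f\|^2_{\partial M,\rho}$ and produces the weight $\sum_j\frac{\beta_j+1}{p_j}$, while expressing $1/K_{S,\lambda}^{I,h_0}(z_0)$ through the product basis and comparing term-by-term over the box makes the constants combine exactly as in Proposition \ref{p:inequality3}, giving (A).

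For the equality clause I would argue that equality forces every one-dimensional estimate above to be sharp, so that the $n=1$ case of Theorem \ref{thm:main1-2} (equivalently Corollary \ref{c:saitoh-higher jet}, with $k=\beta_j$) yields that each $\varphi_j$ is harmonic and $\chi_{j,z_j}^{\beta_j+1}=\chi_{j,-\varphi_j/2}$, i.e.\ statements (2)--(3); orthogonality of the resulting product extremizers (Lemma \ref{l:b9}) is what pins down these cases. Sharpness additionally forces all triangular cross terms to vanish, which is exactly the requirement that the off-corner coefficients $d_\alpha$ with $\alpha\ne\beta$ be zero, i.e.\ statement (1), $h_0=d_\beta(w-z_0)^\beta$; conversely, for this single corner monomial $h_0$ factors as $d_\beta\prod_j(w_j-z_j)^{\beta_j}$ and the claim is precisely Theorem \ref{thm:main2-3}. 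I expect the main obstacle to be exactly this coupling: keeping track of the triangular jet system for a non-product $h_0$ and showing that any off-corner contribution creates strict slack in one of the one-dimensional comparisons, which is what distinguishes the general case from the product case already settled in Theorem \ref{thm:main2-3}.
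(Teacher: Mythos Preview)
Your plan to mimic Proposition \ref{p:inequality3} runs into a genuine obstruction at the very step you flag as new. After the decomposition $f=\sum_m e_{j,m}f_{j,m}$ via Lemma \ref{l:prod-d1xm1}, the jet that $f_{j,m}$ must match in $M_j$ is a general polynomial in $n-1$ variables (a slice of $h_0$), not a product. But Lemma \ref{l:Bergman-prod2} only computes $B_{M_j,\hat\rho_j}^{I'_j,\cdot}$ as a product of one-dimensional kernels when the interpolation datum itself is a product $\prod_{l\ne j}h_l$; this is exactly the hypothesis you lose when $h_0$ is not of product type. So the chain ``bound by the $(n-1)$-dimensional Bergman kernel, split with Lemma \ref{l:Bergman-prod2}, then apply one-dimensional Saitoh'' breaks at the middle link, and with it your term-by-term comparison with $1/K_{S,\lambda}^{I,h_0}$.

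The paper replaces that broken link by induction on $n$. It introduces the auxiliary inequality
\[
\Big(\prod_{j}(\beta_j+1)\Big)K_{S,\lambda}^{I,h_0}(z_0)\ \ge\ \pi^{n}B_{\tilde\rho}^{I,h_0}(z_0)\tag{B}
\]
(with $\tilde\rho=\prod_j e^{-\varphi_j}$), observes that (A)$\Rightarrow$(B) via Theorem \ref{thm:main1-2} with $p_j=n(\beta_j+1)$, and that (A) is trivial for $n=1$. Then, to get (A) in dimension $n$, it applies (B) in dimension $n-1$ to each coefficient $g_l\in A^2(M_1,\hat\rho_1)$:
\[
\|g_l\|_{M_1,\hat\rho_1}^2\ \ge\ \frac{1}{B_{M_1,\hat\rho_1}^{I',g_l}}\ \ge\ \frac{\pi^{n-1}}{\prod_{l\ge2}(\beta_l+1)\,K_{S_1,\hat\lambda_1}^{I',g_l}}\ =\ \frac{\pi^{n-1}\|\tilde g_l\|_{S_1,\hat\lambda_1}^2}{\prod_{l\ge2}(\beta_l+1)},
\]
which needs no product structure on $g_l$. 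Summing $\frac{1}{p_j}\|f\|^2_{\partial D_j\times M_j,p_j\rho}$ over $j$ then gives (A). For necessity of (1)--(3), the paper does not rely on ``cross terms vanishing'': equality in (B)$_{n-1}$ is sandwiched with (A)$_{n-1}$ to obtain equality in Theorem \ref{thm:main1-2} on $M_1$, whose characterization forces each nonzero $g_l$ to match only the corner monomial $(\hat w_1-\hat z_1)^{(\beta_2,\dots,\beta_n)}$; running this over all $j$ kills every $d_\alpha$ with $\alpha\ne\beta$, after which Theorem \ref{thm:main2-3} finishes (2)--(3). Your sketch of the equality step is in the right spirit, but the mechanism that produces $h_0=d_\beta(w-z_0)^\beta$ is this $(n-1)$-dimensional application of Theorem \ref{thm:main1-2}, not an ad hoc triangular argument.
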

\begin{proof} 
	We consider the following inequality 
		\begin{equation}\nonumber
		\left(\prod_{1\le j\le n}(\beta_j+1)\right)K_{S,\lambda}^{I,h_0}(z_0)\ge\pi^nB_{\tilde\rho}^{I,h_0}(z_0) \tag{B},
	\end{equation}
where $\tilde\rho=\prod_{1\le j\le n}e^{-\varphi_j}$.
Theorem \ref{thm:main1-2} (letting $p_j=n(\beta_j+1)$ and $c\equiv1$) tells us that inequality (A)  implies inequality (B) for any $n\ge1$. When $n=1$, inequality (A) holds by definitions. The sufficiency of the characterization in Proposition \ref{p:app3} follows from Theorem \ref{thm:main2-3}.
	
	We prove Proposition \ref{p:app3} in two steps: Firstly, we prove inequality (A) for the case $n$ ($n\ge2$) by using the holding of inequality (B) for the case $n-1$, which shows that inequality (A) holds for any $n$ by induction on $n$; Secondly, we prove the necessity of the characterization in Proposition \ref{p:app3}. 
	
	\
	
	\emph{Step 1:} 
		In this step, we prove inequality (A) for the case $n$ by using the holding of inequality (B) for the case $n-1$. We only prove the case that $K_{\partial M,\rho}^{I,h_0}(z_0)>0$. By Lemma \ref{l:b7}, there is $f\in H_{\rho}^2(M,\partial M)$ such that $(f^*-h_0,z_0)\in I$ and 
	\begin{equation}
		\label{eq:0817a}
		\frac{1}{K_{\partial M,\rho}^{I,h_0}(z_0)}=\|f\|_{\partial M,\rho}^2.
	\end{equation}
Following from Lemma \ref{l:prod-d1xm1} and Lemma \ref{l:a6}, we have 
\begin{equation}
	\label{eq:0817b}
	f=\sum_{l\in \mathbb{Z}_{\ge0}}f_lg_l\text{ (convergence under the norm $\|\cdot\|_{\partial D_1\times M_1,\rho}$)},
\end{equation} 
where $g_l\in A^2(M_1,\hat\rho_1)$ for any $l\in \mathbb{Z}_{\ge0}$, $M_1=\prod_{2\le j\le n}D_j$ and  $\hat\rho_1=\prod_{2\le j\le n}e^{-\varphi_j}$,  and  $\{f_l\}_{l\in\mathbb{Z}_{\ge0}}$ is a complete orthonormal basis of $H^2_{\lambda_1}(D_1,\partial D_1)$ such that $ord_{z_1}(f_l^*)=l$ for any $l\ge0$.
For any $l\in\mathbb{Z}_{\ge0}$, it follows from inequality (B) for the case $n-1$  and Lemma \ref{l:a K<+infty} that there is $\tilde g_l\in H^2_{\hat\lambda_1}(M_1,S_1)$ such that $(\tilde g_l-g_l,\hat z_1)\in I'$
and
\begin{equation}
	\label{eq:0817c}
	\|g_l\|^2_{M_1,\hat\rho_1}\ge\frac{1}{B_{M_1,\hat\rho_1}^{I',g_l}(\hat z_1)}\ge\frac{\pi^{n-1}}{\left(\prod_{2\le j\le n}(\beta_j+1)\right)K_{S_1,\hat\lambda_1}^{I',g_l}(\hat z_1)}=\frac{\pi^{n-1}\|\tilde g_l\|^2_{S_1,\hat\lambda_1}}{\prod_{2\le j\le n}(\beta_j+1)},
\end{equation}
where $S_1=\prod_{2\le j\le n}\partial D_j$ and $\hat\lambda_1=\prod_{2\le j\le n}(\frac{\partial G_{D_j}(w_j,z_j)}{\partial v_{w_j}})^{-1}e^{-\varphi_j}$ on $S_1$.
Note that 
$$f^*=\sum_{l\in \mathbb{Z}_{\ge0}}f^*_lg_l\text{ (convergence on any compact subset of $M$)},$$
$(f^*-h_0,z_0)\in I$, $(\sum_{l\ge\beta_1+1}f^*_lg_l,z_0)\in I$
and $(f_l^*\tilde g_l^*-f_l^*g_l,z_0)\in I$ for any $l\ge0$, then we have $(\sum_{0\le l\le \beta_1}f_l^*\tilde g_l^*-h_0,z_0)\in I$.
As $\ll f_l,f_{l'}\gg_{\partial D_1,\lambda_1}=0$ for any $l\not=l'$, following from equality \eqref{eq:0817b} and inequality \eqref{eq:0817c}, we have 
\begin{equation}
\label{eq:0817f}
	\begin{split}
	\|f\|_{\partial D_1\times M_1,p_1\rho}^2&\ge\sum_{l=0}^{\beta_1}\|f_l\|^2_{\partial D_1,\lambda_1}\times\|g_l\|^2_{M_1,\hat\rho_1}\\
	&\ge\sum_{l=0}^{\beta_1}\|f_l\|^2_{\partial D_1,\lambda_1}\times\frac{\pi^{n-1}\|\tilde g_l\|^2_{S_1,\hat\lambda_1}}{\prod_{2\le j\le n}(\beta_j+1)}\\
	&=\frac{\pi^{n-1}}{\prod_{2\le j\le n}(\beta_j+1)}\|\sum_{l=0}^{\beta_1}f_l\tilde g_l\|^2_{S,\lambda}\\
	&\ge\frac{\pi^{n-1}}{\left(\prod_{2\le j\le n}(\beta_j+1)\right)K_{S,\lambda}^{I,h_0}(z_0)}.
	\end{split}
\end{equation}
Similarly, we have 
\begin{equation}
	\label{eq:0817d}
	\begin{split}
		\|f\|_{\partial D_j\times M_j,p_j\rho}^2\ge\frac{\pi^{n-1}}{\left(\prod_{1\le l\le n,\,l\not=j}(\beta_l+1)\right)K_{S,\lambda}^{I,h_0}(z_0)}.
	\end{split}
\end{equation}
for any $1\le j\le n$, where $M_j=\prod_{1\le l\le n,\,l\not=j}D_l$.
It follows from equality \eqref{eq:0817a} and inequality \eqref{eq:0817d} that 
\begin{equation}
	\label{eq:0817e}\begin{split}
			\frac{1}{K_{\partial M,\rho}^{I,h_0}(z_0)}&=\|f\|_{\partial M,\rho}^2\\
			&=\sum_{1\le j\le n}\frac{1}{p_j}\|f\|_{\partial D_j\times M_j,p_j\rho}^2\\
			&\ge\sum_{1\le j\le n}\frac{1}{p_j}\frac{\pi^{n-1}}{\left(\prod_{1\le l\le n,\,l\not=j}(\beta_l+1)\right)K_{S,\lambda}^{I,h_0}(z_0)}\\
			&=	\left(\sum_{1\le j\le n}\frac{\beta_j+1}{p_j}\right)\frac{\pi^{n-1}}{\left(\prod_{1\le j\le n}(\beta_j+1)\right)K_{S,\lambda}^{I,h_0}(z_0)},
	\end{split}
\end{equation}
hence inequality (A) holds for the case $n$.

\

\emph{Step 2:}
In this step, we prove the necessity of the characterization in Proposition \ref{p:app3} by using Theorem \ref{thm:main1-2} and Theorem \ref{thm:main2-3}. 

Assume that 
$$\left(\prod_{1\le j\le n}(\beta_j+1)\right)K_{S,\lambda}^{I,h_0}(z_0)= \left(\sum_{1\le j\le n}\frac{\beta_j+1}{p_j}\right)\pi^{n-1}K_{\partial M,\rho}^{I,h_0}(z_0).$$
Following the notations in Step 1, 
by inequalities \eqref{eq:0817c}, \eqref{eq:0817f}, \eqref{eq:0817d} and \eqref{eq:0817e},  
\begin{equation}
	\label{eq:220817a}\frac{1}{B_{M_1,\hat\rho_1}^{I',g_{l}}(\hat z_1)}=\frac{\pi^{n-1}}{\left(\prod_{2\le j\le n}(\beta_j+1)\right)K_{S_1,\hat\lambda_1}^{I',g_{l}}(\hat z_1)}
\end{equation}
holds for any $0\le l\le \beta_1$ satisfying that $f_l\not\equiv0$.  Note that $(f,z_0)\not\in I$, then there exists $l\in\{0,\ldots,\beta_1\}$ such that $f_{l}\not\equiv0$ and $(g_{l},\hat z_1)\not\in I'$. 
Following from inequality (A) and equality \eqref{eq:220817a}, we have 
\begin{equation}
	\label{eq:220817b}\frac{1}{B_{M_1,\hat\rho_1}^{I',g_{l}}(\hat z_1)}=\frac{\pi^{n-1}}{\left(\prod_{2\le j\le n}(\beta_j+1)\right)K_{S_1,\hat\lambda_1}^{I',g_{l}}(\hat z_1)}\le \frac{\pi}{K_{\partial M_1,\rho^*_{1}}^{I',g_l}(\hat z_1)},
\end{equation}
where $\rho^*_{1}$ is a positive Lebesgue measurable function on $\partial M_1$ such that $\rho^*_{1}=\frac{1}{(n-1)(\beta_{j}+1)}(\frac{\partial G_{D_{j}}(w_{j},z_{j})}{\partial v_{w_{j}}})^{-1}\prod_{2\le l\le n}e^{-\varphi_l}$ on $\partial D_{j}\times\prod_{l\not=1,j}D_l$ for any $j\not=1$. Denote that $\tilde\psi:=\max_{2\le j\le n}\{2(n-1)(\beta_j+1)\log|w_j-z_j|\}$ on $M_1$. Then it is clear that $\mathcal{I}(\tilde\psi)_{\hat z_1}\subset I'$ and $\{\tilde \alpha=(\tilde\alpha_2,\ldots,\tilde\alpha_n)\in\mathbb{Z}_{\ge0}^{n-1}:\sum_{2\le j\le n}\frac{\tilde\alpha_j+1}{(n-1)(\beta_j+1)}=1\}\cap\{\tilde \alpha=(\tilde\alpha_2,\ldots,\tilde\alpha_n)\in\mathbb{Z}_{\ge0}^{n-1}:\tilde\alpha_j\le\beta_j,\,\forall j\}=\{(\beta_2,\ldots,\beta_n)\}$. 
Note that $f=\sum_{l\in\mathbb{Z}_{\ge0}^n}f_lg_l,$
$ord_{z_1}f^*_l=l$ for any $l\ge0$ and 
$$(f^*-\sum_{\alpha\in E_0}d_{\alpha}(w-z_0)^{\alpha},z_0)\in I,$$ where $E_0=\{\alpha\in\mathbb{Z}_{\ge0}^n:\alpha_j\le\beta_j$ for any $1\le j\le n\}$.
Using Theorem  \ref{thm:main1-2}, we obtain that 
$$\frac{1}{B_{M_1,\hat\rho_1}^{I',g_{l}}(\hat z_1)}=\frac{\pi}{K_{\partial M_1,\rho^*_{1}}^{I',g_{l}}(\hat z_1)}$$
and $(g_{l}-c_l\prod_{2\le j\le n}(w_j-z_j)^{\beta_j},\hat z_1)\in I'$ for any $l\in\{0,\ldots,\beta_1\}$ such that $f_{l}\not\equiv0$ and $(g_{l},\hat z_1)\not\in I'$, where $c_l$ is a constant,  which shows that 
$$d_{\alpha}=0$$
for any $\alpha\in E_0$ satisfying that there exists $j\in\{2,\ldots,n\}$ such that $\alpha_j<\beta_j$. By similar discussion, we have 
$d_{\alpha}=0$
for any $\alpha\in E_{0}\backslash\{\beta\}$, i.e., $$h_0=d_{\beta}(w-z_0)^{\beta}$$
near $z_0.$ It follows from $\left(\prod_{1\le j\le n}(\beta_j+1)\right)K_{S,\lambda}^{I,h_0}(z_0)= \left(\sum_{1\le j\le n}\frac{\beta_j+1}{p_j}\right)\pi^{n-1}K_{\partial M,\rho}^{I,h_0}(z_0)$  and Theorem \ref{thm:main2-3} that $\varphi_j$ is harmonic on $D_j$ and $\chi_{j,z_j}^{\beta_j+1}=\chi_{j,\frac{\varphi_j}{2}}$ for any $1\le j\le n$.

Thus, Proposition \ref{p:app3} holds.
\end{proof}

Let $\tilde\rho$ be as in Corollary \ref{c:app2}. Assume that $\mathcal{I}(\psi)_{z_0}\subset I$, i.e., $\sum_{1\le j\le n}\frac{\beta_j+1}{p_j}\le1$ by Lemma \ref{l:psi}. Proposition \ref{p:app3} and Theorem \ref{thm:main1-2} imply the following corollary.

\begin{Corollary}
	\label{c:app4}
	Assume that $B_{\tilde\rho}^{I,h_0}(z_0)>0$. Inequality 
		\begin{equation}\nonumber
		\left(\prod_{1\le j\le n}(\beta_j+1)\right)K_{S,\lambda}^{I,h_0}(z_0)\ge\left(\int_0^{+\infty}c(t)e^{-t}dt\right)\left(\sum_{1\le j\le n}\frac{\beta_j+1}{p_j}\right)\pi^nB_{\tilde\rho}^{I,h_0}(z_0) 
	\end{equation}
holds, and equality holds if and only if the following statements hold:

$(1)$ $\sum_{1\le j\le n}\frac{\beta_j+1}{p_j}=1$ and  $h_0=d_{\beta}(w-z_0)^{\beta}$ near $z_0$;

$(2)$ $\varphi_j$ is harmonic on $D_j$ for any $1\le j\le n$;

$(3)$  $\chi_{j,z_j}^{\beta_j+1}=\chi_{j,-\frac{\varphi_j}{2}}$  for any $1\le j\le n$, where $\chi_{j,z_j}$ and $\chi_{j,-u_j}$ are the characters associated to functions $G_{\Omega_j}(\cdot,z_j)$ and $-u_j$ respectively. 
\end{Corollary}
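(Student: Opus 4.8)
The plan is to derive both the inequality and its equality case by composing inequality (A) of Proposition~\ref{p:app3} with the inequality of Theorem~\ref{thm:main1-2}. First I would record that the hypothesis $B_{\tilde\rho}^{I,h_0}(z_0)>0$ together with Theorem~\ref{thm:main1-2} gives
$$K_{\partial M,\rho}^{I,h_0}(z_0)\ge\left(\int_0^{+\infty}c(t)e^{-t}dt\right)\pi B_{\tilde\rho}^{I,h_0}(z_0)>0,$$
so that $K_{\partial M,\rho}^{I,h_0}(z_0)>0$ and Proposition~\ref{p:app3} is applicable. Starting from inequality (A),
$$\left(\prod_{1\le j\le n}(\beta_j+1)\right)K_{S,\lambda}^{I,h_0}(z_0)\ge\left(\sum_{1\le j\le n}\frac{\beta_j+1}{p_j}\right)\pi^{n-1}K_{\partial M,\rho}^{I,h_0}(z_0),$$
and inserting the lower bound on $K_{\partial M,\rho}^{I,h_0}(z_0)$ from Theorem~\ref{thm:main1-2} (which contributes the extra factor $\pi$, raising $\pi^{n-1}$ to $\pi^{n}$) yields exactly the asserted inequality. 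When $n=1$ inequality (A) is an identity, since $K_{S,\lambda}^{I,h_0}(z_0)=\frac{1}{p_1}K_{\partial M,\rho}^{I,h_0}(z_0)$, so in that case the statement collapses to the $n=1$ instance of Theorem~\ref{thm:main1-2}.

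For the equality I would use that the intermediate factor $\left(\sum_{1\le j\le n}\frac{\beta_j+1}{p_j}\right)\pi^{n-1}$ is strictly positive and that all three kernels are positive; hence equality in the displayed inequality of the corollary holds if and only if equality holds simultaneously in inequality (A) of Proposition~\ref{p:app3} and in the inequality of Theorem~\ref{thm:main1-2}. The remaining work is to check that the conjunction of the two equality characterizations is equivalent to conditions (1)--(3) of the corollary. The characterization coming from Proposition~\ref{p:app3} already supplies $h_0=d_\beta(w-z_0)^\beta$ near $z_0$, the harmonicity of each $\varphi_j$, and $\chi_{j,z_j}^{\beta_j+1}=\chi_{j,-\varphi_j/2}$, which are precisely the second clause of (1), together with (2) and (3) of the corollary.

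The main obstacle will be to extract the first clause $\sum_{1\le j\le n}\frac{\beta_j+1}{p_j}=1$ of condition (1) from the conjunction, and, conversely, to manufacture the Theorem~\ref{thm:main1-2} data from the corollary's hypotheses. For the forward direction I would argue as follows. Since $\mathcal I(\psi)_{z_0}\subset I$, Lemma~\ref{l:psi} gives $L_\beta\subset E_1$, so any $\alpha\in E$ with $\alpha\le\beta$ satisfies $\sum_{1\le j\le n}\frac{\alpha_j+1}{p_j}\le\sum_{1\le j\le n}\frac{\beta_j+1}{p_j}\le1$ with equality only when $\alpha=\beta$; hence $E\cap L_\beta\subseteq\{\beta\}$. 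Writing the minimizer $f_0$ of Lemma~\ref{l:b7} in the equality form of Theorem~\ref{thm:main1-2}, namely $P_{\partial M}(f_0)=\sum_{\alpha\in E}d_\alpha w^\alpha+g_0$ with $(g_0,z_0)\in\mathcal I(\psi)_{z_0}$, I note that $(g_0,z_0)\in\mathcal I(\psi)_{z_0}$ carries no $L_\beta$-monomial (its exponents lie outside $E_1\supseteq L_\beta$ by Lemma~\ref{l:psi}), whereas $(P_{\partial M}(f_0)-h_0,z_0)\in I$ forces the $\beta$-coefficient of $P_{\partial M}(f_0)$ to equal the nonzero $d_\beta$. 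Thus some $E$-monomial must lie in $L_\beta$, and by the inclusion it is $\beta$; that is, $\beta\in E$, i.e.\ $\sum_{1\le j\le n}\frac{\beta_j+1}{p_j}=1$. For the converse I would verify that conditions (1)--(3) of the corollary produce both equality characterizations: Proposition~\ref{p:app3}'s conditions hold verbatim, and taking $g_j\equiv1$, $u_j=\varphi_j/2$, $d_\beta\ne0$ and $d_\alpha=0$ for $\alpha\in E\setminus\{\beta\}$ realizes the conditions of Theorem~\ref{thm:main1-2}, with the extremal function of Remark~\ref{product 1-2} supplying the required monomial structure of $P_{\partial M}(f_0)$. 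The delicate points will be the bookkeeping of Taylor coefficients modulo $I$ and modulo $\mathcal I(\psi)_{z_0}$, and treating the $n=1$ degeneracy of inequality (A) separately.
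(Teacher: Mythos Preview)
Your approach is exactly the one the paper intends: the paper gives no detailed proof, only the sentence ``Proposition~\ref{p:app3} and Theorem~\ref{thm:main1-2} imply the following corollary,'' and you have correctly unpacked this into the chain $K_{S,\lambda}^{I,h_0}\ge\cdots\ge K_{\partial M,\rho}^{I,h_0}\ge\cdots\ge B_{\tilde\rho}^{I,h_0}$, handled the $n=1$ degeneracy of inequality~(A), and extracted $\sum_j\frac{\beta_j+1}{p_j}=1$ from condition~(1) of Theorem~\ref{thm:main1-2} via Lemma~\ref{l:psi}.

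One remark on the converse: your verification of condition~(1) of Theorem~\ref{thm:main1-2} via Remark~\ref{product 1-2} is slightly circular as written, since that remark presupposes the conditions of Theorem~\ref{thm:main1-2}. A cleaner way to close the sufficiency direction is to note that once $h_0=d_\beta(w-z_0)^\beta$ you are in the product setting $h_0=\prod_j h_j$ with $h_j=c_j(w_j-z_j)^{\beta_j}$ and $\tilde\beta=\beta$, so Corollary~\ref{thm:main2-4} applies directly and its equality conditions coincide with those of the present corollary. Alternatively, one can run the explicit construction of $F$ from the proof of Theorem~\ref{thm:main1-2} (Step~2) without assuming $F=P_{\partial M}(f_0)$ beforehand, and deduce that it is the minimizer a posteriori.
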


\
%%%------------------------------------------------------------------------

\vspace{.1in} {\em Acknowledgements}. The authors would like to thank Prof. Akira Yamada, Dr. Shijie Bao and Dr. Zhitong Mi for checking the manuscript and  pointing out some typos. The first named author was supported by National Key R\&D Program of China 2021YFA1003100, NSFC-11825101, NSFC-11522101 and NSFC-11431013.

\bibliographystyle{references}
\bibliography{xbib}

\end{document}